\def\section{\@startsection{section}{1}\z@{.9\linespacing\@plus\linespacing}%
  {.7\linespacing} {\fontsize{13}{14}\selectfont\bfseries\centering}}
\def\paragraph{\@startsection{paragraph}{4}%
  \z@{0.3em}{-.5em}%
  {$\bullet$ \ \normalfont\itshape}}
\definecolor{gr}{rgb}   {0.,   0.69,   0.23 }
\definecolor{bl}{rgb}   {0.,   0.5,   1. }
\definecolor{cy}{rgb}   {0.,   0.57,   0.67 }
\definecolor{mg}{rgb}   {0.85,  0.,    0.85}
\definecolor{marron}{rgb}  {0.6, 0.40, 0.1} 
\definecolor{or}{rgb}   {0.9,  0.5,   0.}
\definecolor{webred}{rgb}{0.75,0,0}
\definecolor{webgreen}{rgb}{0,0.75,0}
\newtheorem{theorem}{Theorem}[section]
\newtheorem{proposition}[theorem]{Proposition}
\newtheorem{lemma}[theorem]{Lemma}
\newtheorem{corollary}[theorem]{Corollary}
\theoremstyle{definition}
\newtheorem{definition}[theorem]{Definition}
\theoremstyle{remark}
\newtheorem{remark}[theorem]{Remark}
\newtheorem{example}[theorem]{Example}
\newcommand{\R}{\mathbb{R}}
\newcommand{\cA}{\mathcal{A}}
\newcommand{\cB}{\mathcal{B}}
\newcommand{\cD}{\mathcal{D}}
\newcommand{\cF}{\mathcal{F}}
\newcommand{\cI}{\mathcal{I}}
\newcommand{\cO}{\mathcal{O}}
\newcommand{\cS}{\mathcal{S}}
\newcommand{\cU}{\mathcal{U}}
\newcommand{\cV}{\mathcal{V}}
\newcommand{\cW}{\mathcal{W}}
\newcommand{\gA}{\mathfrak{A}}
\newcommand{\gC}{\mathfrak{C}}
\newcommand{\gD}{\mathfrak{D}}
\newcommand{\gE}{\mathfrak{E}}
\newcommand{\gF}{\mathfrak{F}}
\newcommand{\gJ}{\mathfrak{J}}
\newcommand{\gO}{\mathfrak{O}}
\newcommand{\gP}{\mathfrak{P}}
\newcommand{\gS}{\mathfrak{S}}
\newcommand{\gT}{\mathfrak{T}}
\newcommand{\gV}{\mathfrak{V}}
\newcommand{\ogD}{\overline\gD}
\newcommand{\ogP}{\overline\gP}
\newcommand{\dx}{\mathbb{X}}
\newcommand{\dS}{\mathbb{S}}
\newcommand{\spm}{\,\widehat{\!\psi\!}\,}
\newcommand{\dec}{{\bp}}
\newcommand{\tbB}{\widetilde{\bB}}
\newcommand{\bA}{{\boldsymbol{\mathsf{A}}}}
\newcommand{\bB}{{\boldsymbol{\mathsf{B}}}}
\newcommand{\bd}{{\boldsymbol{\mathsf{d}}}}
\newcommand{\be}{{\boldsymbol{\mathsf{e}}}}
\newcommand{\bff}{{\boldsymbol{\mathsf{f}}}}
\newcommand{\bp}{{\boldsymbol{\mathsf{p}}}}
\newcommand{\bt}{{\boldsymbol{\mathsf{t}}}}
\newcommand{\bu}{{\boldsymbol{\mathsf{u}}}}
\newcommand{\bv}{{\boldsymbol{\mathsf{v}}}}
\newcommand{\bx}{{\boldsymbol{\mathsf{x}}}}
\newcommand{\by}{{\boldsymbol{\mathsf{y}}}}
\newcommand{\bz}{{\boldsymbol{\mathsf{z}}}}
\newcommand{\bZ}{{\boldsymbol{\mathsf{Z}}}}
\newcommand{\bfz}{{\boldsymbol{0}}}
\newcommand{\sA}{\mathscr{A}}
\newcommand{\sC}{\mathscr{C}}
\newcommand{\sE}{\mathscr{E}}
\newcommand{\sP}{\mathscr{P}}
\newcommand{\sZ}{\mathscr{Z}}
\newcommand{\rd}{\mathrm{d}}
\newcommand{\re}{\mathrm{e}}
\newcommand{\ri}{i}%{\mathrm{i}}
\newcommand{\rD}{\mathrm{D}}
\newcommand{\rG}{\mathrm{G}}
\newcommand{\rJ}{\mathrm{J}}
\newcommand{\uA}{\underline{\bA}}
\newcommand{\uB}{\underline{\bB}}
\newcommand{\udiffeo}{\ee\underline{\me\mathrm U\me}\ee} 
\newcommand{\uPsi}{\,\underline{\!\Psi\!}\,}
\newcommand{\upsi}{\underline{\psi\!}\,}
\newcommand{\ess}{\mathrm{ess}}
\newcommand{\loc}{\mathrm{loc}}
\newcommand{\ee}{\hskip 0.15ex}
\newcommand{\me}{\hskip -0.15ex}
\newcommand{\tronc}{\chi_{j}} %{\chi_{j,h}}
\newcommand{\troncg}{\chi_{(\bx,r)}}
\newcommand{\troncp}{\xi_{(\bx,r)}}
\newcommand{\troncz}{\xi_{(\bx_0,r_0)}}
\newcommand\dom{\operatorname{Dom}}
\newcommand\supp{\operatorname{supp}}
\newcommand\curl{\operatorname{curl}}
\newcommand\Id{\operatorname{\mathbb{I}}}
\renewcommand\Re{\operatorname{Re}}
\newcommand{\OP}{H} % operateur
\newcommand{\DG}[1]{\mathcal{H}(#1)} % operateur de De Gennes
\newcommand{\En}{E} % ground energy
\newcommand{\seE}{\mathscr{E}^*} 
\newcommand{\pot}{\widetilde{\bA}{}} % s^*
\newcommand{\diffeo}{\mathrm U}
\newcommand{\diffeoT}{\mathrm T}
\newcommand{\dir}{\boldsymbol{\tau}}
\title[Ground energy of the magnetic Laplacian in polyhedral bodies]{Ground energy of the magnetic Laplacian\\ in polyhedral bodies}
\author{Virginie Bonnaillie-No\"el, Monique Dauge, Nicolas Popoff}
\begin{document}

\begin{abstract}
The asymptotic behavior of the first eigenvalues of magnetic Laplacian operators with large magnetic fields and Neumann realization in polyhedral domains is characterized by a hierarchy of model problems. We investigate properties of the model problems (continuity, semi-continuity, existence of generalized eigenfunctions). We prove estimates for the remainders of our asymptotic formula. Lower bounds are obtained with the help of a classical IMS partition based on adequate coverings of the polyhedral domain, whereas upper bounds are established by a novel construction of quasimodes, qualified as sitting or sliding according to spectral properties of local model problems.
\end{abstract}

\today
\maketitle

{\parskip 0.5pt
\tableofcontents
}

%%%%%%%%%%%%%%%%%%%%%%%%%%%%%%%%%%%
\section{Introduction. Main results}
\label{sec:intro}
%%%%%%%%%%%%%%%%%%%%%%%%%%%%%%%%%%%

The Schr\"odinger operator with magnetic field (also called magnetic Laplacian) takes the form
\[
   (-i\nabla+\bA)^2
\]
where $\bA$ is a given vector field that will be assumed to be regular. When set on a domain $\Omega$ of $\R^n$ ($n=2$ or $3$) and completed by natural boundary conditions (Neumann), this operator is denoted by $\OP(\bA,\Omega)$. If $\Omega$ is bounded with Lipschitz boundary, the form domain of $\OP(\bA,\Omega)$ is the standard Sobolev space $H^1(\Omega)$ and $\OP(\bA,\Omega)$ is positive self-adjoint with compact resolvent. The ground state of $\OP(\bA,\Omega)$ is the eigenpair $(\lambda,\psi)$ 
\begin{equation}
\label{eq:pb1}
   \begin{cases}
   (-i\nabla+\bA)^2\psi=\lambda\psi\ \ &\mbox{in}\ \ \Omega, \\
   (-i\partial_n+\mathbf{n}\cdot\bA)\psi=0\ \ &\mbox{on}\ \ \partial\Omega,
   \end{cases}
\end{equation}
associated with the lowest eigenvalue $\lambda$. 
If $\Omega$ is simply connected, its eigenvalues only depend on the magnetic field $\bB$ defined as
\begin{equation}
\label{eq:B}
   \bB = \curl\bA.
\end{equation}
The eigenvectors corresponding to two different instances of $\bA$ for the same $\bB$ are deduced from each other by a {\em gauge transform}. %\Mg R\'ef\'erence?\Bk

Introducing a (small) parameter $h>0$ and setting
\[
   \OP_{h}(\bA,\Omega) = (-ih\nabla+\bA)^2
   \quad\mbox{with Neumann b.c. on $\partial\Omega$},
\]
we get the relation
\begin{equation}
\label{eq:h2}
   \OP_{h}(\bA,\Omega) = h^2 \OP\Big(\frac{\bA}{h},\Omega\Big)
\end{equation}
linking the problem with large magnetic field to the semiclassical limit $h\to0$. We denote by $\lambda_h(\bB,\Omega)$ (or $\lambda_h$ if no confusion is possible) the smallest eigenvalue of $\OP_{h}(\bA,\Omega)$ and by $\psi_h$ an associated eigenvector, so that
\begin{equation}
\label{eq:pbh}
   \begin{cases}
   (-ih\nabla+\bA)^2\psi_h=\lambda_h\psi_h\ \ &\mbox{in}\ \ \Omega \,,\\
   (-ih\partial_n+\mathbf{n}\cdot\bA)\psi_h=0\ \ &\mbox{on}\ \ \partial\Omega\,.
   \end{cases}
\end{equation}
The behavior of $\lambda_h$ as $h\to0$ clearly provide equivalent information about the lowest eigenvalue of $\OP(\breve\bA,\Omega)$ when $\breve\bB$ is large, especially in the parametric case when $\breve\bB = B\ee\bB$ where the real number $B$ tends to $+\infty$ and $\bB$ is a chosen reference magnetic field.

From now on, we consider that $\bB$ is fixed. We assume that it is smooth and does not vanish\footnote{Should $\bB$ cancel, the situation would be very different, leading to another type of asymptotics \cite{HeMo96,DoRa13}.} on $\overline\Omega$. 
The question of the semiclassical behavior of $\lambda_h$ has been considered in many papers for a variety of domains, with constant or variable magnetic fields: Smooth domains \cite{LuPan99-2, HeMo01, FouHe06,Ara07, Ray09} and polygons \cite{Bo05, BonDau06, BoDauMaVial07} in dimension $n=2$, and mainly smooth domains \cite{LuPan00, HeMo02, HeMo04, Ray09-3d, FouHe10} in dimension $n=3$. Until now, three-dimensional non-smooth domains are only addressed in two particular configurations---rectangular cuboids \cite{Pan02} and lenses \cite{PoTh, PoRay12}, with special orientation of the (constant) magnetic field. We give more detail about the state of the art in Section \ref{sec:art}.

\subsection{Asymptotic formulas with remainders}
Let us briefly describe our main results in the three-dimensional setting.

Each point $\bx$ in the closure of a polyhedral domain $\Omega$ is associated with a dilation invariant, tangent open set $\Pi_\bx$, according to the following cases:
\begin{enumerate}
\item If $\bx$ is an interior point, $\Pi_\bx=\R^3$,
\item If $\bx$ belongs to a {\em face $\bff$} (i.e., a connected component of the smooth part of $\partial\Omega$), $\Pi_\bx$ is a half-space,
\item If $\bx$ belongs to an {\em edge} $\be$, $\Pi_\bx$ is an infinite wedge,
\item If $\bx$ is a {\em vertex} $\bv$, $\Pi_\bx$ is an infinite polyhedral cone.
\end{enumerate}
Let $\bB_\bx$ be the magnetic field frozen at $\bx$. Let $\En(\bB_\bx \ee,\Pi_\bx)$ be the bottom of the spectrum (ground energy) of the tangent operator $\OP(\bA_\bx \ee,\Pi_\bx)$ where $\bA_\bx$ is the linear approximation of $\bA$ at $\bx$, so that
\[
   \curl\bA_\bx = \bB_\bx \,.
\] 
We introduce the quantity
\begin{equation}
\label{eq:s}
   \sE(\bB \ee,\Omega) := \inf_{\bx\in\overline\Omega} \En(\bB_\bx \ee,\Pi_\bx).
\end{equation}
In this paper, we prove that this quantity provides the value of the limit of $\lambda_h/h$ as $h\to0$ with some control of the convergence rate as $h\to0$, namely
\begin{equation}
\label{eq:conv}
   - C h^{5/4} \le  \lambda_h(\bB \ee,\Omega) - h\ee \sE(\bB \ee,\Omega) \le
    C h^{5/4} ,
\end{equation}
where the constant $C$ is bounded by the norm of $\bA$ in $W^{2,\infty}(\Omega)$, as proved in Theorems~\ref{T:generalUB} and \ref{T:generalLB}. We can also control the constant $C$ by the magnetic field $\bB$ as established in Corollaries~\ref{co:T:generalUBB} and \ref{co:T:generalUB}.
With the point of view of large magnetic fields in the parametric case $\breve\bB = B\ee\bB$,  \eqref{eq:conv} yields obviously
\begin{equation}\label{eq.asympB54}
   - C B^{3/4} \le  \lambda(\breve\bB \ee,\Omega) - B\ee \sE(\bB \ee,\Omega) \le C B^{3/4},\quad\mbox{ as } B\to+\infty.
\end{equation}
Note that $B\ee \sE(\bB \ee,\Omega)=\ee \sE(\breve\bB \ee,\Omega)$ by homogeneity (see Lemma~\ref{lem.dilatation}).

If the magnetic potential is more regular $\bA\in W^{3,\infty}(\overline\Omega)$, we establish in Theorem~\ref{T:sUB} a better upper bound: 
\begin{equation}
\label{eq:conv2}
   - C h^{5/4} \le  \lambda_h(\bB \ee,\Omega) - h\ee \sE(\bB \ee,\Omega) \le
    C h^{4/3} ,
\end{equation}
where the constant $C$ is bounded by the norm of $\bA$ in $W^{3,\infty}(\Omega)$ and can be controlled by the magnetic field $\bB$ as mentioned in Corollary~\ref{co:T:generalUBB2}. 
Like for \eqref{eq.asympB54}, we deduce the asymptotics for large magnetic fields $\breve\bB = B\ee\bB$: 
\begin{equation}\label{eq.asympB43}
   - C B^{3/4} \le  \lambda(\breve\bB \ee,\Omega) - B\ee \sE(\bB \ee,\Omega) \le C B^{2/3},
   \quad\mbox{ as } B\to+\infty.
\end{equation}
These results are new in this generality. In view of \cite[Theorem 1.1]{HeMo04} (in the smooth three-dimensional case) the upper bound in \eqref{eq.asympB43} is optimal. The lower bound coincides with the one obtained in the smooth case in dimensions $2$ and $3$ when no further assumptions are done. In the literature, improvements of the convergence rates are possible in certain cases when one knows more on $\sE(\bB \ee,\Omega)$, in particular whether the infimum is attained in some special points. 

Our result does not need such extra assumptions, but our proofs have to distinguish cases whether the local ground energies $\En(\bB_\bx \ee,\Pi_\bx)$ are attained or not, and we have to understand the behavior of the function $\bx\mapsto\En(\bB_\bx \ee,\Pi_\bx)$ when $\bx$ spans the different regions of $\overline\Omega$. We have proved very general continuity and semi-continuity properties as described now.

Let $\gF$ be the set of faces $\bff$, $\gE$ the set of edges $\be$ and $\gV$ the set of vertices of $\Omega$. They form a partition of the closure of $\Omega$, called stratification
\begin{equation}
\label{eq:stratif}
   \overline\Omega = \Omega \cup \big(\bigcup_{\bff\in\gF}\ \bff\ \big)
   \cup \big(\bigcup_{\be\in\gE}\ \be\ \big)
   \cup \big(\bigcup_{\bv\in\gV}\ \bv\ \big) .
\end{equation}
The sets $\Omega$, $\bff$, $\be$ and $\bv$ are called the strata of $\overline\Omega$, compare with \cite{MazyaPlamenevskii77} and \cite[Ch. 9]{NazarovPlamenevskii94}. We denote them by $\bt$ and their set by $\gT$. For each stratum $\bt$, let us denote by $\Lambda_\bt$ the function
\begin{equation}
\label{eq:Lams}
   \Lambda_\bt : \bt\ni \bx \mapsto \En(\bB_\bx \ee,\Pi_\bx).
\end{equation}
We will show the following facts
\begin{enumerate}
\item[a)] The function $\bx\mapsto \En(\bB_\bx \ee,\Pi_\bx)$ is lower semi-continuous on $\overline\Omega$.
\item[b)] For each stratum $\bt\in\gT$, the function $\Lambda_\bt$ is continuous on $\bt$ and can be continuously extended to the closure $\bar\bt$ of $\bt$. Moreover, for each $\bx_0\in\bar\bt$, $\Lambda_\bt(\bx_0)$ is the bottom of the spectrum $\En(\bB_\dx \ee,\Pi_\dx)$ of a tangent magnetic operator $\OP(\bA_\dx,\Pi_\dx)$ associated with a {\em singular chain} $\dx$ originating at $\bx_0$. 
\end{enumerate}
As a consequence, the infimum determining the limit $\sE(\bB,\Omega)$ in \eqref{eq:s} is a minimum
\begin{equation}
\label{eq:s,min}
   \sE(\bB \ee,\Omega) = \min_{\bx\in\overline\Omega} \,\En(\bB_\bx \ee,\Pi_\bx) \,.
\end{equation}

\subsection{Contents of the paper}
In Section \ref{sec:art} we place our results in the framework of existing literature for dimensions $2$ and $3$. In Section \ref{sec:chain} we introduce the wider class of corner domains, alongside with their tangent cones and singular chains. We particularize these notions in the case of three-dimensional polyhedral domains. In Section \ref{sec:tax} we introduce and classify magnetic model problems on tangent cones (taxonomy) and extract from the literature related facts. We show that to each point $x\in\overline\Omega$ is associated a singular chain $\dx$ originating at $\bx$ for which the tangent operator $\OP(\bA_\dx,\Pi_\dx)$ possesses {\em admissible generalized eigenvectors} with energy $\En(\bB_\bx \ee,\Pi_\bx)$. In Section \ref{sec:cont} we prove the semi-continuity and continuity properties of the functions $\bx\mapsto \En(\bB_\bx \ee,\Pi_\bx)$ on $\overline\Omega$ and its strata. In Section \ref{sec:up} we prove the upper bounds $\lambda_h(\bB \ee,\Omega) \le h\ee \sE(\bB \ee,\Omega) + C h^\kappa$, with $\kappa={5/4}$ or $\kappa=4/3$ according to the regularity of $\bA$, by a construction of quasimodes based on admissible generalized eigenvectors for tangent problems. In Section \ref{sec:low} we prove the lower bound $h\ee \sE(\bB \ee,\Omega) - C h^{5/4} \le \lambda_h(\bB \ee,\Omega)$ by a classical IMS formula. 
%\Cy We conclude our paper in Section \ref{sec:conc}: A  revoir... 
%we give a numerical example in dimension $n=2$ and, finally, discuss generalizations. \Bk

\subsection{Notations} 
For a generic (unbounded) self-adjoint operator $L$
we denote by $\dom(L)$ its domain and $\gS(L)$ its spectrum. 
Likewise the domain of a quadratic form $q$ is denoted by $\dom(q)$.

Domains as open simply connected subsets of $\R^n$ are denoted by $\cO$ if they are generic, $\Pi$ if they are invariant by dilatation (cones) and $\Omega$ if they are bounded.

In this paper, the quadratic forms of interest are those associated with magnetic Laplacians, namely, for a positive constant $h$, a smooth magnetic potential $\bA$, and a generic domain $\cO$
\begin{equation}
\label{D:fq}
   q_{h}[\bA,\cO](f):=
   \int_{\cO}(-ih\nabla+\bA)f\cdot \overline{(-ih\nabla+\bA)f}\;\rd\bx ,
\end{equation}
and its domain
\begin{equation}
\label{D:fqd}
   \dom(q_{h}[\bA,\cO]) = \{f\in L^2(\cO), \ (-ih\nabla+\bA)f\in L^2(\cO)\}\,.
\end{equation}
For a bounded domain $\Omega$, $\dom(q_{h}[\bA,\Omega])$ coincides with $H^1(\Omega)$. For $h=1$, we omit the index $h$, denoting the quadratic form by $q[\bA,\cO]$. 

In relation with changes of variables, we will also use the more general form with metric:
\begin{equation}
\label{D:fqG}
   q_{h}[\bA,\cO,\rG](\psi)=
   \int_{\cO}(-ih\nabla+\bA)\psi\cdot \rG \big(\,\overline{\!(-ih\nabla+\bA)\psi\!}\,\big)
   \; |\rG|^{-1/2} \,\rd\bx ,
\end{equation}
where $\rG$ is a smooth function with values in $3\times3$ positive symmetric matrices and $|\rG|=\det \rG$.  Its domain is (see \cite[\S5]{HeMo04} for more details)
\[
   \dom(q_{h}[\bA,\cO,\rG]) = \{\psi\in L^2_\rG(\cO), \ \rG^{1/2}(-ih\nabla+\bA)\psi\in L^2_\rG(\cO)\}\,,
\]
where $L^2_{\rG}(\cO)$ is the space of the square-integrable functions for the weight $|\rG|^{-1/2}$ and $\rG^{1/2}$ is the square root of the matrix $\rG$.

The domain of the magnetic Laplacian with Neumann boundary conditions on the  set $\cO$ is
\begin{multline}
\label{eq:dom}
   \dom\left(\OP_h(\bA \ee,\cO)\right) =
   \big\{\psi\in L^2(\cO), \quad \\
   (-ih\nabla+\bA)^2\psi\in L^2(\cO) \ \ \mbox{and}\ \ 
   (-ih\partial_n+\mathbf{n}\cdot\bA)\psi=0\ \mbox{on}\ \partial\cO \big\} \, .
\end{multline}

We will also use the space of the functions which are {\em locally}\footnote{Here $H^m_\loc(\overline{\cO})$ denotes for $m=0,1$ the space of functions  which are in $H^m(\cO\cap\cB)$ for any ball $\cB$.} in the domain of $\OP_h(\bA,\cO)$:
\begin{multline}
\label{D:domloc}
   \dom_{\,\loc} \left(\OP_h(\bA,\cO)\right) := 
   \{\psi\in H^1_\loc(\overline{\cO}), \\  
   (-ih\nabla+\bA)^2\psi\in H^0_\loc(\overline{\cO}) \ \ \mbox{and}\ \  
  (-ih\partial_n+\mathbf{n}\cdot\bA)\psi=0 \ \mbox{on} \ \partial\cO\}.
\end{multline}
When $h=1$, we omit the index $h$ in \eqref{eq:dom} and \eqref{D:domloc}.

%%%%%%%%%%%%%%%%%%%%%%%%%%%%%%%%%%%
\section{State of the art}
\label{sec:art}
%%%%%%%%%%%%%%%%%%%%%%%%%%%%%%%%%%%
 
Here we collect some results of the literature about the semiclassical limit for the first eigenvalue of the magnetic Laplacian depending on the geometry of the domain and the variation of the magnetic field. We briefly mention the case when the domain has no boundary, before reviewing in more detail what is known on bounded domains $\Omega\subset \R^n$ with Neumann boundary condition depending on the dimension $n\in \{2,3\}$. To keep this section relatively short, we only quote results related with our problematics, i.e., the asymptotic behavior of the ground energy with error estimates from above and from below.

%\Gr Je voudrais dire des trucs en plus dans l'intro (ou la conclusion) : dire que l'op\'erateur est bien compris dans $\R^n$, qu'il est connu (cf state of the art) que mettre une condition de Neumann au bord fait au moins baisser l\'energie, et que nous allons expliquer ce ph\'enom\`ene dans un cadre tr\`es g\'en\'eral. \Bk

\subsection{Without boundary or with Dirichlet conditions}
Here $M$ is either a compact Riemannian manifold without boundary or $\R^n$, and $\OP_{h}(\bA,M)$ is the magnetic Laplacian associated with the 1-form $\bA$. In this very general framework, the magnetic field is the 2-form $\bB=\curl\bA$. Then for each $\bx\in M$ the local energy at $\bx$ is the intensity  
$$b(\bx) := \tfrac12 \operatorname{\rm Tr}([\bB^*(\bx)\cdot\bB(\bx)]^{1/2})$$ 
and $\sE(\bB,M)=b_0:=\inf_{\bx\in M} b(\bx)$. 
It is proved by Helffer and Mohamed in \cite{HeMo96} that if $b_0$ is positive and under a condition at infinity if $M=\R^n$, then 
$$
   \exists C>0, \quad - Ch^{5/4} \leq
   \lambda_h(\bB \ee,M)-h\sE(\bB \ee,M) \leq Ch^{4/3}  \ .
$$
More precise results are proved in dimension $2$ when $b$ admits a unique positive non-degenerate minimum: A complete asymptotic expansion of the eigenvalues of $\OP_{h}(\bA,M)$ in powers of $\sqrt{h}$ has been obtained by Helffer and Kordyukov in \cite{HeKo11}, and improved in  by V\~u Ng\d{o}c and Raymond in \cite{RayVuN13} where it is proved that the sole integer powers of $h$ appear in the expansion. These results imply in particular that with these assumptions there holds 
$$|\lambda_{h}(\bB,M)-h\sE(\bB \ee,M) |\leq Ch^2.$$
The case of Dirichlet boundary condition is very close to the case without boundary.

\subsection{Neumann conditions in dimension 2}
In contrast, when Neumann boundary conditions are applied on the boundary, the local energy drops significantly as was established in \cite{SaGe63} by Saint-James and de Gennes as early as 1963.
In this review of the dimension $n=2$, we classify the domains in two categories: those with a regular boundary and those with a polygonal boundary.

\subsubsection{Regular domains}
Let $\Omega\subset \R^2$ be a regular domain and $B$ be a regular non-vanishing scalar magnetic field on $\overline\Omega$. To each $\bx\in\overline\Omega$ is associated a tangent problem. According to whether $\bx$ is an interior point or a boundary point, the tangent problem is the magnetic Laplacian on the plane $\R^2$ or the half-plane $\Pi_\bx$ tangent to $\Omega$ at $\bx$, with the constant magnetic field $B_\bx\equiv B(\bx)$. The associated spectral quantities $\En(B_\bx,\R^2)$ and $\En(B_\bx,\Pi_\bx)$ are respectively equal to $|B_\bx|$ and $|B_\bx|\Theta_{0}$ where $\Theta_{0}:=\En(1,\R^2_+)$ is a universal constant whose value is close to $0.59$ (see \cite{SaGe63}). With the quantities 
\begin{equation}
\label{D:betbprime}
   b=\inf_{\bx\in\overline \Omega} |B(\bx)| \quad \mbox{and} \quad  
   b'=\inf_{\bx\in \partial\Omega}|B(\bx)|, 
\end{equation}
we find 
$$
   \sE(B \ee,\Omega)=\min(b,b'\Theta_{0}) \ . 
$$
In this generality, the asymptotic limit
\begin{equation}
\label{Metaordre0}
   \lim_{h\to0}\frac{\lambda_h(B \ee,\Omega)}{h} = \sE(B,\Omega) 
\end{equation} 
is proved by Lu and Pan in \cite{LuPan99-2}. Improvements of this result depend on the geometry and the variation of the magnetic field as we describe now.

\paragraph{Constant magnetic field} 
If the magnetic field is constant and normalized to $1$, then $\sE(B \ee,\Omega)=\Theta_{0}$. The following estimate is proved by Helffer and Morame:
$$
  \exists C>0, \quad -Ch^{3/2} \leq \lambda_h(1 \ee,\Omega)-h\Theta_{0} \leq Ch^{3/2} \ ,
$$
for $h$ small enough \cite[\S 10]{HeMo01}, while the upper bound was already given by Bernoff and Sternberg \cite{BeSt98}. 
This result is improved in \cite[\S 11]{HeMo01} in which a two-term asymptotics is proved, showing that a remainder in $O(h^{3/2})$ is optimal. Under the additional assumption 
that the curvature of the boundary admits a unique and non-degenerate maximum, a complete 
expansion of $\lambda_h(1 \ee,\Omega)$ is provided by Fournais and Helffer \cite{FouHe06}.

\paragraph{Variable magnetic field}
Here we recall results from \cite[\S 9]{HeMo01} for variable magnetic fields (we use the notation \eqref{D:betbprime})
\begin{subequations}
\begin{eqnarray}
\label{eq:2.3a}
   \mbox{If $b<\Theta_{0}b'$,} \quad &
   \exists C>0, \quad & |\lambda_h(B \ee,\Omega)-hb| \leq Ch^{2} , \\[0.2ex]
\label{eq:2.3b}
   \mbox{If $b>\Theta_{0}b'$,} \quad &
   \exists C>0,  \quad 
   & -Ch^{5/4} \leq \lambda_h(B \ee,\Omega)-h\Theta_{0}b' \leq Ch^{3/2} , \\[0.2ex]
\label{eq:2.3c}
   \mbox{If $b=\Theta_{0}b'$,} \quad &
   \exists C>0,  \quad 
   & -Ch^{5/4}  \leq \lambda_h(B \ee,\Omega)-hb\leq Ch^{2},
\end{eqnarray}
\end{subequations}
Under non-degeneracy hypotheses, the optimality of the interior estimates \eqref{eq:2.3a} is a consequence of \cite{HeKo11}, whereas the eigenvalue asymptotics provided in \cite{Ray09,Ray13} yield that the upper bound in \eqref{eq:2.3b} is sharp.

\subsubsection{Polygonal domains}
\label{sec:2.1.2}
Let $\Omega$ be a curvilinear polygon and let $\gV$ be the (finite) set of its vertices. 
In this case, new model operators appear on infinite sectors $\Pi_\bx$ tangent to $\Omega$ at vertices $\bx\in\gV$. By homogeneity $\En(B_\bx \ee,\Pi_\bx)=|B(\bx)|\En(1 \ee,\Pi_\bx)$ and by rotation invariance, $\En(1 \ee,\Pi_\bx)$ only depends on the opening $\alpha(\bx)$ of the sector $\Pi_\bx$. Let $\cS_\alpha$ be a model sector of opening $\alpha\in(0,2\pi)$. Then
$$
   \sE(B \ee,\Omega) = \min\big(b,b'\Theta_{0},
   \min_{\bx\in\gV} |B(\bx)|\,\En(1 \ee,\cS_{\alpha(\bx)})\big) \ . 
$$
In \cite[\S 11]{Bo05}, it is proved that 
\[
   \exists C>0,  \quad 
   -Ch^{5/4} \leq \lambda_h(B \ee,\Omega)-h\sE(B \ee,\Omega) \leq Ch^{9/8}.
\]
This estimate can be improved under the assumption that 
\begin{equation}
\label{eq:sless}
   \sE(B \ee,\Omega) < \min (b,b'\Theta_{0}) ,
\end{equation}
which means that at least one of the corners makes the energy lower than in the regular case: The asymptotic expansions provided in \cite{BonDau06}  then yield the sharp estimates
\[
   \exists C>0, \quad |\lambda_h(B \ee,\Omega) - h \sE(B \ee,\Omega)|\leq Ch^{3/2} \ . 
\]

From \cite{Ja01,Bo05} follows that for all $\alpha\in(0,\frac\pi2]$ there holds
\begin{equation}
\label{eq:alpha}
   \En(1 \ee,\cS_{\alpha})<\Theta_{0}.
\end{equation}
Therefore condition \eqref{eq:sless} holds for constant magnetic fields as soon as there is an angle opening $\alpha_\bx\le\frac\pi2$. 
Finite element computations by Galerkin projection as presented in \cite{BoDauMaVial07} suggest that \eqref{eq:alpha} still holds for all $\alpha\in(0,\pi)$. Let us finally mention that if $\Omega$ has straight sides and $B$ is constant, the convergence of $\lambda_h(B \ee,\Omega)$ to $h \sE(B \ee,\Omega)$ is exponential: Their difference is bounded by $C\exp(-\beta h^{-1/2})$ for suitable positive constants $C$ and $\beta$ (see \cite{BonDau06}).

\subsection{Neumann conditions in dimension 3}
In dimension $n=3$ we still distinguish the regular and singular domains.

\subsubsection{Regular domains}
\label{SS:regulardomain}
Here $\Omega\subset \R^3$ is assumed to be regular. For a continuous magnetic field $\bB$ it is known (\cite{LuPan00} and \cite{HeMo02}) that \eqref{Metaordre0} holds. In that case 
$$
   \sE(\bB \ee,\Omega)=
   \min\big(\inf_{\bx\in\Omega}|\bB(\bx)|,
   \inf_{\bx\in\partial\Omega}|\bB(\bx)| \ee\sigma(\theta(\bx))\big),
$$ 
where $\theta(\bx)\in[0,\frac\pi2]$ denotes the unoriented angle between the magnetic field and the boundary at the point $\bx\in\partial\Omega$, and the quantity $\sigma(\theta)$ is the bottom of the spectrum of a model problem, see Section \ref{sec:tax}. Let us simply mention that $\sigma$ is increasing on $[0,\frac\pi2]$ and that $\sigma(0)=\Theta_0$, $\sigma(\pi/2)=1$.

\paragraph{Constant magnetic field}
Here the magnetic field $\bB$ is assumed to be constant and unitary. 
There exists a non-empty set $\Sigma$ of $\partial\Omega$ on which $\bB(\bx)$ is tangent to the boundary. In that case we have 
$$
   \sE(\bB \ee,\Omega)=\Theta_{0} \ . 
$$ 
Theorem 1.1 of \cite{HeMo04} states that
$$
  \exists C>0, \quad |\lambda_h(\bB \ee,\Omega)-h\Theta_{0}| \leq Ch^{4/3},  
$$
for $h$ small enough.
Under some extra assumptions on $\Sigma$, Theorem 1.2 of \cite{HeMo04} yields a two-term asymptotics for $\lambda_h(\bB \ee,\Omega)$ showing the optimality of the previous estimate.

\paragraph{Variable magnetic field}
Let $\bB$ be a smooth non-vanishing magnetic field. There holds \cite[Theorem 9.1.1]{FouHe10} 
$$
   \exists C>0, \quad - Ch^{5/4} \leq
   \lambda_h(\bB \ee,\Omega)-h\sE(\bB \ee,\Omega) \leq Ch^{5/4}  \ .
$$
The proof of this result was already sketched in \cite{LuPan00}. 
In \cite[Remark 6.2]{HeMo04}, the upper bound is improved to $O(h^{4/3})$.

Under the following two extra assumptions
\begin{itemize}
\item[\em a)] The inequality
$\inf_{\bx\in\partial\Omega} |\bB(\bx)|\, \sigma(\theta(\bx))
   < \inf_{\bx\in\Omega} |\bB(\bx)|$ holds,
\item[\em b)] The function $\bx\mapsto |\bB(\bx)| \ee\sigma(\theta(\bx))$ reaches its minimum at a point $\bx_{0}$ where $\bB$ is neither normal nor tangent to the boundary,
\end{itemize}
a three-term quasimode is constructed in \cite{Ray09-3d}, providing the sharp upper bound: 
$$ 
   \exists C>0, \quad \lambda_h(\bB \ee,\Omega)-h\sE(\bB \ee,\Omega) \leq C h^{3/2} \,.
$$

\subsubsection{Singular domains}
Until now, two examples of non-smooth domains have been addressed in the literature. In both cases, the magnetic field $\bB$ is assumed to be constant.
 
\paragraph{Rectangular cuboids}
The case where $\Omega$ is a rectangular cuboid (i.e., the product of three bounded intervals) is considered by Pan \cite{Pan02}:
The asymptotic limit \eqref{Metaordre0} holds for such a domain and there exists a vertex $\bv\in\gV$ such that $\sE(\bB \ee,\Omega)=\En(\bB \ee,\Pi_\bv)$.
Moreover, in the case where the magnetic field is tangent to a face but is not tangent to any edge, there holds   
$$
   \En(\bB \ee,\Pi_\bv) < 
   \inf_{\bx\in \overline{\Omega}\setminus \gV }\En(\bB \ee,\Pi_{\bx})
$$
and eigenfunctions associated to $\lambda_h(\bB \ee,\Omega)$ concentrate near corners as $h\to0$.

\paragraph{Lenses} 
The case where $\Omega$ has the shape of a lens is treated in \cite{PoTh} and \cite{PoRay12}. The domain $\Omega$ is supposed to have two faces separated by an edge $\be$ that is a regular loop contained in the plane $x_3=0$. The magnetic field considered is $\bB=(0,0,1)$.

It is proved in \cite{PoTh} that, 
if the opening angle $\alpha$ of the lens is constant and $\leq 0.38\pi$,
$$
   \inf_{\bx\in \be}\En(\bB,\Pi_{\bx}) < 
   \inf_{\bx\in \overline{\Omega}\setminus \be}\En(\bB,\Pi_{\bx})  
$$
and that the asymptotic limit \eqref{Metaordre0} holds with the following estimate: 
$$
   \exists C>0, \quad |\lambda_h(\bB \ee,\Omega)-h\sE(\bB \ee,\Omega)| 
   \leq C h^{5/4} \ .
$$
When the opening angle of the lens is variable and under some non-degeneracy hypotheses, a complete eigenvalue asymptotics is obtained in \cite{PoRay12} resulting into the optimal estimate
$$
   \exists C>0, \quad 
   |\lambda_h(\bB \ee,\Omega)-h\sE(\bB \ee,\Omega)| \leq C h^{3/2} \,.  
$$

%%%%%%%%%%%%%%%%%%%%%%%%%%%%%%%%%%%
\section{Polyhedral domains and their singular chains}
\label{sec:chain}
%%%%%%%%%%%%%%%%%%%%%%%%%%%%%%%%%%%

For the sake of completeness and for ease of further discussion, in the same spirit as in \cite[Section 2]{Dau88}, we introduce here a recursive definition of two intertwining classes of domains
\begin{itemize}
\item[a)] $\gP_n$, a class of infinite open cones in $\R^n$. 
\item[b)] 
$\gD(M)$, a class of bounded connected open subsets of a smooth manifold without boundary---actually, $M=\R^n$ or $M=\dS^n$, with $\dS^n$ the unit sphere of $\R^{n+1}$, 
\end{itemize}

\subsection{Domains and tangent cones}
\label{SS:tangent}
We call a \emph{cone} any open subset $\Pi$ of $\R^n$ satisfying 
\[
   \forall\rho>0 \ \ \mbox{and}\ \ \bx\in\Pi,\quad \rho \bx\in\Pi,
\]
and the \emph{section} of the cone $\Pi$ is its subset $\Pi\cap\dS^{n-1}$.
Note that $\dS^0=\{-1,1\}$.

{\bf Initialization}: $\gP_0$ has one element, $\{0\}$.
$\gD(\dS^0)$ is formed by all subsets of $\dS^0$.

{\bf Recurrence}: For $n\ge1$,
\begin{enumerate}
\item $\Pi\in\gP_n$ if and only if the section of $\Pi$ belongs to $\gD(\dS^{n-1})$,
\item
$\Omega\in\gD(M)$ if and only if for any $\bx\in\overline\Omega$, there exists a cone $\Pi_\bx\in\gP_n$ and a local $\sC^\infty$  diffeomorphism $\diffeo^\bx$ which maps a neighborhood $\cU_\bx$ of $\bx$ in $M$ onto a neighborhood $\cV_\bx$ of $\bfz$ in $\R^n$ and such that
\begin{equation}
\label{eq:diffeo}
   \diffeo^\bx(\cU_\bx\cap\Omega) = \cV_\bx\cap\Pi_\bx \quad\mbox{and}\quad
   \diffeo^\bx(\cU_\bx\cap\partial\Omega) = \cV_\bx\cap\partial\Pi_\bx .
\end{equation}
We assume without restriction that the differential of $\diffeo^\bx$ at the point $\bx$ is the identity matrix $\Id_n$. The cone $\Pi_\bx$ is said to be tangent to $\Omega$ at $\bx$. 
\end{enumerate}

{\bf Examples}:
\begin{itemize}
\item The elements of $\gP_1$ are $\R$, $\R_+$ and $\R_-$.
\item The elements of $\gD(\dS^1)$ are $\dS^1$ and all open intervals $\cI\subset\dS^1$ such that $\overline \cI\neq\dS^1$.
\item The elements of $\gP_2$ are $\R^2$ and all sectors with opening $\alpha\in(0,2\pi)$, including half-spaces ($\alpha=\pi$).
\item The elements of $\gD(\R^2)$ are curvilinear polygons with piecewise non-tangent smooth sides. Note that corner angles do not take the values $0$ or $2\pi$, and that $\gD(\R^2)$ includes smooth domains.
\end{itemize}

\begin{definition}
\label{def:redcone}
Let $\gO_n$ denote the group of orthogonal linear transformations of $\R^n$. 
\begin{itemize}
\item[a)] We say that a cone $\Pi$ is {\em equivalent} to another cone $\Pi'$ and denote
\[
   \Pi \equiv \Pi'
\]
if there exists $\udiffeo\in\gO_n$ such that $\udiffeo\Pi=\Pi'$. 
\smallskip
\item[b)] Let $\Pi\in\gP_n$. If $\Pi$ is equivalent to $\R^{n-d}\times\Gamma$ with $\Gamma\in\gP_d$ and $d$ is minimal for such an equivalence, $\Gamma$ is said to be {\em a minimal reduced cone} associated with $\Pi$. 
\smallskip
\item[c)] Let $\bx\in\overline\Omega$ and let $\Pi_\bx$ be its tangent cone. We denote by $d_0(\bx)$ the dimension of the minimal reduced cone associated with $\Pi_\bx$.
\end{itemize}
\end{definition}

\subsection{Recursive definition of singular chains}
A singular chain
$\dx=(\bx_0,\bx_1,\ldots,\bx_\nu)\in\gC(\Omega)$ (with a natural number $\nu$) is a finite collection of points according to the following recursive definition.

{\bf Initialization}: $\bx_0\in\overline\Omega$, 
\begin{itemize}
\item Let $C_{\bx_0}$ be the tangent cone to $\Omega$ at $\bx_0$ (here $C_{\bx_0}=\Pi_{\bx_0}$),
\item Let $\Gamma_{\bx_0}\in\gP_{d_0}$ be its minimal reduced cone: 
$C_{\bx_0}=\udiffeo^0(\R^{n-d_0}\times\Gamma_{\bx_0})$.
\item Alternative: \begin{itemize}
\item If $\nu=0$, stop here.
\item If $\nu>0$, then $d_0>0$ and let $\Omega_{\bx_0}\in\gD(\dS^{d_{0}-1})$ be the section of $\Gamma_{\bx_0}$
\end{itemize}
\end{itemize}

{\bf Recurrence}: $\bx_j\in\overline\Omega_{\bx_0,\ldots,\bx_{j-1}}\in\gD(\dS^{d_{j-1}-1})$.
If $d_{j-1}=1$, stop here ($\nu=j$). If not:
\begin{itemize}
\item Let $C_{\bx_0,\ldots,\bx_j}$ be the tangent cone to $\Omega_{\bx_0,\ldots,\bx_{j-1}}$ at $\bx_j$, 
\item Let $\Gamma_{\bx_0,\ldots,\bx_j}\in\gP_{d_j}$ be its minimal reduced cone: 
$C_{\bx_0,\ldots,\bx_j}=\udiffeo^j(\R^{d_{j-1}-1 -d_j}\times\Gamma_{\bx_0,\ldots,\bx_j})$.
\item Alternative: \begin{itemize}
\item If $j=\nu$, stop here.
\item If $j<\nu$, then $d_j>0$ and let $\Omega_{\bx_0,\ldots,\bx_j}\in\gD(\dS^{d_{j}-1})$ be the section of $\Gamma_{\bx_0,\ldots,\bx_j}$.
\end{itemize}
\end{itemize}

Note that $n\ge d_0>d_1>\ldots>d_\nu$. Hence $\nu\le n$. Note also that for $\nu=0$, we obtain the trivial one element chain $(\bx_0)$ for any $\bx_0\in\overline\Omega$.

While $\gC(\Omega)$ is the set of all singular chains, for any $\bx\in\overline\Omega$, we denote by $\gC_\bx(\Omega)$ the subset of chains originating at $\bx$, i.e., the set of chains $\dx=(\bx_0,\ldots,\bx_\nu)$ with $\bx_0=\bx$. Note that the one element chain $(\bx)$ belongs to $\gC_\bx(\Omega)$. We also set
\begin{equation}
\label{eq:gCx}
   \gC^*_\bx(\Omega) = \{\dx\in\gC_\bx(\Omega),\  \nu>0\} = \gC_\bx(\Omega)\setminus\{(\bx)\}.
\end{equation}

We set finally, with the notation $\langle \by\rangle$ for the vector space generated by $\by$,
\begin{equation}
\label{eq:PiX}
   \Pi_\dx = 
   \begin{cases}
     C_{\bx_0} =\Pi_{\bx_0} & \ \mbox{if} \ \  \nu=0,\\[0.5ex]
     \udiffeo^0 \big(\R^{n-d_0}\times\langle \bx_1\rangle \times 
     C_{\bx_0,\bx_1} \big)  &\ \mbox{if} \ \  \nu=1, \\[0.5ex]
     \udiffeo^0 \Big(\R^{n-d_0}\times\langle \bx_1\rangle \times  \ldots \times
     \udiffeo^{\nu-1} \big(\R^{d_{\nu-2}-1-d_{\nu-1}}\times\langle \bx_\nu\rangle \times 
     C_{\bx_0,\ldots,\bx_{\nu}}\big)
     \ldots \Big)  &\ \mbox{if} \ \  \nu\ge2.
   \end{cases}
\end{equation}
Note that if $d_\nu=0$, the cone $C_{\bx_0,\ldots,\bx_{\nu}}$ coincides with $\R^{d_{\nu-1}-1}$, leading to $\Pi_\dx=\R^n$.  %Note also that if $d_0=n$, $\udiffeo_0=\Id_n$.

\begin{definition}
\label{def:chaineq}
Let $\dx=(\bx_0,\ldots,\bx_\nu)$ and $\dx'=(\bx'_0,\ldots,\bx'_{\nu'})$ be two chains in $\gC(\Omega)$. We say that $\dx$ is equivalent to $\dx'$ 
if $\bx_0=\bx'_0$ and $\Pi_\dx=\Pi_{\dx'}$.
\end{definition}

{\bf Special subsets} of $\overline\Omega$: For $d\in\{0,\ldots,n\}$, let
\begin{equation}
\label{eq:Ad}
   \gA_d(\Omega) = \{\bx\in\overline\Omega,\quad d_0(\bx)=d\}.
\end{equation}
The strata of $\overline\Omega$ are the connected components of $\gA_d(\Omega)$, for $d\in\{0,\ldots,n\}$. They are denoted by $\bt$ and their set by $\gT$.

Examples:
\begin{itemize}
\item $\gA_0(\Omega)$ coincides with $\Omega$.
\item $\gA_1(\Omega)$ is the subset of $\partial\Omega$ of the regular points of the boundary.
\item If $n=2$, $\gA_2(\Omega)$ is the set of corners.
\item If $n=3$, $\gA_2(\Omega)$ is the set of edge points.
\item If $n=3$, $\gA_3(\Omega)$ is the set of corners.
\end{itemize}

\subsection{Polyhedral domains} \label{sec:3.3} Polyhedral domains and polyhedral cones form subclasses of $\gD(M)$ and $\gP_n$, denoted by $\ogD(M)$ and $\ogP_n$, respectively:

\begin{itemize}
\item[a)] The cone $\Pi\in \gP_n$ is a polyhedral cone if its boundary is contained in a finite union of (hyper)surfaces. We write $\Pi\in\ogP_n$.
\item[b)]  The domain $\Omega\in\gD(M)$ is a polyhedral domain if all its tangent cones $\Pi_\bx$ are polyhedral. We write $\Omega\in\ogD(M)$.
\end{itemize}

This allows to make precise the definition of faces, edges and corners in dimension $3$, in connection with singular chains.
\begin{enumerate}
\item Interior point $\bx\in\Omega$. Only one chain in $\gC_\bx(\Omega)$: $\dx=(\bx)$.
\smallskip

\item The faces $\bff$ are the connected components of $\gA_1(\Omega)$. The set of faces is denoted by $\gF$. Let $\bx$ belong to a face. There are two chains in $\gC_\bx(\Omega)$:
\begin{enumerate}
\item $\dx = (\bx)$ with $\Pi_\dx=\Pi_\bx$, the tangent half-space. $\Pi_\dx\equiv\R^2\times\R_+$.
\item $\dx = (\bx,\bx_1)$ where $\bx_1=1$ is the only element in $\R_+\cap\dS^0$. 
Thus $\Pi_\dx=\R^3$. 
\end{enumerate}
\smallskip

\item The edges $\be$ are the connected components of $\gA_2(\Omega)$. The set of edges is denoted by $\gE$. Let $\bx$ belong to an edge. There are three types of chains in $\gC_\bx(\Omega)$:
\begin{enumerate}
\item $\dx = (\bx)$ with $\Pi_\dx=\Pi_\bx$, the tangent wedge (which is not a half-plane). The reduced cone of $\Pi_\bx$ is a sector $\Gamma_\bx$ the section of which is an interval $\cI_\bx\subset\dS^1$.
\item $\dx = (\bx,\bx_1)$ where $\bx_1\in \overline \cI_\bx$. 
\begin{enumerate}
\item If $\bx_1$ is interior to $\cI_\bx$, $\Pi_\dx=\R^3$. No further chain.
\item If $\bx_1$ is a boundary point of $\cI_\bx$, $\Pi_\dx$ is a half-space, containing one of the two faces $\partial^\pm\Pi_\bx$ of the wedge $\Pi_\bx$.
\end{enumerate}
\item $\dx=(\bx,\bx_1,\bx_2)$ where $\bx_1\in \partial \cI_\bx$, $\bx_2=1$ and $\Pi_\dx=\R^3$.
\end{enumerate}
To sum up, there are 4 equivalence classes in $\gC_\bx(\Omega)$ in the case of an edge point $\bx$:
\begin{itemize}
\item $\dx=(\bx)$
\item $\dx=(\bx,\bx_1^\pm)$ with $\{\bx_1^-,\bx_1^+\}=\partial \cI_\bx$
\item $\dx=(\bx,\bx_1^\circ)$ with $\bx_1^\circ$ any chosen point in $\cI_\bx$.
\end{itemize}
\smallskip

\item The corners $\bv$ are the connected components of $\gA_3(\Omega)$. The set of corners is denoted by $\gV$. There are four types of chains in $\gC_\bx(\Omega)$:
\begin{enumerate}
\item $\dx = (\bx)$ with $\Pi_\dx=\Pi_\bx$, the tangent cone (that is not a wedge). It coincides with its reduced cone. Its section $\Omega_\bx$ is a polygonal domain in $\dS^2$.
\item $\dx = (\bx,\bx_1)$ where $\bx_1\in \overline \Omega_\bx$. 
\begin{enumerate}
\item If $\bx_1$ is interior to $\Omega_\bx$, $\Pi_\dx=\R^3$. No further chain.
\item If $\bx_1$ is in a side of $\Omega_\bx$, $\Pi_\dx$ is a half-space, containing one of the faces of the cone $\Pi_\bx$.
\item If $\bx_1$ is a corner of $\Omega_\bx$, $\Pi_\dx$ is a wedge. Its edge contains one of the edges of $\Pi_\bx$.
\end{enumerate}
\item $\dx=(\bx,\bx_1,\bx_2)$ where $\bx_1\in\partial\Omega_\bx$ 
\begin{enumerate}
\item If $\bx_1$ is in a side of $\Omega_\bx$, $\bx_2=1$, $\Pi_\dx=\R^3$. No further chain.
\item If $\bx_1$ is a corner of $\Omega_\bx$, $C_{\bx,\bx_1}$ is plane sector, and $\bx_2\in\overline\cI_{\bx,\bx_1}$ where the interval $\cI_{\bx,\bx_1}$ is its section. If $\bx_2$ is an interior point, then $\Pi_\dx=\R^3$. 
\end{enumerate}
\item $\dx=(\bx,\bx_1,\bx_2,\bx_3)$ where $\bx_1$ is a corner of $\Omega_\bx$, $\bx_2\in\partial\cI_{\bx,\bx_1}$ and $\bx_3=1$. Then $\Pi_\dx=\R^3$.
\end{enumerate}
Let $\bx_1^j$, $1\le j\le N$, be the corners of $\Omega_\bx$, and $\bff^j_1$, $1\le j\le N$, be its sides (notice that there are as many corners as sides). There are $2N+2$ equivalence classes in $\gC_\bx(\Omega)$:
\begin{itemize}
\item $\dx=(\bx)$ (vertex)
\item $\dx=(\bx,\bx_1^j)$ with $1\le j\le N$ (edge points limit)
\item $\dx=(\bx,\bx_1^{\circ,j})$ with $\bx_1^{\circ,j}$ any chosen point inside $\bff^j_1$ (face points limit)
\item $\dx=(\bx,\bx_1^\circ)$ with $\bx_1^\circ$ any chosen point in $\Omega_\bx$ (interior points limit).
\end{itemize}
\end{enumerate}

\begin{remark}
\label{rem:length2}
For polyhedral domains $\Omega$, it is a consequence of the description above that chains $(\bx_0,\bx_1)$ of length 2 are enough to describe all equivalence classes of the set of chains $\gC^*_{\bx_0}(\Omega)$ \eqref{eq:gCx}. 
This does not hold anymore if general corner domains are considered. Besides, the notion of equivalence classes as introduced in Definition \ref{def:chaineq} is sufficient for the analysis of operators $\OP_{h}(\bA,\Omega)$ in the case of magnetic fields $\bB$ smooth in Cartesian variables. Should $\bB$ be smooth in polar variables only, the whole hierarchy of singular chains would be needed.
\end{remark}

%%%%%%%%%%%%%%%%%%%%%%%%%%%%%%%%%%%
\section{Taxonomy of model problems}
\label{sec:tax}
%%%%%%%%%%%%%%%%%%%%%%%%%%%%%%%%%%%

\subsection{Tangent and model operators}
We recall that $\bA$ is a magnetic potential associated with the magnetic field $\bB$ on the polyhedral domain $\Omega\in\ogD(\R^3)$. For each singular chain $\dx=(\bx_0,\bx_1,\ldots,\bx_\nu)\in\gC(\Omega)$ we set
\begin{equation}
\label{eq:tangent}
   \bB_\dx = \bB(\bx_0) \quad\mbox{and}\quad
   \bA_\dx(\bx) = \nabla\bA(\bx_0)\cdot\bx,\ \ \bx\in\Pi_\dx,
\end{equation}
so that $\bB_\dx$ is the magnetic field frozen at $\bx_0$ and $\bA_\dx$ the linear part\footnote{In \eqref{eq:tangent}, $\nabla\bA$ is the $3\times3$ matrix with entries $\partial_kA_j$, $1\le j,k\le 3$, and $\cdot\,\bx$ denotes the multiplication by the column vector $\bx=(x_1,x_2,x_3)^\top$.} of the potential at $\bx_0$. 
We have obviously
\[
   \curl\bA_\dx = \bB_\dx\,,
\]
so that the tangent magnetic operator $\OP(\bA_\dx \ee,\Pi_\dx)$ and its ground energy $\En(\bB_\dx \ee,\Pi_\dx)$ make sense. 
By homogeneity,  there holds for tangent problems (cf.\ Lemma~\ref{lem.dilatation})
\begin{lemma}\label{lem.scal}
Let $\Pi\in\gP_3$ be a cone, $\bB$ be a constant magnetic field with norm $b>0$. There holds 
\begin{equation}
\label{eq:norm}
   \En(\bB,\Pi) = b\, 
  \En\Big(\frac{\bB}{b},\Pi\Big)\,.
\end{equation}
Moreover, let $\bA$ be a linear potential associated with $\bB$.
Then $\bx\mapsto \Psi(\sqrt{b}\,\bx)$ is an eigenfunction of $H(\bA,\Pi)$ associated with $E(\bB,\Pi)$ if and only if $\bx\mapsto \Psi(\bx)$ is an eigenfunction of $H(\bA/b,\Pi)$ associated with $E(\bB/b,\Pi)$. 
\end{lemma}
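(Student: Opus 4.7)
The statement is purely a scaling identity, so the plan is to exploit two structural facts simultaneously: the dilation invariance of the cone $\Pi$ (which makes rescalings of any admissible function admissible again) and the linearity of $\bA$ (which allows the potential to absorb a factor under dilation). I would therefore introduce the change of variable $\by=\sqrt{b}\,\bx$ and define the unitary rescaling $U_b\colon L^2(\Pi)\to L^2(\Pi)$ by $(U_b\Psi)(\bx)=b^{3/4}\Psi(\sqrt{b}\,\bx)$ (the Jacobian factor ensures $L^2$-isometry; $\Pi$ being a cone guarantees $\sqrt{b}\,\Pi=\Pi$ so $U_b$ really maps $L^2(\Pi)$ into itself and, in view of \eqref{eq:dom}, preserves the Neumann domain).

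Next I would compute how the magnetic gradient transforms. Since $\bA$ is linear, $\bA(\by/\sqrt{b})=\bA(\by)/\sqrt{b}$, and by the chain rule $\nabla_\bx=\sqrt{b}\,\nabla_\by$. A short computation then gives
\begin{equation*}
  \bigl(-i\nabla_\bx+\bA(\bx)\bigr)\bigl[\Psi(\sqrt{b}\,\bx)\bigr]
  =\sqrt{b}\,\bigl[(-i\nabla_\by+\tfrac{1}{b}\bA(\by))\Psi\bigr](\sqrt{b}\,\bx),
\end{equation*}
and squaring yields the operator identity
\begin{equation*}
  U_b^{-1}\,\OP(\bA,\Pi)\,U_b = b\,\OP(\bA/b,\Pi),
\end{equation*}
valid on the Neumann domain. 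Because $\bA/b$ is the linear potential associated with $\bB/b$ (and $\curl$ commutes with the scaling), this is a unitary equivalence between the operator with magnetic field $\bB$ and $b$ times the operator with magnetic field $\bB/b$.

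From the unitary equivalence, the two spectra satisfy $\gS(\OP(\bA,\Pi))=b\,\gS(\OP(\bA/b,\Pi))$, whence taking infima gives exactly \eqref{eq:norm}. The same equivalence directly yields the stated correspondence between eigenfunctions: $\Psi(\sqrt{b}\,\bx)$ solves the eigenvalue problem for $\OP(\bA,\Pi)$ at energy $E(\bB,\Pi)$ if and only if $\Psi$ solves the one for $\OP(\bA/b,\Pi)$ at energy $E(\bB/b,\Pi)$.

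There is essentially no obstacle beyond bookkeeping; the only subtle point is to verify that $U_b$ does preserve the Neumann boundary condition and the form domain. Since $\Pi$ is a cone, its outward normal at $\sqrt{b}\,\bx$ coincides with the one at $\bx$, and the boundary condition $(-i\partial_n+\mathbf n\cdot\bA)\psi=0$ is preserved by the same linearity/chain-rule computation as above. This is exactly the content already packaged in the cited Lemma~\ref{lem.dilatation}, which can be invoked to shorten the argument if desired.
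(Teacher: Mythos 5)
Your proposal is correct and follows essentially the same route as the paper, which derives the identity from the dilation $\bx\mapsto\sqrt{b}\,\bx$ via its general change-of-variables/scaling lemmas (Lemma~\ref{L:chgvar} and Lemma~\ref{lem.dilatation}), using that a cone is invariant under this dilation and that a linear potential is homogeneous of degree one. Your explicit unitary conjugation $U_b^{-1}\OP(\bA,\Pi)U_b=b\,\OP(\bA/b,\Pi)$, with the correct $b^{3/4}$ normalization and the check of the Neumann condition, is just a spelled-out version of that same argument.
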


That is why we can reduce to consider {\em model problems} on cones $\Pi\in\ogP_3$ with unitary constant magnetic fields.

\subsection{Singular chains and generalized eigenvectors for model problems}
Let $\Pi\in\ogP_3$ be a polyhedral cone and $\bB$ be a unitary constant magnetic field associated with a linear 
potential $\bA$. Let $\Gamma\in\ogP_d$ be a minimal reduced cone associated with $\Pi$. We recall that this means that $\Pi\equiv\R^{3-d}\times\Gamma$ and that the dimension $d$ is minimal for such an equivalence. 

Let $\gC_\bfz(\Pi)$ denote the singular chains of $\Pi$ originating at its vertex $\bfz$ and let $\gC^*_\bfz(\Pi)$ be the subset of chains of length $\ge2$ (see \eqref{eq:gCx}). Note that $\gC^*_\bfz(\Pi)$ is empty if and only if $\Pi=\R^3$, i.e., if $d=0$. We introduce the energy along higher chains:

\begin{definition}[Energy along higher chains]\label{def.seE}
We define the quantity
\begin{equation}
\label{eq:s*}
 \seE(\bB \ee,\Pi):=
 \begin{cases}
  \inf_{\dx\in\gC^{*}_0(\Pi)}\En(\bB \ee,\Pi_{\dx})  & \mbox{if $d>0$,} \\
  +\infty  & \mbox{if $d=0$,} \\
\end{cases}
\end{equation}
which is the infimum of the ground energy of the magnetic Laplacian over all the singular chains of length $\geq2$.
\end{definition} 

If $d>0$, let $\Omega_0\in\ogD(\dS^{d-1})$ be the section of $\Gamma$. Since $\Pi$ is a polyhedral cone, we have (cf.\ Remark \ref{rem:length2})
\begin{equation}
\label{eq:s*b}
   \seE(\bB \ee,\Pi)=\inf_{\bx_1\in\overline\Omega_0} \En(\bB \ee,\Pi_{(\bfz,\bx_1)}) \ ,
\end{equation}
i.e., among all chains $\dx\in\gC^*_\bfz(\Pi)$, we can restrict to those of length $2$, $\dx=(\bfz,\bx_1)$.

Since the cone $\Pi$ is unbounded, it is relevant to define $\lambda_{\rm ess}(\bB,\Pi)$ as the bottom of the essential spectrum of the operator $H(\bA,\Pi)$. When $d\leq 2$, due to translation invariance we have $\En(\bB,\Pi)=\lambda_{\rm ess}(\bB,\Pi)$.
When $d=3$, the operator $H(\bA,\Pi)$ may have discrete spectrum. 

With the aim of constructing quasimodes for our original problem on $\Omega$, we need generalized eigenvectors for its tangent problems. For this we make use of the localized domain $\dom_{\,\loc} \left(\OP(\bA,\Pi)\right)$ of the model magnetic Laplacian $\OP(\bA,\Pi)$ as introduced in \eqref{D:domloc}.

\begin{definition}[Generalized eigenvector]
\label{def:geneig}
Let $\Pi\in\ogP_3$ be a polyhedral cone and $\bA$ a linear magnetic potential. We call {\em generalized eigenvector} for $\OP(\bA\ee,\Pi)$ a nonzero function $\Psi\in\dom_{\,\loc}(\OP(\bA\ee,\Pi))$ associated with a real number $\Lambda$, so that
\begin{equation}
\label{eq:geneig}
\begin{cases}
(-i\nabla+\bA)^2\Psi=\Lambda\Psi &\mbox{in } \Pi,\\
(-i\partial_n+\mathbf{n}\cdot\bA)\Psi=0 &\mbox{on } \partial\Pi.
\end{cases}
\end{equation}
\end{definition}

\begin{definition}[Admissible generalized eigenvector]
\label{D:Age}
Under the hypothesis of Definition \ref{def:geneig}, a generalized eigenvector $\Psi$ for $\OP(\bA\ee,\Pi)$ is said to be {\em admissible} if there exist a rotation $\udiffeo:\Pi\mapsto\R^{3-k}\times\Upsilon$ with $k \geq d$ and $\Upsilon\in\ogP_k$, and a system of coordinates $(\by,\bz)\in\R^{3-k}\times\R^k$ such that
\begin{equation}
\label{eq:age1}
   \Psi\circ \udiffeo^{-1} (\by,\bz) = \re^{\ri\ee\vartheta(\by,\bz)}\,\Phi(\bz)\, ,
\end{equation}
with some real polynomial function $\vartheta$ of degree $\le2$ and some exponentially decreasing function $\Phi$, namely there exist positive constants $c_\Psi$ and $C_\Psi$ such that
\begin{equation}
\label{eq:agmongeneig}
\|\re^{c_\Psi|\bz|}\Phi\|_{L^2(\Upsilon)}\leq C_\Psi \|\Phi \|_{L^2(\Upsilon)}  \, .
\end{equation}
We will denote by $\bx^\natural=(\by,\bz)\in\R^{3-k}\times\Upsilon$ the natural coordinates and by $\Psi^{\natural}=\Psi\circ \udiffeo^{-1}$ the natural form of $\Psi$. 
\end{definition}

\begin{remark}
\label{rem:geneig}
By Lemma \ref{L:chgvar} it is straightforward that in coordinates $\bx^\natural$ the magnetic potential and the magnetic field are transformed into
\[
   \bA^\natural = \rJ^\top \big(\bA\circ\udiffeo^{-1}\big)
   \quad\mbox{and}\quad
   \bB^\natural = \rJ^\top \big(\bB\circ\udiffeo^{-1}\big)
\]
where the orthogonal $3\times3$ matrix $\rJ$ is such that $\udiffeo^{-1}(\bx^\natural)=\rJ\bx^\natural$. Note also that $\udiffeo(\bx)=\rJ^\top\bx$. Therefore $\Psi$ is an admissible generalized eigenfunction of $H(\bA,\Pi)$ associated with the value $\Lambda$ if and only if $\Psi\circ \udiffeo^{-1}$ is a generalized eigenfunction of $H(\bA^{\natural},\R^{3-k}\times \Upsilon)$ associated with the same value $\Lambda$.
\end{remark}

\begin{lemma}
\label{lem:geneig}
If $\Psi$ is an admissible generalized eigenvector for $\OP(\bA\ee,\Pi)$ associated with $\Lambda$, for any other linear magnetic potential $\bA'$ such that $\curl\bA'=\curl\bA$, the operator $\OP(\bA',\Pi)$ possesses an admissible generalized eigenvector $\Psi'$ associated with the same value $\Lambda$.
\end{lemma}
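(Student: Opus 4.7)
The plan is to use the classical gauge transform argument, since two linear potentials $\bA$ and $\bA'$ with the same curl differ by the gradient of a scalar function. First I would observe that because $\bA$ and $\bA'$ are both linear (hence smooth) on the simply connected cone $\Pi$ and $\curl(\bA'-\bA)=0$, there exists a scalar potential $\varphi$ on $\Pi$ with $\nabla\varphi=\bA'-\bA$. Crucially, $\bA'-\bA$ is linear in $\bx$, so $\varphi$ can be chosen as a \emph{real polynomial of degree at most $2$} in the Cartesian coordinates of $\R^3$.

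Next I would set $\Psi':=\re^{-\ri\varphi}\Psi$ and verify, by the standard identity $(-\ri\nabla+\bA')\re^{-\ri\varphi}=\re^{-\ri\varphi}(-\ri\nabla+\bA'-\nabla\varphi)=\re^{-\ri\varphi}(-\ri\nabla+\bA)$, that $\Psi'\in\dom_{\,\loc}(\OP(\bA',\Pi))$ satisfies the eigenvalue equation \eqref{eq:geneig} with the same value $\Lambda$ and the same Neumann condition on $\partial\Pi$. This step is routine and uses only that the multiplication by $\re^{-\ri\varphi}$ (with $\varphi$ real) is a unitary pointwise operation preserving both the bulk equation and the boundary condition.

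The real content is to check that admissibility is preserved. Transferring to the natural coordinates $\bx^\natural=(\by,\bz)=\udiffeo(\bx)$ given by the rotation $\udiffeo$ associated with $\Psi$, I would write
\begin{equation*}
   \Psi'\circ\udiffeo^{-1}(\by,\bz) = \re^{-\ri\varphi(\udiffeo^{-1}(\by,\bz))}\, \re^{\ri\vartheta(\by,\bz)}\,\Phi(\bz)
   = \re^{\ri\vartheta'(\by,\bz)}\,\Phi(\bz),
\end{equation*}
with $\vartheta'(\by,\bz):=\vartheta(\by,\bz)-\varphi(\udiffeo^{-1}(\by,\bz))$. Since $\udiffeo^{-1}$ is linear and $\varphi$ is a real polynomial of degree $\le 2$, the composition $\varphi\circ\udiffeo^{-1}$ is again a real polynomial of degree $\le 2$, hence so is $\vartheta'$. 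The profile $\Phi$ is unchanged, so estimate \eqref{eq:agmongeneig} continues to hold with the same constants $c_\Psi$, $C_\Psi$. Keeping the same rotation $\udiffeo$ and the same subcone $\Upsilon\in\ogP_k$ yields an admissible decomposition of $\Psi'$ in the sense of Definition~\ref{D:Age}.

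I do not foresee a serious obstacle: the only subtlety is to notice that the phase factor $\varphi$ must remain polynomial of degree at most $2$ so that the admissibility shape \eqref{eq:age1} is preserved after the gauge change; this is exactly guaranteed by the linearity of $\bA$ and $\bA'$ and the linearity of $\udiffeo$. The verification of the Neumann condition after gauge transform is automatic because $\nabla\varphi$ is smooth and real.
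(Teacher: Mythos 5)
Your proposal is correct and follows essentially the same route as the paper: both identify the difference of the two linear potentials as the gradient of a real polynomial of degree at most $2$ and apply the gauge transform of Lemma~\ref{lem:gauge}, observing that the phase $\vartheta$ absorbs this polynomial (after composition with the linear map $\udiffeo^{-1}$) while $\Phi$ and its decay estimate are untouched. Your write-up is in fact slightly more explicit than the paper's about why admissibility is preserved.
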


\begin{proof}
If $\curl\bA=\curl\bA'$ and $\bA$, $\bA'$ are both linear, there exists a polynomial function $\phi$ of degree 2 such that $\bA=\bA'-\nabla\phi$. Using a change of gauge (Lemma~\ref{lem:gauge}), we find that $\Psi'$ defined as 
$$
   \Psi'(\bx)=\re^{\ri\phi(\bx)} \re^{\ri\vartheta(\by,\bz)}\,\Phi(\bz), \quad\bx\in\Pi,
$$ 
is an admissible generalized eigenvector for $\OP(\bA',\Pi)$.
\end{proof}

\subsection{Dichotomy for model problems}
The main result which we prove in this section is a dichotomy statement, as follows.

\begin{theorem}
\label{th:dicho}
Let $\Pi\in\ogP_3$ be a polyhedral cone and $\bB$ be a constant nonzero magnetic field.
Let $\bA$ be any associated linear magnetic potential.
Recall that $\En(\bB,\Pi)$ is the ground energy of $\OP(\bA,\Pi)$ and $\seE(\bB,\Pi)$ is the energy along higher chains introduced in \eqref{eq:s*b}. Then,
\begin{equation}
\label{eq:comp}
   \En(\bB,\Pi) \leq \seE(\bB,\Pi) \ 
\end{equation}
and we have the dichotomy:
\begin{enumerate}
\item[(i)] If $\En(\bB,\Pi)<\seE(\bB,\Pi)$, then $\OP(\bA,\Pi)$ admits an admissible generalized eigenvector associated with the value $\En(\bB,\Pi)$.\smallskip
\item[(ii)] If $\En(\bB,\Pi)=\seE(\bB,\Pi)$, then there exists a singular chain $\dx\in \gC^{*}_{\bfz}(\Pi)$ such that 
$$
   \En(\bB,\Pi_\dx) = \En(\bB,\Pi) \quad\mbox{and}\quad
   \En(\bB,\Pi_\dx)<\seE(\bB,\Pi_\dx).
$$
\end{enumerate}
\end{theorem}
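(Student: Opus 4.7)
The plan is to induct on the dimension $d=d_0(\bfz)$ of the minimal reduced cone $\Gamma$ of $\Pi$, simultaneously establishing \eqref{eq:comp} and the dichotomy. The base case $d=0$ reduces to $\Pi\equiv\R^3$, where $\seE(\bB,\Pi)=+\infty$ by convention, $\En(\bB,\Pi)=|\bB|$, and only case (i) applies: the Landau ground state provides an admissible generalized eigenvector of the required form \eqref{eq:age1} (with $k=2$, $\Upsilon\equiv\R^2$, $\vartheta\equiv 0$). For the inductive step I assume the theorem is available at all strictly lower reduced dimensions.

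For \eqref{eq:comp}, fix $\dx=(\bfz,\bx_1)\in\gC^*_\bfz(\Pi)$. By the induction hypothesis applied to $\Pi_\dx$, whose reduced dimension is strictly less than $d$, there is an admissible generalized eigenvector $\Psi_\dx=e^{i\vartheta_\dx}\Phi_\dx$ of $\OP(\bA_\dx,\Pi_\dx)$ at energy $\En(\bB,\Pi_\dx)$. I would truncate $\Psi_\dx$ to a bounded set and translate its support along the ray $R\bx_1$ with $R$ large, absorbing the quadratic discrepancy between $\bA$ and $\bA_\dx$ into an additional quadratic gauge phase (Lemma~\ref{lem:geneig}). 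The exponential decay of $\Phi_\dx$ ensures that the truncation error is small, so the Rayleigh quotient on $\Pi$ of the resulting trial function tends to $\En(\bB,\Pi_\dx)$ as the localization radius and $R$ tend to $\infty$; min-max then gives $\En(\bB,\Pi)\le\En(\bB,\Pi_\dx)$, and infimizing over $\dx$ yields \eqref{eq:comp}.

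In case (i), $\En(\bB,\Pi)<\seE(\bB,\Pi)$, I write $\Pi\equiv\R^{3-d}\times\Gamma$ after rotation, with coordinates $(\by,\bz)\in\R^{3-d}\times\Gamma$. Using Lemma~\ref{lem:geneig} I select a gauge compatible with partial Fourier transform in $\by$, which fibers $\OP(\bA,\Pi)$ into a family $\{\OP_\xi\}_{\xi\in\R^{3-d}}$ of magnetic Laplacians on $\Gamma$, all with the same magnetic field. One has $\En(\bB,\Pi)=\inf_\xi \inf\sigma(\OP_\xi)$. A Persson-type localization on $\Gamma$ identifies the essential spectrum bottom of each $\OP_\xi$ with an energy along higher chains on $\Gamma$, and matching these chains to length-two chains $(\bfz,\bx_1)$ on $\Pi$ via the fibering gives $\inf\sigma_{\mathrm{ess}}(\OP_\xi)\ge\seE(\bB,\Pi)$ uniformly in $\xi$. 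The hypothesis then forces the infimum to be attained at some $\xi_0$ by a true eigenvalue of $\OP_{\xi_0}$, with eigenfunction $\Phi\in L^2(\Gamma)$ which is exponentially decaying by Agmon estimates. The product $\Psi(\by,\bz)=e^{i\vartheta(\by,\bz)}\Phi(\bz)$, where $\vartheta$ combines the gauge phase with the linear term $\xi_0\cdot\by$, is admissible in the sense of Definition~\ref{D:Age} with $k=d$, $\Upsilon=\Gamma$.

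In case (ii), $\En(\bB,\Pi)=\seE(\bB,\Pi)$. The section $\overline{\Omega_0}$ of $\Gamma$ is compact and the map $\bx_1\mapsto\En(\bB,\Pi_{(\bfz,\bx_1)})$ is lower semi-continuous (Section~\ref{sec:cont}), so the infimum in \eqref{eq:s*b} is attained at some $\bx_1^\star$. Setting $\dx^{(1)}=(\bfz,\bx_1^\star)$, one has $\En(\bB,\Pi_{\dx^{(1)}})=\En(\bB,\Pi)$; applying the dichotomy inductively to $\Pi_{\dx^{(1)}}$, either (i) holds there and $\dx^{(1)}$ is the desired chain, or (ii) holds and we extend to $\dx^{(2)}\in\gC^*_\bfz(\Pi)$, and so on. Since the reduced dimension strictly decreases at each extension and $\seE\equiv+\infty$ once $\R^3$ is reached, the iteration terminates in at most $d$ steps. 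The hardest step is the essential-spectrum comparison $\inf\sigma_{\mathrm{ess}}(\OP_\xi)\ge\seE(\bB,\Pi)$ in case (i): it requires a Persson argument on $\Gamma$ together with a careful matching of tangent cones through the Fourier fibration, the delicate point being the magnetic coupling between $\xi$ and $\bz$ in the fiber operators, which must be shown not to lower the essential spectrum below $\seE(\bB,\Pi)$.
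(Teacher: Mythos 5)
Your overall architecture coincides with the paper's: an exhaustion over the reduced dimension $d$, partial Fourier transform in the translation-invariant variables, a Persson-type identification of the bottom of the essential spectrum with $\seE$, Agmon decay for the resulting eigenfunction, and a descent along singular chains in case (ii) that terminates because $d$ strictly decreases. The comparison \eqref{eq:comp} is also obtained in the paper by translating test functions far along a ray associated with the chain $\dx$ (with the gauge phase of Lemma~\ref{lem.transl}); note that you do not need the induction hypothesis there at all: since $\En(\bB,\Pi_\dx)$ is an infimum of Rayleigh quotients over compactly supported functions, an $\varepsilon$-near minimizer in $\sC^\infty_0(\overline{\Pi_\dx})$ does the job directly and avoids the issue that $\Pi_\dx$ might itself be in case (ii), where no admissible generalized eigenvector on $\Pi_\dx$ is available.

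The genuine gap is in case (i), at the step you yourself flag as the hardest. From the fiberwise bound $\inf\gS_{\mathrm{ess}}(\OP_\xi)\ge\seE(\bB,\Pi)$ and the hypothesis $\En(\bB,\Pi)<\seE(\bB,\Pi)$ you conclude that ``the infimum is attained at some $\xi_0$ by a true eigenvalue.'' This does not follow: the ground-state band $\xi\mapsto\inf\gS(\OP_\xi)$ could decrease strictly to $\En(\bB,\Pi)$ as $|\xi|\to\infty$ without ever attaining it, in which case no fiber carries an eigenfunction at the bottom energy and no generalized eigenvector of the required form exists. What is needed in addition is control of $\liminf_{|\xi|\to\infty}\inf\gS(\OP_\xi)$, showing it is at least $\seE(\bB,\Pi)$; this is a statement about the limits of the band function, not about the essential spectrum of any single fiber, and it is precisely the content of the results the paper imports: the de Gennes function $\mu(\tau)$ with $\mu(\tau)\to1$ as $\tau\to+\infty$ and $\mu(\tau)\to+\infty$ as $\tau\to-\infty$ for $d=1$ (Lemma~\ref{P:continuitesigma} and the references there), and \cite[Theorem 3.5]{Pop13} (Lemma~\ref{lem:pop}) for wedges, where the fiber operator $\widehat\OP_\tau(\uA,\cW_\alpha)$ on $\cS_\alpha$ mixes an electric potential $(\tau+b_2x_3-b_3x_2)^2$ with a residual magnetic term and the analysis of its behaviour as $\tau\to\pm\infty$ is genuinely delicate. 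Also note that your description of the fibers as ``magnetic Laplacians on $\Gamma$ with the same magnetic field'' is inaccurate for $d=1,2$: the Fourier parameter enters as an electric potential coupled to $\bz$, which is exactly why the large-$|\xi|$ behaviour is nontrivial. For $d=3$ there is no fibering and your argument correctly degenerates to the Persson computation of Proposition~\ref{prop:cone-ess}, which is the part of the theorem the paper actually proves from scratch.
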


\begin{remark}\label{rem:chaine}
In the case (ii), we note that by statement (i) applied to the cone $\Pi_\dx$, $\OP(\bA,\Pi_\dx)$ admits an admissible generalized eigenvector associated with the value $\En(\bB,\Pi)$.
\end{remark}

\paragraph{Outline of the proof of Theorem \ref{th:dicho}}
Owing to Lemma \ref{lem.scal} we may assume that $\bB:=\curl \bA$ is unitary. The proof relies on an exhaustion of cases, organized according increasing values of $d$, the dimension of the reduced cone of $\Pi$. First note that quantities $\En(\bB,\Pi)$ and $\seE(\bB,\Pi)$, like the definition of admissible generalized eigenvectors, are independent of a choice of Cartesian coordinates. Thus, for each value of $d$, ranging from $0$ to $3$, once $\Pi$ and a constant unitary magnetic field $\bB$ are chosen, we exhibit a system of Cartesian coordinates $\bx=(x_1,x_2,x_3)$ that allows the simplest possible description of the configuration $(\bB,\Pi)$. In these coordinates, the magnetic field can be viewed as a reference one and we denote it by $\uB$. We also choose a corresponding reference linear potential $\uA$, since we have gauge independence by virtue of Lemma \ref{lem:geneig}. Then relying on various results corresponding to each case, we prove that we are either in situation (i) and exhibit coordinates\footnote{As our system of Cartesian coordinates $\bx=(x_1,x_2,x_3)$ will be already chosen in some optimal way, coordinates $(\by,\bz)$ will simply be a splitting of $(x_1,x_2,x_3)$. But it is noticeable that the dimension $k$ of $\bz$ variables may be strictly larger than $d$.} $(\by,\bz)$ and admissible generalized eigenvectors, or in situation (ii). 

In each of Sections \ref{subs:R3}--\ref{subs:coin} we consider one value of $d$, from $0$ to $3$, which will achieve the proof of Theorem \ref{th:dicho}. In Section \ref{s:age}, we collect all possible structures of admissible generalized eigenvectors $\Psi$, organized according to increasing values of $k$, the number of directions in which $\Psi$ has exponential decay.

\subsection{Full space} $d=0$. \ 
\label{subs:R3}
$\Pi$ is the full space. We take coordinates $\bx=(x_1,x_2,x_3)$ so that
\[
   \Pi=\R^3 \quad\mbox{and}\quad \uB = (1,0,0),
\]
and choose as reference potential
\[
   \uA = (0,-\tfrac{x_{3}}{2},\tfrac{x_{2}}{2}) .
\]
Hence
\[
   \OP(\uA \ee,\Pi) = \OP(\uA \ee,\R^3) = 
   \rD_1^2 + (\rD_2-\tfrac{x_{3}}{2})^2  + (\rD_3 + \tfrac{x_{2}}{2})^2
   \quad\mbox{with}\quad \rD_{j}=-i\partial_{x_{j}}.
\]
It is classical (see \cite{LauLifIII}) that the spectrum of $ \OP(\uA \ee,\R^3)$ is $[1,+\infty)$. Therefore 
\begin{equation}
\label{E:spectrespace}
\En(\uB \ee,\R^3) = 1 \ .
\end{equation}
An admissible generalized eigenfunction associated to the ground energy is 
$$\Psi(\bx)= \re^{-(x_{2}^2+x_{3}^2)/4} \, , $$
which has the form \eqref{eq:age1} with $\by=x_{1}$, $\bz=(x_{2},x_{3})$, and $\vartheta\equiv0$ .

\subsection{Half space} $d=1$. \ \label{subs:R3+}
\label{SS:HS}
$\Pi$ is a half-space. We take coordinates $\bx=(x_1,x_2,x_3)$ so that
\[
   \Pi = \R^3_+ := \{(x_1,x_2,x_3)\in\R^3,\ x_{3}>0\} 
   \quad\mbox{and}\quad
   \uB=(0,b_{1},b_{2}) \ \mbox{with}\ b_1^2+b_2^2=1 \, ,
\]
and choose as reference potential
\[
   \uA = (b_{1}x_{3}-b_{2}x_{2},0,0) \, .
\]
Hence
\[
   \OP(\uA \ee,\Pi) = \OP(\uA \ee,\R^3_+) = 
   (\rD_1+b_{1}x_{3}-b_{2}x_{2})^2 + \rD_2^2  + \rD_3^2 .
\]
We note that 
\begin{equation}
\label{eq:En1}
  \seE(\uB \ee,\R^3_+) = \En(\uB,\R^3)=1.
\end{equation}
There exists $\theta\in[0,2\pi)$ such that $b_1=\cos\theta$ and $b_2=\sin\theta$, so that $\theta$ is the angle between the magnetic field and the boundary of $\R^3_{+}$. Due to symmetries we can reduce to $\theta\in[0,\frac{\pi}{2}]$.
Denote by $\cF_{1}$ the Fourier transform in $x_{1}$-variable, $\tau$ the Fourier variable associated with $x_{1}$, and
\[
   \widehat \OP_{\tau}(\uA \ee,\R^3_+) 
   :=  (\tau+b_{1}x_{3}-b_{2}x_{2})^2 + \rD_2^2  + \rD_3^2 , 
\]
acting on $L^2(\R\times \R_{+})$ with natural boundary condition. There holds 
$$
   \cF_{1} \ H(\uA,\R^3_{+})\ \cF_{1}^*=
   \int^{\bigoplus}_{\tau\in\R} \widehat \OP_{\tau}(\uA \ee,\R^3_+)\,\rd \tau . 
$$
We discriminate three cases:
\paragraph{Tangent field}
$\theta=0$, then $\widehat \OP_{\tau}(\uA\ee,\R^3_+) :=  \rD_{2}^2 + \rD_3^2 + (\tau + x_3)^2$, let $\xi$ be the partial Fourier variable associated with $x_{2}$ and define the new operators 
\[
   \widehat \OP_{\xi,\tau}(\uA\ee,\R^3_+) :=  \xi^2 + D_{3}^2 + (\tau + x_{3})^2,\quad
   \DG\tau = \rD_{3}^2 + (\tau + x_{3})^2\,,
\]
where $\DG\tau$ (sometimes called the de Gennes operator) acts on $L^2(\R_{+})$ with Neumann boundary condition. There holds
\[
   \inf\gS(\DG\tau) = \mu(\tau),\quad
   \inf\gS(\widehat \OP_{\tau,\xi}(\uA\ee,\R^3_{+})) = \mu(\tau)+\xi^2,
\]
in which the behavior of the first eigenvalue $\mu(\tau)$ is well-known (see \cite{DauHe93}): The function $\mu$ admits a unique minimum denoted by $\Theta_0\simeq0.59$ for the value $\tau=-\sqrt{\Theta_0}$.
Hence
\[
   \En(\uB \ee,\R^3_+) = \Theta_0 < \seE(\uB \ee,\R^3_+).
\]
We are in case (i) of Theorem \ref{th:dicho}. If $\Phi$ denotes an eigenvector of $\DG\tau$ associated with $\Theta_{0}$ (function of $x_{3}\in\R_+$), a corresponding admissible generalized eigenvector is
\begin{equation}
\label{eq:d1.1}
   \Psi(\bx) = \re^{-\ri\ee\sqrt{\Theta_0}\, x_{1}} \,\Phi(x_{3}).
\end{equation}
which has the form \eqref{eq:age1} with  $\by=(x_{1},x_{2})$, $\bz=x_{3}$, and $\vartheta(\by,\bz)=-x_1\sqrt{\Theta_{0}}$.

\paragraph{Normal field}
$\theta=\frac\pi2$, then $\widehat \OP_{\tau}(\uA,\R^3_+) :=  \rD_2^2 + \rD_3^2 + (\tau - x_2)^2$. There holds
for all $\tau\in \R$, $\inf\gS(\widehat \OP_{\tau}(\uA,\R^3_+))=1$ (see \cite[Theorem 3.1]{LuPan00}), hence
\[
   \En(\uB \ee,\R^3_+) = 1 = \seE(\uB \ee,\R^3_+).
\]
We are in case (ii) of Theorem \ref{th:dicho}. 
 The chain $\dx$ is given by $(\bfz,1)$ and $\Pi_{\dx}=\R^3$, for which $\En(\uB \ee,\R^3)<\seE(\uB \ee,\R^3)$.

\paragraph{Neither tangent nor normal}
$\theta\in(0,\frac\pi2)$. Then for any $\tau\in\R$, $\widehat \OP_{\tau}(\uA,\R^3_+) $ is isospectral to $\widehat \OP_{0}(\uA,\R^3_+) $ the ground energy of which is an eigenvalue $\sigma(\theta)<1$
(see \cite{HeMo02}). We deduce
\[
   \En(\uB,\R^3_+) = \sigma(\theta) 
   \quad \mbox{with}\quad\sigma(\theta)<1.
\]
We are in case (i) of Theorem \ref{th:dicho}. We recall:
\begin{lemma}
\label{P:continuitesigma}
The function $\theta\mapsto \sigma(\theta)$ is continuous and increasing on $(0,\frac{\pi}{2})$ (\cite{HeMo02,LuPan00}). Set $\sigma(0)=\Theta_0$ and $\sigma(\frac{\pi}{2})=1$. Then the function $\theta\mapsto \sigma(\theta)$ is of class $\sC^1$ on $[0,\frac{\pi}{2}]$ (\cite{BoDauPopRay12}).
\end{lemma}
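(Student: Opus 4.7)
The plan is to analyze the reduced two-dimensional operator obtained after partial Fourier transform in $x_1$, treating continuity on $(0,\frac\pi2)$, strict monotonicity, and the endpoint behavior separately. Since $\widehat\OP_\tau(\uA,\R^3_+)$ is isospectral in $\tau$ (translation in $x_2$ by $\tau/b_2$ absorbs the Fourier parameter), it is enough to study the one-parameter family
\begin{equation*}
   L(\theta) := \rD_2^2 + \rD_3^2 + (\cos\theta\, x_3 - \sin\theta\, x_2)^2
   \quad\text{on } L^2(\R\times\R_+),
\end{equation*}
with Neumann condition at $x_3=0$. For $\theta\in(0,\frac\pi2)$, the ground state energy $\sigma(\theta)$ is, by the cited results of \cite{HeMo02,LuPan00}, a simple eigenvalue strictly below $\inf\gSess(L(\theta))=1$, with an exponentially localized eigenfunction $u_\theta$. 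Since $q_\theta$ depends analytically on $\theta$ in the sense of Kato, analytic perturbation of isolated eigenvalues yields that $\theta\mapsto\sigma(\theta)$ is real-analytic, hence continuous, on $(0,\frac\pi2)$.

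For strict monotonicity I would use Feynman--Hellmann: with $u_\theta$ normalized,
\begin{equation*}
   \sigma'(\theta) = \langle u_\theta,\partial_\theta L(\theta)\ee u_\theta\rangle
   = -2\int_{\R\times\R_+} (\cos\theta\, x_3-\sin\theta\, x_2)(\sin\theta\, x_3+\cos\theta\, x_2)\,|u_\theta|^2\,\rd\bx.
\end{equation*}
The sign of this integrand is not manifest, so the key step is to use the eigenvalue equation $L(\theta)u_\theta=\sigma(\theta)u_\theta$ and the Neumann condition at $x_3=0$ to rewrite the above as the sum of a nonnegative bulk term and a boundary integral on $\R\times\{x_3=0\}$ of controlled sign. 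Exponential decay of $u_\theta$ in the direction of the effective vector potential, plus the geometric fact that tilting the field toward the normal increases the cost of satisfying Neumann, should deliver $\sigma'(\theta)>0$.

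Continuity at the endpoints is handled by matching quasimodes. As $\theta\to 0^+$, the candidate $\re^{-\ri\sqrt{\Theta_0}\,x_1}\,\Phi(x_3)\,\chi(x_2)$, with $\Phi$ the ground state of $\DG{\sqrt{\Theta_0}}$ and $\chi$ a slowly-varying cut-off, gives the upper bound $\sigma(\theta)\le\Theta_0+o(1)$; the matching lower bound uses the min-max comparison with the tangent-field quadratic form \eqref{eq:d1.1}. The limit $\sigma(\theta)\to1$ as $\theta\to\frac\pi2^-$ is obtained analogously, constructing a quasimode concentrated in a Landau level perpendicular to $\bB$. Finally, the $C^1$ regularity at the degenerate endpoints, where $\sigma(\theta)$ merges with the essential spectrum, is extracted from the refined two-term asymptotic expansions of $\sigma(\theta)-\Theta_0$ and $1-\sigma(\theta)$ established in \cite{BoDauPopRay12}, the one-sided derivatives being read off from the leading coefficients.

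The main obstacle is the strict monotonicity: the Feynman--Hellmann formula has no obvious sign and the Neumann boundary at $x_3=0$ prevents a direct rotation argument that would be available in the full space. Extracting positivity requires a careful integration by parts exploiting simultaneously the eigenequation, the boundary condition, and the exponential localization of $u_\theta$. The $C^1$ behavior at $\theta=0$ and $\theta=\frac\pi2$ is the second delicate point, since both endpoints correspond to spectral thresholds where standard perturbation theory breaks down and an \emph{ad hoc} asymptotic analysis of the eigenprojector is needed.
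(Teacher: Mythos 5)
The paper offers no proof of this lemma: it is a recalled result, with continuity and monotonicity on $(0,\frac\pi2)$ attributed to \cite{HeMo02,LuPan00} and the $\sC^1$ regularity on the closed interval to \cite{BoDauPopRay12}. You are therefore attempting to compress into a sketch results that occupy substantial portions of three papers, and your outline leaves precisely the hardest step unproved. That step is the monotonicity. Your Feynman--Hellmann formula for $\sigma'(\theta)$ is correct (granting differentiability), but, as you yourself concede, the integrand $-2(\cos\theta\,x_3-\sin\theta\,x_2)(\sin\theta\,x_3+\cos\theta\,x_2)$ has no manifest sign, and the announced rewriting into ``a nonnegative bulk term plus a boundary integral of controlled sign'' is never exhibited. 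The heuristic that ``tilting the field toward the normal increases the cost of satisfying Neumann'' is the statement to be proved, restated; as written, monotonicity is asserted, not established.

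Two further points need repair. First, the appeal to Kato's analytic perturbation theory is not legitimate as stated: the derivative of the potential grows quadratically in the direction $\sin\theta\,x_3+\cos\theta\,x_2$, which is \emph{not} controlled by the quadratic form of $L(\theta)$ (the form controls only the gradient and the orthogonal combination $\cos\theta\,x_3-\sin\theta\,x_2$), so $\{L(\theta)\}$ is not an analytic family of type (B) off the shelf. One must first establish Agmon-type exponential decay of $u_\theta$ in \emph{both} variables --- available because $\sigma(\theta)<1=\inf\gSess(L(\theta))$ --- and only then run the perturbation argument on the decaying eigenfunction; this is essentially what \cite{BoDauPopRay12} does. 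Second, your quasimode as $\theta\to0^+$ is written in the three-dimensional variables although the rest of your argument concerns the reduced operator on $\R\times\R_+$; after the fiber decomposition the oscillation $\re^{-\ri\sqrt{\Theta_0}x_1}$ has no counterpart, and a product $\Phi(x_3)\chi(x_2)$ with $\chi$ centered near the origin has Rayleigh quotient close to $1$, not $\Theta_0$. To capture the energy $\Theta_0$ the cut-off must be centered at $x_2\approx\sqrt{\Theta_0}/\sin\theta$, so that $(\cos\theta\,x_3-\sin\theta\,x_2)^2$ is close to $(x_3-\sqrt{\Theta_0})^2$ on its support. Deferring the $\sC^1$ behavior at the endpoints to \cite{BoDauPopRay12} is consistent with the paper, but the continuity, and above all the monotonicity, on the open interval remain unproved in your proposal.
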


The first eigenvalue of $\widehat \OP_{0}(\uA,\R^3_+) $ is associated with an exponentially decreasing eigenvector $\Phi$ which is a function of $(x_{2},x_{3})\in\R\times\R_+$. An admissible generalized eigenvector for $\OP(\uA,\R^3_{+})$ is given by
\begin{equation}
\label{eq:d1.3}
   \Psi(\bx) = \Phi(x_{2},x_{3}),
\end{equation}
which has the form \eqref{eq:age1} with $\by=x_{1}$, $\bz=(x_{2},x_{3})$, and $\vartheta\equiv0$.

Thus Theorem \ref{th:dicho} is proved for half-spaces.

\subsection{Wedges} $d=2$. \ \label{subs:diedre}
$\Pi$ is a wedge and let $\alpha\in(0,\pi)\cup(\pi,2\pi)$ denote its opening. 
Let us introduce the model sector $\cS_{\alpha}$ and the model wedge $\cW_{\alpha}$
\begin{equation}
\label{eq:Wa}
   \cS_{\alpha} = 
   \begin{cases}
   \{x=(x_2,x_3),\ x_2\tan\tfrac\alpha2>|x_3|\big\}   & \mbox{if $\alpha\in(0,\pi)$} \\
   \{x=(x_2,x_3),\ x_2\tan\tfrac\alpha2>-|x_3|\big\}  & \mbox{if $\alpha\in(\pi,2\pi)$}
\end{cases} 
   \quad\mbox{and}\quad
   \cW_\alpha = \R\times\cS_\alpha \,.
\end{equation}
We take coordinates $\bx=(x_1,x_2,x_3)$ so that
\[
   \Pi=\cW_\alpha \quad\mbox{and}\quad
   \uB=(b_{1},b_{2},b_{3}) \ \mbox{with}\ b_{1}^2+b_2^2+b_3^2=1 \, ,
\]
and choose as reference potential
\[
   \uA = (b_{2}x_{3}-b_{3}x_{2},0,b_{1}x_{2})\,.
\]
Hence
\[
   \OP(\uA \ee,\Pi) =
   \OP(\uA \ee,\cW_\alpha) = (\rD_1+b_{2}x_{3}-b_{3}x_{2})^2 + \rD_2 ^2 + (\rD_3 + b_1x_2 )^2.
\]
Denote by $\tau$ the Fourier variable associated with $x_1$, and
\begin{equation}
\label{D:Hhatsector}
   \widehat \OP_{\tau}(\uA \ee,\cW_\alpha) 
   :=   (\tau+b_{2}x_{3}-b_{3}x_{2})^2 + \rD_2 ^2 + (\rD_3 + b_1x_2 )^2
\end{equation}
acting on $L^2(\cS_{\alpha})$ with natural Neumann boundary condition. We introduce the notation:
$$
   s(\uB,\cS_{\alpha};\tau) := \inf\gS( \widehat \OP_{\tau}(\uA \ee,\cW_\alpha) ),
$$
so that we have the direct Fourier integral decomposition
$$
   \cF_{1}\ H(\uA,\cW_{\alpha})\ \cF_{1}^*=
   \int^{\bigoplus}_{\tau\in\R} \widehat \OP_{\tau}(\uA \ee,\cW_\alpha) \,\rd \tau 
$$
and the relation
\begin{equation}
\label{E:relslambda}
 \En(\uB \ee,\cW_{\alpha}) =\inf_{\tau\in\R} s(\uB,\cS_{\alpha};\tau) \,.
\end{equation}
The singular chains of $\gC^*_\bfz(\cW_\alpha)$ have three equivalence classes, cf.\ Definition \ref{def:chaineq} and Section \ref{sec:3.3}~(3), corresponding to three distinct model operators, associated to half-spaces $\Pi^\pm_\alpha$ corresponding to the faces $\partial^\pm\cW_\alpha$ of $\cW_\alpha$, and to the full space $\R^3$. Thus
\[
   \seE(\uB,\cW_\alpha) = \min\{ \En(\uB,\Pi^+_\alpha), \,\En(\uB,\Pi^-_\alpha), \,
   \En(\uB,\R^3) \}.
\]
Let $\theta^{\pm}\in[0,\frac{\pi}{2}]$ be the angle between $\uB$ and the face $\partial\Pi^\pm_{\alpha}$. We have, cf.\ Lemma \ref{P:continuitesigma},
\begin{equation}
\label{eq:s*Da}
   \seE(\uB,\cW_\alpha) = \min\{ \sigma(\theta^+), \sigma(\theta^-), \, 1\} =
   \sigma(\min\{\theta^+, \,\theta^-\}).
\end{equation}
When $\Pi=\cW_\alpha$, Theorem \ref{th:dicho} relies on the following result \cite[Theorem 3.5]{Pop13}:

\begin{lemma}
\label{lem:pop}
We have $\En(\uB,\cW_{\alpha})\leq\seE(\uB,\cW_{\alpha})$.\\ 
Moreover, if $\En(\uB,\cW_{\alpha})<\seE(\uB,\cW_{\alpha})$, then the function $\tau\mapsto s(\uB,\cS_{\alpha};\tau)$ reaches its infimum. Let $\tau^{*}$ be a minimizer. Then $\En(\uB,\cW_{\alpha})$ is a discrete eigenvalue for the operator $\widehat \OP_{\tau^{*}}(\uA \ee,\cW_\alpha) $ and the associated eigenfunctions have exponential decay.
\end{lemma}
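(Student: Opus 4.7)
The plan is to reduce both parts to a Persson-style analysis of the essential spectrum combined with the fiber decomposition in the $x_1$-variable. First, I would establish the inequality $\En(\uB,\cW_\alpha)\le \seE(\uB,\cW_\alpha)$ by showing the stronger statement $\lambdaess(\uB,\cW_\alpha)\le \seE(\uB,\cW_\alpha)$. To each of the three terms appearing in \eqref{eq:s*Da} I would associate a singular Weyl sequence for $\OP(\uA,\cW_\alpha)$: for $\sigma(\theta^\pm)=\En(\uB,\Pi^\pm_\alpha)$ I take the admissible generalized eigenvector constructed in Section~\ref{subs:R3+} for the tangent half-space $\Pi^\pm_\alpha$, cut it off with a smooth bump supported far from the edge but along the face $\partial^\pm\cW_\alpha$, and translate it to infinity along the edge direction. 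A suitable gauge transformation (Lemma~\ref{lem:geneig}) adjusts $\uA$ to match the half-space potential on the support, and the exponential decay \eqref{eq:agmongeneig} of the transverse part $\Phi$ controls the error created by the cutoff. The analogous construction starting from the $\R^3$ eigenvector of Section~\ref{subs:R3}, translated into the interior of the wedge, gives a Weyl sequence at energy $1=\En(\uB,\R^3)$. Taking the infimum yields $\lambdaess\le\seE$.

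For the second assertion, assume $\En(\uB,\cW_\alpha)<\seE(\uB,\cW_\alpha)$. Then $\En<\lambdaess$, so $\En$ is a discrete eigenvalue. Thanks to the fiber decomposition, $\En=\inf_{\tau}s(\uB,\cS_\alpha;\tau)$; to show this infimum is attained I would establish the asymptotic bound
\[
   \liminf_{|\tau|\to\infty}\,s(\uB,\cS_\alpha;\tau)\ \ge\ \seE(\uB,\cW_\alpha).
\]
The mechanism is that for large $|\tau|$ the effective potential $(\tau+b_2 x_3-b_3 x_2)^2$ inside $\widehat\OP_\tau(\uA,\cW_\alpha)$ blows up outside a thin strip along the line $b_2 x_3-b_3 x_2=-\tau$, a strip that exits every ball of $\cS_\alpha$ when $(b_2,b_3)\neq(0,0)$. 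Low-energy eigenfunctions of $\widehat\OP_\tau$ are thus forced to localize near infinity in $\cS_\alpha$; near infinity $\cS_\alpha$ looks either like a half-plane tangent to one of the two sides (recovering the reduced half-space model on $\Pi^\pm_\alpha$) or like the full plane (recovering the $\R^3$ model), so the spectral value is bounded below by the corresponding ground energies already appearing in $\seE$. The degenerate case $b_2=b_3=0$ is handled directly since the fiber then reads $\tau^2+\rD_2^2+(\rD_3+b_1 x_2)^2$ with an explicit $\tau$-dependence. Combined with the continuity of $\tau\mapsto s(\uB,\cS_\alpha;\tau)$ (standard analytic perturbation theory for the type-(A) family $\widehat\OP_\tau(\uA,\cW_\alpha)$), the strict inequality $\En<\seE$ forces any minimizing sequence to remain bounded, hence to possess an accumulation point $\tau^{*}$ at which $s(\uB,\cS_\alpha;\tau^{*})=\En(\uB,\cW_\alpha)$.

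At this minimizer, the same $\liminf$ analysis applied fiber-wise gives $\seE\le\lambdaess(\widehat\OP_{\tau^{*}}(\uA,\cW_\alpha))$, so the equality $s(\uB,\cS_\alpha;\tau^{*})=\En<\seE$ places $s(\uB,\cS_\alpha;\tau^{*})$ strictly below the essential spectrum of the fiber, which therefore makes it a discrete eigenvalue of $\widehat\OP_{\tau^{*}}(\uA,\cW_\alpha)$. The exponential decay of the corresponding eigenfunction follows from a classical Agmon estimate on the sector $\cS_\alpha$, the positivity of the Agmon distance being ensured by the spectral gap $\seE-\En>0$.

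I expect the principal difficulty to lie in the $\liminf$ statement, since it requires a careful Persson-type description of the 2D fiber operator $\widehat\OP_\tau(\uA,\cW_\alpha)$ on the unbounded sector $\cS_\alpha$ in the presence of an unbounded electric-type potential depending on the parameter $\tau$; identifying correctly the three possible limiting configurations (behavior at infinity along each of the two sides and in the interior of $\cS_\alpha$) and matching the resulting reduced ground energies with the three intrinsic terms in $\seE$ is the technical core of the proof, and is where the 2D sector analysis of \cite{Bo05,BonDau06} enters the game.
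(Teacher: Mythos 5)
First, a point of calibration: the paper does not prove Lemma~\ref{lem:pop} at all --- it is imported verbatim from \cite[Theorem 3.5]{Pop13}, so your text is a reconstruction of the proof of that external result rather than an alternative to an argument present here. Your reconstruction follows the same route as the reference: compactly supported test functions modelled on the generalized eigenvectors of the tangent half-spaces $\Pi^\pm_\alpha$ and of $\R^3$, pushed to infinity after a gauge/translation adjustment, give $\En(\uB,\cW_\alpha)\le\seE(\uB,\cW_\alpha)$; an IMS--Persson analysis of the two-dimensional fiber $\widehat\OP_{\tau}(\uA\ee,\cW_\alpha)$ on $\cS_\alpha$ (partition into a vertex region, two face regions and an interior region, comparison with the fibers of the half-space and full-space models) gives both $\liminf_{|\tau|\to\infty}s(\uB,\cS_\alpha;\tau)\ge\seE(\uB,\cW_\alpha)$ and $\lambdaess\big(\widehat\OP_{\tau^*}(\uA\ee,\cW_\alpha)\big)\ge\seE(\uB,\cW_\alpha)$; continuity of the band function then produces a minimizer $\tau^*$, the strict gap places $s(\uB,\cS_\alpha;\tau^*)=\En(\uB,\cW_\alpha)$ below the fiber's essential spectrum, and Agmon estimates give the decay. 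You correctly identify the technical core (the behaviour at infinity of the fiber on the unbounded sector, including the degenerate case $b_2=b_3=0$, which falls back on the 2D sector results of \cite{Bo05}). This is sound.

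One sentence, however, is wrong and should be deleted: ``Then $\En<\lambdaess$, so $\En$ is a discrete eigenvalue.'' Read for the three-dimensional operator $\OP(\uA,\cW_\alpha)$ this is false: by translation invariance along the edge the wedge operator has purely essential spectrum and $\En(\uB,\cW_\alpha)=\lambdaess(\uB,\cW_\alpha)$ always, as the paper itself notes in Section~\ref{sec:tax} --- which is exactly why the lemma claims discreteness only for the fiber operator $\widehat\OP_{\tau^*}(\uA\ee,\cW_\alpha)$, never for $\OP(\uA,\cW_\alpha)$. The error is not load-bearing, since you re-derive discreteness correctly at the fiber level afterwards, but it reflects a confusion worth fixing. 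Two smaller imprecisions: in the Weyl-sequence construction the supports must also recede from the edge (not merely slide along it) if the Rayleigh quotients are to converge to $\En(\uB,\Pi^\pm_\alpha)$ exactly rather than to $\En(\uB,\Pi^\pm_\alpha)+O(R^{-2})$ at fixed distance $R$ --- a diagonal choice of parameters repairs this; and the family $\tau\mapsto\widehat\OP_\tau(\uA\ee,\cW_\alpha)$ is more safely treated through its quadratic forms (the perturbation $2\tau(b_2x_3-b_3x_2)$ is only relatively form-bounded), though the continuity of $s(\uB,\cS_\alpha;\cdot)$ you need follows directly from the min--max principle in any case.
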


From the previous lemma we deduce 
\begin{itemize}
\item[(i)] 
If $\En(\uB,\cW_{\alpha})<\seE(\uB,\cW_{\alpha})$, there exists $\tau^{*}$ such that the operator $\widehat \OP_{\tau^{*}}(\uA,\cW_{\alpha})$ admits an exponential decaying eigenfunction $\Phi$ of $(x_{2},x_{3})\in\cS_\alpha$ associated with $\En(\uB,\cW_{\alpha})$. The function 
\begin{equation*}
   \Psi(\bx) = \re^{\ri\tau^*x_{1}}\Phi(x_{2},x_{3})
\end{equation*}
is an admissible generalized eigenvector for the operator $\OP(\uA,\cW_{\alpha})$ associated with $\En(\uB,\cW_{\alpha})$. It has the form \eqref{eq:age1} with $\by=x_{1}$, $\bz=(x_{2},x_{3})$, and $\vartheta(\by,\bz)=\tau^*\by$. 
\smallskip
\item[(ii)] 
If $\En(\uB,\cW_{\alpha})=\seE(\uB,\cW_{\alpha})$, let $\circ\in \{-,+\}$ satisfy $\theta^{\circ}=\min(\theta^{-},\theta^{+})$ and $\Pi_{\alpha}^\circ$ be the corresponding face.  We have $\theta^{\circ}\in[0,\frac{\pi}{2})$ and $\seE(\uB,\cW_{\alpha})=\sigma(\theta^{\circ})$. 
We deduce from Section \ref{SS:HS} that $\seE(\uB,\Pi_{\alpha}^\circ)=1$, hence
$\En(\uB,\cW_{\alpha})=\En(\uB,\Pi_{\alpha}^\circ)<\seE(\uB,\Pi_{\alpha}^\circ)$.
\end{itemize}

Thus Theorem \ref{th:dicho} is proved for wedges. We extend the definition of $\cW_{\alpha}$ to $\alpha=\pi$ by setting $\cW_{\pi}:=\R_{+}^3$. Let us quote now the continuity result of \cite[Theorem 4.5]{Pop13}:
\begin{lemma}
\label{lem:contwedge}
The function $(\bB,\alpha)\mapsto \En(\bB,\cW_{\alpha})$ is continuous on $\dS^2\times (0,2\pi)$. 
\end{lemma}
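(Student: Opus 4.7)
The strategy is to reduce the question to continuity of a parametrized family of operators on a fixed domain, and then to handle the infimum over the Fourier variable $\tau$ by splitting into two cases according to the dichotomy of Lemma~\ref{lem:pop}. Concretely, I would fix a reference opening $\alpha_0\in(0,2\pi)$ and choose a smooth family of diffeomorphisms $\diffeoT_\alpha:\cS_{\alpha_0}\to\cS_\alpha$ (e.g.\ dilation of the angular variable) depending continuously on $\alpha$. Pulling back the fibered operator $\widehat\OP_\tau(\uA,\cW_\alpha)$ defined in~\eqref{D:Hhatsector} through $\diffeoT_\alpha$ yields a family of quadratic forms $q^{(\bB,\alpha,\tau)}$ on the fixed form domain $H^1(\cS_{\alpha_0})$ (in the sense of~\eqref{D:fqG}, with a metric $\rG_\alpha$ and potential depending continuously on $(\bB,\alpha)$), and the identity $\En(\bB,\cW_\alpha)=\inf_\tau s(\bB,\cS_\alpha;\tau)$ from~\eqref{E:relslambda} reduces the problem to the continuity of this infimum.

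Upper semi-continuity is routine: given $(\bB_0,\alpha_0)$ and $\epsilon>0$, pick a near-minimizer $\tau^*\in\R$ and $\phi\in\sC_c^\infty(\cS_{\alpha_0})$ with Rayleigh quotient for $q^{(\bB_0,\alpha_0,\tau^*)}$ below $\En(\bB_0,\cW_{\alpha_0})+\epsilon$. Since $\phi$ has compact support, the coefficients of $q^{(\bB,\alpha,\tau^*)}$ converge uniformly on $\supp\phi$ as $(\bB,\alpha)\to(\bB_0,\alpha_0)$, and the min-max principle gives $\En(\bB,\cW_\alpha)\le\En(\bB_0,\cW_{\alpha_0})+2\epsilon$ in a neighborhood.

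Lower semi-continuity is obtained through the dichotomy. If $\En(\bB_0,\cW_{\alpha_0})=\seE(\bB_0,\cW_{\alpha_0})$, one uses the explicit formula \eqref{eq:s*Da}: since the angles $\theta^\pm$ depend continuously on $(\bB,\alpha)$ and $\sigma$ is continuous on $[0,\tfrac{\pi}{2}]$ by Lemma~\ref{P:continuitesigma}, $\seE(\bB,\cW_\alpha)$ is itself continuous at $(\bB_0,\alpha_0)$; combined with $\En\le\seE$ from Lemma~\ref{lem:pop} and the upper semi-continuity just established, one gets $\liminf\En(\bB,\cW_\alpha)\ge\liminf\seE(\bB,\cW_\alpha)-\limsup(\seE-\En)\ge\En(\bB_0,\cW_{\alpha_0})$. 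If instead $\En(\bB_0,\cW_{\alpha_0})<\seE(\bB_0,\cW_{\alpha_0})$, the same continuity of $\seE$ together with upper semi-continuity of $\En$ preserves the strict inequality in a neighborhood, so Lemma~\ref{lem:pop} applies at every nearby $(\bB,\alpha)$: the infimum over $\tau$ is a discrete eigenvalue of $\widehat\OP_{\tau(\bB,\alpha)}$ below the essential spectrum, with exponentially decaying eigenfunction. Standard perturbation theory for isolated eigenvalues of the form family $q^{(\bB,\alpha,\tau)}$ then yields continuity of this discrete eigenvalue.

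The delicate point, and the reason we rely on the references~\cite{Pop13}, is to control the infimum over the unbounded variable $\tau$ in the case $\En<\seE$: one must show that the minimizing $\tau$ stays in a bounded set under small variations of $(\bB,\alpha)$, so that continuity of individual fiber eigenvalues upgrades to continuity of the infimum. This follows from Agmon-type estimates giving uniform exponential localization of the minimizing eigenfunctions away from the threshold $\seE$, together with a compactness argument ruling out escape of $\tau$ to infinity.
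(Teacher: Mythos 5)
The paper offers no proof of this lemma at all: it is quoted verbatim from \cite[Theorem 4.5]{Pop13}. Your outline must therefore stand on its own, and it does not, because the lower semi-continuity argument in the threshold case is circular.

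Concretely, when $\En(\bB_0,\cW_{\alpha_0})=\seE(\bB_0,\cW_{\alpha_0})$ you write
\[
\liminf\En(\bB,\cW_\alpha)\ \ge\ \liminf\seE(\bB,\cW_\alpha)-\limsup\big(\seE(\bB,\cW_\alpha)-\En(\bB,\cW_\alpha)\big)\ \ge\ \En(\bB_0,\cW_{\alpha_0}).
\]
The second inequality requires $\limsup(\seE-\En)\le 0$. But $\seE-\En\ge 0$ everywhere by Lemma \ref{lem:pop}, so $\limsup(\seE-\En)\ge 0$, and the assertion that it vanishes is exactly the statement $\En(\bB,\cW_\alpha)\to\seE(\bB_0,\cW_{\alpha_0})=\En(\bB_0,\cW_{\alpha_0})$, i.e.\ the continuity you are trying to prove. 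The three ingredients you invoke --- $\En\le\seE$, continuity of $\seE$ (which is indeed available from \eqref{eq:s*Da} and Lemma \ref{P:continuitesigma}), and upper semi-continuity of $\En$ --- cannot yield lower semi-continuity: abstractly, a function $f\le g$ with $g$ continuous, $f$ upper semi-continuous and $f=g$ at a point may still jump down arbitrarily near that point. At such a threshold point one needs a genuine spectral lower bound: take normalized quasi-minimizers for $\En(\bB_n,\cW_{\alpha_n})$, pull them back to a fixed wedge, and split by an IMS partition of the section --- mass escaping to infinity has energy bounded below by quantities converging to $\seE(\bB_0,\cW_{\alpha_0})$, while mass staying in a compact region yields in the limit a generalized eigenfunction of the limiting operator. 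This is the Persson-type mechanism of Proposition \ref{prop:cone-ess}, which your outline never deploys in the one case where it is indispensable.

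The rest of the outline is sound: upper semi-continuity by transporting a compactly supported quasi-minimizer is correct, and in the case $\En<\seE$ the persistence of the spectral gap plus perturbation of the discrete fiber eigenvalue is the right mechanism. But note that even there the boundedness of the minimizing $\tau$ and the uniform exponential decay of the fiber eigenfunctions, which you defer to \cite{Pop13}, carry essentially all of the analytic content (they are what \cite[Proposition 4.2]{Pop13} and Lemma \ref{L:Elip} encapsulate), so that case too is ultimately resting on the very reference the paper cites for the whole lemma.
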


We end this paragraph by a few examples.

\begin{example}
Let $\bB\in \dS^2$ be a constant magnetic field. Let $\alpha$ be chosen in $(0,\pi)\cup(\pi,2\pi)$.

a) For $\alpha$ small enough $\En(\bB,\cW_{\alpha})<\seE(\bB,\cW_{\alpha})$ (see \cite{Pop13} when the magnetic field is not tangent to the plane of symmetry of the wedge and \cite[Ch. 7]{PoTh} otherwise). 

b) Let $\bB=(0,0,1)$ be tangent to the edge. Then $\seE(\bB,\cW_{\alpha})=\Theta_{0}$ and $\En(\bB,\cW_{\alpha})=\En(1 \ee,\cS_{\alpha})$, cf.\ Section \ref{sec:2.1.2}. According to whether the ground energy $\En(1 \ee,\cS_{\alpha})$ of the plane sector $\cS_{\alpha}$ is less than $\Theta_0$ or equal to $\Theta_0$, we are in case (i) or (ii) of the dichotomy.

c) Let $\bB$ be tangent to a face of the wedge and normal to the edge. Then $\seE(\bB,\cW_{\alpha})=\Theta_{0}$. It is proved in \cite{Pof13T} that there holds $\En(\bB,\cW_{\alpha})<\Theta_{0}$ for $\alpha$ small enough, whereas $\En(\bB,\cW_{\alpha})=\Theta_{0}$ for $\alpha\in[\frac{\pi}{2},\pi)$. 
\end{example}

As a direct consequence of the whole description performed in the previous Sections \ref{subs:R3}-\ref{subs:diedre}, we obtain the following continuity statements.

\begin{theorem}\label{th:contEBd<3}
Let $\Pi$ be a cone in $\ogP_{3}$ with $d<3$ (i.e.\ $\Pi$ is the full space, a half-space or a wedge). 
Then the function $\bB\mapsto \En(\bB,\Pi)$ is continuous on $\dS^2$.
\end{theorem}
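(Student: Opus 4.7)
The plan is to handle separately the three possible values $d = 0, 1, 2$, in each case invoking the explicit description of $\En(\bB,\Pi)$ obtained in the preceding subsections. Throughout, I use the fact that if $\udiffeo\in\gO_3$ and $\udiffeo\Pi=\Pi'$, then $\En(\bB,\Pi) = \En(\udiffeo^{\top}\bB,\Pi')$, so that continuity of $\bB\mapsto \En(\bB,\Pi)$ is equivalent to continuity of the corresponding map for any fixed cone in standard position.

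For $d=0$, the cone $\Pi$ is equivalent to $\R^3$, and \eqref{E:spectrespace} together with Lemma~\ref{lem.scal} gives $\En(\bB,\Pi) = |\bB| = 1$ for every $\bB\in\dS^2$; a constant function is trivially continuous.

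For $d=1$, the cone $\Pi$ is a half-space and, after a fixed rotation, we may assume $\Pi = \R^3_+$. The analysis of Section~\ref{subs:R3+} shows that $\En(\bB,\R^3_+) = \sigma(\theta(\bB))$, where $\theta(\bB)\in[0,\tfrac{\pi}{2}]$ is the unoriented angle between $\bB$ and $\partial\R^3_+$, i.e.\ $\sin\theta(\bB) = |\bB\cdot\mathbf{n}|$ for $\mathbf{n}$ the unit normal, and $\sigma$ is extended by $\sigma(0) = \Theta_0$, $\sigma(\tfrac{\pi}{2}) = 1$. The map $\bB\mapsto\theta(\bB)$ is clearly continuous on $\dS^2$, and $\sigma$ is continuous on $[0,\tfrac{\pi}{2}]$ by Lemma~\ref{P:continuitesigma}; hence the composition is continuous.

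For $d=2$, the cone $\Pi$ is a wedge, and after a fixed rotation it coincides with $\cW_\alpha$ for some $\alpha\in(0,\pi)\cup(\pi,2\pi)$. The conclusion then follows immediately from Lemma~\ref{lem:contwedge}, which asserts the joint continuity of $(\bB,\alpha)\mapsto\En(\bB,\cW_\alpha)$ on $\dS^2\times(0,2\pi)$; in particular, for our fixed $\alpha$, the partial map $\bB\mapsto\En(\bB,\cW_\alpha)$ is continuous. The proof is essentially a bookkeeping exercise, all the real analytic content being already encoded in Lemmas~\ref{P:continuitesigma} and~\ref{lem:contwedge}; the only nontrivial point (the continuity of $\sigma$ up to the endpoints $0$ and $\tfrac\pi2$, where the eigenvalue of $\widehat\OP_\tau(\uA,\R^3_+)$ meets the bottom of the essential spectrum) is exactly what Lemma~\ref{P:continuitesigma} provides, and is the reason why no additional argument is required here.
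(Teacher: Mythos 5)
Your proof is correct and follows essentially the same route as the paper, which presents Theorem~\ref{th:contEBd<3} as a direct consequence of the case-by-case descriptions in Sections~\ref{subs:R3}--\ref{subs:diedre}: constancy for $d=0$, composition of the continuous angle map with $\sigma$ (Lemma~\ref{P:continuitesigma}) for $d=1$, and Lemma~\ref{lem:contwedge} for $d=2$. The only cosmetic remark is that with the paper's convention the transported field under $\udiffeo\Pi=\Pi'$ is $\udiffeo\bB$ rather than $\udiffeo^{\top}\bB$ (cf.\ \eqref{E:utiliseunetransfo}), which of course does not affect the continuity argument.
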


\begin{corollary}\label{cor:contE*B}
Let $\Pi$ be a cone in $\ogP_{3}$. 
The function $\bB\mapsto \seE(\bB,\Pi)$ is continuous on $\dS^2$.
\end{corollary}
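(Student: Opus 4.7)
The plan is to express $\seE(\bB,\Pi)$ as a finite minimum of functions each continuous in $\bB$, and then appeal to Theorem~\ref{th:contEBd<3}. The trivial case $d=0$ (so $\Pi=\R^3$) is immediate since $\seE(\bB,\R^3)\equiv+\infty$ by definition, which is constant on $\dS^2$.

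Assume $d\ge 1$. Because $\Pi$ is polyhedral, Remark~\ref{rem:length2} applies and every chain in $\gC^*_\bfz(\Pi)$ is equivalent to a chain of length~$2$, so by \eqref{eq:s*b},
\[
   \seE(\bB,\Pi)=\inf_{\bx_1\in\overline\Omega_0}\En(\bB,\Pi_{(\bfz,\bx_1)}).
\]
The enumeration of equivalence classes performed in Section~\ref{sec:3.3} shows that the cone $\Pi_{(\bfz,\bx_1)}$ depends only on the stratum of $\overline\Omega_0$ containing $\bx_1$, and that these strata are finite in number (at most $2N+1$ classes when $\Omega_0\subset\dS^2$ is a polygonal domain with $N$ sides, with smaller counts for $d=1,2$). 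Fixing one representative $\dx_j$, $j=1,\ldots,M$, per equivalence class, and using that $\En(\bB,\Pi_\dx)$ is an equivalence-class invariant of $\dx$ by Definition~\ref{def:chaineq}, we obtain
\[
   \seE(\bB,\Pi)=\min_{1\le j\le M}\En(\bB,\Pi_{\dx_j}).
\]

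For each such $j$, the recursive construction \eqref{eq:PiX} forces the reduced dimension of $\Pi_{\dx_j}$ to be strictly less than $d\le 3$. Hence $\Pi_{\dx_j}$ is the full space, a half-space, or a wedge, and Theorem~\ref{th:contEBd<3} yields the continuity of $\bB\mapsto\En(\bB,\Pi_{\dx_j})$ on $\dS^2$. A minimum of finitely many continuous functions is continuous, which gives the corollary. The only delicate point is the finiteness of the family of equivalence classes; this is precisely where polyhedrality intervenes, and would be the main obstacle in extending the statement to general corner cones in the sense of Remark~\ref{rem:length2}, where chains of arbitrary length would need to be taken into account and a compactness argument would have to replace the bare finite minimum.
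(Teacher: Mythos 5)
Your proof is correct and follows essentially the same route the paper intends: the paper presents the corollary as a direct consequence of \eqref{eq:s*b} together with the finite enumeration of equivalence classes of length-$2$ chains (cf.\ \eqref{eq:s*Da}, \eqref{eq:s*3}) and Theorem~\ref{th:contEBd<3}, which is exactly your finite-minimum argument. The observations you flag as delicate (finiteness of classes via polyhedrality, reduced dimension of $\Pi_{\dx}$ strictly below $3$) are the right ones and are handled correctly.
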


%\Bl
%En fait on a assez s\^urement plus:
%\begin{itemize}
%\item le th \ref{th:contEBd<3} est vrai aussi pour $d=3$,
%\item pour tout le monde on doit avoir la H\"older continuit\'e...
%\end{itemize}
%\Bk

\subsection{Polyhedral cones} $d=3$ \ \label{subs:coin}
The main result of this paragraph is the characterization of the bottom $\lambda_\ess(\bB,\Pi)$ of the essential spectrum of $\OP(\bA,\Pi)$.
\begin{proposition}
\label{prop:cone-ess}
Let $\Pi\in\ogP_3$ be a polyhedral cone with $d=3$, which means that $\Pi$ is not a wedge, nor a half-space, nor the full space. Let $\bB$ be a constant magnetic field. With the quantity $\seE(\bB,\Pi)$ introduced in \eqref{eq:s*}, there holds
\[
   \lambda_\ess(\bB,\Pi) = \seE(\bB,\Pi)\,.
\]
\end{proposition}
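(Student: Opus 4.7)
I would prove the proposition via the singular-sequence (Persson) characterization of the essential spectrum, establishing the two matching inequalities separately. The strategy rests on two structural features: (a) the dilation invariance and polyhedrality of $\Pi$, so that near a suitably chosen far-away base point $\bx_R$ with $|\bx_R|\sim R$, $\Pi$ exactly coincides with a translate of its tangent cone $\Pi_\dx$ on a ball of radius $r(R)\to\infty$; and (b) the linearity of $\bA$, which allows one to match the magnetic operators on $\Pi$ and on $\Pi_\dx$ through an elementary constant-gauge change $\bA\mapsto\bA-\bA(\bx_R)$.

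\textbf{Upper bound} $\lambda_\ess(\bB,\Pi) \leq \seE(\bB,\Pi)$. By \eqref{eq:s*b} it suffices to show $\En(\bB,\Pi_\dx)\in\gS(\OP(\bA,\Pi))_\ess$ for every $\dx=(\bfz,\bx_1)\in\gC^*_\bfz(\Pi)$. Given $\dx$, apply Theorem \ref{th:dicho} to $\Pi_\dx$ (which is $\R^3$, a half-space, or a wedge; see Sections \ref{subs:R3}--\ref{subs:diedre}), invoking Remark \ref{rem:chaine} in case (ii), to obtain a refined chain $\dx'$ with $\En(\bB,\Pi_{\dx'})=\En(\bB,\Pi_\dx)$ and an admissible generalized eigenvector $\Psi^\natural(\by,\bz)=e^{\ri\vartheta(\by,\bz)}\Phi(\bz)$ of $\OP(\bA_{\dx'},\Pi_{\dx'})$, with $\Phi$ exponentially decaying in $\bz$. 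Transport $\Psi^\natural$ to $\Pi$ by translating its center to $\bx_R=R\bx_1$ and applying the constant gauge $e^{-\ri\bA(\bx_R)\cdot\bx}$, then truncate by a smooth cutoff $\chi_R$ of width $r(R)=R^{1/2}$. Writing the resulting quasimode as $\psi_R$, the commutator error $(\OP(\bA,\Pi)-\En(\bB,\Pi_\dx))\psi_R$ is supported on the shell $|\bx-\bx_R|\sim r(R)$; the explicit structure of $\Psi^\natural$ ensures that $|(-\ri\nabla+\bA)\Psi^\natural(\by,\bz)|$ is controlled by a function of $\bz$ alone (times $|\Phi|$ and $|\nabla\Phi|$), so that a direct integration gives
\[
\|(\OP(\bA,\Pi)-\En(\bB,\Pi_\dx))\psi_R\|_{L^2(\Pi)} = O(r(R)^{-1})\|\psi_R\|_{L^2(\Pi)}.
\]
Choosing $R_n\to\infty$ fast enough so that the supports of the $\psi_{R_n}$ are pairwise disjoint yields a Weyl singular sequence, which proves $\En(\bB,\Pi_\dx)\in\gS(\OP(\bA,\Pi))_\ess$.

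\textbf{Lower bound} $\lambda_\ess(\bB,\Pi) \geq \seE(\bB,\Pi)$. By Persson's formula, it suffices to prove that for every $\psi\in C_c^\infty(\overline\Pi\setminus\overline{B(0,R_0)})$,
\[
q[\bA,\Pi](\psi) \geq \bigl(\seE(\bB,\Pi)-CR_0^{-2}\bigr)\|\psi\|^2.
\]
I would construct a smooth IMS partition of unity $(\chi_j)_j$ on $\overline\Pi\setminus\overline{B(0,R_0)}$ adapted to the stratification of $\partial\Pi$: each $\chi_j$ is supported in a ball $B(\bx_j,r_j)$ with $r_j=\rho|\bx_j|$ for a small fixed $\rho$ (smaller than the minimum angular separation between strata of the section $\Omega_0$), chosen so that
\[
B(\bx_j,r_j)\cap\Pi = B(\bx_j,r_j)\cap\Pi_{\dx_j}
\]
for an associated length-$2$ chain $\dx_j=(\bfz,\bx_1^{(j)})$, with $\Pi_{\dx_j}$ equal to $\R^3$, a half-space, or a wedge according to whether $\bx_j$ is interior, near a face, or near an edge of $\Pi$, and with $\sum_j\chi_j^2=1$ and $\sum_j|\nabla\chi_j|^2\leq C|\bx|^{-2}$. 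Applying the IMS localization formula
\[
q[\bA,\Pi](\psi) = \sum_j q[\bA,\Pi](\chi_j\psi) - \sum_j \int_\Pi |\nabla\chi_j|^2|\psi|^2\,\rd\bx,
\]
and using that $\bA$ coincides identically with the linear potential $\bA_{\dx_j}$ on $B(\bx_j,r_j)$ (no gauge change being required), the extension of $\chi_j\psi$ by zero to $\Pi_{\dx_j}$ is an admissible test function with Neumann conditions, so
\[
q[\bA,\Pi](\chi_j\psi) = q[\bA_{\dx_j},\Pi_{\dx_j}](\chi_j\psi) \geq \En(\bB,\Pi_{\dx_j})\|\chi_j\psi\|^2 \geq \seE(\bB,\Pi)\|\chi_j\psi\|^2.
\]
Summing and using $\sum_j\chi_j^2=1$ produces the claimed bound; sending $R_0\to\infty$ gives the result.

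\textbf{Main obstacle.} The principal technical difficulty is the construction of the hierarchical IMS partition of unity: the radii $r_j$ must shrink consistently as $\bx_j$ approaches lower-dimensional strata (so that balls near an edge do not see extraneous faces, and balls near a face do not see edges), while preserving the uniform gradient bound $\sum_j|\nabla\chi_j|^2\leq C|\bx|^{-2}$. This is achievable by a hierarchical dyadic construction in the angular variables of the spherical section $\Omega_0$, combined with a radial dyadic scaling justified by the dilation invariance of $\Pi$. Polyhedrality is what makes the matching $B(\bx_j,r_j)\cap\Pi=B(\bx_j,r_j)\cap\Pi_{\dx_j}$ exact, not merely asymptotic; this is crucial because it eliminates boundary-curvature corrections in the lower bound on $q[\bA,\Pi](\chi_j\psi)$ and gives exactly $\seE(\bB,\Pi)$ in the limit.
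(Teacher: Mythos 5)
Your proposal is correct in substance, but both halves take routes that are noticeably heavier than the paper's. For the upper bound, the paper uses no generalized eigenvectors at all: via \eqref{eq:s*3simple} (a consequence of \eqref{eq:comp} for $d=2$, reducing the infimum over $\gC^*_\bfz(\Pi)$ to the finitely many edges), it takes an $\varepsilon$-near-minimizer $\psi_\varepsilon\in\sC_0^\infty(\overline\Pi_{\be^*})$ of the Rayleigh quotient on the minimizing wedge and translates it along the edge by $r\bx_{\be^*}$ with the constant gauge factor of Lemma \ref{lem.transl}; polyhedrality and dilation invariance guarantee that for $r$ large the translate is supported in $\Pi$ outside $\cB_R$, and Lemma \ref{L:Persson} concludes. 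This costs nothing beyond the definition of $\En$ as an infimum, whereas your construction invokes Theorem \ref{th:dicho} on the lower-dimensional cones, requires recentering the profile on the stratum attached to the refined chain $\dx'$ in case (ii) (not at $R\bx_1$ itself, where $\Pi$ matches the wedge rather than $\Pi_{\dx'}$), and must be run through the quadratic form and Persson (or Lemma \ref{lem:tronc}) rather than through $\|(\OP-\lambda)\psi_R\|$, since the truncated function need not satisfy the Neumann condition and hence need not lie in the operator domain. For the lower bound, the paper sidesteps what you call the main obstacle: instead of a covering by balls of radii proportional to $|\bx|$, it takes one finite partition of unity $(\hat\chi_j)$ of the section $\overline\Omega_0$ subordinate to neighborhoods of its strata and sets $\chi_j^R(\bx)=\chi(|\bx|/R)\,\hat\chi_j(\bx/|\bx|)$; homogeneity makes $\supp\chi_j^R\cap\overline\Pi\subset\overline\Pi_j$ exact, the gradients are $O(R^{-1})$, and the IMS formula of Lemma \ref{lem:IMS} yields the $O(R^{-2})$ error with finitely many terms. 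Your hierarchical covering would also work (it is essentially Lemma \ref{lem:IMScov}, which the paper needs anyway for the bounded domain $\Omega$ in Section \ref{sec:low}), but for a dilation-invariant cone the angular construction is shorter and makes the exact matching with the tangent wedges and half-spaces transparent.
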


Before writing proof details, let us specify what is $\seE(\bB,\Pi)$ in the case of a polyhedral cone. Let $\Omega_0$ be the section of $\Pi$, i.e., $\Omega_0=\Pi\cap\dS^2$. We recall from \eqref{eq:s*b} that
\begin{equation}
\label{E:EstarinfE}
   \seE(\bB \ee,\Pi)=\inf_{\bx_1\in\overline\Omega_0} \En(\bB \ee,\Pi_{(\bfz,\bx_1)}) \,.
\end{equation}
In fact the set of equivalence classes (Definition \ref{def:chaineq}) of the chains $\dx=(\bfz,\bx_1)$ is finite. Let us describe this set,  cf.\ Section \ref{sec:3.3}~(4). Let $\gF$ and $\gE$ be the set of faces $\bff$ and edges $\be$ of $\Pi$. For $\bff\in\gF$, let $\Pi_\bff$ be the half-space whose boundary contains $\bff$ and containing points of $\Pi$ near any point of $\bff$. For $\be\in\gE$, there are two faces $\bff^\pm_\be$ adjacent to $\be$. Let $\Pi_\be$ be the wedge whose boundary contains $\be\cup\bff^+_\be\cup\bff^-_\be$ and containing points of $\Pi$ near any point of $\bff^+_\be\cup\bff^-_\be$.

Let $\bx_1\in\overline\Omega_0$. There are three possibilities:
\begin{enumerate}
\item $\bx_1$ is interior to $\Omega_0$. Then $\Pi_{(\bfz,\bx_1)}=\R^3$.
\item $\bx_1$ belongs to a side of $\Omega_0$. This side is contained in a face $\bff$ of $\Pi$. Then $\Pi_{(\bfz,\bx_1)}=\Pi_\bff$.
\item $\bx_1$ belongs to a vertex of $\Omega_0$. This vertex is contained in an edge $\be$ of $\Pi$. Then $\Pi_{(\bfz,\bx_1)}=\Pi_\be$.
\end{enumerate}
We have that
\begin{equation}
\label{eq:s*3}
   \seE(\bB \ee,\Pi)= \min\big\{ \min_{\be\in\gE} \En(\bB,\Pi_\be),\ 
   \min_{\bff\in\gF} \En(\bB,\Pi_\bff),\ 1 \big\}\,.
\end{equation}
 Since $\eqref{eq:comp}$ is proved for $d=2$, we have $E(\bB,\Pi_{\be}) \leq \min\{E(\bB,\Pi_{\bff_{\be}^{+}}),E(\bB,\Pi_{\bff_{\be}^{-}})\}$. Therefore equation \eqref{eq:s*3} becomes 
\begin{equation}
\label{eq:s*3simple}
   \seE(\bB \ee,\Pi)= \min_{\be\in\gE} \{\En(\bB,\Pi_\be)\}\, .
\end{equation}

We recall the Persson Lemma that gives a characterization of the bottom of the essential spectrum (see \cite{Pers60}):
\begin{lemma}
\label{L:Persson}
We have 
$$
   \lambda_\ess(\bB,\Pi)=\lim_{R\to +\infty} \Sigma \left(\bB,\Pi,R\right)
$$
with
$$\Sigma \left(\bB,\Pi,R\right):=\inf_{\substack{\psi\in \sC_{0}^{\infty}(\overline{\Pi}\cap \complement\mathcal{B}_{R})
\\
 \psi\neq0}}\frac{q[\bA,\Pi](\psi)}{\|\psi\|^2}$$
where $\cB_{R}$ is the ball of radius $R$ centered at the origin and $\complement\cB_{R}$ its complementary in $\R^3$.
\end{lemma}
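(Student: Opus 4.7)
The plan is to establish this classical Persson characterization by proving two opposite inequalities, combining Weyl's criterion for the essential spectrum (for the direction $\Sigma_\infty\le\lambdaess$) and the variational min--max principle applied to disjointly supported trial functions (for $\lambdaess\le\Sigma_\infty$). As a preliminary, observe that $R\mapsto\Sigma(\bB,\Pi,R)$ is nondecreasing, since enlarging $R$ shrinks the class of admissible test functions; hence the limit $\Sigma_\infty:=\lim_{R\to\infty}\Sigma(\bB,\Pi,R)$ exists in $[0,+\infty]$.

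For the inequality $\Sigma_\infty\le\lambdaess(\bB,\Pi)$, fix $\lambda\in\gSess(\OP(\bA,\Pi))$. By Weyl's criterion there is a sequence $(\psi_n)\subset\dom(\OP(\bA,\Pi))$ with $\|\psi_n\|=1$, $\psi_n\rightharpoonup 0$ in $L^2(\Pi)$ and $(\OP(\bA,\Pi)-\lambda)\psi_n\to 0$ strongly, which in turn yields $q[\bA,\Pi](\psi_n)\to\lambda$ together with a uniform $H^1_\loc(\overline\Pi)$ bound on $(\psi_n)$. Fix $R>0$ and choose a cutoff $\chi_R\in\sC^\infty(\R^3)$ equal to $0$ on $\cB_R$, to $1$ outside $\cB_{R+1}$, with $|\nabla\chi_R|$ bounded. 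The standard IMS-type identity
\[
   q[\bA,\Pi](\chi_R\psi_n) = \Re\langle \OP(\bA,\Pi)\psi_n,\chi_R^2\psi_n\rangle + \||\nabla\chi_R|\psi_n\|^2,
\]
combined with Rellich--Kondrachov compactness on the bounded Lipschitz set $\Pi\cap\cB_{R+1}$ (so that $\psi_n\to 0$ strongly in $L^2$ there), gives $\||\nabla\chi_R|\psi_n\|\to 0$, $\|\chi_R\psi_n\|\to 1$, and finally $q[\bA,\Pi](\chi_R\psi_n)/\|\chi_R\psi_n\|^2\to\lambda$. After mollifying $\chi_R\psi_n$ into a sequence of $\sC_0^\infty(\overline\Pi\cap\complement\cB_R)$ functions (a density argument in the form norm), the infimum definition of $\Sigma(\bB,\Pi,R)$ yields $\Sigma(\bB,\Pi,R)\le\lambda$, and sending $R\to\infty$ gives $\Sigma_\infty\le\lambda$.

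For the reverse inequality, I invoke the variational characterization $\lambdaess(\OP(\bA,\Pi))=\sup_{N\in\N}\mu_N(\OP(\bA,\Pi))$, where $\mu_N$ is the $N$-th Rayleigh--Ritz min--max level of $q[\bA,\Pi]$. Given $N$ and $\varepsilon>0$, construct recursively radii $R_1<R_1'<R_2<R_2'<\cdots<R_N<R_N'$ together with trial functions $\psi_j\in\sC_0^\infty(\overline\Pi\cap\complement\cB_{R_j})$ satisfying $\supp\psi_j\subset\cB_{R_j'}$ and $q[\bA,\Pi](\psi_j)\le(\Sigma_\infty+\varepsilon)\|\psi_j\|^2$; each $\psi_j$ exists by definition of $\Sigma(\bB,\Pi,R_j)$, the compact support in $\cB_{R_j'}$ being automatic from the class $\sC_0^\infty$. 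The $\psi_j$ are pairwise $L^2$-orthogonal by disjointness of supports, and on their $N$-dimensional linear span the Rayleigh quotient of $\OP(\bA,\Pi)$ is bounded by $\Sigma_\infty+\varepsilon$. Hence $\mu_N(\OP(\bA,\Pi))\le\Sigma_\infty+\varepsilon$ for every $N$, and letting $\varepsilon\to 0$ then $N\to\infty$ yields $\lambdaess(\bB,\Pi)\le\Sigma_\infty$.

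The principal technical care lies in the cutoff and localization step: one must check that multiplication by $\chi_R$ preserves the form domain of $\OP(\bA,\Pi)$ and that the compact embedding $H^1(\Pi\cap\cB_{R+1})\hookrightarrow L^2(\Pi\cap\cB_{R+1})$ applies to the Neumann realization on this set. Both follow from the Lipschitz regularity of polyhedral cones on bounded subsets, together with smoothness of $\bA$ on $\overline{\Pi\cap\cB_{R+1}}$, which permits the control $\|\nabla\psi_n\|_{L^2(\Pi\cap\cB_{R+1})}\le\|(-i\nabla+\bA)\psi_n\|_{L^2}+\|\bA\|_{L^\infty(\cB_{R+1})}\|\psi_n\|_{L^2}$ needed to invoke Rellich--Kondrachov. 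All other steps are standard.
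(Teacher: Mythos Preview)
The paper does not give its own proof of this lemma: it is stated with a reference to Persson's original paper \cite{Pers60} and then used as a black box in the proof of Proposition~\ref{prop:cone-ess}. So there is no ``paper's proof'' to compare against.

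Your argument is the standard two-inequality proof of Persson's theorem and is correct. A couple of minor remarks. For the direction $\Sigma_\infty\le\lambdaess$, the identity you invoke is exactly the one recorded later in the paper as the cut-off formula (the analogue of the IMS identity for a single localizer applied to a function satisfying the Neumann boundary condition), so it is available here. For the direction $\lambdaess\le\Sigma_\infty$, the key point that disjoint supports make the form additive on the span is correct because $(-i\nabla+\bA)\psi_j$ inherits the same support as $\psi_j$, so the cross terms vanish; you state this implicitly but it is worth making explicit. Finally, the density step (approximating $\chi_R\psi_n$ in form norm by functions in $\sC_0^\infty(\overline\Pi\cap\complement\cB_R)$) is safe here because $\chi_R\psi_n$ already vanishes on $\cB_R$, so truncation at large radius together with a standard mollification preserves that property; this uses that $\sC_0^\infty(\overline\Pi)$ is a form core for the Neumann magnetic Laplacian, which holds for polyhedral cones and smooth (in particular linear) potentials.
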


\begin{proof}
{\em (of Proposition~\ref{prop:cone-ess})}. \
Let $\bA$ be a linear potential associated with $\bB$. \\
{\em Upper bound:} 
We denote by $\bx_{\be^*}$ a vertex of $\overline\Omega_{0}$ and $\be^*$ the associated edge such that  $\seE(\bB \ee,\Pi)=\En(\bB \ee,\Pi_{\be^*})$, cf.\ \eqref{eq:s*3simple}. Let $\varepsilon>0$, there exists $\psi_{\varepsilon}\in\sC_{0}^{\infty}(\overline\Pi_{\be^*})$ a normalized function such that 
$$
   q[\bA,\Pi_{\be^*}] (\psi_{\varepsilon}) \leq \En(\bB \ee,\Pi_{\be^*})+\varepsilon \, . 
$$
For $r>0$ we define 
$$
   \psi_{\varepsilon}^{r}(\bx):=
   \re^{\ri \langle \bx\ee,\ee \bA(r\bx_{\be^*})\rangle} \, \psi_{\varepsilon}(\bx-r\bx_{\be^*}) \, , 
$$
so that we have, due to gauge invariance and translation effect, cf.\ Lemma~\ref{lem.transl},
\[
   \supp(\psi_{\varepsilon}^{r})=\supp(\psi_{\varepsilon})+r\bx_{\be^*}
   \quad\mbox{and}\quad
   q[\bA,\Pi](\psi_{\varepsilon}^{r})=q[\bA,\Pi_{\be^*}](\psi_{\varepsilon})\,.
\]
Let $R>0$, for $r$ large enough we have $\supp(\psi_{\varepsilon}^{r})\subset \complement\mathcal{B}_{R}$ and $\psi_{\varepsilon}^{r}\in \dom(q[\bA,\Pi])$. We get 
$$
   q[\bA,\Pi](\psi_{\varepsilon}^{r})=
   q[\bA,\Pi_{\be^*}](\psi_{\varepsilon})\leq E(\bB,\Pi_{\be^*})+ \varepsilon \, . 
$$
We deduce 
$$
   \forall \varepsilon>0, \forall R>0, \quad \Sigma(\bB,\Pi,R) \leq  
   E(\bB,\Pi_{\be^*})+\varepsilon 
$$
and Lemma \ref{L:Persson} provides the upper bound of Proposition \ref{prop:cone-ess}: $\lambda_\ess(\bB,\Pi) \leq \seE(\bB,\Pi)$.
  
\smallskip
{\em Lower bound:} 
Let
\[
 \cU_\bfz\cup \big(\bigcup_{\bff\in\gF}\ \cU_\bff\ \big)
   \cup \big(\bigcup_{\be\in\gE}\ \cU_\be\ \big)
\]
a covering of $\overline{\Omega}_{0}$ according to its stratification, which means that
\begin{equation*}
   \cU_{\bfz}\subset \Omega_0,\quad
   \cU_\bff\cap\overline{\Omega}_{0} = \Pi_\bff \cap\overline{\Omega}_{0} \ (\forall\bff\in\gF),
   \quad\mbox{and}\quad
   \cU_\be\cap\overline{\Omega}_{0} = \Pi_\be \cap\overline{\Omega}_{0}\ (\forall\be\in\gE).
\end{equation*} 
Let $\hat\chi_{j}$, $j\in\gJ :=\{\bfz\}\cup\gF\cup\gE$, be an associated partition of unity of the section $\overline{\Omega}_{0}$ such that
\[
   \sum_{j\in\gJ}(\hat\chi_{j})^2=1 \quad\mbox{and}\quad
   \supp(\hat\chi_{j})\subset \cU_j,\ \forall j\in\gJ.
\]
Let $\chi\in\sC^{\infty}(\R_+)$ such that $\chi\equiv0$ on $[0,\frac12]$ and $\chi\equiv1$ on $[1,+\infty)$.
We now define a partition of the unity of $\complement\mathcal{\cB}_{R}\cap \overline{\Pi}$ by setting
\[
   \chi_{j}^{R}(\bx) = \chi\Big(\frac{|\bx|}{R}\Big)\, \hat\chi_{j}\Big(\frac{\bx}{|\bx|}\Big),
   \quad j\in\gJ.
\]
We have $\sum_{j}(\chi_{j}^{R})^2=1$ on $\complement\mathcal{\cB}_{R}\cap \overline{\Pi}$ and  
$$
   \forall R>0, \quad \sum_{j\in\gJ}|\nabla{\chi}_{j}^{R}|^2 \leq CR^{-2}.
$$
Moreover we have $\supp({\chi}_{j}^{R})\cap\overline\Pi\subset \overline\Pi_{j}$, $j\in\gJ =\{\bfz\}\cup\gF\cup\gE$, where we have set $\Pi_\bfz=\R^3$. The IMS formula for quadratic forms (see Lemma \ref{lem:IMS}) provides 
\begin{align*}
   \forall \psi\in \sC_{0}^{\infty}(\overline{\Pi}\cap\complement\cB_{R}), \quad  
   q[\bA,\Pi](\psi)&\geq\sum_{j\in\gJ}q[\bA,\Pi](\chi_{j}^{R}\psi)-CR^{-2}\|\psi\|^2  
\\
&=\sum_{j\in\gJ}q[\bA,\Pi_{j}](\chi_{j}^{R}\psi)-CR^{-2}\|\psi\|^2  
\\
&\geq\sum_{j\in\gJ}E(\bB,\Pi_{j})\|\chi_{j}^{R}\psi\|^2-CR^{-2}\|\psi\|^2  
\\
&\geq(\seE(\bB,\Pi)-CR^{-2})\|\psi\|^2.
\end{align*}
Thus we deduce the lower bound of Proposition \ref{prop:cone-ess} by using Lemma \ref{L:Persson}.
\end{proof}

Then it is clear that Theorem \ref{th:dicho} in the case of polyhedral cones is a consequence of Proposition \ref{prop:cone-ess}:

\begin{itemize}
\item if $\En(\bB,\Pi)<\seE(\bB,\Pi)=\lambda_\ess(\bB,\Pi)$, then there exists an eigenvector for $\OP(\bA,\Pi)$, which by standard arguments based on Agmon estimates is exponentially decreasing, see \cite{Ag85}.
\item if $\En(\bB,\Pi)=\seE(\bB,\Pi)=\lambda_\ess(\bB,\Pi)= \min_{\be\in\gE} \{\En(\bB,\Pi_\be)\}$, we are reduced to the previous cases in lower values of $d$.
\end{itemize}

\begin{example}[Octant]
Let $\Pi=(\R_{+})^3$ be the model octant and $\bB$ be a constant magnetic field with $|\bB|=1$. We quote from \cite[\S 8]{Pan02}:
\begin{enumerate}
\item[(i)] If the magnetic field $\bB$ is tangent to a face but not to an edge of $\Pi$, there exists an edge $\be$ such that $\seE(\bB,\Pi)=\En(\bB,\Pi_{\be})$ and there holds $\En(\bB,\Pi)<\En(\bB,\Pi_{\be})$. 

\item[(ii)] If the magnetic field $\bB$ is tangent to an edge $\be$ of $\Pi$, $\seE(\bB,\Pi)=\En(\bB,\Pi_{\be})=\En(\bB,\Pi)$. 
Moreover by \cite[\S 4]{Pan02}, $\En(\bB,\Pi_{\be})=\En(1,\cS_{\pi/2})<\Theta_{0}=\seE(\bB,\Pi_{\be})$.
\end{enumerate}
\end{example}

%\Bl
%\begin{example}[Small Cones]
%Il est vrai que pour tout $\bB\in\dS^2$ et tout domaine Lip $\omega\subset\R^2$, 
%la famille de cones indexee par $\alpha>0$
%\[
%   \Pi_\alpha = \{\bx\in\R^3,\ \  x_3>0,\ (x_1,x_2)\in \alpha\omega\}
%\]
%satisfait $\En(\bB,\Pi_{\alpha})\le C(\bB,\omega)\alpha$ et $\seE(\bB,\Pi_\alpha)$ 
%tend vers une limite non-nulle quand $\alpha\to0$. Therefore for $\alpha$ small enough 
%$\En(\bB,\Pi_\alpha)<\seE(\bB,\Pi_\alpha)$. Le cas des c\^ones de r\'evolution 
%n'est pas tr\`es pertinent ici  \cite{BoRay13}, sauf comme cas particulier 
%de l'\'enonc\'e g\'en\'eral.
%\end{example}
%\Bk

\subsection{Structure of the admissible generalized eigenvectors}
\label{s:age}
We have described in the previous Sections \ref{subs:R3}-\ref{subs:coin} admissible generalized eigenvectors in every situation. In this section we list the model configurations $(\bB,\Pi)$ owning admissible generalized eigenvectors and give a comprehensive overview of their structure in a table. We also prove some stability properties of the generalized eigenvectors and the associated energy under perturbation of the magnetic field $\bB$.

Let $\bB$ be a constant magnetic field and $\Pi$ a cone in $\ogP_{3}$. Let us assume that $\En(\bB,\Pi)<\seE(\bB,\Pi)$. Therefore by Theorem \ref{th:dicho} there exist  admissible generalized eigenvectors $\Psi$ that have the form \eqref{eq:age1}. We recall the discriminant parameter $k\in \{1,2,3 \}$ which is the number of directions in which the generalized eigenvector has an exponential decay. For further use we call (G1), (G2), and (G3) the situation where $k=1$, $2$, and $3$, respectively. 

In Table \ref{T:age} we gather all possible situations for $(k,d)$ where $d$ is the dimension  of the reduced cone of $\Pi$. We assume that the magnetic field $\bB$ is unitary, similar formulas can be found using Lemma \ref{lem.scal} for any non-zero constant magnetic field. We provide the explicit form of an admissible generalized eigenfunction $\Psi$ of $H(\uA,\Pi)$ in variables $(\by,\bz)\in \R^{3-k}\times \Upsilon$ where $\uA$ is a model linear potential associated with $\bB$ in these variables. We also give the cone $\Upsilon$ on which the generalized eigenfunction has exponential decay (note that $\Upsilon$ does not always coincide with the reduced cone $\Gamma$ of $\Pi$).

\begin{table}[ht!]
\mbox{}\hskip-1em
{\small
\renewcommand{\arraystretch}{1.5}
\begin{tabular}{| p{9mm}|p{3cm}|p{3.6cm}|p{1.3cm}|p{2.2cm}|p{36mm}|}
  \hline
$(k,d)$  & Model $(\bB,\Pi)$ & Potential $\uA$ &  $\Upsilon$ & Explicit $\Psi$  & $\Phi$ eigenvector of  \\
  \hline
  \hline
(3,3)  & \hrulefill & \hrulefill  &  $\Pi=\Gamma$ & $\Phi(\bz)$ & $\OP(\bA,\Pi)$ 
 \\
\hline
(2,2) &  \centering $(b_{0},b_{1},b_{2})$ \ \ \  \ \ \  $\Pi=\R\times \cS_{\alpha}$ & $(b_1z_2-b_2z_1,0,b_0z_1)$ &  $\cS_{\alpha}=\Gamma$ & $\re^{\ri\tau^{*}y}\Phi(\bz)$ &
  $ \widehat \OP_{\tau}(\uA \ee,\cW_\alpha) $, cf \eqref{D:Hhatsector}
\\
\hline
(2,1) &
 \centering $(0,b_{1},b_{2})$, $b_{2}\neq 0$ \ \ 
 $\Pi=\R^2\times \R_{+}$ & $(b_{1}z_{2}-b_{2}z_{1},0,0)$ &  $\R\times \R_{+}$  & $\Phi(\bz)$ & $-\Delta_{\bz}+(b_1z_2-b_2z_1)^2$
\\
\hline
(2,0) & \centering $(1,0,0)$ \quad \ \ \ \ \ \ \ $\Pi=\R^3$ & $(0,-\frac12 z_2, \frac12 z_1)$ & $\R^2$ & 
$\re^{-|\bz|^2/4}$ & $-\Delta_\bz+i\bz\times\!\nabla_\bz+\tfrac{|\bz|^2}{4}$
\\
\hline
(1,1) & \centering $(0,1,0)$ \quad \ \ \ \ \ \ \ $\Pi=\R^2\times \R_{+}$ & $(z,0,0)$ &  $\R_{+}=\Gamma$ & $\re^{-iy_{1}\sqrt{\Theta_{0}}} \Phi(z)$ & $-\partial_{z}^2+(z-\sqrt{\Theta_{0}})^2$
\\
\hline
\end{tabular}}
\mbox{}\\[1ex]
\caption{\label{T:age} Generalized eigenfunctions of $H(\bA,\Pi)$ depending on the geometry $(\bB,\Pi)$ with $E(\bB,\Pi) < \seE(\bB,\Pi)$ written in variables $(\by,\bz)\in \R^{3-k}\times \Upsilon$.}
\end{table}

\begin{remark}
In the case where $\Pi$ is a half-space and $\bB$ is normal to $\partial\Pi$, we have $E(\bB,\Pi)=\seE(\bB,\Pi)=1$ and we are in case (ii) of Theorem \ref{th:dicho}, therefore there exists an admissible generalized eigenvector for the strict singular chain $\R^3$ of $\Pi$. However there also exists an admissible generalized eigenfunction for the operator on the half-plane $\Pi$. Let $\bz=(z_{1},z_{2})$ be coordinates of $\partial\Pi$ and $y$ coordinate normal to $\partial\Pi$. Let $\uA(y,z_{1},z_{2}):=(0,-\frac{z_{2}}{2},\frac{z_{1}}{2})$. As described in \cite[Lemma 4.3]{LuPan00}, the function $\Psi:(y,z_{1},z_{2})\mapsto e^{-(z_{1}^2+z_{2}^2)/4}$ is an admissible generalized eigenvector for $H(\uA,\Pi)$ associated with 1, indeed it satisfies the Neumann boundary condition at the boundary $y=0$ since it is constant in the $y$ direction and it is a solution of the eigenvalue equation $H(\uA,\Pi)\Psi=\Psi$, see Section \ref{subs:R3}. 
\end{remark}

A perturbation of the magnetic field has distinct effects according to the situation : 
(G1) is not stable whereas (G2) is.
%\Bl (G3) doit \^etre stable mais on verra cela plus tard!!\\ \Bk
We prove this in the following lemma together with local uniform estimates for exponential decay. 
\begin{lemma} \label{L:Elip}
Let $\bB_{0}$ be a non zero constant magnetic field and $\Pi$ be a cone in $\ogP_{3}$ with $d<3$. Assume that
$\En(\bB_{0},\Pi)<\seE(\bB_{0},\Pi)$. 
\begin{itemize}
\item[(a)] In a ball $\cB(\bB_{0},\varepsilon)$, the function  $\bB\mapsto \En(\bB,\Pi)$ is Lipschitz-continuous and
$$
\En(\bB,\Pi)<\seE(\bB,\Pi).
$$
\item[(b)]
We suppose moreover that $(\bB_{0},\Pi)$ is in situation (G2). 
For $\bB\in\cB(\bB_{0},\varepsilon)$, we denote by $\Psi^{\bB}$ an admissible generalized eigenfunction given by Theorem \ref{th:dicho}. For $\varepsilon$ small enough, $(\bB,\Pi)$ is still in situation (G2) and $\Psi^{\bB}$ has the form
$$\Psi^{\bB}(\bx)=\re^{\ri\vartheta^{\bB}(\by,\bz)}\Phi^{\bB}(\bz)\quad\mbox{for }\quad 
\udiffeo^{\bB}\bx=(\by,\bz)\in\R\times\Upsilon,$$
with $\udiffeo^{\bB}$ a suitable rotation, and there exist constants $c>0$ and $C>0$ such that
\begin{equation}
\label{E:agmonuniform}
\forall \bB\in \cB(\bB_{0},\varepsilon), \quad \|\Phi^{\bB}\re^{c|\bz|}\|_{L^2(\Upsilon)}\leq C \|\Phi^{\bB} \|_{L^2(\Upsilon)}  \, .
\end{equation}
\end{itemize}
\end{lemma}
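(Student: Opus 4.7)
\emph{Part (a).} The strict inequality $\En(\bB,\Pi)<\seE(\bB,\Pi)$ on a small ball around $\bB_{0}$ is immediate from the continuity of $\bB\mapsto \En(\bB,\Pi)$ on $\dS^2$ (Theorem \ref{th:contEBd<3}, which applies since $d<3$, extended to the whole of $\R^3\setminus\{0\}$ by the scaling Lemma \ref{lem.scal}) together with the continuity of $\bB\mapsto \seE(\bB,\Pi)$ (Corollary \ref{cor:contE*B}). For the Lipschitz estimate I would fix the linear gauge $\bA^{\bB}(\bx)=\tfrac12\bB\times\bx$, which depends linearly on $\bB$ and satisfies $|\bA^{\bB}(\bx)-\bA^{\bB_{0}}(\bx)|\le C\ee|\bB-\bB_{0}|\ee|\bx|$. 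Expanding the quadratic forms one gets
\begin{equation*}
   q[\bA^{\bB},\Pi](\psi)-q[\bA^{\bB_{0}},\Pi](\psi)
   =2\Re\int_{\Pi}\overline{(-i\nabla+\bA^{\bB_{0}})\psi}\cdot(\bA^{\bB}-\bA^{\bB_{0}})\psi\,\rd\bx
   +\int_{\Pi}|(\bA^{\bB}-\bA^{\bB_{0}})\psi|^2\,\rd\bx,
\end{equation*}
and the right-hand side is bounded by $C|\bB-\bB_{0}|\bigl(\|\ee|\bx|\psi\|^{2}+\|\psi\|\ee\|\ee|\bx|\psi\|\bigr)$ times controlled quantities. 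Inserting on both sides a well-chosen quasimode built from an admissible generalized eigenvector (whose $L^2$-weighted norm in the $\bz$ variables is finite thanks to the exponential decay in the (G2) case, and localized in $\by$ when needed) gives, via the min-max principle applied in both directions, the bilateral Lipschitz bound $|\En(\bB,\Pi)-\En(\bB_{0},\Pi)|\le C|\bB-\bB_{0}|$.

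\emph{Part (b).} By assumption $(\bB_{0},\Pi)$ is in case (G2): there is a rotation $\udiffeo^{\bB_{0}}$ and splitting $\bx=(\by,\bz)\in\R\times\Upsilon$ giving a direct Fourier integral
\begin{equation*}
   \cF_{\by}\ee H(\uA^{\bB_{0}},\Pi)\ee\cF_{\by}^{*}
   =\int_{\tau\in\R}^{\bigoplus}\widehat{H}_{\tau}(\bB_{0})\,\rd\tau,
\end{equation*}
and there exists $\tau^{*}_{0}\in\R$ at which $\inf_{\tau}s(\bB_{0};\tau)=\En(\bB_{0},\Pi)$ is attained as a discrete eigenvalue of $\widehat{H}_{\tau^{*}_{0}}(\bB_{0})$, strictly below $\seE(\bB_{0},\Pi)$ which coincides with the (uniform in $\tau$) bottom of the essential spectrum of the fibers. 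The plan is to choose $\udiffeo^{\bB}$ smoothly in $\bB$ (so that the reference plane $\Upsilon$ stays fixed and only the coefficients of $\widehat{H}_{\tau}(\bB)$ vary regularly) and to invoke regular (Kato--Rellich) perturbation theory for the analytic family $(\bB,\tau)\mapsto \widehat{H}_{\tau}(\bB)$: by part (a), the spectral gap $\seE(\bB,\Pi)-\En(\bB,\Pi)$ is bounded below uniformly on $\cB(\bB_{0},\varepsilon)$, so a simple discrete eigenvalue $s(\bB;\tau)$ persists near $\tau^{*}_{0}$ and a minimizer $\tau^{*}(\bB)$ exists by Lemma \ref{lem:pop}, giving an admissible generalized eigenvector $\Psi^{\bB}(\bx)=\re^{\ri\ee\tau^{*}(\bB)\by}\Phi^{\bB}(\bz)$ of the form prescribed in Table \ref{T:age}; thus $(\bB,\Pi)$ remains in (G2).

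The uniform Agmon estimate \eqref{E:agmonuniform} follows from the classical argument of \cite{Ag85} applied to the fiber operator $\widehat{H}_{\tau^{*}(\bB)}(\bB)$: the Agmon metric rate can be chosen as any $c<\sqrt{\seE(\bB,\Pi)-\En(\bB,\Pi)}/C'$, and the gap being uniformly positive on $\cB(\bB_{0},\varepsilon)$ allows a single $c>0$ to work for all $\bB$. The prefactor constant $C$ is controlled by standard elliptic bounds on the fiber operator whose coefficients are smooth in $\bB$, hence bounded uniformly in $\cB(\bB_{0},\varepsilon)$.

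\emph{Main obstacle.} The delicate point is constructing a rotation $\udiffeo^{\bB}$ depending smoothly on $\bB$ that preserves the cone $\Pi$ while reducing $(\bB,\Pi)$ to canonical coordinates; this is automatic when $\Pi$ has a continuous invariance group aligned with the reduced directions ($d=0,1,2$), but one must verify that, as $\bB$ varies, the distinguished axis $\by$ (along which the Fourier decomposition is made) can be chosen to depend analytically on $\bB$. After this, keeping the Agmon constants uniform is straightforward because continuity of the spectral gap (part (a)) and smoothness of the coefficients provide all required bounds.
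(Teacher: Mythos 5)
Your overall strategy is genuinely different from the paper's: the paper proves this lemma case by case in $d$, reducing everything to cited results (for $d=1$, the explicit formula $\En(\bB,\Pi)=|\bB|\ee\sigma(\theta(\bB))$ together with the $\sC^1$ regularity of $\sigma$ from Lemma \ref{P:continuitesigma} and the uniform decay from \cite[\S2]{BoDauPopRay12}; for $d=2$, points (a) and (b) are exactly \cite[\S4]{Pop13} and \cite[Proposition 4.2]{Pop13}). You instead attempt a self-contained variational and perturbative argument. That is legitimate in principle, but as written it has a gap at the crucial point.

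The gap is in your Lipschitz estimate in part (a). With the symmetric gauge $\bA^{\bB}(\bx)=\tfrac12\bB\times\bx$ the potential difference is bounded by $C|\bB-\bB_{0}|\ee|\bx|$, and $|\bx|$ grows in the translation-invariant direction $\by$. Any test function built from a generalized eigenvector must be truncated in $\by$ at some scale $R$, and then $\|\ee|\bx|\psi\|\sim R\|\psi\|$ while the truncation error in the Rayleigh quotient is $O(R^{-2})$; optimizing the cross term $|\bB-\bB_{0}|\ee R$ against $R^{-2}$ yields only a H\"older bound of order $|\bB-\bB_{0}|^{2/3}$, not the claimed Lipschitz bound. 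To get Lipschitz continuity you must use a gauge adapted to the fibration (as in Table \ref{T:age}, e.g.\ $\uA=(b_{2}x_{3}-b_{3}x_{2},0,b_{1}x_{2})$ for the wedge), so that $\bA^{\bB}-\bA^{\bB_{0}}$ is controlled by $|\bB-\bB_{0}|\ee|\bz|$ alone and the weight is absorbed by the exponential decay in $\bz$, uniformly in the $\by$-truncation. Even then the argument is only one-sided unless you already have an exponentially decaying minimizer for the perturbed field $\bB$ with uniform decay rate, i.e.\ unless you already have part (b); this circularity is precisely why the paper defers to \cite[Proposition 4.2]{Pop13} for wedges. Relatedly, in part (b) your appeal to Kato--Rellich perturbation theory glosses over the fact that for the wedge the bottom of the essential spectrum of the fiber $\widehat{\OP}_{\tau}$ depends on $\tau$, so one must show that the band minimum remains attained at some $\tau^{*}(\bB)$ with a gap to the fiber's essential spectrum that is uniform in $\bB$ — again the nontrivial content of the cited result, not a routine consequence of the global gap $\seE-\En>c$ from part (a).
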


\begin{proof}
Let us distinguish the three possible situations according to the value of $d$:
\begin{itemize}
\item[$d=0\,$:] When $\Pi=\R^3$, we have $\En(\bB,\Pi)=|\bB|$ and $\seE(\bB,\Pi)=+\infty$. The admissible generalized eigenvector $\Psi^{\bB}$ is explicit as explained above.
Thus (a) and (b) are established in this case. 

\item[$d=1\,$:] When $\Pi$ is a half-space, we denote by $\theta(\bB)$ the unoriented angle in $[0,\frac\pi2]$ between $\bB$ and the boundary. The function $\bB\mapsto\theta(\bB)$ is Lipschitz. 
Moreover the function $\sigma$ is $\sC^1$ on $[0,\pi/2]$ (see Lemma \ref{P:continuitesigma}).
We deduce that the function $\bB\mapsto\sigma(\theta(\bB))$ is Lipschitz outside any neighborhood of $\bB= 0$. Thus point (a) is proved. 
Assuming furthermore that $(\Pi,\bB_{0})$ is in situation (G2), we have $\theta(\bB_{0})\in (0,\frac{\pi}{2})$ and there exist $\varepsilon>0$, $\theta_{\min}$ and $\theta_{\max}$ such that 
$$
   \forall \bB \in \cB(\bB_{0},\varepsilon), \quad 
   \theta(\bB)\in [\theta_{\min},\theta_{\max}]
   \subset (0,\tfrac{\pi}{2}) \, .
$$
The admissible generalized eigenvector is constructed above.  
The uniform exponential estimate is proved in \cite[\S2]{BoDauPopRay12}.

\item[$d=2\,$:]  When $\Pi$ is a wedge, point (a) is proved in \cite[\S4]{Pop13}. 
Due to the continuity of $\bB\mapsto \En(\bB,\Pi)$ there exist $c>0$ and $\varepsilon>0$ such that
$$\forall\bB\in \cB(\bB_{0},\varepsilon),\quad \seE(\bB,\Pi)-\En(\bB,\Pi)>c.$$ 
Point (b) is then a direct consequence of \cite[Proposition 4.2]{Pop13}.
\end{itemize}
The proof of Lemma~\ref{L:Elip} is complete.
\end{proof}

%%%%%%%%%%%%%%%%%%%%%%%%%%%%%%%%%%%
\section{Continuity properties of the ground energy}
%%%%%%%%%%%%%%%%%%%%%%%%%%%%%%%%%%%
\label{sec:cont}
Let $\Omega\in \ogD(\R^3)$ and let $\bB\in\sC^0(\overline\Omega)$ be a continuous magnetic field. In this section we investigate the continuity properties on $\overline{\Omega}$ of the application $\Lambda:\bx\mapsto \En(\bB_{\bx},\Pi_{\bx})$. Let $\bt$ be a stratum of $\overline{\Omega}$ (see \eqref{eq:stratif}). We have denoted by $\Lambda_{\bt}$ the restriction of $\Lambda$ to $\bt$ (see \eqref{eq:Lams}). Combining \eqref{eq:norm}, \eqref{E:spectrespace}, Lemma \ref{P:continuitesigma} and Lemma \ref{lem:contwedge} we get that $\Lambda_{\bt}$ is continuous. 

Let us assume that $\bt$ is not reduced to a point. We now describe how we extend $\Lambda_{\bt}$ to the boundary of $\bt$. Let $\bx \in \partial\bt$ and $\Pi_{\bx}\in \ogP_{3}$ be its tangent cone. Let $\cU_{\bx}$, $\cV_{\bx}$ and $\diffeo^{\bx}$ be the open sets and the diffeomorphism introduced in Section \ref{SS:tangent}. Let $\tilde\bt$ be the stratum of $\Pi_{\bx}$ such that 
$$\diffeo^{\bx}(\bt \cap \cU_{\bx})= \tilde\bt\cap \cV_{\bx} \ . $$
To $\tilde\bt$ is associated the singular chain $\dx\in \gC^*_\bfz(\Pi_{\bx})$ such that $\Pi_{\dx}$ is the tangent cone to $\Pi_\bx$ at any point of $\tilde\bt$.

We extend $\Lambda_{\bt}$ in $\bx$ by setting 
\begin{equation}
\label{E:Defextension}
\Lambda_{\bt}(\bx)=\En(\bB_{\bx},\Pi_{\dx}) \, .
\end{equation}

\begin{lemma}
\label{lem:cont}
Let $\Omega\in \ogD(\R^3)$ and let $\bB\in\sC^0(\overline\Omega)$. Let $\bt$ a stratum of $\Omega$ which is not a vertex. Then
formula \eqref{E:Defextension} defines a continuous extension of the function $\Lambda_{\bt}$ to $\overline{\bt}$.
\end{lemma}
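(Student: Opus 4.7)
The plan is to treat the three possible types of non-vertex stratum separately (interior $\bt=\Omega$, face $\bt=\bff$, edge $\bt=\be$) and, in each case, to reduce the continuity at a point $\bx_0\in\partial\bt$ to one of the continuity statements already established: continuity of $\bB\mapsto \En(\bB,\R^3)=|\bB|$, continuity of $\sigma$ on $[0,\pi/2]$ (Lemma~\ref{P:continuitesigma}), or continuity of $(\bB,\alpha)\mapsto\En(\bB,\cW_\alpha)$ on $\dS^2\times(0,2\pi)$ (Lemma~\ref{lem:contwedge}), combined with the continuity of the magnetic field $\bB$ and the geometric fact that the tangent cone $\Pi_{\bx_n}$ varies continuously along $\bt$ near $\bx_0$. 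The interior continuity of $\Lambda_\bt$ on $\bt$ itself is obtained by exactly the same ingredients and will be stated first.

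For $\bt=\Omega$, every tangent cone is $\R^3$ and $\Lambda_\bt(\bx)=|\bB_\bx|$ everywhere on $\overline\bt=\overline\Omega$, so continuity is immediate from the continuity of $\bB$. For $\bt=\bff$ a face, $\Pi_{\bx_n}$ is the half-space tangent to $\bff$ at $\bx_n$, so $\Lambda_\bt(\bx_n)=|\bB_{\bx_n}|\ee\sigma(\theta_n)$ where $\theta_n$ is the unoriented angle between $\bB_{\bx_n}$ and the tangent plane to $\bff$ at $\bx_n$; by definition \eqref{E:Defextension}, $\Lambda_\bt(\bx_0)$ equals $\En(\bB_{\bx_0},\Pi_\dx)=|\bB_{\bx_0}|\ee\sigma(\theta_\infty)$, where $\Pi_\dx$ is the half-space whose boundary is the tangent plane to $\bff$ at $\bx_0$ and $\theta_\infty$ is the corresponding angle. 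I would then use the fact that the unit normal to $\bff$ at $\bx_n\to\bx_0$ is a continuous function of $\bx_n\in\overline\bff$ (true whether $\bx_0$ lies on an edge or at a vertex, since the face itself is smooth up to its boundary), so $\theta_n\to\theta_\infty$, and conclude by continuity of $\sigma$ and of $\bB$.

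For $\bt=\be$ an edge and $\bx_0$ a vertex, the tangent cone $\Pi_{\bx_n}$ is a wedge $\cW_{\alpha_n}$ whose opening $\alpha_n$ and edge direction $\dir_n$ are continuous functions of $\bx_n\in\overline\be$; $\Pi_\dx$, by construction via the singular chain issued from $\bx_0$, is precisely the wedge obtained as the tangent cone to $\Pi_{\bx_0}$ along the edge of $\Pi_{\bx_0}$ corresponding to $\be$, with opening $\alpha_\infty$ and direction $\dir_\infty$. Passing to the coordinates in which $\cW_{\alpha_n}$ is put in model position and using the rotation invariance $\En(\bB_{\bx_n},\Pi_{\bx_n}) = \En(R_n\bB_{\bx_n},\cW_{\alpha_n})$, continuity will follow from Lemma~\ref{lem:contwedge} applied at $(R_\infty\bB_{\bx_0},\alpha_\infty)\in\dS^2\times(0,2\pi)$, together with convergence $R_n\bB_{\bx_n}\to R_\infty\bB_{\bx_0}$ and $\alpha_n\to\alpha_\infty$.

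The main geometric step --- and the one place that needs real argument rather than quotation --- is the convergence of the geometric parameters ($\dir_n,\alpha_n$ in the edge case, the unit normal in the face case) to the parameters attached to $\Pi_\dx$. I would handle it uniformly by invoking the local diffeomorphism $\diffeo^{\bx_0}$ of Section~\ref{SS:tangent} whose differential at $\bx_0$ is the identity: since $\diffeo^{\bx_0}(\bt\cap\cU_{\bx_0})=\tilde\bt\cap\cV_{\bx_0}$ with $\tilde\bt$ a stratum of $\Pi_{\bx_0}$, the tangent cone at $\bx_n\in\bt$ is the image of the (constant) tangent cone $\Pi_\dx$ along $\tilde\bt$ by the linear map $(\mathrm{D}\diffeo^{\bx_0}|_{\bx_n})^{-1}$; the $\sC^\infty$ regularity of $\diffeo^{\bx_0}$ forces $\mathrm{D}\diffeo^{\bx_0}|_{\bx_n}\to\Id$ as $\bx_n\to\bx_0$, which yields the desired convergence of all the geometric data. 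Combining this with the continuity of $\bB$ and the relevant continuity lemma closes the three cases and thus completes the proof.
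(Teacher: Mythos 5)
Your proposal is correct and follows essentially the same route as the paper: a case split on the dimension $d$ of the reduced tangent cone ($d=0,1,2$), reducing continuity at a boundary point of the stratum to the continuity of $\bB$, of $\sigma$ (Lemma~\ref{P:continuitesigma}), and of $(\bB,\alpha)\mapsto\En(\bB,\cW_\alpha)$ (Lemma~\ref{lem:contwedge}), together with convergence of the geometric data (normal, opening, rotation) as $\by\to\bx_0$. The only difference is that you spell out, via the differential of $\diffeo^{\bx_0}$ tending to the identity, the convergence of these geometric parameters, which the paper merely asserts.
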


\begin{proof}
For $\bx\in \partial\bt$ we show that the extension defined by \eqref{E:Defextension} is continuous in $\bx$. Let $\by\in \cU_{\bx}\cap \bt$ and $\Pi_{\by}$ be the tangent cone to $\Omega$ at $\by$. In the following we will prove that 
\begin{equation}
\label{E:continuiteextension}
\lim_{\by \to \bx}\En(\bB_{\by},\Pi_{\by})=\En(\bB_{\bx},\Pi_{\dx}) \, . 
\end{equation}
 For a tangent cone $\Pi$ we denote by $d(\Pi)$ the dimension of its reduced cone. 
 Since $\bt$ is not reduced to a point, we have $d(\Pi_{\dx})=d(\Pi_{\by})\leq 2$ and we distinguish several cases:
\begin{itemize}
\item $d=0$. $\Pi_{\dx}=\Pi_{\by}=\R^3$. It follows from Section \ref{subs:R3} that $\En(\bB_{\by},\Pi_{\by})=|\bB_{\by}|$ and $\En(\bB_{\bx},\Pi_{\dx})=|\bB_{\bx}|$. Therefore \eqref{E:continuiteextension}.
\item $d=1$. $\Pi_{\dx}$ and $\Pi_{\by}$ are half-spaces. We denote by $\theta_{\bx}$ (respectively $\theta_{\by}$) the angle between $\bB_{\bx}$ and $\Pi_{\dx}$ (respectively $\bB_{\by}$ and $\Pi_{\by}$). We have $\En(\bB_{\by},\Pi_{\by})=|\bB_{\by}|\sigma(\theta_{\by})$ and $\En(\bB_{\bx},\Pi_{\dx})=|\bB_{\bx}|\sigma(\theta_{\bx})$ (see Section \ref{SS:HS}). Since $\theta_{\by}$ goes to $\theta_{\bx}$ when $\by$ goes to $\bx$, \eqref{E:continuiteextension} follows from the continuity of the function $\sigma$, see Lemma \ref{P:continuitesigma}.
\item $d=2$. $\Pi_{\dx}$ and $\Pi_{\by}$ are wedges. We denote by $\alpha_{\bx}$ and $\alpha_{\by}$ their openings. We denote by $\udiffeo^{\bx}\in\gO_3$ (respectively $\udiffeo^{\by}\in\gO_3$) the linear orthogonal transformation which maps $\Pi_{\dx}$ on $\cW_{\alpha_{\bx}}$ (respectively $\Pi_{\by}$ on $\cW_{\alpha_{\by}}$). We have 
\begin{equation}
\label{E:utiliseunetransfo}
\En(\bB_{\bx},\Pi_{\dx})=\En(\uB_{\bx},\cW_{\alpha_{\bx}})\quad \mbox{and}\quad\En(\bB_{\by},\Pi_{\by})=\En(\uB_{\by},\cW_{\alpha_{\by}}) \, .
\end{equation}
where we have denoted $\uB_{\bx}=\udiffeo^{\bx}(\bB_{\bx})$ and $\uB_{\by}=\udiffeo^{\by}(\bB_{\by})$.
 We have 
\begin{equation*}
\lim_{\by\to\bx}\|\udiffeo^{\bx}-\udiffeo^{\by} \|=0 \quad \mbox{and}\quad \lim_{\by\to\bx}|\alpha_{\bx}-\alpha_{\by}|=0 \, ,
\end{equation*}
therefore we deduce \eqref{E:continuiteextension} from \eqref{E:utiliseunetransfo} and Lemma \ref{lem:contwedge}.
\end{itemize}
Hence we have proved \eqref{E:continuiteextension} in all cases.
\end{proof}

Let $\bx\in \partial\bt$, we deduce from \eqref{eq:comp} that 
$$
   \Lambda_{\bt}(\bx)=\En(\bB_{\bx},\Pi_{\dx})\geq \En(\bB_{\bx},\Pi_{\bx})=\Lambda(\bx) \ . 
$$ 
Combining this with Lemma \ref{lem:cont}, we obtain the following:

\begin{theorem}
\label{T:sci}
Let $\Omega\in \ogD(\R^3)$ and let $\bB\in\sC^0(\overline\Omega)$ be a continuous magnetic field. Then 
the function $\Lambda:\bx\mapsto \En(\bB_{\bx},\Pi_{\bx})$ is lower semi-continuous on $\overline{\Omega}$.
\end{theorem}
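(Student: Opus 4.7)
The plan is to combine three ingredients already developed in the paper: the finiteness of the stratification \eqref{eq:stratif}, the continuity of $\Lambda_\bt$ on $\overline{\bt}$ provided by Lemma \ref{lem:cont}, and the dichotomy inequality \eqref{eq:comp} recalled just before the statement. The argument works by sequential characterization of lower semi-continuity.

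Fix $\bx_0\in\overline{\Omega}$ and an arbitrary sequence $(\by_n)\subset\overline{\Omega}$ with $\by_n\to\bx_0$. The goal is to show
\[
   \liminf_{n\to\infty}\Lambda(\by_n)\geq\Lambda(\bx_0).
\]
Since $\overline\Omega$ is a finite disjoint union of strata, by pigeonhole there exist a stratum $\bt\in\gT$ and a subsequence (relabelled $(\by_n)$) entirely contained in $\bt$. It is enough to prove the inequality along any such subsequence, so I may assume $\by_n\in\bt$ for all $n$; necessarily $\bx_0\in\overline{\bt}$.

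I would then distinguish two cases. First, if $\bt$ is a vertex (a stratum reduced to a single point), the sequence $(\by_n)$ is eventually constant equal to $\bx_0$, and the inequality becomes an equality. Second, if $\bt$ is not a vertex, Lemma \ref{lem:cont} applies and gives a continuous extension of $\Lambda_\bt$ to $\overline{\bt}$. Continuity yields $\Lambda_\bt(\by_n)\to\Lambda_\bt(\bx_0)$, so
\[
   \liminf_{n\to\infty}\Lambda(\by_n)
   = \lim_{n\to\infty}\Lambda_\bt(\by_n)
   = \Lambda_\bt(\bx_0).
\]
If $\bx_0\in\bt$, then $\Lambda_\bt(\bx_0)=\Lambda(\bx_0)$ and we are done. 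If $\bx_0\in\partial\bt$, then by the definition \eqref{E:Defextension} of the extension there is a singular chain $\dx\in\gC^*_{\bfz}(\Pi_{\bx_0})$ with $\Lambda_\bt(\bx_0)=\En(\bB_{\bx_0},\Pi_\dx)$, and applying \eqref{eq:comp} to the cone $\Pi_{\bx_0}$ gives
\[
   \Lambda_\bt(\bx_0)
   = \En(\bB_{\bx_0},\Pi_\dx)
   \geq \En(\bB_{\bx_0},\Pi_{\bx_0})
   = \Lambda(\bx_0).
\]
Combining the two cases establishes the desired inequality.

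The proof is essentially assembly; the actual substance is packed into Lemma \ref{lem:cont} (continuity on each stratum and its closure) and into Theorem \ref{th:dicho} (which yields \eqref{eq:comp}). The only delicate verification is that the ``boundary value'' $\Lambda_\bt(\bx_0)$ coming from the extension really is the energy of the tangent operator on the cone $\Pi_\dx$ associated with the singular chain of the stratum, so that \eqref{eq:comp} can be applied; this is guaranteed by how \eqref{E:Defextension} is constructed via the local diffeomorphism $\diffeo^{\bx_0}$. No new obstacle appears beyond what is handled in the preceding lemma.
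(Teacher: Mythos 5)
Your proof is correct and follows essentially the same route as the paper: the paper likewise deduces lower semi-continuity by combining the continuous extension of $\Lambda_\bt$ to $\overline{\bt}$ from Lemma \ref{lem:cont} with the inequality $\Lambda_\bt(\bx)=\En(\bB_\bx,\Pi_\dx)\geq\En(\bB_\bx,\Pi_\bx)=\Lambda(\bx)$ obtained from \eqref{eq:comp}. Your sequential formulation and the explicit treatment of vertex strata merely spell out details the paper leaves implicit.
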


%%%%%%%%%%%%%%%%%%%%%%%%%%%%%%%%%%%
\section{Upper bound for first eigenvalues in polyhedral domains}
\label{sec:up}
%%%%%%%%%%%%%%%%%%%%%%%%%%%%%%%%%%%
In this section we prove general upper bounds for the first eigenvalue $\lambda_{h}(\bB,\Omega)$ by $h\sE(\bB,\Omega)$ up to a remainder of size $h^{\kappa}$ with $\kappa >1$. The first theorem provides an upper bound with $\kappa=\frac{5}{4}$ using a novel construction of quasimodes depending on the geometry. Under more regularity assumptions on the potential and using more knowledge on the model problems we refine the quasimodes and we reach an upper bound with $\kappa=\frac{4}{3}$. 

Here is our first result:
\begin{theorem}
\label{T:generalUB}
Let $\Omega\in \ogD(\R^3)$ be a polyhedral domain, $\bA\in W^{2,\infty}(\overline{\Omega})$ be a twice differentiable magnetic potential such that the associated magnetic field $\bB$ does not vanish on $\overline\Omega$. Then there exist $C(\Omega)>0$ and $h_{0}>0$ such that
\begin{equation}
\label{eq:above1}
   \forall h\in (0,h_{0}), \quad \lambda_{h}(\bB,\Omega) \leq 
   h\sE(\bB,\Omega)+C(\Omega)(1+\|\bA\|_{W^{2,\infty}(\Omega)}^2)\,h^{5/4} \ . 
\end{equation}
We recall that the quantity $\sE(\bB,\Omega)$ is the lowest local energy defined in \eqref{eq:s}.
\end{theorem}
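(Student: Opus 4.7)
The strategy is to apply the min-max principle and construct a quasimode $\psi_h$ whose Rayleigh quotient $q_h[\bA,\Omega](\psi_h)/\|\psi_h\|^2$ is bounded by $h\sE(\bB,\Omega) + Ch^{5/4}$. The starting point is Theorem~\ref{T:sci} (semi-continuity) together with \eqref{eq:s,min}, which ensure that the infimum $\sE(\bB,\Omega)$ is attained at some $\bx_0 \in \overline\Omega$, and Theorem~\ref{th:dicho} combined with Remark~\ref{rem:chaine}, which produce a singular chain $\dx \in \gC_{\bx_0}(\Omega)$ such that the tangent operator $\OP(\bA_\dx,\Pi_\dx)$ admits an admissible generalized eigenvector $\Psi$ of energy $\sE(\bB,\Omega)$. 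In case~(i) of the dichotomy $\Pi_\dx = \Pi_{\bx_0}$ and the concentration point is $\bx_\star := \bx_0$; in case~(ii) $\Pi_\dx \neq \Pi_{\bx_0}$, and I would place the concentration point in a neighbouring stratum whose tangent cone coincides with $\Pi_\dx$, at distance of order $r := h^{3/8}$ from $\bx_0$. Writing $\Psi^\natural(\by,\bz) = e^{i\vartheta(\by,\bz)}\Phi(\bz)$ in the natural coordinates of Definition~\ref{D:Age}, the quasimode is defined by
$$
  \psi_h(\bx) = \chi\!\Big(\tfrac{\bx-\bx_\star}{r}\Big)\,
  e^{i\Xi(\bx)/\sqrt h}\,
  \Psi\!\Big(\tfrac{\diffeo^{\bx_\star}(\bx)}{\sqrt h}\Big),
$$
where $\chi$ is a fixed smooth compactly supported cutoff equal to $1$ near the origin, $\diffeo^{\bx_\star}$ is the straightening diffeomorphism of Section~\ref{SS:tangent}, and $\Xi$ is a gauge phase absorbing $\bA(\bx_\star)$ via Lemma~\ref{lem:geneig}. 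In the scaled variable $\bu = \diffeo^{\bx_\star}(\bx)/\sqrt h$, the operator factorizes by homogeneity (Lemma~\ref{lem.scal}) as $h\cdot\OP(\bA_\dx,\Pi_\dx)$, so the leading order of the Rayleigh quotient equals $h\ee\En(\bB_{\bx_0},\Pi_\dx) = h\ee\sE(\bB,\Omega)$.

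The error analysis splits into four contributions. The Taylor remainder satisfies $|\bA(\bx) - \bA(\bx_\star) - \bA_\dx(\bx-\bx_\star)| \leq C|\bx-\bx_\star|^2\|\bA\|_{W^{2,\infty}}$ on the support of $\chi$; inserted into the difference $q_h[\bA,\Omega]-q_h[\bA_\dx,\Omega]$, using Cauchy--Schwarz and the a~priori bound $\|(-ih\nabla + \bA_\dx)\psi_h\| \lesssim \sqrt h\,\|\psi_h\|$, it yields Rayleigh quotient corrections of order $r^2\|\bA\|_{W^{2,\infty}}\sqrt h$ and $r^4\|\bA\|_{W^{2,\infty}}^2$. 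The cutoff commutator $[\chi(\cdot/r),-ih\nabla]$ contributes $h^2/r^2$. The Jacobian and metric $\rG$ of \eqref{D:fqG} induced by $\diffeo^{\bx_\star}$ differ from the Euclidean ones by $O(|\bx-\bx_\star|)=O(r)$, producing a subdominant remainder $O(r\sqrt h)$. Finally, in the decaying directions $\bz$ the Agmon bound \eqref{eq:agmongeneig} makes the tail of $\psi_h$ exponentially small in $h$, while in the non-decaying directions $\by$ the cutoff penalty above is sharp. The choice $r = h^{3/8}$ balances the cutoff term $h^2/r^2$ with the first-order Taylor term $r^2\|\bA\|_{W^{2,\infty}}\sqrt h$, both yielding $Ch^{5/4}$, and the quadratic Taylor term gives $h^{3/2}\|\bA\|_{W^{2,\infty}}^2 \leq \|\bA\|_{W^{2,\infty}}^2 h^{5/4}$. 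After normalisation by $\|\psi_h\|^2$ one obtains \eqref{eq:above1} with constant controlled by $1 + \|\bA\|_{W^{2,\infty}}^2$.

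The main obstacle is the uniform treatment of the distinct geometric configurations of Table~\ref{T:age}: the number $k \in \{1,2,3\}$ of directions of exponential decay of $\Psi$ depends on both the stratum containing $\bx_0$ and the orientation of the magnetic field, and in each configuration one must verify that the complementary $\by$-directions are genuinely tangential to the relevant stratum, so that the isotropic cutoff $\chi((\bx-\bx_\star)/r)$ is compatible with the support constraint $\supp\chi \subset \diffeo^{\bx_\star}(\cU_{\bx_\star})$. A secondary obstacle is the displacement $\bx_\star \neq \bx_0$ in case~(ii) of Theorem~\ref{th:dicho}: here I would invoke Lemma~\ref{L:Elip} (Lipschitz dependence of $\En$ on $\bB$) together with $\bB \in \sC^1(\overline\Omega)$ (implied by $\bA \in W^{2,\infty}$) to estimate $|\En(\bB_{\bx_\star},\Pi_\dx) - \sE(\bB,\Omega)| \leq C|\bx_\star - \bx_0|\,\|\bA\|_{W^{2,\infty}}$, which with $|\bx_\star-\bx_0| = O(h^{3/8})$ yields an additional contribution of order $h^{11/8}\|\bA\|_{W^{2,\infty}}$, safely dominated by the target $h^{5/4}$.
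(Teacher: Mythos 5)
Your overall architecture matches the paper's: the minimizer $\bx_0$ exists by lower semi-continuity, Theorem \ref{th:dicho} supplies an admissible generalized eigenvector, the quasimode is a cutoff at scale $h^{3/8}$ of the $h^{1/2}$-scaled eigenvector, and the error budget $h^{2-2\delta}+h^{1/2+2\delta}+h^{4\delta}+h^{1+\delta}$ optimized at $\delta=3/8$ is exactly the paper's. Case (i) of the dichotomy (sitting quasimodes) is correct as you describe it. The gap is in case (ii). You relocate the \emph{physical} concentration point to $\bx_\star$ in an adjacent stratum at distance $h^{3/8}$ from $\bx_0$ and build the quasimode with the chart $\diffeo^{\bx_\star}$ and the frozen data $(\bB_{\bx_\star},\Pi_{\bx_\star})$. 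This forces you to compare $\En(\bB_{\bx_\star},\Pi_{\bx_\star})$ with $\sE(\bB,\Omega)=\En(\bB_{\bx_0},\Pi_\dx)$ at a quantitative rate ($O(h^{1/4})$ is needed; you claim $O(h^{3/8})$). Lemma \ref{L:Elip} gives Lipschitz dependence on $\bB$ only \emph{for a fixed cone}; but for a curvilinear polyhedral domain the tangent cone $\Pi_{\bx_\star}$ does not coincide with $\Pi_\dx$ --- when the adjacent stratum is a curved edge it is a wedge whose opening and orientation vary with $\bx_\star$ --- and the paper only establishes \emph{continuity} of $(\bB,\alpha)\mapsto\En(\bB,\cW_\alpha)$ (Lemma \ref{lem:contwedge}), with no rate. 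So your displacement estimate $|\En(\bB_{\bx_\star},\Pi_{\bx_\star})-\sE(\bB,\Omega)|\le C|\bx_\star-\bx_0|$ is unjustified. A second problem with the relocation: the map-neighborhood $\cU_{\bx_\star}$ cannot contain the lower-dimensional stratum point $\bx_0$, so its radius is at most $|\bx_\star-\bx_0|=h^{3/8}$, which is comparable to the support of your cutoff; the support constraint is borderline rather than automatic, and the existence and uniform Agmon decay of generalized eigenvectors for the perturbed model data would also have to be re-established.

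The paper's sliding construction avoids all of this: it keeps the chart and the linearization point at $\bx_0$ and translates the quasimode \emph{inside the exact tangent cone} $\Pi_{\bx_0}$ by $\dec=h^\delta\dir$, multiplying by the linear phase $\re^{-\ri\langle\pot_\bfz(\dec),\bx\rangle/h}$. Because the model potential is linear, this translation is an exact gauge transformation (Lemma \ref{lem.transl}): the model energy remains exactly $h\Lambda_\bfz$, no displacement error appears, and the compatibility of supports is checked once at scale one via $\cB(\dir,2R)\cap\Pi_{\bx_0}=\cB(\dir,2R)\cap\Pi_\dx$. To keep your route you would have to either restrict to straight polyhedra or prove quantitative stability of $\En(\bB,\cW_\alpha)$ in $\alpha$. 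One further slip: the metric error is not $O(r\sqrt h)=O(h^{7/8})$ --- that would swamp $h^{5/4}$ --- but a \emph{relative} error $O(r)$ on a Rayleigh quotient of size $O(h)$, hence $O(h^{1+\delta})=O(h^{11/8})$, which is indeed subdominant.
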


It is possible to obtain an upper bound in \eqref{eq:above1} depending on the magnetic field $\bB$ and not on the magnetic potential. For this, we consider $\bB$ as a datum and associate a potential $\bA$ with it. Operators $\sA:\bB\mapsto\bA$ lifting the curl (i.e.\ such that $\curl\circ\,\sA=\Id$) and satisfying suitable estimates do exist in the literature. We quote \cite{CostabelMcIntosh10} in which it is proved that such lifting can be constructed as a pseudo-differential operator of order $-1$. As a consequence $\sA$ is continuous between H\"older classes of non integer order:
\[
   \forall\alpha\in(0,1),\quad
   \exists C_\alpha>0,\quad
   \|\sA\bB\|_{W^{2+\alpha,\infty}(\Omega)} \le C_\alpha  
   \|\bB\|_{W^{1+\alpha,\infty}(\Omega)} \,.
\]
Choosing $\bA=\sA\bB$ in Theorem \ref{T:generalUB}, we deduce the following.

\begin{corollary}
\label{co:T:generalUBB}
Let $\Omega\in \ogD(\R^3)$ be a polyhedral domain, $\bB\in W^{1+\alpha,\infty}(\overline{\Omega})$ be a non-vanishing  H\"older continuous magnetic field of order $1+\alpha$ with some $\alpha\in(0,1)$. Then there exist $C(\Omega)>0$ and $h_{0}>0$ such that
\begin{equation}
\label{eq:above1B}
   \forall h\in (0,h_{0}), \quad \lambda_{h}(\bB,\Omega) \leq 
   h\sE(\bB,\Omega)+C(\Omega)(1+\|\bB\|_{W^{1+\alpha,\infty}(\Omega)}^2)\,h^{5/4} \ . 
\end{equation}
\end{corollary}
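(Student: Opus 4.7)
The plan is to reduce directly to Theorem \ref{T:generalUB} by constructing, from the datum $\bB$, a potential $\bA$ with $\curl\bA=\bB$ whose $W^{2,\infty}$ norm is controlled by the given Hölder norm of $\bB$. The pivotal ingredient is the pseudo-differential lifting operator $\sA$ of Costabel--McIntosh \cite{CostabelMcIntosh10}, an inverse of $\curl$ that is continuous of order $-1$ on non-integer Hölder scales. Fixing $\alpha\in(0,1)$, this yields
\[
   \|\sA\bB\|_{W^{2+\alpha,\infty}(\Omega)} \le C_\alpha\,\|\bB\|_{W^{1+\alpha,\infty}(\Omega)}.
\]

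Set $\bA := \sA\bB$, so that $\curl\bA=\bB$. By the continuous embedding $W^{2+\alpha,\infty}(\Omega) \hookrightarrow W^{2,\infty}(\Omega)$ on a Lipschitz bounded domain, we obtain $\bA\in W^{2,\infty}(\overline{\Omega})$ together with the estimate
\[
   \|\bA\|_{W^{2,\infty}(\Omega)} \le C(\Omega)\,\|\sA\bB\|_{W^{2+\alpha,\infty}(\Omega)}
   \le C(\Omega)\,C_\alpha\,\|\bB\|_{W^{1+\alpha,\infty}(\Omega)}.
\]
Applying Theorem \ref{T:generalUB} to this particular potential gives, for $h\in(0,h_0)$,
\[
   \lambda_h(\bB,\Omega) \le h\,\sE(\bB,\Omega) + C(\Omega)\,\bigl(1+\|\bA\|_{W^{2,\infty}(\Omega)}^2\bigr)\,h^{5/4},
\]
and substituting the previous bound on $\|\bA\|_{W^{2,\infty}(\Omega)}$ yields \eqref{eq:above1B}. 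That the left-hand side $\lambda_h(\bB,\Omega)$ depends only on the field $\bB$, and not on the chosen lifting, follows from the simple connectedness hypothesis on $\Omega$ (built into the general conventions of Section~\ref{sec:intro}) together with the gauge invariance recorded below \eqref{eq:B}, so we are free to make this specific choice of potential.

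There is no genuine obstacle here beyond invoking an off-the-shelf regularity result: the only nontrivial input is the existence of a curl-inverse $\sA$ that gains one Hölder derivative on the polyhedral domain $\Omega$, and this is exactly what \cite{CostabelMcIntosh10} provides. The reason one must pass through a non-integer Hölder index $1+\alpha$ rather than the plain $W^{1,\infty}$ scale is precisely that the $-1$-order pseudo-differential lifting is not continuous between the integer-order Lipschitz spaces $W^{1,\infty}$ and $W^{2,\infty}$, but becomes continuous after the arbitrarily small Hölder shift by $\alpha>0$.
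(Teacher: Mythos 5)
Your proof is correct and follows essentially the same route as the paper: choose $\bA=\sA\bB$ via the Costabel--McIntosh pseudo-differential lifting, use its continuity from $W^{1+\alpha,\infty}$ to $W^{2+\alpha,\infty}\hookrightarrow W^{2,\infty}$, and apply Theorem~\ref{T:generalUB} with that potential. The remarks on gauge invariance and on why the non-integer H\"older index is needed are consistent with the paper's own presentation.
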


Theorem \ref{T:generalUB} is proved in Sections \ref{SS:CV}--\ref{SS:estimQM} according to the following plan. 
Since the energy $\bx \mapsto \En(\bB_{\bx},\Pi_{\bx})$ is lower semi-continuous (see Theorem \ref{T:sci}), it reaches its infimum over the compact $\overline{\Omega}$. We denote by $\bx_{0}\in \overline{\Omega}$ a point such that 
\begin{equation}
\label{D:x0}
   \En(\bB_{\bx_{0}}\ee,\Pi_{\bx_{0}})= \sE(\bB \ee,\Omega)
\end{equation}
where $\Pi_{\bx_{0}}$ is the tangent cone at $\bx_{0}$. Starting from this, the proof is organized in three main steps, developed in Sections \ref{SS:CV}, \ref{SS:QM}, and \ref{SS:estimQM} respectively:
\begin{enumerate}
\item By the local diffeomorphism $\diffeo^{\bx_{0}}$ in the neighborhood $\cU_{\bx_0}$ of $\bx_{0}$ introduced in \eqref{eq:diffeo}, we reduce to a local magnetic operator set on the tangent cone $\Pi_{\bx_{0}}$.
\item We construct quasimodes for the tangent magnetic operator at the vertex $\bfz$ of $\Pi_{\bx_{0}}$. Here we use Theorem \ref{th:dicho} which provides us with 
a suitable admissible generalized eigenfunction associated with the energy $\En(\bB_{\bx_{0}}\ee,\Pi_{\bx_{0}})$.
This generalized eigenfunction will be scaled, truncated and translated in order to give a quasimode for the local magnetic operator.
\item The estimation of various terms in the associated Rayleigh quotient and the min-max principle will finally prove Theorem \ref{T:generalUB}.
\end{enumerate}

If steps (1) and (3) are very classical, step (2) reveals much more originality, because our constructions are valid in any configuration: We do not need to know {\it a priori} whether $(\bB_{\bx_{0}}\ee,\Pi_{\bx_{0}})$ is in situation (i) or (ii) of the dichotomy theorem. If we are in situation (i) -- the most classical one -- our quasimodes will be classical too, and qualified as ``sitting''. If we are in situation (ii), we define our quasimodes on a higher structure $\Pi_\dx$ and make then ``slide'' towards the vertex $\bfz$ of $\Pi_{\bx_{0}}$.

Our second result is also general, the unique additional assumption is a supplementary regularity on the magnetic potential (or equivalently on the magnetic field). 

\begin{theorem}
\label{T:sUB}
Let $\Omega\in \ogD(\R^3)$ be a polyhedral domain,  $\bA\in W^{3,\infty}(\overline{\Omega})$ be a magnetic potential such that the associated magnetic field does not vanish.
Then there exist $C(\Omega)>0$ and $h_{0}>0$ such that
\begin{equation}
\forall h\in (0,h_{0}), \quad \lambda_{h}(\bB,\Omega) \leq h\sE(\bB,\Omega)+C(\Omega)(1+\|\bA\|_{W^{3,\infty}(\Omega)}^2)\,h^{4/3} \ . 
\end{equation}
\end{theorem}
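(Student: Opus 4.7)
The proof follows the architecture of Theorem~\ref{T:generalUB}. Select $\bx_{0}\in\overline{\Omega}$ attaining the minimum $\En(\bB_{\bx_{0}},\Pi_{\bx_{0}})=\sE(\bB,\Omega)$, localize near $\bx_{0}$ via the diffeomorphism $\diffeo^{\bx_{0}}$ to the tangent cone $\Pi_{\bx_{0}}$, and invoke Theorem~\ref{th:dicho} to obtain an admissible generalized eigenfunction $\Psi$, sitting on $\Pi_{\bx_{0}}$ if $\En<\seE$ or sliding to $\Pi_{\dx}$ along a chain $\dx$ if $\En=\seE$. The gain from $h^{5/4}$ to $h^{4/3}$ stems exclusively from a refined quasimode built from one more order of Taylor expansion of $\bA$.

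\textbf{Second-order Taylor and cubic phase.} With $\bA\in W^{3,\infty}(\overline{\Omega})$, Taylor's formula yields
\[
 \bA(\bx)=\bA(\bx_{0})+\bA_{\bx_{0}}(\bx-\bx_{0})+Q(\bx-\bx_{0})+R(\bx-\bx_{0}),
\]
with $Q$ the homogeneous quadratic Taylor polynomial and $|R(\bv)|\le C\|\bA\|_{W^{3,\infty}}|\bv|^{3}$. By the usual Helmholtz-type decomposition, write $Q=\nabla\phi+Q^{*}$, where $\phi$ is a homogeneous cubic scalar polynomial and $\curl Q^{*}=\nabla\bB(\bx_{0})\cdot(\bx-\bx_{0})$. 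The gauge factor $\exp(\ri[\bA(\bx_{0})\cdot(\bx-\bx_{0})+\phi(\bx-\bx_{0})]/h)$ absorbs the constant potential $\bA(\bx_{0})$ and the curl-free quadratic part $\nabla\phi$. After the magnetic scaling $\by=(\bx-\bx_{0})/h^{1/2}$, the rescaled effective magnetic potential acting on $\Psi(\by)$ reads $\bA_{\bx_{0}}(\by)+h^{1/2}Q^{*}(\by)+O(h\|\bA\|_{W^{3,\infty}}|\by|^{3})$.

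\textbf{Truncation at scale $h^{1/3}$.} We define
\[
 \qm_{h}(\bx)=\re^{\ri[\bA(\bx_{0})\cdot(\bx-\bx_{0})+\phi(\bx-\bx_{0})]/h}\,\chi(|\bx-\bx_{0}|/\rho)\,\Psi((\bx-\bx_{0})/h^{1/2}),
\]
with $\chi$ a standard cutoff and $\rho=h^{1/3}$ (equivalently $|\by|\le h^{-1/6}$ in scaled variables). The error $q_{h}[\bA,\Omega](\qm_{h})-\En(\bB_{\bx_{0}},\Pi_{\bx_{0}})\|\qm_{h}\|^{2}$ splits into: (a) the cutoff commutator $|\nabla\chi|^{2}$, of size $h^{2}\rho^{-2}=h^{4/3}$; (b) the cubic remainder $R$, whose $L^{\infty}$-norm on the support is $\|\bA\|_{W^{3,\infty}}\rho^{3}=h\|\bA\|_{W^{3,\infty}}$, contributing $O((1+\|\bA\|_{W^{3,\infty}}^{2})h^{4/3})$ after Cauchy--Schwarz and expansion of the square; (c) the cross term with $h^{1/2}Q^{*}$. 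The squared norm $\|\qm_{h}\|^{2}$ is controlled from below using the Agmon decay~\eqref{E:agmonuniform} in the $\bz$-directions and the modulus-constancy of $\Psi$ in any sliding directions, the sliding-volume factor cancelling in the Rayleigh quotient.

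\textbf{Main obstacle.} The delicate point is (c). Naive estimates of the cross term $2h^{1/2}\,\mathrm{Re}\,\langle Q^{*}\Psi,(-\ri\nabla+\bA_{\bx_{0}})\Psi\rangle$ are insufficient to reach the $h^{4/3}$ level because the polynomial growth of $Q^{*}(\by)$ on $|\by|\le h^{-1/6}$ defeats the decay available from $\Psi$. One must therefore exhibit a genuine first-order cancellation. The underlying mechanism is a Feynman--Hellmann identity: the minimality of $\bx_{0}$ for the lower semi-continuous function $\bx\mapsto\En(\bB_{\bx},\Pi_{\bx})$ (Theorem~\ref{T:sci}), together with its stratum-wise continuity (Section~\ref{sec:cont}) and the local Lipschitz regularity of $\bB\mapsto\En(\bB,\Pi)$ (Lemma~\ref{L:Elip}), forces the tangential derivatives of this function at $\bx_{0}$ along its stratum to vanish; a standard computation then identifies these derivatives with the cross term in question. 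When $\bx_{0}$ lies on the boundary of its stratum (e.g.\ a vertex with $\En=\seE$), one argues instead through the sliding mechanism of Theorem~\ref{th:dicho}: the value $\En(\bB_{\bx_{0}},\Pi_{\dx})$ matches the ground energy of a tangent operator on a higher stratum where the above Feynman--Hellmann argument applies, possibly combined with the addition of a Rayleigh--Schr\"odinger correction $h^{1/2}\Psi_{1}$, whose solvability is ensured by the exponential decay~\eqref{E:agmonuniform}. Once (c) is bounded at level $h^{4/3}$, the min-max principle yields the announced inequality, with constant bounded by $C(\Omega)(1+\|\bA\|_{W^{3,\infty}}^{2})$ through the explicit form of $R$ and $Q^{*}$ in terms of the second and third derivatives of $\bA$.
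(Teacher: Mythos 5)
There is a genuine gap, concentrated exactly where you locate the difficulty: the treatment of the cross term with the quadratic part of the potential. Your proposed mechanism — that minimality of $\bx_0$ for $\bx\mapsto\En(\bB_\bx,\Pi_\bx)$ forces its tangential derivatives to vanish, and that these derivatives ``identify with the cross term'' — does not work. First, the minimizer may sit at a vertex or at the boundary of its stratum, where there are no tangential directions or no differentiability, and the function is in general only Lipschitz; the theorem is claimed (and proved in the paper) with no criticality hypothesis whatsoever. Second, even at an interior critical point, the tangential gradient of the scalar function $\Lambda$ is a few real numbers, whereas the cross term $2h^{1/2}\Re\langle Q^{*}\Psi,(-i\nabla+\bA_{\bx_0})\Psi\rangle$ involves the full matrix $\nabla\bB(\bx_0)$ contracted against second moments and currents of $\Psi$; there is no identification between the two, and indeed the cross term is \emph{not} zero in general — it is merely small for other reasons. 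Third, your fallback via a Rayleigh--Schr\"odinger corrector $h^{1/2}\Psi_1$ is unavailable: for $k<3$ the energy $\Lambda_\bfz$ lies at the bottom of the essential spectrum of the model operator, so the corrector equation $(\OP-\Lambda_\bfz)\Psi_1=\cdots$ has no solvability theory, exponential decay of $\Phi$ notwithstanding. Finally, your Helmholtz split $Q=\nabla\phi+Q^{*}$ is too generic: $Q^{*}$ still grows quadratically in the non-decaying directions, which is precisely what defeats the estimate.

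What the paper actually does is case-by-case in $k$, the number of decaying directions. For $k=2$ the gauge function $F$ is chosen (Lemma \ref{L:d2ell0}) so as to annihilate the second derivative of the potential in the \emph{single} non-decaying direction $y$; the surviving quadratic terms are $O(|y|\,|\bz|+|\bz|^2)$ and are absorbed by the exponential decay in $\bz$ (effectively $|\bz|\sim h^{1/2}$), giving $\hat a_h\le Ch^{\min(1,\frac12+\delta,3\delta)}$ and hence $h^{4/3}$ with $\delta=\tfrac13$ — no cancellation of the cross term is claimed. For $k=1$ (tangent field on a half-space) there are two non-decaying directions and no cubic gauge can kill both; instead the cross term collapses to a single component weighted by $(z-\sqrt{h\Theta_0})$ because $\Phi$ and the cut-off are real, and the contribution of the pure-$\by$ quadratic $P_2(\by)$ is killed by the Feynman--Hellmann identity $\int_{\R_+}(z-\sqrt{\Theta_0})|\Phi(z)|^2\,\rd z=0$ for the de Gennes operator $\DG{\tau}$ at its minimizing $\tau=-\sqrt{\Theta_0}$ — a Feynman--Hellmann in the Fourier parameter of the \emph{model} operator, not a spatial one tied to the minimality of $\bx_0$. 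For $k=3$ one already has $h^{3/2}|\log h|$ from Proposition \ref{P:Cornerconcentration}. Your outline is correct on the architecture (sitting/sliding quasimodes, scale $h^{1/3}$, the $h^2\rho^{-2}$ and cubic-remainder bookkeeping), but the central cancellation argument must be replaced by these two distinct, $k$-dependent mechanisms.
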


Like for Corollary~\ref{co:T:generalUBB}, we can deduce an upper bound where the constant depends on the magnetic field. 
\begin{corollary}
\label{co:T:generalUBB2}
Let $\Omega\in \ogD(\R^3)$ be a polyhedral domain, $\bB\in W^{2+\alpha,\infty}(\overline{\Omega})$ be a non-vanishing  H\"older continuous magnetic field of order $2+\alpha$ with some $\alpha\in(0,1)$. Then there exist $C(\Omega)>0$ and $h_{0}>0$ such that
\begin{equation}
\label{eq:above1B2}
   \forall h\in (0,h_{0}), \quad \lambda_{h}(\bB,\Omega) \leq 
   h\sE(\bB,\Omega)+C(\Omega)(1+\|\bB\|_{W^{2+\alpha,\infty}(\Omega)}^2)\,h^{4/3} \ . 
\end{equation}
\end{corollary}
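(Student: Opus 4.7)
The argument simply transfers Theorem~\ref{T:sUB} to the magnetic-field setting, exactly as Corollary~\ref{co:T:generalUBB} was deduced from Theorem~\ref{T:generalUB}. The idea is to produce a particular potential $\bA$ representing $\bB$ with controlled $W^{3,\infty}$ norm and then invoke Theorem~\ref{T:sUB}.

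First I would invoke the lifting operator $\sA$ constructed in \cite{CostabelMcIntosh10}, which is a pseudo-differential operator of order $-1$ on the (Lipschitz) polyhedral domain $\Omega$ satisfying $\curl\circ\,\sA=\Id$. Since such an operator is continuous between non-integer H\"older--Zygmund classes, one obtains, for any $\alpha\in(0,1)$, a constant $C_\alpha>0$ such that
\[
   \|\sA\bB\|_{W^{3+\alpha,\infty}(\Omega)} \le C_\alpha \|\bB\|_{W^{2+\alpha,\infty}(\Omega)}.
\]
This is the analogue, one order up, of the estimate quoted just before Corollary~\ref{co:T:generalUBB}.

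Next I would set $\bA := \sA\bB$, so that $\curl\bA = \bB$ by construction. Using the continuous embedding $W^{3+\alpha,\infty}(\Omega) \hookrightarrow W^{3,\infty}(\Omega)$ for non-integer $\alpha\in(0,1)$, we have $\bA \in W^{3,\infty}(\overline\Omega)$ with
\[
   \|\bA\|_{W^{3,\infty}(\Omega)} \;\le\; \|\bA\|_{W^{3+\alpha,\infty}(\Omega)} \;\le\; C_\alpha \|\bB\|_{W^{2+\alpha,\infty}(\Omega)}.
\]
Since $\bB=\curl\bA$ does not vanish on $\overline\Omega$ by hypothesis, the assumptions of Theorem~\ref{T:sUB} are met for this $\bA$, yielding constants $C(\Omega)>0$ and $h_0>0$ such that
\[
   \lambda_h(\bB,\Omega) \le h\sE(\bB,\Omega) + C(\Omega)\bigl(1+\|\bA\|_{W^{3,\infty}(\Omega)}^{2}\bigr) h^{4/3}.
\]
Substituting the previous norm bound and absorbing the factor $C_\alpha^{2}$ into a new constant still denoted $C(\Omega)$ produces estimate \eqref{eq:above1B2}.

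There is essentially no obstacle: the proof is a direct application of Theorem~\ref{T:sUB} once the lifting $\sA$ is available. The only point that deserves care is the validity of the H\"older mapping property of $\sA$ on the non-smooth domain $\Omega$, which is covered by the Lipschitz framework of \cite{CostabelMcIntosh10}, together with the fact that using a non-integer exponent $\alpha\in(0,1)$ is necessary to avoid the failure of pseudo-differential calculus on integer-order $W^{k,\infty}$ spaces.
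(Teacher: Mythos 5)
Your proposal is correct and follows exactly the route the paper intends: it deduces the corollary from Theorem~\ref{T:sUB} by choosing $\bA=\sA\bB$ with the lifting operator of \cite{CostabelMcIntosh10} and using its continuity between non-integer H\"older classes, just as Corollary~\ref{co:T:generalUBB} is deduced from Theorem~\ref{T:generalUB}. The paper gives no further detail for this corollary beyond ``like for Corollary~\ref{co:T:generalUBB}'', so your write-up is a faithful (indeed slightly more explicit) version of the intended argument.
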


 Note that the $h^{4/3}$ bound was known for smooth three-dimensional domains, \cite[Proposition 6.1 \& Remark 6.2]{HeMo04} and that our result extends this result to polyhedral domains without loss.

Theorem \ref{T:sUB} is proved in Section \ref{S:improvement4/3} and we give now some hint on the proof. Like for Theorem \ref{T:generalUB} we start from admissible generalized eigenfunctions and construct sitting or sliding quasimodes adapted to the geometry. However, unlike for the proof of Theorem \ref{T:generalUB}, we are going to actually take advantage of the decaying property of the generalized eigenfunctions and adopt different strategies depending on the number $k$ of directions in which the generalized eigenfunction has exponential decay: A Feynman-Hellmann formula if $k=1$, a refined Taylor expansion of the potential if $k=2$, and an Agmon decay estimate if $k=3$.

\subsection{Change of variables}
\label{SS:CV}

Let us recall from Section \ref{SS:tangent} that the local smooth diffeomorphism $\diffeo^{\bx_0}$ maps a neighborhood $\cU_{\bx_0}$ of $\bx_0$ onto a neighborhood $\cV_{\bx_0}$ of $\bfz$ so that
\begin{equation}
\label{eq:diffeox0}
   \diffeo^{\bx_0}(\cU_{\bx_0}\cap\Omega) = \cV_{\bx_0}\cap\Pi_{\bx_0} \quad\mbox{and}\quad
   \diffeo^{\bx_0}(\cU_{\bx_0}\cap\partial\Omega) = \cV_{\bx_0}\cap\partial\Pi_{\bx_0} .
\end{equation}
The differential of $\diffeo^{\bx_0}$ at the point $\bx_0$ is the identity matrix $\Id_3$.
Let
$$
   \rJ:=\rd (\diffeo^{\bx_0})^{-1} \quad\mbox{and}\quad
   \rG:=\rJ^{-1}(\rJ^{-1})^{\top} 
$$ 
be the jacobian matrix of the inverse of $\diffeo^{\bx_0}$ and the associated metric. 
Lemma \ref{L:chgvar} leads to introduce the following formulas for the transformed magnetic potential $\pot$ and magnetic field $\tbB=\curl\pot$ in $\cV_{\bx_0}\cap\Pi_{\bx_0}$
\begin{equation}
\label{E:ABtilde}
   \pot:=\rJ^{\top} \big((\bA-\bA(\bx_0))\circ (\diffeo^{\bx_0})^{-1} \big) \quad 
   \mbox{and}\quad 
   \tbB:=|\det \rJ|\,\rJ^{-1} \big(\bB\circ (\diffeo^{\bx_0})^{-1} \big) \ . 
\end{equation}
We also introduce the phase shift
\begin{equation}
\label{E:phase}
   \zeta^{\bx_0}_h(\bx) = \re^{-i\langle \bA(\bx_0),\ee\bx\rangle/h},
   \quad\bx\in\cU_{\bx_0}.
\end{equation}
To any function $f$ in $H^1(\Omega)$ with support in $\cU_{x_0}$ corresponds the function
\begin{equation}
\label{E:psi}
   \psi:= (\overline{\zeta^{\bx_0}_h\!}\, f)\circ (\diffeo^{\bx_0})^{-1}
\end{equation} 
defined in $\Pi_{\bx_0}$, with support in $\cV_{\bx_0}$. 
For any $h>0$ we have 
\begin{equation}
\label{eq:shift}
   q_{h}[\bA,\Omega](f)=q_{h}[\bA-\bA(\bx_0),\Omega](\overline{\zeta^{\bx_0}_h\!}\, f)
\end{equation}
and thus
\begin{equation}
\label{E:chgGx0}
   q_{h}[\bA,\Omega](f)
   = q_{h}[\pot,\Pi_{\bx_0},\rG](\psi) 
   \quad \mbox{and} \quad \| f\|_{L^2(\Omega)}=\| \psi\|_{L^2_{\rG}(\Pi_{\bx_0})}\,,
\end{equation}
 where the quadratic forms $q_{h}[\bA,\Omega]$ and $q_{h}[\pot,\Pi_{\bx_0},\rG]$ are defined in \eqref{D:fq}  and \eqref{D:fqG}, respectively. 

Since $\rd \diffeo^{\bx_{0}}(\bx_{0})=\Id_{3}$ by definition, there holds
\begin{equation}
\label{eq:field0}
   \tbB(\bfz)=\bB(\bx_{0})\,.
\end{equation}
Likewise, let $\pot_{\bfz}$ be the 
linear part of $\pot$ at the vertex $\bfz$ of $\Pi_{x_0}$. 
Note that, as a consequence of \eqref{E:ABtilde}, $\pot(\bfz)=0$ and 
the two potentials $\pot_{\bfz}$ and $\bA_{\bx_{0}}$ coincide (we recall that $\bA_{\bx_{0}}$ is the linear part of $\bA$ at $\bx_0$).

\begin{lemma}
\label{L:changvar}
Let $r_0>0$ be such that $\cV_{\bx_0}$ contains the ball $\cB(\bfz,r_0)$ of center $\bfz$ and radius $r_0$. Then there exists a constant $C(\Omega)$ such that for all $r\in(0,r_0]$, if $\psi\in H^1(\Pi_{\bx_{0}})$ with $\supp(\psi)\subset \cB(\bfz,r)$ we have the two estimates
\begin{gather}
   \big|q_{h}[\pot,\Pi_{\bx_0}](\psi)-
   q_{h}[\bA-\bA(\bx_0),\Omega](\psi\circ\diffeo^{\bx_{0}})\big| 
   \leq C(\Omega)\, r\, q_{h}[\pot,\Pi_{\bx_0}](\psi) , 
\\[1ex]
   \big| \| \psi\|_{L^2(\Pi_{\bx_0})}-\| \psi\circ\diffeo^{\bx_{0}}\|_{L^2(\Omega)} \big| 
   \leq C(\Omega)\, r \,\| \psi\|_{L^2(\Pi_{\bx_0})} .
\end{gather}
\end{lemma}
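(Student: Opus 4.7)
The plan is to reduce the comparison on $\Omega$ versus $\Pi_{\bx_0}$ to a purely local comparison between the flat quadratic form $q_h[\pot,\Pi_{\bx_0}]$ and the metric-weighted form $q_h[\pot,\Pi_{\bx_0},\rG]$ (and likewise between $\|\cdot\|_{L^2(\Pi_{\bx_0})}$ and $\|\cdot\|_{L^2_\rG(\Pi_{\bx_0})}$), then to exploit the normalization $\rd\diffeo^{\bx_0}(\bx_0)=\Id_3$ to show that $\rG$ and $|\rG|^{-1/2}$ differ from $\Id_3$ and $1$ only by $O(r)$ on the support of $\psi$.

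First I would invoke the general change-of-variables identity underlying \eqref{E:chgGx0}, but applied with the centered potential $\bA-\bA(\bx_0)$ instead of $\bA$. Since $(\bA-\bA(\bx_0))(\bx_0)=0$, the phase shift $\zeta^{\bx_0}_h$ is not needed, and the pull-back of $\bA-\bA(\bx_0)$ through $(\diffeo^{\bx_0})^{-1}$ is exactly $\pot$ as defined by \eqref{E:ABtilde}. Writing $f=\psi\circ\diffeo^{\bx_0}$, this yields the two identities
\begin{equation*}
  q_h[\bA-\bA(\bx_0),\Omega](\psi\circ\diffeo^{\bx_0})
    =q_h[\pot,\Pi_{\bx_0},\rG](\psi),\qquad
  \|\psi\circ\diffeo^{\bx_0}\|_{L^2(\Omega)}
    =\|\psi\|_{L^2_\rG(\Pi_{\bx_0})}.
\end{equation*}
The lemma is therefore equivalent to two purely intrinsic estimates on $\Pi_{\bx_0}$, comparing respectively $q_h[\pot,\Pi_{\bx_0},\rG](\psi)$ with $q_h[\pot,\Pi_{\bx_0}](\psi)$, and $\|\psi\|_{L^2_\rG(\Pi_{\bx_0})}$ with $\|\psi\|_{L^2(\Pi_{\bx_0})}$.

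Next I would establish the pointwise bound $\|\rG(\bx)|\rG(\bx)|^{-1/2}-\Id_3\|+\bigl||\rG(\bx)|^{-1/2}-1\bigr|\le C(\Omega)\,|\bx|$ on $\cB(\bfz,r_0)$. This is immediate from $\rJ(\bfz)=\Id_3$ (because $\rd\diffeo^{\bx_0}(\bx_0)=\Id_3$), which gives $\rG(\bfz)=\Id_3$ and $|\rG(\bfz)|^{-1/2}=1$, combined with the $\sC^1$ regularity of $\diffeo^{\bx_0}$ on the fixed compact set $\overline{\cB(\bfz,r_0)}\subset\cV_{\bx_0}$; the constant $C(\Omega)$ depends only on the diffeomorphism, hence only on $\Omega$. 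Plugging this into the definition \eqref{D:fqG} of $q_h[\pot,\Pi_{\bx_0},\rG]$ shows that for $\psi$ supported in $\cB(\bfz,r)$,
\begin{equation*}
  \bigl|q_h[\pot,\Pi_{\bx_0},\rG](\psi)-q_h[\pot,\Pi_{\bx_0}](\psi)\bigr|
  =\left|\int_{\Pi_{\bx_0}}\overline{(-ih\nabla+\pot)\psi}\cdot
  \bigl(\rG|\rG|^{-1/2}-\Id_3\bigr)(-ih\nabla+\pot)\psi\,\rd\bx\right|
  \le C(\Omega)\,r\,q_h[\pot,\Pi_{\bx_0}](\psi),
\end{equation*}
which is the first estimate. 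Similarly, $\bigl|\|\psi\|_{L^2_\rG(\Pi_{\bx_0})}^2-\|\psi\|_{L^2(\Pi_{\bx_0})}^2\bigr|\le C(\Omega)\,r\,\|\psi\|_{L^2(\Pi_{\bx_0})}^2$, and the second estimate follows from $|a-b|\le|a^2-b^2|/(a+b)$ applied with $a=\|\psi\|_{L^2_\rG(\Pi_{\bx_0})}$ and $b=\|\psi\|_{L^2(\Pi_{\bx_0})}$, adjusting $C(\Omega)$ by a harmless factor.

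There is no genuine analytic difficulty here: the statement is essentially a first-order Taylor expansion of the pulled-back metric around its base point $\bfz$. The main point requiring care is bookkeeping—namely, recognizing that the change of variables \eqref{E:chgGx0} can be applied without a gauge phase because we have subtracted $\bA(\bx_0)$, so that the linearized potential $\pot$ from \eqref{E:ABtilde} is exactly the transform of $\bA-\bA(\bx_0)$, and that all constants emerging from the estimates on $\rG-\Id_3$ and $|\rG|^{-1/2}-1$ depend only on the fixed diffeomorphism $\diffeo^{\bx_0}$ and thus on $\Omega$, uniformly for $r\in(0,r_0]$.
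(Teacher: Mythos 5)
Your proof is correct and follows essentially the same route as the paper's: apply the change-of-variables identity to reduce to comparing $q_h[\pot,\Pi_{\bx_0},\rG]$ with $q_h[\pot,\Pi_{\bx_0},\Id_3]$, then use $\rG(\bfz)=\Id_3$ and a first-order Taylor bound on $\rG$ over $\cB(\bfz,r)$. If anything you are slightly more explicit than the paper about the weight $|\rG|^{-1/2}$ and the passage from squared norms to norms; the only point the paper adds is that the $W^{1,\infty}$ bound on $\rG$ can be taken uniform as $\bx_0$ ranges over $\overline\Omega$ (using the polyhedral structure), which matters for the later applications where $\bx_0$ varies.
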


\begin{proof}
Recall that $\rJ=\rd (\diffeo^{\bx_0})^{-1}$ and $\rG:=\rJ^{-1}(\rJ^{-1})^{\top}$. Since $\rd \diffeo^{\bx_{0}}(\bx_{0})=\rd (\diffeo^{\bx_{0}})^{-1}(\bfz)=\Id_{3}$ by definition, we have $\rJ(\bfz)=\Id_{3}$ and  $\rG(\bfz)=\Id_{3}$. We deduce 
\begin{equation}
\label{E:taylorG}
\| \rG-\Id_{3}\|_{L^{\infty}(\cB(\bfz,r))} \leq r \|\rG\|_{W^{1,\infty}(\cV_{\bx_0})}.
\end{equation}
Since $\Omega$ is assumed to be polyhedral, its curvature (curvature of the faces and curvature of the edges) is bounded, therefore there exists a uniform bound $C(\Omega)$ for the norm in ${W^{1,\infty}(\cV_{\bx_0})}$ of the metric $\rG=\rG^{\bx_0}$ when $\bx_0$ runs through $\overline\Omega$. Notice that $q_{h}[\pot,\Pi_{\bx_{0}}]=q_{h}[\pot,\Pi_{\bx_{0}},\Id_{3}]$, so we deduce the Lemma by using \eqref{E:taylorG} in \eqref{E:chgGx0}. 
\end{proof}

Therefore we are reduced to study the Laplacian with magnetic potential $\pot$ on the cone $\Pi_{\bx_{0}}$ with the identity metric.

\subsection{Construction of quasimodes}
\label{SS:QM}
Let $\bx_{0}\in\overline\Omega$ be a point satisfying \eqref{D:x0}. Thus $\bx_0$ minimizes the local ground energy. For shortness we denote by $\Lambda_{\bfz}$ this energy:
\begin{equation}
\label{eq:lam}
   \Lambda_{\bfz}=\En(\bB_{\bx_{0}},\Pi_{\bx_{0}}).
\end{equation} 
In order to prove Theorem \ref{T:generalUB}, we are going to construct a family of quasimodes $f_h\in H^1(\Omega)$ satisfying the estimate for $h\le h_0$ (with some chosen positive $h_0$)
\begin{equation}
\label{eq:qm}
   \frac{q_{h}[\bA,\Omega](f_{h})}{\|f_{h}\|^2}\leq 
   h\Lambda_{\bfz}+C(\Omega)(1+\|\bA\|_{W^{2,\infty}(\Omega)}^2)h^{5/4}.
\end{equation}
Let $\pot$ be the magnetic potential in the tangent cone $\Pi_{\bx_{0}}\cap\cV_{\bx_0}$ given by \eqref{E:ABtilde}. We recall that $\pot_{\bfz}$ is the linear part of $\pot$ at $\bfz$. 
We recall that since $\rd \diffeo^{\bx_{0}}(\bx_{0})=\Id_{3}$, the point values of the corresponding fields $\tbB(\bfz)$ and $\bB_{\bx_0}$ coincide. 

Theorem \ref{th:dicho} provides a singular chain\footnote{In the case (i) of Theorem \ref{th:dicho}, $\dx$ is the trivial chain $(\bx_{0})$ and $\Pi_{\dx}=\Pi_{\bx_{0}}$.} $\dx$ in $\gC_{\bx_0}(\Omega)\equiv \gC_{\bfz}(\Pi_{\bx_{0}})$ and its associated cone $\Pi_{\dx}$ such that the operator $H(\pot_{\bfz},\Pi_{\dx})$ has an admissible generalized eigenfunction $\Psi^\dx$ associated with the energy $\En(\bB_{\bx_{0}},\Pi_{\bx_{0}})=\Lambda_\bfz$. 
 
There exists a rotation $\udiffeo$ which transforms $\Pi_{\dx}$ into $\R^{3-k}\times \Upsilon$ so that in coordinates $\bx^\natural\in \R^{3-k}\times \Upsilon$, with $\Upsilon\in\ogP_k$ the generalized eigenfunction writes:
\begin{equation}
\label{D:Generalizedef}
\Psi^\dx(\bx) =\Psi^\natural(\bx^\natural) = \re^{\ri\vartheta(\by,\bz)}\,\Phi(\bz) \quad \mbox{with}\quad
\udiffeo\bx  =\bx^\natural=(\by,\bz)\in \R^{3-k}\times \Upsilon,
\end{equation}
 where $\Phi$ is an exponentially decreasing function.
The function $\Psi^\dx$ satisfies
\begin{equation}
\label{eq:genEP}
\begin{cases}
   (-i\nabla+\pot_\bfz)^2\Psi^\dx=\Lambda_{\bfz}\Psi^\dx &\mbox{in } \Pi_{\dx},\\
   (-i\partial_n+\mathbf{n}\cdot\pot_\bfz)\Psi^\dx=0 &\mbox{on } \partial\Pi_{\dx}.
\end{cases}
\end{equation}
Then the scaled function
\begin{equation}\label{eq.AGEsc}
\Psi^\dx_{h}(\bx):=\Psi^\dx\Big(\frac{\bx}{\sqrt{h}}\Big) ,\quad\mbox{for}\quad \bx\in\Pi_\dx,
\end{equation}
defines a generalized eigenfunction for the operator $H_{h}(\pot_\bfz,\Pi_{\dx})$ associated with $h\Lambda_{\bfz}$.

According as $\Pi_{\bx_{0}}$ equals $\Pi_{\dx}$ or not (cases (i) or (ii) in Theorem \ref{th:dicho}, respectively),  our constructions will be different, leading to two types of quasimodes qualified as ``sitting'' or ``sliding''. 

\subsubsection{Sitting quasimodes}
When we are in case (i) of the dichotomy given by Theorem~\ref{th:dicho}, $\dx=(\bx_0)$ and $\Pi_\dx=\Pi_{\bx_0}$. In a classical way, the construction amounts to realize a suitable cut-off of the scaled generalized eigenfunction $\Psi^\dx_{h}$.
For doing this, let us choose, once for all, a model cut-off function $\underline\chi\in \sC^\infty(\R^+)$ such that
\begin{equation}
\label{D:chi}
\underline\chi(r)=\begin{cases} 1\mbox{ if }r\leq 1,\\ 0\mbox{ if }r\geq 2.
\end{cases}
\end{equation}
For any $R>0$, let $\underline\chi_{R}$ be the cut-off function defined by
\begin{equation}
\label{D:chiR}
\underline\chi_{R}(r)=\underline\chi \left(\frac{r}{R} \right),
\end{equation}
and, finally
\begin{equation}\label{eq.chih}
\chi_{h}(\bx) = \underline\chi_{R}\left(\frac{|\bx|}{h^\delta} \right) 
= \underline\chi\left(\frac{|\bx|}{Rh^\delta} \right) 
\quad\mbox{ with }\quad 0\le \delta\le \frac 12\,.
\end{equation}
Here the exponent $\delta$ is the decay rate of the cut-off. It will be tuned to optimize remainders. We choose $R=1$ in the formula for the cut-off and set\footnote{The reason for the double notation $\varphi^\dx_{h}(\bx) =\psi^\dx_{h}(\bx)$ will appear clearly later on, see \eqref{eq.defQMa}-\eqref{eq.defQMb}.} 
\begin{equation}
\label{eq:psih}
   \varphi^\dx_{h}(\bx) =\psi^\dx_{h}(\bx) =\chi_{h}(\bx)\Psi^\dx_{h}(\bx)
\end{equation}
which provides a quasimode for $q_{h}[\pot_\bfz,\Pi_{\dx}]$ satisfying  $(-ih\partial_n+\mathbf{n}\cdot\pot_\bfz)\psi^\dx_h=0$ on $\partial\Pi_{\dx}$. Note that when $h$ decreases, the supports of $\varphi^\dx_{h}$ decrease while staying embedded in each other.

\subsubsection{Sliding quasimodes}
Now we are in case (ii) of the dichotomy given by Theorem~\ref{th:dicho}, which means that $\Lambda_\bfz=\seE(\bB_{\bx_{0}},\Pi_{\bx_{0}})$ and that there exists a chain $\dx\in \gC^*_{\bx_{0}}(\Omega)$ such that $\Lambda_\bfz=\En(\bB_{\dx},\Pi_{\dx})<\seE(\bB_{\dx},\Pi_{\dx})$. Let $\udiffeo^{0}\in\gO_3$ such that $\Pi_{\bx_{0}}=\udiffeo^{0}(\R^{3-d}\times \Gamma)$ where $\Gamma$ is the reduced cone of $\Pi_{\bx_{0}}$. Let $\Omega_0=\Gamma\cap\dS^{d-1}$ be the section of $\Gamma$. According to Remark \ref{rem:length2}, there exists $\bx_{1}\in \overline\Omega_0$ so that $\dx=(\bx_0,\bx_{1})$. Then we define the unitary vector $\dir$ by the formulas
\begin{equation}
\label{def:tau}
   \underline{\dir}:=(0,\bx_{1})\in \R^{3-d}\times \Gamma
   \quad\mbox{and}\quad
   \dir=\udiffeo^{0}\,\underline{\dir}\in\dS^2.
\end{equation}

\begin{remark}
\label{R:decritdir}
The cone $\Pi_{\dx}$ can be the full space, a half-space or a wedge, and $\dir$ gives a direction associated with $\Pi_{\dx}$ starting from the origin $\bfz$ of $\Pi_{\bx_{0}}$:
\begin{enumerate}
\item If $\Pi_{\dx}\equiv\R^3$, $\dir$ belongs to the interior of $\Pi_{\bx_{0}}$.
\item If $\Pi_{\dx}\equiv\R^3_{+}$, $\dir$ belongs to a face of $\Pi_{\bx_{0}}$.
\item If $\Pi_{\dx}\equiv\cW_{\alpha}$, $\dir$ belongs to an edge  of $\Pi_{\bx_{0}}$. 
\end{enumerate}
Note that unless we are in the latter case ($\Pi_{\dx}$ is a wedge), the choice of $\dir$ is not unique.
\end{remark}

At this point, let us emphasize that we need that our quasimodes on the tangent cone $\Pi_{\bx_{0}}$ are compatible with the structure of $\Pi_\dx$ that provides the admissible generalized eigenvector $\Psi^\dx$.
Here the vector $\dir$ introduced in \eqref{def:tau} comes into play. Instead of being concentric like before, the supports of the quasimodes are {\em sliding} along $\dir$ and concentrate at the same rate $h^\delta$:
We define our quasimode $\varphi_{h}^{\dx}$ by setting
\begin{subequations}
\label{eq.defQM}
\begin{gather}
\label{eq.defQMa}
   \psi_{h}^{\dx}(\bx) = 
   \chi_{h}(\bx)\Psi^\dx_{h}(\bx),
   \quad(\forall\bx\in\Pi_{\dx}) \, , \\
\label{eq.defQMb}
   \varphi_{h}^{\dx}(\bx) = 
   \re^{-\ri\langle\pot_\bfz(\dec),\ee\bx\rangle/h} \,
   \psi_{h}^{\dx}(\bx-\dec) \quad\mbox{with}\quad
   \dec = h^\delta\dir \quad(\forall\bx\in\Pi_{\bx_{0}}) \,.
\end{gather}
\end{subequations}

The vector $\dec$ is a shift and the translation $\diffeoT^{\dec}$ by $-\dec$ sends a neighborhood of $\dec$ in $\Pi_{\bx_0}$ onto a neighborhood of $\bfz$ in $\Pi_\dx$.
We check that
\begin{equation}
\label{eq.A0tildeA0}
   (-ih\nabla+\pot_\bfz(\bx))\varphi_{h}^{\dx}(\bx)
   =   \re^{-\ri\langle\pot_\bfz(\dec),\ee\bx\rangle/h} \,
   (-ih\nabla+\pot_\bfz(\bx-\dec))\psi^\dx_{h}(\bx-\dec) ,
   \quad \forall\bx\in\Pi_{\bx_{0}}\,.
\end{equation}

We  choose $R>0$ in \eqref{D:chiR} such that $\cB(\dir,2R)\cap \Pi_{\bx_{0}}=\cB(\dir,2R)\cap \Pi_{\dx}$. Note that $R$ depends only on the geometry of $\Omega$ near $\bx_{0}$. 
Hence, using the translation $\diffeoT^{\dir}:\bx\mapsto\bx-\dir$,
\begin{equation}
\label{eq:R}
   \diffeoT^{\dir}\Big\{\supp \big( \underline\chi_{R}(\cdot-\dir)\big)\cap \Pi_{\bx_{0}}\Big\}
   =\supp(\underline\chi_{R})\cap \Pi_{\dx} \ . 
\end{equation}
It follows by scaling that, with the translation $\diffeoT^{\dec}:\bx\mapsto\bx-\dec$,
$$
   \forall h>0, \quad \diffeoT^{\dec}
   \Big\{\supp (\varphi_{h}^{\dx})\cap \Pi_{\bx_{0}}\Big\}
   =\supp(\psi_{h}^{\dx})\cap \Pi_{\dx}  \, .
$$ 
Therefore, in virtue of \eqref{eq.A0tildeA0} we have
\begin{equation}\label{eq.qhpotPidx}
   q_{h}[\pot_\bfz,\Pi_{\bx_{0}}](\varphi_{h}^{\dx})  =  
   q_{h} [\pot_\bfz,\Pi_{\dx}](\psi^\dx_{h}) \, , 
\end{equation}
with $\psi^\dx_{h}$ satisfying the Neumann boundary conditions $(-ih\partial_n+\mathbf{n}\cdot\pot_\bfz)\psi^\dx_h=0$ on $\partial\Pi_{\dx}$, which allows to take advantage of better cut-off estimates, cf.\ Lemma~\ref{lem:tronc}. 

The localization of the quasimodes $\varphi_{h}^{\dx}$ can be deduced from Remark \ref{R:decritdir}:
\begin{enumerate}
\item If $\Pi_{\dx}\equiv\R^3$, 
 $\varphi_{h}^{\dx}$ is centered in the interior of $\Pi_{\bx_{0}}$.
\item If $\Pi_{\dx}\equiv\R^3_{+}$,
 $\varphi_{h}^{\dx}$ is centered on the face of $\Pi_{\bx_{0}}$ associated with $\dir$.
\item If $\Pi_{\dx}\equiv\cW_{\alpha}$,
$\varphi_{h}^{\dx}$ is centered on the edge of $\Pi_{\bx_{0}}$ associated with $\dir$.
\end{enumerate}

\subsubsection{Synthesis}
\label{d:qm}
We have constructed the functions $\varphi_{h}^{\dx}\in\dom(H_{h}(\pot_{\bfz},\Pi_{\bx_{0}}))$
by formulas \eqref{D:Generalizedef}-\eqref{eq:R}: 

\begin{subequations}
\label{eq.defQM1}
(i) If $\dx=(\bx_0)$, the functions $\varphi_{h}^{\dx}$ are {\em sitting quasimodes}:
\begin{equation}\label{eq.defQMbis0}
   \varphi_{h}^{\dx}(\bx) = 
   \underline\chi_{R}\left(\frac{|\bx|}{h^{\delta}}\right)  
   \Psi^\dx\left(\frac{\bx}{h^{1/2}}\right),
   \quad\mbox{for}\ \ \bx\in\Pi_{\bx_{0}} ,
\end{equation}

(ii) If $\dx=(\bx_0,\bx_1)$, the functions $\varphi_{h}^{\dx}$ are {\em sliding quasimodes}:
\begin{equation}\label{eq.defQMbis}
   \varphi_{h}^{\dx}(\bx) =
   \re^{-\ri\langle\pot_\bfz(\dec),\ee\bx\rangle/h} \,
   \underline\chi_{R}\left(\frac{|\bx-\dec|}{h^{\delta}}\right)  
   \Psi^\dx\left(\frac{\bx-\dec}{h^{1/2}}\right),
   \quad\mbox{for}\ \ \bx\in\Pi_{\bx_{0}}\ \ \mbox{and}\ \ \dec=h^{\delta} \dir,
\end{equation}
\end{subequations}

\subsection{Estimation of the quasimodes}
\label{SS:estimQM}
We separately estimate the cut-off errors, the linearization errors, and the error due to the change of metric. 
\subsubsection{Cut-off effect}
In both situations of sitting and sliding quasimodes, relying on formulas \eqref{eq:psih},  \eqref{eq.defQMa} and \eqref{eq.qhpotPidx}, we can apply Lemma~\ref{lem:tronc} with the magnetic potential $\pot_\bfz$, $\chi=\chi_{h}$ and $\psi=\Psi^{\dx}_{h}$, we obtain for the Rayleigh quotient of $\varphi_{h}^{\dx}$:
\begin{subequations}
\label{eq:tronc1}
\begin{equation}
   \frac{q_{h}[\pot_\bfz,\Pi_{\bx_{0}}](\varphi_{h}^{\dx})}{\|\varphi_{h}^{\dx}\|^2}=  
   \frac{q_{h}[\pot_\bfz,\Pi_{\dx}](\psi_{h}^{\dx})}{\|\psi_{h}^{\dx}\|^2}=  
   \frac{q_{h}[\pot_\bfz,\Pi_{\dx}](\chi_{h}\Psi^{\dx}_{h})}{\|\chi_{h}\Psi^{\dx}_{h}\|^2} =
   h\Lambda_{\bfz} + h^2\rho_{h} 
\end{equation}
with
\begin{equation}
\label{eq:rhoh}
  \rho_{h}=\frac{\| \,|\nabla\chi_{h}|\, \Psi^{\dx}_{h}\|^2}{\|\chi_{h}\Psi^{\dx}_{h}\|^2}.
\end{equation} 
\end{subequations}
The fact that $\Psi^{\dx}_{h}$ belongs to $\dom_{\,\loc} (\OP_{h}(\pot_{\bfz},\Pi_{\dx}))$ is essential for the validity of the identities above.

The following lemma estimates the remainder due to the cut-off effect:

\begin{lemma}\label{lem:rhoh}
Let $\Psi$ be an admissible generalized eigenvector given by \eqref{D:Generalizedef} and  $\Psi_{h}$  the rescaled associated function given by \eqref{eq.AGEsc}. Let $\chi_{h}$ be the cut-off function defined by \eqref{eq.chih} involving parameters $R>0$ and $\delta\in[0,\frac12]$. 
Then there exist constants $C_0>0$ and $c_0>0$ depending only on $h_0>0$, $R_0>0$ and $\Psi$ such that
$$
   \rho_{h}=\frac{\|\,|\nabla\chi_{h}|\, \Psi_{h}\|^2}{\|\chi_{h}\Psi_{h}\|^2}\leq
   \begin{cases} C_0\, h^{-2\delta} & \mbox{ if }k<3,\\[0.5ex]
   C_0\, h^{-2\delta} \,\re^{-c_0h^{\delta-1/2}} & \mbox{ if }k=3, 
   \end{cases}
   \qquad \forall R\ge R_0,\ \forall h\le h_0,\ \forall\delta\in[0,\tfrac12]\,.
$$
\end{lemma}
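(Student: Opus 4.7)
The strategy is to rescale back to fixed (unscaled) coordinates, then exploit the product structure of admissible generalized eigenvectors from Definition~\ref{D:Age}, with a case split according to whether $k<3$ or $k=3$.

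First, from $|\nabla\chi_h|\le C_{\underline\chi}(Rh^\delta)^{-1}$ with support in the annulus $\{Rh^\delta\le|\bx|\le 2Rh^\delta\}$, and the change of variable $\bx=\sqrt{h}\,\tilde\bx$, both numerator and denominator of $\rho_h$ pick up the same jacobian $h^{3/2}$, leading to
\begin{equation*}
   \rho_h \;\le\; \frac{C_{\underline\chi}^2}{R^2\,h^{2\delta}}\,\mathcal{R}(T),
   \qquad
   \mathcal{R}(T) := \frac{\|\Psi\|_{L^2(\{T\le|\tilde\bx|\le 2T\})}^2}
                          {\|\underline\chi(|\cdot|/T)\,\Psi\|_{L^2}^2},
   \qquad T := R\,h^{\delta-1/2}.
\end{equation*}
Since $\delta\in[0,\tfrac12]$, we have $T\ge R\ge R_0$, and $T$ can be made arbitrarily large by choosing $h_0$ small. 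The task reduces to bounding $\mathcal{R}(T)$ for large $T$. Writing $\Psi$ in the natural coordinates $(\by,\bz)\in\R^{3-k}\times\Upsilon$ provided by Definition~\ref{D:Age} (a rotation preserving $L^2$ norms and the radial variable), one has $|\Psi^\natural(\by,\bz)|^2=|\Phi(\bz)|^2$.

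If $k<3$, there is at least one translation-invariant direction $\by$. The numerator is controlled by dropping the lower bound $|(\by,\bz)|\ge T$ in the $\by$-direction,
\begin{equation*}
   \|\Psi\|^2_{\{T\le|\tilde\bx|\le 2T\}}
   \;\le\; |\{|\by|\le 2T\}|\cdot\|\Phi\|^2_{L^2(\Upsilon)}
   \;\le\; C\,T^{3-k}\|\Phi\|^2,
\end{equation*}
while the denominator is bounded below by restricting to $|\by|\le T/2$ and $\bz$ in a fixed compact $K\subset\Upsilon$ carrying positive $L^2$ mass of $\Phi$; once $T$ is large enough that $K\subset\{|\bz|\le T/2\}$, the cut-off equals $1$ on $\{|\by|\le T/2\}\times K$, giving
\begin{equation*}
   \|\underline\chi(|\cdot|/T)\,\Psi\|^2 \;\ge\; c\,T^{3-k}\,\|\Phi\|^2_{L^2(K)}.
\end{equation*}
The powers $T^{3-k}$ cancel and $\mathcal{R}(T)$ stays uniformly bounded, yielding $\rho_h\le C_0h^{-2\delta}$.

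If $k=3$, there is no $\by$ variable and $\Psi=\Phi(\bz)$ up to phase, with $\Phi$ satisfying the Agmon-type decay \eqref{eq:agmongeneig}. By dominated convergence the denominator tends to $\|\Phi\|^2>0$ as $T\to\infty$, so it is at least $\tfrac12\|\Phi\|^2$ for $h$ small. The numerator is controlled through the exponential weight:
\begin{equation*}
   \int_{|\bz|\ge T}|\Phi|^2\,\rd\bz \;\le\; \re^{-2c_\Psi T}\!\int_\Upsilon \re^{2c_\Psi|\bz|}|\Phi|^2\,\rd\bz
   \;\le\; C_\Psi^2\,\re^{-2c_\Psi T}\|\Phi\|^2.
\end{equation*}
Thus $\mathcal{R}(T)\le 2C_\Psi^2\,\re^{-2c_\Psi T}$ and the announced exponential bound holds with $c_0=2c_\Psi R_0$. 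I do not foresee a serious obstacle: the whole argument is bookkeeping of the translation-invariant versus exponentially decaying directions encoded in Definition~\ref{D:Age}, the only smallness condition being that $h_0$ be small enough to ensure $T\ge R_0 h_0^{\delta-1/2}$ is large enough in both cases (fitting $K$ into $\{|\bz|\le T/2\}$ when $k<3$, and making the denominator at least $\tfrac12\|\Phi\|^2$ when $k=3$).
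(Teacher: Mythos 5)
Your overall strategy---rescaling to the unscaled variables, exploiting the product structure $|\Psi^\natural(\by,\bz)|=|\Phi(\bz)|$ so that the powers $T^{3-k}$ from the translation-invariant directions cancel between numerator and denominator, and invoking the Agmon bound \eqref{eq:agmongeneig} when $k=3$---is exactly the paper's. There is, however, one genuine gap: the assertion that ``$T=Rh^{\delta-1/2}$ can be made arbitrarily large by choosing $h_0$ small'' is false uniformly over $\delta\in[0,\tfrac12]$. At the endpoint $\delta=\tfrac12$ one has $T=R$, which is only bounded below by the fixed number $R_0$, and shrinking $h_0$ does not help. Consequently the lower bound on the denominator---which in your write-up requires either ``$T$ large enough that $K\subset\{|\bz|\le T/2\}$'' ($k<3$) or ``the denominator is at least $\tfrac12\|\Phi\|^2$ for $h$ small'' ($k=3$)---is not justified for $\delta$ at or near $\tfrac12$. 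This regime is actually used later (Proposition \ref{P:Cornerconcentration} takes $\delta=\tfrac12-\varepsilon(h)$), so the uniformity cannot be dismissed.

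To close the gap you must show that $\int_{\Upsilon\cap\{|\bz|\le S\}}|\Phi|^2>0$ for \emph{every} $S>0$, so that the compact $K$ can be fitted inside $\{|\bz|\le R_0/2\}$ (at the price of constants depending on $R_0$ and $\Psi$, which the statement permits). This is not automatic from $\Phi\neq0$: a priori $\Phi$ could carry no $L^2$ mass near the origin of $\Upsilon$. The paper's argument is that $\Phi$, as a solution of an elliptic equation with polynomial coefficients and null right-hand side, is analytic inside $\Upsilon$, hence cannot vanish on a nonempty open set; therefore the mass ratio $\cI(S)$ is positive and increasing on $(0,\infty)$ and is uniformly bounded below at $S=R_0$ for all $R\ge R_0$, $h\le h_0$, $\delta\in[0,\tfrac12]$. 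Once this observation is inserted, the rest of your computation (the numerator bounds, the cancellation of $T^{3-k}$, and the factor $\re^{-2c_\Psi T}\le \re^{-c_0 h^{\delta-1/2}}$ for $k=3$) goes through and coincides with the paper's proof.
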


\begin{proof}
By assumption $\Psi(\bx) = \re^{\ri\vartheta(\by,\bz)}\,\Phi(\bz)$ for $\udiffeo\bx=(\by,\bz)\in \R^{3-k}\times \Upsilon$ and there exist positive constants $c_\Psi,C_\Psi$ such that
\begin{equation}
\label{eq:agmon}
   \int_{\Upsilon} \re^{2 c_\Psi |\bz|}|\Phi(\bz)|^2 \rd\bz\leq C_\Psi\|\Phi\|^2_{L^2(\Upsilon)}.
\end{equation}
 Let us set $T=Rh^\delta$, so that $\chi_h(\bx) = \underline{\chi}(|\bx|/T)$.

Let us first give an upper bound for $\|\,|\nabla\chi_{h}|\,\Psi_{h}\|$:\\
If $k<3$, then 
\begin{eqnarray*}
\|\,|\nabla\chi_{h}|\,\Psi_{h}\|^2 
&\leq& CT^{-2} \int_{|\by|\leq 2T} \rd \by\ 
\int_{\Upsilon\, \cap\, \{|\bz|\leq2T\}} 
\left|\Phi\Big(\frac{\bz}{\sqrt h}\Big)\right|^2\rd \bz \\ 
&\leq& C T^{-2}\, T^{3-k} \, h^{k/2} \|\Phi\|^2_{L^2(\Upsilon)},
\end{eqnarray*} 
else, if $k=3$ 
\begin{eqnarray*}
\|\,|\nabla\chi_{h}|\,\Psi_{h}\|^2 
&\leq& C T^{-2} \int_{\Upsilon\, \cap\, \{T \leq |\bz|\leq2T\}} \left|\Phi\Big(\frac{\bz}{\sqrt h}\Big)\right|^2\rd \bz \\
&\leq& C T^{-2} \, h^{k/2} \int_{\Upsilon\, \cap\, 
   \big\{Th^{-\frac12} \leq |\bz|\leq2Th^{-\frac12}\big\}} 
   \left|\Phi (\bz)\right|^2\rd \bz \\
&\leq& C T^{-2} \, h^{k/2}  \ \re^{-2 c_\Psi T /\sqrt{h}}
   \int_{\Upsilon\, \cap\, 
   \big\{Th^{-\frac12} \leq |\bz|\leq2Th^{-\frac12}\big\}} 
   \re^{2 c|\bz|} \left|\Phi(\bz)\right|^2\rd \bz\\
&\leq& C T^{-2} \, h^{k/2}  \ \re^{-2 c_\Psi T/\sqrt{h}}\, \|\Phi\|^2_{L^2(\Upsilon)}.
\end{eqnarray*}
Let us now consider $\|\chi_{h}\Psi_{h}\|$ (we use that $2|\by|<R$ and $2|\bz|<R$ implies $|\bx|<R$):
\begin{eqnarray}
   \|\chi_{h}\Psi_{h}\|^2  &\geq& \int_{ 2|\by|\leq T} \rd \by\ 
   \int_{\Upsilon\, \cap\, \{2|\bz|\leq T\}} 
   \left|\Phi\Big(\frac{\bz}{\sqrt h}\Big)\right|^2\rd \bz \nonumber \\
\label{eq:minL2a}
   &\geq & C T^{3-k} h^{k/2} \, \int_{\Upsilon\, \cap\, \big\{2|\bz|\leq Th^{-\frac12}\big\}} 
   \left|\Phi(\bz)\right|^2\rd \bz \\
\label{eq:minL2b}
   &\geq & C T^{3-k} h^{k/2} \, \cI(Th^{-\frac12}) \, \|\Phi\|^2_{L^2(\Upsilon)}
\end{eqnarray}
where we have set for any $S\ge0$
\[
   \cI(S) := \bigg(\int_{\Upsilon\, \cap\, \{2|\bz|\leq S\}} 
   \left|\Phi(\bz)\right|^2\rd \bz \bigg)  \bigg(
   \int_{\Upsilon} 
   \left|\Phi(\bz)\right|^2\rd \bz\bigg)^{-1}.
\]
The function $S\mapsto\cI(S)$ is continuous, non-negative and non-decreasing on $[0,+\infty)$. It is moreover {\em increasing and positive} on $(0,\infty)$ since $\Phi$, as a solution of an elliptic equation with polynomial coefficients and null right hand side, is analytic inside $\Upsilon$.  
Consequently, $\cI(T h^{-\frac12})=\cI(R h^{\delta-\frac12})$ is uniformly bounded from below for $R\geq R_{0}$, $h\in (0,h_{0})$, $\delta\in [0,\frac 12]$ and thus
\begin{eqnarray*}
\rho_{h}&\leq& 
\begin{cases}
C T^{-2}\,\big\{\cI(Th^{-\frac12})\big\}^{-1}
\le C_0 h^{-2\delta}
&\mbox{ if }k<3,\\[0.5ex]
C T^{-2} \ \re^{-2 c_\Psi T/\sqrt{h}} \,\big\{\cI(Th^{-\frac12})\big\}^{-1}
\le C_0 h^{-2\delta} \re^{-c_0 h^{\delta-1/2}} &\mbox{ if }k=3,
\end{cases}
\end{eqnarray*}
where the constants $C_0$ and $c_0$ in the above estimation depend only on the lower bound $R_0$ on $R$, the upper bound $h_0$ on $h$, and on the model problem associated with $\bx_{0}$, provided $\delta\in[0,\frac12]$. Lemma \ref{lem:rhoh} is proved. 
\end{proof}

 If the exponent $\delta$ is bounded from above by a number $\delta_0<\frac12$, we obtain the following improvement of the previous lemma.

\begin{lemma}
\label{lem:rhohb}
Under the conditions of Lemma \ref{lem:rhoh}, let $\delta_0<\frac12$ be a positive number. Let $R_0>0$.
Then there exist constants $h_0>0$, $C_1>0$ and $c_1>0$ depending only on $R_0$, $\delta_0$ and on the constants $c_\Psi$, $C_\psi$ in \eqref{eq:agmon} such that
$$
   \rho_{h}=\frac{\|\,|\nabla\chi_{h}|\, \Psi_{h}\|^2}{\|\chi_{h}\Psi_{h}\|^2}\leq
   \begin{cases} C_1\, h^{-2\delta} & \mbox{ if }k<3,\\[0.5ex]
   C_1\, \re^{-c_1h^{\delta-1/2}} & \mbox{ if }k=3, 
   \end{cases}
   \qquad \forall R\ge R_0,\ \forall h\le h_0,\ \forall\delta\in[0,\delta_0]\,.
$$
\end{lemma}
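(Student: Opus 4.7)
The plan is to revisit the computations in the proof of Lemma~\ref{lem:rhoh} under the added hypothesis $\delta\le \delta_0<\tfrac12$, which provides uniform control on the quantity $h^{\delta-1/2}$ and allows, in the case $k=3$, to absorb the polynomial prefactor $h^{-2\delta}$ into the exponential.

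For $k<3$, the bound $\rho_h \le C\,T^{-2}\{\cI(Th^{-1/2})\}^{-1}$ with $T=Rh^\delta$ established in the proof of Lemma~\ref{lem:rhoh} is already of the shape $C_1 h^{-2\delta}$; the task is to exhibit a constant $C_1$ depending only on $R_0,\delta_0,c_\Psi,C_\Psi$. The $\Phi$-dependence enters solely through $\cI(S)$. A Chebyshev-type argument based on \eqref{eq:agmon} yields
\[
   \int_{\Upsilon\cap\{|\bz|>S/2\}}|\Phi(\bz)|^2\,\rd\bz
   \;\le\; \re^{-c_\Psi S}\int_{\Upsilon}\re^{2c_\Psi|\bz|}|\Phi(\bz)|^2\,\rd\bz
   \;\le\; C_\Psi\re^{-c_\Psi S}\|\Phi\|^2_{L^2(\Upsilon)},
\]
hence $\cI(S)\ge 1-C_\Psi\re^{-c_\Psi S}$. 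Since $\delta-1/2\le \delta_0-1/2<0$ and $h\le h_0<1$, one has $Rh^{\delta-1/2}\ge R_0 h_0^{\delta_0-1/2}$, and choosing $h_0$ small enough in terms of $R_0,\delta_0,c_\Psi,C_\Psi$ forces $\cI(Rh^{\delta-1/2})\ge\tfrac12$. This delivers the case $k<3$ with a suitable $C_1$.

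The new content lies in the case $k=3$, where Lemma~\ref{lem:rhoh} furnishes $\rho_h\le C_0 h^{-2\delta}\re^{-c_0 h^{\delta-1/2}}$. Fix any $c_1\in(0,c_0)$. Then
\[
   h^{-2\delta}\re^{-c_0 h^{\delta-1/2}}
   \;=\; \exp\!\bigl(2\delta\log(1/h)-c_0 h^{\delta-1/2}\bigr)
   \;\le\; \exp\!\bigl(-c_1 h^{\delta-1/2}\bigr)
\]
as soon as $2\delta\log(1/h)\le (c_0-c_1)\,h^{\delta-1/2}$. Using $2\delta\le 2\delta_0$ together with $h^{\delta-1/2}\ge h^{\delta_0-1/2}=(1/h)^{1/2-\delta_0}$ for $h\le 1$, this reduces to a polynomial-beats-logarithm comparison in $1/h$ that holds for every $h\le h_0$ once $h_0$ is small enough in terms of $\delta_0,c_0,c_1$. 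Setting $C_1=C_0$ yields the announced bound $C_1\re^{-c_1h^{\delta-1/2}}$.

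The main (and only) difficulty is bookkeeping: every constraint placed on $h_0$ must ultimately depend solely on $R_0,\delta_0,c_\Psi,C_\Psi$, which is exactly what the two estimates above ensure. Note the strict inequality $\delta_0<\tfrac12$ is essential precisely at the polynomial-versus-logarithm comparison step, since it makes the exponent $1/2-\delta_0$ strictly positive; were $\delta_0=\tfrac12$ allowed, the prefactor $h^{-2\delta}$ would not be absorbable.
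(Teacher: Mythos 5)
Your proposal is correct and follows essentially the same route as the paper: the numerator is bounded exactly as in Lemma \ref{lem:rhoh}, while the denominator is bounded below via the Chebyshev-type consequence of \eqref{eq:agmon}, namely $\cI(Rh^{\delta-1/2})\ge 1-C_\Psi\re^{-c_\Psi Rh^{\delta-1/2}}\ge\tfrac12$ for $h\le h_0(R_0,\delta_0,c_\Psi,C_\Psi)$, which is precisely the paper's estimate \eqref{eq.minpsih}. The only addition is that you spell out the absorption of the prefactor $h^{-2\delta}$ into the exponential when $k=3$ (a polynomial-versus-exponential comparison the paper leaves implicit), which is a welcome clarification.
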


\begin{proof}
We obtain an upper bound of $\|\,|\nabla\chi_{h}|\,\Psi_{h}\|^2$ as in the proof of Lemma~\ref{lem:rhoh}. Let us now deal with the lower-bound of $\|\chi_{h}\Psi_{h}\|^2$. With $T=R h^\delta$, we have
\begin{eqnarray}
\|\chi_{h}\Psi_{h}\|^2 
   &\geq & C T^{3-k} h^{k/2} \, \int_{\Upsilon\, \cap\, \big\{2|\bz|\leq Th^{-\frac12}\big\}} 
   \left|\Phi(\bz)\right|^2\rd \bz 
\nonumber\\
   &\geq& C  T^{3-k} h^{k/2} \,  \left(1-C_{\Psi}\re^{-c_{\Psi} R h^{\delta-1/2}}\right) 
   \|\Phi\|^2_{L^2(\Upsilon)}.\label{eq.minpsih}
\end{eqnarray}
Since $0\leq\delta\leq\delta_{0}<\frac 12$, there holds $C_{\Psi}\re^{-c_{\Psi} R h^{\delta-1/2}}<\frac12$ for $h$ small enough or $R$ large enough. Thus we deduce the lemma.
\end{proof}

\subsubsection{Linearization}
\label{SS:linear}
Note that in any case (sitting or sliding) the quasimode $\varphi_{h}^{\dx}$ \eqref{eq.defQM1} on $\Pi_{\bx_0}$ belongs to $\dom(q_{h}[\pot,\Pi_{\bx_{0}}])$. 
We can compare the quadratic form for the magnetic potential and its linear part by using \eqref{eq:diffAA'} with $\bA=\pot$, $\bA'=\pot_\bfz$, $\cO=\Pi_{\bx_{0}}$, and $\psi=\varphi_{h}^{\dx}$:
\begin{multline}
\label{eq:diffA}
  q_{h}[\pot,\Pi_{\bx_{0}}](\varphi_{h}^{\dx}) =   q_{h}[\pot_\bfz,\Pi_{\bx_{0}}](\varphi_{h}^{\dx}) \\
 +2  \Re\int_{\Pi_{\bx_{0}}} (-ih\nabla+\pot_\bfz)\varphi_{h}^{\dx}(\bx)\cdot(\pot-\pot_\bfz)(\bx)\overline{\varphi_{h}^{\dx}(\bx)}\,\rd \bx 
  + \|(\pot-\pot_\bfz)\varphi_{h}^{\dx}\|^2.
\end{multline}

Combining with \eqref{eq:tronc1} we get
\begin{multline}
\label{eq:troncA}
  \frac{q_{h}[\pot,\Pi_{\bx_{0}}](\varphi_{h}^{\dx})}{\|\varphi_{h}^{\dx}\|^2} 
  = h \Lambda_{\bfz} + h^2\rho_{h} 
  \\[-1.5ex]
  + \frac{ 2 \Re\int_{\Pi_{\bx_{0}}} (-ih\nabla+\pot_\bfz)\varphi_{h}^{\dx}(\bx)\cdot(\pot-\pot_\bfz)(\bx)\overline{\varphi_{h}^{\dx}(\bx)}\,\rd \bx }{\|\varphi_{h}^{\dx}\|^2}
  + \frac{\|(\pot-\pot_\bfz)\varphi_{h}^{\dx}\|^2}{\|\varphi_{h}^{\dx}\|^2}.
\end{multline}
By Cauchy-Schwarz inequality, we obtain easily
\begin{subequations}
\label{eq:troncA2}
\begin{equation}
  \frac{q_{h}[\pot,\Pi_{\bx_{0}}](\varphi_{h}^{\dx})}{\|\varphi_{h}^{\dx}\|^2} 
  \leq h\Lambda_{\bfz} + h^2\rho_{h} +2 \sqrt h \sqrt{\Lambda_{\bfz}+h\rho_{h}}\ a_{h} + a_{h}^2
 \end{equation}
where we have set
\begin{equation}\label{eq.defah}
  a_{h}=\frac{\|(\pot-\pot_\bfz)\varphi_{h}^{\dx}\|}{\|\varphi_{h}^{\dx}\|}.
\end{equation}
\end{subequations}
We now estimate the remainder due to the linearization of $\pot$. 

\begin{lemma}\label{lem.TaylorA}
Let $r_0>0$ be such that $\cV_{\bx_0}\supset\cB(\bfz,r_0)$. 
For any $r\in(0,r_0]$, we have
\begin{equation}
\label{E:taylorA}
  \forall \bx\in\cB(\bfz,r),\quad |\pot(\bx)-\pot_\bfz(\bx)|\leq 
  \tfrac1{2} \|\pot\|_{W^{2,\infty}(\cB(0,r))} |\bx|^2\, .
\end{equation}
\end{lemma}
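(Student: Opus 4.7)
The statement is a purely local second-order Taylor estimate for the vector field $\pot$ at the origin $\bfz$, so the natural approach is Taylor's formula with integral remainder applied componentwise. First, I would recall from the discussion following \eqref{E:ABtilde} that $\pot(\bfz)=0$ and that $\pot_\bfz$ denotes the linear part of $\pot$ at $\bfz$, so that componentwise $(\pot_\bfz)_i(\bx)=\nabla\pot_i(\bfz)\cdot\bx$ for $i=1,2,3$. Consequently, $\pot_i(\bx)-(\pot_\bfz)_i(\bx)$ is exactly the second-order Taylor remainder of $\pot_i$ at $\bfz$, and the question reduces to controlling this remainder by the $W^{2,\infty}$ norm of $\pot$ on the ball.

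Next, I would exploit convexity. Since $\cB(\bfz,r)\subset\cB(\bfz,r_0)\subset\cV_{\bx_0}$ and the ball is convex and contains $\bfz$, the segment $\{t\bx:t\in[0,1]\}$ lies in $\cB(\bfz,r)$ for every $\bx\in\cB(\bfz,r)$. Using the regularity $\pot\in W^{2,\infty}(\cB(\bfz,r_0))$ (inherited from $\bA\in W^{2,\infty}(\overline\Omega)$ and the smoothness of the diffeomorphism $\diffeo^{\bx_0}$), the integral remainder formula yields
\begin{equation*}
  \pot_i(\bx)-(\pot_\bfz)_i(\bx) = \int_0^1 (1-t)\,\bx^\top \nabla^2\pot_i(t\bx)\,\bx\,\rd t,\qquad i=1,2,3.
\end{equation*}
Taking absolute values, bounding the integrand pointwise by $\|\nabla^2\pot_i\|_{L^\infty(\cB(\bfz,r))}|\bx|^2$, and computing $\int_0^1(1-t)\,\rd t=1/2$, gives the componentwise estimate $|\pot_i(\bx)-(\pot_\bfz)_i(\bx)|\le \tfrac12\|\nabla^2\pot_i\|_{L^\infty(\cB(\bfz,r))}|\bx|^2$.

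Finally, I would aggregate the three components: summing the squared inequalities, taking the square root, and bounding by the full $W^{2,\infty}$ seminorm of $\pot$ produces \eqref{E:taylorA} with the stated constant $1/2$. There is no genuine obstacle: the argument is entirely standard Taylor analysis. The only bookkeeping point is verifying that the convention for $\|\pot\|_{W^{2,\infty}}$ absorbs the aggregation over components cleanly so that the constant $1/2$ is correct; this is immediate once the $W^{2,\infty}$ norm is taken to dominate the operator norm of the Hessian of each component.
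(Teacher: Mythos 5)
Your proposal is correct and follows exactly the same route as the paper, whose proof is the one-line remark that the estimate follows from the Taylor expansion of $\pot$ at $\bfz$ together with $\pot(\bfz)=0$; you have simply written out the integral-remainder form of that Taylor argument in full detail.
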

\begin{proof}
Since $\pot_\bfz$ is the linear part of $\pot$ and $\pot(\bfz)=0$, the formula is deduced from the Taylor expansion of $\pot$ at $\bx=\bfz$. 
\end{proof}

By construction, there exists $C(\Omega)>0$ such that the support of $\varphi_{h}^{\dx}$ is included in the ball $\cB(\bfz,C(\Omega)h^{\delta})$. 
Consequently, we obtain immediately
\begin{equation}\label{eq.ah2delta}
a_{h}\leq C(\Omega)\|\pot\|_{W^{2,\infty}(\supp(\varphi_{h}^{\dx}))} h^{2\delta}.
\end{equation}
Moreover using the definition of $\pot$ (see \eqref{E:ABtilde}) we get
\begin{align*}
\|\pot\|_{W^{2,\infty}(\supp(\varphi_{h}^{\dx}))} & \leq \left(1+\|\Id_{3}-\rJ\|_{L^{\infty}(\cV_{\bx_{0}})}h^{\delta}\right)\|\bA\|_{W^{2,\infty}(\cU_{\bx_{0}})}
\\
& \leq \left(1+C(\Omega)h^{\delta}\right)\|\bA\|_{W^{2,\infty}(\Omega)}
\end{align*}
Thus, putting this last inequality in \eqref{eq.ah2delta},  we deduce
\begin{equation}\label{eq.ah54}
a_{h}\leq C(\Omega)\|\bA\|_{W^{2,\infty}(\Omega)} h^{2\delta}.
\end{equation}
Combining \eqref{eq:troncA2}, \eqref{eq.ah54} and Lemma \ref{lem:rhoh} we get 
\begin{equation}\label{eq:Ray1}
\frac{q_{h}[\pot,\Pi_{\bx_{0}}](\varphi_{h}^{\dx})}{\|\varphi_{h}^{\dx}\|^2} \leq h\Lambda_{\bfz}+C(\Omega)(1+\| \bA \|_{W^{2,\infty}(\Omega)}^2)
(h^{2-2\delta}+ h^{\frac12+2\delta}+ h^{4\delta}) \, . 
\end{equation}
Note that we have also used $\Lambda_{\bfz} \leq \| \bB \|_{L^{\infty}(\Omega)}\le \| \bA \|_{W^{1,\infty}(\Omega)}$ (since $\bB = \curl\bA$) in order to control the cross term $\sqrt{h}\sqrt{\Lambda_{\bfz}+h\rho_{h}}a_{h}$ in the right hand side of \eqref{eq:troncA2}.
   
\subsubsection{Quasimode on $\Omega$ and estimation of the remainders}
\label{SS:qmfinal}
We now define a quasimode for $q_{h}[\bA,\Omega]$. Let us note that for $h$ small enough, in any situation (sitting or sliding) $\varphi_{h}^{\dx}$  is supported in $\cV_{\bx_{0}}$. Therefore we can define $f_{h}$ by 
\begin{equation}
\label{D:qmsuromega}
   f_{h}(\bx)=\varphi_{h}^{\dx}\circ \diffeo^{\bx_{0}}(\bx)\; \zeta^{\bx_0}_h(\bx),\quad \bx\in\cU_{\bx_0} \ , 
\end{equation}
where the phase shift $\zeta^{\bx_0}_h$ was introduced in \eqref{E:phase}.
We extend $f_{h}$ by zero, thus defining a function $f_{h}\in H^1(\Omega)$. 
Combining \eqref{eq:Ray1} and \eqref{eq:shift} with Lemma \ref{L:changvar} for $r=h^{\delta}$ we get 
\begin{equation*}
  \frac{q_{h}[\bA,\Omega](f_{h})}{\|f_{h}\|^2} \leq 
  \left(h\Lambda_{\bfz}+C(\Omega)(1+\| \bA \|_{W^{2,\infty}(\Omega)}^2)
  (h^{2-2\delta}+h^{\frac12+2\delta}+h^{4\delta})\right)(1+C(\Omega)h^{\delta}) \ . 
\end{equation*}
Therefore there exists a constant $C(\Omega)>0$ such that 
\begin{equation}\label{eq:Ray3}
  \frac{q_{h}[\bA,\Omega](f_{h})}{\|f_{h}\|^2} \leq 
  h\Lambda_{\bfz}+C(\Omega)(1+\| \bA \|_{W^{2,\infty}(\Omega)}^2)
  (h^{2-2\delta}+h^{\frac12+2\delta}+h^{4\delta}+h^{1+\delta}) \, .
  \end{equation}
We optimize this upper bound by taking $\delta=\frac{3}{8}$, which provides immediately estimate \eqref{eq:qm}. The min-max principle then yields Theorem \ref{T:generalUB}.

\subsubsection{Improvement in case of corner concentration}
When the geometry minimizing the energy is given by a corner  whose tangent problem has an eigenvalue under its essential spectrum, we get a better upper bound by improving the estimate on $a_{h}$:

\begin{proposition}
\label{P:Cornerconcentration}
Let $\Omega\in \ogD(\R^3)$ be a polyhedral domain, $\bA\in W^{2,\infty}(\overline{\Omega})$ be a twice differentiable magnetic potential such that the associated magnetic field $\bB$ does not vanish on $\overline\Omega$. We assume moreover that there exists a corner $\bx_{0}\in \overline{\Omega}$ such that 
$$\sE(\bB,\Omega)=\En(\bB_{\bx_{0}},\Pi_{\bx_{0}})<\seE(\bB_{\bx_{0}},\Pi_{\bx_{0}}).$$ 
Then there exist $C(\Omega)>0$ and $h_{0}>0$ such that
$$ \forall h\in (0,h_{0}), \quad \lambda_{h}(\bB,\Omega) \leq 
h\sE(\bB,\Omega)+C(\Omega)(1+\|\bA\|_{W^{2,\infty}(\Omega)}^2)\, h^{3/2}|\log h| \, .$$
\end{proposition}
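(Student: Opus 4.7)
The plan is to revisit the construction of Section \ref{SS:QM} in the favorable situation where the minimum of the local energy is attained at a corner $\bx_0$ with $\En(\bB_{\bx_0},\Pi_{\bx_0})<\seE(\bB_{\bx_0},\Pi_{\bx_0})$. By Theorem \ref{th:dicho} (i) we are in the sitting case, with $\dx=(\bx_0)$ and $\Pi_\dx=\Pi_{\bx_0}$; since $d=3$, row $(3,3)$ of Table \ref{T:age} furnishes an admissible generalized eigenvector $\Psi^\dx$ that is a genuine $L^2$-eigenfunction of $\OP(\pot_\bfz,\Pi_{\bx_0})$, with exponential decay in \emph{all three} directions (i.e.\ $k=3$ in \eqref{eq:agmongeneig}). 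I would keep the sitting quasimode \eqref{eq.defQMbis0} but set the cutoff exponent $\delta=\tfrac12$ and let the prefactor $R$ in \eqref{D:chiR} depend on $h$ logarithmically,
\begin{equation*}
   R(h)=\frac{3}{4c_\Psi}\,|\log h|,
\end{equation*}
where $c_\Psi>0$ is the Agmon decay rate of $\Psi^\dx$. The support of the resulting quasimode then has radius $O(\sqrt h\,|\log h|)$ around $\bfz$.

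With this choice I would sharpen the three remainder estimates collected in \eqref{eq:Ray3}. For the cutoff error $\rho_h$ of \eqref{eq:rhoh}, the argument in the proof of Lemma \ref{lem:rhoh} in the case $k=3$ gives $\rho_h\le CR^{-2}h^{-1}\re^{-2c_\Psi R}$ at $\delta=\tfrac12$, and inserting $R(h)$ yields $h^2\rho_h=O(h^{5/2}|\log h|^{-2})$. For the linearization error $a_h$ of \eqref{eq.defah}, the Taylor bound \eqref{E:taylorA} combined with the rescaling $\bx=\sqrt h\,\by$ and the exponential decay of $\Psi^\dx$ gives
\begin{equation*}
   a_h^2\le C\|\bA\|_{W^{2,\infty}}^2\,h^2\,\frac{\int |\by|^4|\Psi^\dx(\by)|^2\,d\by}{\int |\Psi^\dx(\by)|^2\,d\by}\le C\|\bA\|_{W^{2,\infty}}^2\,h^2,
\end{equation*}
the numerator integral being finite precisely because $\Psi^\dx$ decays exponentially in all three directions. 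This is the key qualitative improvement afforded by the corner hypothesis: it replaces the weaker estimate $a_h=O(h^{2\delta})$ of \eqref{eq.ah54}. Finally, the metric-change correction from Lemma \ref{L:changvar} applied with $r=2R(h)\sqrt h$ contributes a multiplicative factor $1+C(\Omega)R(h)\sqrt h=1+C(\Omega)|\log h|\sqrt h$.

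Inserting these three bounds into the analogue of \eqref{eq:troncA2} for the transplanted function $f_h$ defined as in \eqref{D:qmsuromega}, the dominant remainder comes from the metric factor multiplied by $h\Lambda_\bfz$, giving
\begin{equation*}
   \frac{q_h[\bA,\Omega](f_h)}{\|f_h\|^2}\le h\Lambda_\bfz+C(\Omega)\bigl(1+\|\bA\|_{W^{2,\infty}(\Omega)}^2\bigr)\,h^{3/2}|\log h|,
\end{equation*}
the other terms contributing only $O(h^{3/2})$ or smaller. The min-max principle then yields the proposition.

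The main difficulty I foresee lies in the tension between the cutoff error and the metric error at $\delta=\tfrac12$: one cannot keep $R$ bounded, since then $h^2\rho_h$ would only be $O(h)$ and would ruin the estimate, but enlarging $R$ enlarges the support of the quasimode and hence the metric-change remainder from Lemma \ref{L:changvar}, which is unavoidably linear in the support radius. The logarithmic factor $|\log h|$ in the statement is the price of this balance; removing it would require a local change of variables that osculates $\Omega$ to second order at the corner, which is not available in the general polyhedral setting.
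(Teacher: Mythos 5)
Your proposal is correct and follows essentially the same route as the paper: a sitting quasimode built from the exponentially decaying corner eigenfunction, the improved linearization bound $a_h=O(h)$ coming from that decay, and the metric-change error of Lemma \ref{L:changvar} (proportional to the support radius) as the dominant remainder. The only difference is the parametrization of the optimization: you reach a support radius of order $\sqrt h\,|\log h|$ by fixing $\delta=\tfrac12$ and letting $R(h)\sim|\log h|$, whereas the paper fixes $R$ and takes $\delta=\tfrac12-\varepsilon(h)$ with $h^{-\varepsilon(h)}\sim|\log h|$; both balances of the cut-off error against the metric error yield the same $h^{3/2}|\log h|$ bound.
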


\begin{remark}
As in Corollary \ref{co:T:generalUBB} we can express this upper bound in function of the H\"older norm of the magnetic field. 
\end{remark}

\begin{proof}
Since $\Pi_{\bx_{0}}$ is a polyhedral corner and $\En(\bB_{\bx_{0}},\Pi_{\bx_{0}})<\seE(\bB_{\bx_{0}},\Pi_{\bx_{0}})$, by Proposition \ref{prop:cone-ess} the generalized eigenfunction $\Psi$ of $\OP(\bA_{\bx_{0}},\Pi_{\bx_{0}})$ provided by Theorem \ref{th:dicho} is an eigenfunction and has exponential decay when $|\bx|\to+\infty$. Here $\dx=(\bx_0)$, the quasimode $\varphi^{\dx}_h$ is sitting and defined by \eqref{eq:psih}. From now on we may drop in this proof the superscript $\dx$. Using \eqref{E:taylorA} we get $C(\Omega)>0$ such that
$$\forall \bx\in \supp (\varphi_{h}), \quad |(\pot-\pot_\bfz)(\bx)| \leq C(\Omega) \|\pot\|_{W^{2,\infty}(\supp(\varphi_{h}))}|\bx|^2\, .$$
Using the change of variable ${\bf X}=\bx h^{-1/2}$ and the exponential decay of $\Psi$ we get
$$a_{h}=\frac{\|(\pot-\pot_\bfz)\varphi_{h}\|}{\|\varphi_{h}\|} \leq C(\Omega)  \|\pot\|_{W^{2,\infty}(\supp(\varphi_{h}))} h,$$
where $a_{h}$ is set in \eqref{eq.defah}. Now \eqref{eq:troncA2} provides, with Lemma \ref{lem:rhoh}, for any $\delta\in(0,\frac12]$
\begin{eqnarray*}
\frac{q_{h}[\pot,\Pi_{\bx_{0}}](\varphi_{h})}{\|\varphi_{h}\|^2} 
&\leq& h\Lambda_{\bfz}+ C(\Omega)\, h^{2-2\delta}\re^{-ch^{\delta-\frac 12}}
 + C(\Omega)\| \bA \|_{W^{2,\infty}(\Omega)} h^{3/2}+ C(\Omega)\| \bA \|^2_{W^{2,\infty}(\Omega)} h^2\\
&\leq& h\Lambda_{\bfz}+ C(\Omega)(1+\|\bA \|^2_{W^{2,\infty}(\Omega)}) 
 \, (h^{2-2\delta}\re^{-ch^{\delta-\frac 12}}+h^{3/2}).
\end{eqnarray*}
Therefore the quasimode $f_{h}$ defined in \eqref{D:qmsuromega} satisfies
\begin{eqnarray*}
\frac{q_{h}[\bA,\Omega](f_{h})}{\|f_{h}\|^2} 
&\leq (1+C(\Omega)h^\delta) \big\{h\Lambda_{\bfz}+ C(\Omega)(1+\|\bA \|^2_{W^{2,\infty}(\Omega)}) 
 (h^{2-2\delta}\re^{-ch^{\delta-\frac 12}}+h^{3/2})\big\} \\
&\leq h\Lambda_{\bfz}+ C(\Omega)(1+\|\bA \|^2_{W^{2,\infty}(\Omega)}) 
 \big\{ h^{1+\delta} + h^{2-2\delta}\re^{-ch^{\delta-\frac 12}}+h^{3/2}\big\} .
\end{eqnarray*}
Here $C(\Omega)$ denotes various constants independent from $h\le h_0$ and $\delta\le\frac12$.
We optimize this by taking $\delta=\frac12-\varepsilon(h)$ with $\varepsilon(h)$ so that
$h^{1+\delta} = h^{2-2\delta}\re^{-ch^{\delta-\frac 12}}$, i.e.
\[
   h^{\frac32-\varepsilon(h)} = h^{1+2\varepsilon(h)}\re^{-ch^{-\varepsilon(h)}} .
\] 
We find
\[
   \re^{ch^{-\varepsilon(h)}} = h^{-\frac12+3\varepsilon(h)},\quad\mbox{i.e.}\quad
   h^{-\varepsilon(h)} = \tfrac{1}{c}(-\tfrac12+3\varepsilon(h))\log h \,.
\] 
The latter equation has one solution $\varepsilon(h)$ which tends to $0$ as $h$ tends to $0$.
Replacing $h^{-\varepsilon(h)}$ by the value above in $h^{\frac32-\varepsilon(h)}$, we find that the remainder is a $O(h^{3/2}|\log h|)$
and the min-max principle provides the proposition.
\end{proof}

\subsection{Improvement for more regular magnetic fields}
\label{S:improvement4/3}
The object of this section is the proof of Theorem~\ref{T:sUB}.
In fact, our proof of the $h^{5/4}$ upper bound as done in previous sections weakly uses the exponential decay of generalized eigenfunctions in some directions. It would also work with purely oscillating generalized eigenfunctions. 

Now the proof of the $h^{4/3}$ upper bound makes a more extensive use of fine properties of the model problems: First, the decay properties of admissible generalized eigenvectors and their stability upon perturbation, and second, the Lipschitz regularity of the ground energy depending on the magnetic field, cf.\ Lemma \ref{L:Elip}. 

We recall that $\bx_{0}\in \overline{\Omega}$ is a point such that 
$\En(\bB_{\bx_{0}}\ee,\Pi_{\bx_{0}})= \sE(\bB \ee,\Omega)=:\Lambda_{\bfz}$.
We apply Theorem \ref{th:dicho} (and Remark~\ref{rem:chaine}) with $(\bB_{\bx_{0}},\Pi_{\bx_{0}})$: 
we denote by $\dx$ the corresponding singular chain which satisfies
$$
\sE(\bB \ee,\Omega) = \En(\bB_{\bx_{0}},\Pi_{\bx_{0}})=
\En(\bB_{\bx_{0}},\Pi_{\dx})< \seE(\bB_{\bx_{0}},\Pi_{\dx}).
$$ 
We now split our analysis between three geometric configurations depending on the number of variables $k$ in which the generalized eigenfunction has exponential decay (see Section \ref{s:age}):
 \begin{itemize}
\item[(G1)] $\Pi_{\dx}$ is a half-space and $\bB_{\bx_{0}}$ is tangent to the boundary.
\item[(G2)] We are in one of the following situations:
\begin{itemize}
\item Either $\Pi_{\dx}$ is a wedge,
\item or $\Pi_{\dx}$ is a half-space and $\bB_{\bx_{0}}$ is neither tangent nor normal to the boundary. 
\item or $\Pi_{\dx}=\R^3$.
\end{itemize}
 \item[(G3)] $\Pi_{\dx}$ is a polyhedral cone of dimension 3 and coincides with $\Pi_{\bx_0}$.
 \end{itemize}
Let us now deal with each situation. The arguments are specific to each case. 
 
\paragraph{Assume that we are in situation (G3)} 
This means that $\bx_0$ is a corner and that we have the strict inequality $\En(\bB_{\bx_{0}},\Pi_{\bx_{0}})<\seE(\bB_{\bx_{0}},\Pi_{\bx_{0}})$. In that case we can rely on Proposition \ref{P:Cornerconcentration} in which we have already proved a better upper bound for $\lambda_{h}(\bB,\Omega)$, even with a weaker regularity assumption on the magnetic field.
  
\paragraph{Assume that we are in situation (G2)}
The generalized eigenfunction $\Psi$ associated with $\OP(\pot_{\bfz},\Pi_{\dx})$ has two directions of decay, $z_1$ and $z_2$, leaving one direction $y$ with a purely oscillating character. In this case, we are going to improve the linearization error: Until now we have used that $\pot(\bx)-\pot_\bfz(\bx)$ is a $O(|\bx|^2)$. Here, by a suitable phase shift (which corresponds to a change of gauge), we can eliminate from this error the term in $O(|y|^2)$, replacing it by a $O(|y|^3)$. The other terms containing at least one power of $|\bz|$, we can take advantage of the decay of $\Psi$. The sitting modes will be constructed following exactly this strategy, whereas concerning sliding modes, we have to linearize the potential at the point $\dec:=h^{\delta}\dir$, instead of $\bfz$ as previously. Let us develop details now.

Quasimodes $f_h$ on $\Omega$ are still defined by the formula \eqref{D:qmsuromega}, but we alter now the definition of $\varphi^{\dx}_{h}$. We first treat sitting modes, and second, sliding modes.

-- {\em Sitting quasimodes.} This is the case when $\dx=(\bx_0)$. Here we use the admissible generalized eigenvector $\Psi^{\natural}$  in natural variables as introduced in \eqref{D:Generalizedef} and its scaled version $\Psi^{\natural}_h$. We set
\begin{equation}
\label{eq:psidx}
    \psi^{\natural}_h(\bx^{\natural}) = 
    \chi_h(\bx^{\natural})\, \Psi^{\natural}_h(\bx^{\natural}) = 
   \underline\chi_{R}\left(\frac{|\bx^{\natural}|}{h^{\delta}}\right)  
   \Psi^{\natural}\left(\frac{\bx^{\natural}}{h^{1/2}}\right),\quad \bx^{\natural}\in \R\times\Upsilon\,.
\end{equation}
We are going to apply Lemma \ref{L:d2ell0} in the variables $(y,\bz)=\bx^\natural$.
We recall that $\udiffeo$ is the rotation ($\rJ$ its associated matrix) such that $\udiffeo(\bx)=\rJ^{\top}(\bx) =\bx^\natural$ (here $\bx\in\Pi_\dx$). Let $\bA^\natural$ be the magnetic potential associated with $\pot$ in variables $\bx^\natural$. Let $\bA^\natural_\bfz$ and $\pot_\bfz$ be their linear parts at $\bfz$. There holds, cf.\ Remark \ref{rem:geneig}
\begin{equation}
\label{E:lienbapot}
   \bA^\natural(\bx^\natural) = \rJ^\top\big(\pot(\bx)\big)
   \quad\mbox{and}\quad 
   \bA^\natural_\bfz(\bx^\natural) = \rJ^\top(\pot_\bfz(\bx)),
   \quad \forall\bx\in\cV_{\bx_0}\,.
\end{equation}
Lemma \ref{L:d2ell0} 
in variables $(u_1,u_2,u_3)=(y,z_1,z_2)$ with $\ell=1$ then gives us a function $F$ such that $\partial^2_{y}(\bA^\natural-\nabla F)(\bfz) = 0$ leading to the estimates 
\begin{equation}
\label{eq:d2y0est}
   \big|\big(\bA^\natural - \bA^\natural_\bfz - 
   \nabla F\big)(\bx^\natural)\big| 
   \le C(\cV_{\bx_0})\,\|\pot\|_{W^{3,\infty}(\cV_{\bx_{0}})}
   \big(|y|^3 + |y||\bz| + |\bz|^2\big)\,.
\end{equation}
 After \eqref{eq:psidx}, we define 
\begin{equation}
\label{eq:spm}
\spm_{h}^{\natural}(\bx^\natural):=
   \re^{-iF(\bx^\natural)/h}\psi_{h}^\natural(\bx^\natural) \quad \mbox{for} \quad 
   \bx^\natural\in\R\times\Upsilon,
\end{equation}
which leads to our new quasimode given by
\begin{equation}
\label{eq:qmX}
   \varphi^{\dx}_h(\bx) = \spm_{h}^{\dx}(\bx):= \spm_{h}^{\natural}(\bx^\natural)
\quad \mbox{for} \quad    \bx\in\Pi_{\bx_0}=\Pi_{\dx}.
\end{equation}
We have obviously $\|\spm_{h}^{\dx}\|=\|\psi_{h}^{\natural}\|$.
Formulas \eqref{E:chgG} and \eqref{eq:diffAA'} then yield: 
\begin{align}\label{eq:diffAAdec}
q_{h}[\pot,\Pi_{\dx}](\spm_{h}^{\dx}) 
&= q_{h} [\bA^\natural,\R\times\!\Upsilon](\spm_{h}^{\natural}) \\ \nonumber
&= q_{h} [\bA^\natural-\nabla F,\R\times\!\Upsilon](\psi_{h}^{\natural}) \\ \nonumber
&= q_{h}[\bA^\natural_{\bfz},\R\times\!\Upsilon](\psi_{h}^{\natural}) 
+ \big\|(\bA^\natural-\bA^\natural_{\bfz}-\nabla F)\psi_{h}^{\natural}\big\|^2
\\ \nonumber
   & +2  \Re\int_{\R\times\Upsilon} (-ih\nabla+\bA^\natural_{\bfz})\psi_{h}^{\natural}(\bx^\natural)\cdot
   \big(\bA^\natural-\bA^\natural_{\bfz}-\nabla F\big)
   \overline{\psi_{h}^{\natural}}(\bx^\natural)\,\rd\bx^\natural .
\end{align}
By analogy with \eqref{eq.defah}, we define
\begin{equation}
\label{eq:hata}
   \hat a_{h}=
   \frac{\|(\bA^\natural -\bA^\natural_{\bfz} - \nabla F)\psi_{h}^{\natural}\|}
   {\|\psi_{h}^{\natural}\|}\, .
\end{equation}
Combining \eqref{eq:diffAAdec} with \eqref{eq:tronc1}, we obtain
\begin{equation}\label{eq:troncA22}
  \frac{q_{h}[\pot,\Pi_{\dx}](\spm_{h}^{\dx})}{\|\spm_{h}^{\dx}\|^2} 
  \leq h \Lambda_\bfz + h^2\rho_{h} +
  2 \sqrt h \sqrt{\Lambda_\bfz+h\rho_{h}}\ \hat a_{h} + \hat a_{h}^2
\end{equation}
In comparison with \eqref{eq:troncA2}, we have $\hat a_{h}$ instead of $a_h$. It remains to bound $\hat a_h$ from above. 

The following Lemma provides an improvement when compared to \eqref{eq.ah2delta}, due to estimates \eqref{eq:d2y0est} which replace \eqref{E:taylorA}. 

\begin{lemma}\label{lem:ahsuper}
With the previous notation, there exist constants $C(\Omega)>0$ and $h_{0}>0$ such that for all $h\in (0,h_{0})$
\begin{equation}\label{eq:ah}
   \hat a_{h} = 
  \frac{\|(\bA^\natural -\bA^\natural_{\bfz} - \nabla F)\psi_{h}^{\natural}\|}
   {\|\psi_{h}^{\natural}\|}
   \leq C(\Omega)\|\pot\|_{W^{3,\infty}(\cV_{\bx_0})}
   h^{\min(1,\frac12+\delta,3\delta)} .
\end{equation}
\end{lemma}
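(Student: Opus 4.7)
The plan is to estimate $\hat a_h$ by inserting the pointwise bound \eqref{eq:d2y0est} into the $L^2$-ratio \eqref{eq:hata} and then computing the three resulting $L^2$ norms separately, exploiting two complementary localization mechanisms: the compact support of the cut-off $\chi_h$ in the $y$-direction (which forces $|y| \le 2h^\delta$ on $\supp \psi_h^\natural$), and the exponential decay of $\Phi(\bz)$ in the $\bz$-variables, which effectively turns each factor of $|\bz|$ into a factor of $h^{1/2}$ after the rescaling $\bZ = \bz/\sqrt h$.

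More precisely, from \eqref{eq:d2y0est} and the triangle inequality
\[
   \hat a_h \;\le\; C(\cV_{\bx_0})\,\|\pot\|_{W^{3,\infty}(\cV_{\bx_0})}\,
   \bigl(I_1(h) + I_2(h) + I_3(h)\bigr),
\]
where, writing $N(h):=\|\psi_h^\natural\|_{L^2(\R\times\Upsilon)}$,
\[
   I_1(h) = \frac{\big\| |y|^3 \,\psi_h^\natural\big\|}{N(h)},\qquad
   I_2(h) = \frac{\big\| |y||\bz|\,\psi_h^\natural\big\|}{N(h)},\qquad
   I_3(h) = \frac{\big\| |\bz|^2 \,\psi_h^\natural\big\|}{N(h)}.
\]
The denominator was already controlled in the proof of Lemma~\ref{lem:rhoh} (case $k=2$): with $T = R h^\delta$, the estimate \eqref{eq:minL2b} together with the uniform positivity of $\cI(Th^{-1/2})$ for $\delta\le\delta_0<\tfrac12$ yields $N(h)^2 \ge c\,h^{\,\delta+1}\|\Phi\|_{L^2(\Upsilon)}^2$.

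For the numerators, the crucial observation is that $\psi_h^\natural(y,\bz) = \underline\chi_R(|(y,\bz)|/h^\delta)\,e^{i\vartheta(y/\sqrt h,\bz/\sqrt h)}\,\Phi(\bz/\sqrt h)$, so $|\psi_h^\natural|^2$ is independent of the phase, bounded by $|\Phi(\bz/\sqrt h)|^2$ and supported where $|y|\le 2h^\delta$. Performing the change of variables $\bZ = \bz/\sqrt h$ (Jacobian $h$) and using the Agmon-type bound \eqref{eq:agmon} to control $\int |\bZ|^{2j}|\Phi(\bZ)|^2\,\rd\bZ \le C\|\Phi\|^2$ for any $j\ge 0$, we obtain
\begin{align*}
   \big\||y|^3\psi_h^\natural\big\|^2 &\le \int_{|y|\le 2h^\delta}\! |y|^6\,\rd y \int_\Upsilon |\Phi(\bz/\!\sqrt h)|^2\,\rd\bz \le C\,h^{7\delta+1}\|\Phi\|^2,\\
   \big\||y||\bz|\,\psi_h^\natural\big\|^2 &\le \int_{|y|\le 2h^\delta}\! |y|^2\,\rd y \int_\Upsilon |\bz|^2|\Phi(\bz/\!\sqrt h)|^2\,\rd\bz \le C\,h^{3\delta+3}\|\Phi\|^2,\\
   \big\||\bz|^2\psi_h^\natural\big\|^2 &\le \int_{|y|\le 2h^\delta}\!\rd y \int_\Upsilon |\bz|^4|\Phi(\bz/\!\sqrt h)|^2\,\rd\bz \le C\,h^{\delta+5}\|\Phi\|^2.
\end{align*}
Dividing each of these by $N(h)^2\ge c\,h^{\delta+1}\|\Phi\|^2$ and taking square roots gives $I_1(h)\le C h^{3\delta}$, $I_2(h)\le C h^{\delta+1/2}$, and $I_3(h)\le C h$. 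Summing yields the claimed bound
\[
   \hat a_h \le C(\Omega)\,\|\pot\|_{W^{3,\infty}(\cV_{\bx_0})}\,h^{\min(1,\,\frac12+\delta,\,3\delta)}.
\]

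There is no substantial obstacle, but the key subtlety is that the improvement over the crude bound $a_h\lesssim h^{2\delta}$ of \eqref{eq.ah2delta} comes entirely from the asymmetric decay of $\Psi^\natural$: the purely oscillating factor $e^{i\vartheta}$ in the $y$-direction is harmless because $|\psi_h^\natural|^2$ forgets the phase, so only the cut-off constrains $|y|$, while each factor of $|\bz|$ is absorbed by the exponentially decaying $\Phi$ at the price of an extra $\sqrt h$. This is exactly what is unlocked by the gauge correction $e^{-iF/h}$ built from Lemma~\ref{L:d2ell0}, which removes the $O(|y|^2)$ term from $\pot-\pot_\bfz$ that would otherwise have given the limiting contribution $h^{2\delta}$.
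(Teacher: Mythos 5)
Your proof is correct and follows essentially the same route as the paper: split the error via \eqref{eq:d2y0est} into the three contributions $|y|^3$, $|y||\bz|$, $|\bz|^2$, bound $|y|$ by $Ch^\delta$ on the support of the cut-off, and convert each factor of $|\bz|$ into a $\sqrt h$ through the rescaling $\bZ=\bz/\sqrt h$ and the exponential decay of $\Phi$, finally dividing by the lower bound on $\|\psi_{h}^{\natural}\|^2$ from the proof of Lemma~\ref{lem:rhoh}. Two of your intermediate exponents are off: since $\Upsilon\subset\R^2$ the substitution gives $\big\||y||\bz|\,\psi_{h}^{\natural}\big\|^2\le Ch^{3\delta+2}\|\Phi\|^2$ and $\big\||\bz|^2\psi_{h}^{\natural}\big\|^2\le Ch^{\delta+3}\|\Phi\|^2$ rather than $h^{3\delta+3}$ and $h^{\delta+5}$; it is these corrected numerators, divided by $N(h)^2\gtrsim h^{\delta+1}\|\Phi\|^2$, that yield the rates $h^{\delta+1/2}$ and $h$ you state, so the final conclusion is unaffected.
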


\begin{proof}
 Using the form of the admissible generalized eigenvector $\Psi^{\natural}$:
$$
   \Psi^{\natural}(\bx^\natural)=
   \re^{\ri\vartheta(\bx^\natural)}\Phi(\bz)
   \quad\mbox{with}\quad
   \bx^\natural=(y,\bz)\, ,
$$ 
we obtain by definition of $\psi^{\natural}_h$ \eqref{eq:psidx}
\[
   |\psi^{\natural}_h(\bx^\natural)| = 
   \underline\chi_{R}\left(\frac{|\bx^\natural|}{h^{\delta}}\right)  
   \Big|\Phi\left(\frac{\bz}{h^{1/2}}\right)\Big|\,.
\] 
Then 
relying on  \eqref{eq:d2y0est} and using
the changes of variables $\bZ=\bz h^{-1/2}$ and $Y=yh^{-\delta}$, we find the bounds 
\begin{eqnarray*}
   \Big\||y|^3 \ \underline\chi_{R}\left(\frac{|\bx^\natural|}{h^{\delta}}\right)  
   \Phi\left(\frac{\bz}{h^{1/2}}\right)\Big\| &\le& h^{3\delta} \,
   \|\psi^{\natural}_h\|\\
   \Big\||y|\,|\bz| \ \underline\chi_{R}\left(\frac{|\bx^\natural|}{h^{\delta}}\right)  
   \Phi\left(\frac{\bz}{h^{1/2}}\right)\Big\| &\le& h^{\delta+\frac12} \,
   \|\psi^{\natural}_h\| \\
   \Big\||\bz|^2 \ \underline\chi_{R}\left(\frac{|\bx^\natural|}{h^{\delta}}\right)  
   \Phi\left(\frac{\bz}{h^{1/2}}\right)\Big\| &\le& h  \,
   \|\psi^{\natural}_h\|.
\end{eqnarray*}
Summing up the latter three estimates leads to the lemma.
\end{proof}

We now take the same arguments as in Section \ref{SS:linear} but instead of \eqref{eq.ah54} we use Lemma \ref{lem:ahsuper} to estimate $\hat a_{h}$ and \eqref{eq:troncA22} becomes 
\[
   \frac{q_{h}[\pot,\Pi_{\dx}](\spm_{h}^{\dx})}{\|\spm_{h}^{\dx}\|^2} 
   \leq h\Lambda_\bfz
   +C(\Omega)(1+\| \bA \|_{W^{3,\infty}(\Omega)}^2)
   (h^{3/2}+h^{2-2\delta}+ h^{1+\delta}+ h^{3\delta+\frac12}+h^{6\delta}) \, . 
\]
Let ${f}_{h}$ be the quasimode defined in the same way as in \eqref{D:qmsuromega}. The same arguments as in Section \ref{SS:qmfinal} and Lemma \ref{L:Elip} combined with \eqref{E:QRspm} yields
\begin{equation}\label{eq:Ray4}
  \frac{q_{h}[\bA,\Omega]({f}_{h})}{\|{f}_{h}\|^2} \leq 
  h\Lambda_{\bfz}+C(\Omega)(1+\| \bA \|_{W^{3,\infty}(\Omega)}^2)
  (h^{2-2\delta}+h^{3\delta+\frac12}+h^{6\delta}+h^{1+\delta}) \, .
  \end{equation}
We optimize this upper bound by taking $\delta=\frac13$. The min-max principle provides Theorem \ref{T:sUB} in the case (G2) if $\dx=(\bx_0)$.

-- {\em Sliding modes.} This is the case when $\dx=(\bx_0,\bx_1)$. Let us explain now how the above arguments adapt to sliding quasimodes. We recall that we have introduced a vector $\dir$ in \eqref{def:tau} depending on $\Pi_{\bx_0}$ and $\Pi_\dx$.
Mimicking definition \eqref{eq.defQMbis} for quasimodes $\varphi_{h}^{\dx}$, we are going to construct new quasimodes with two adaptations: first, we linearize the potential at the point $\dec:=h^{\delta}\dir$, second, we shift the phase to optimize remainders. 

Let us denote by $\pot^{\dec}$ the potential transformed from $\pot$ by the translation $\diffeoT^\dec:\bx\to\bx-\dec$ 
\[
   \pot^\dec(\bx) = \pot(\bx+\dec) - \pot(\dec), \quad \bx\in\Pi_\dx
\]
(compare with \eqref{E:ABtilde}) and by $\pot^\dec_\bfz$ the linear part of $\pot^\dec$ at $\bfz$: 
$$
   \pot^{\dec}_{\bfz}(\bx)=\nabla\pot(\dec)\cdot\bx, \quad \bx\in\Pi_\dx.
$$
We have $\curl \pot^{\dec}_{\bfz}=\tbB_{\dec}$ where the constant $\tbB_{\dec}$ is the magnetic field $\tbB$ frozen at $\dec$. 

We have $\En(\bB_{\bx_{0}},\Pi_{\dx})< \seE(\bB_{\bx_{0}},\Pi_{\dx})$. Due to Lemma~\ref{L:Elip}, we have 
\begin{equation}
\label{E:remarkimportante}
 \exists \varepsilon>0, \ \ \forall \dec\in \cB(0,\varepsilon)\cap \overline\Pi_{\bx_{0}}, \quad \En(\tbB_{\dec},\Pi_{\dx})<\seE(\tbB_{\dec},\Pi_{\dx}) \,,
 \end{equation}
and $(\tbB_{\dec},\Pi_{\dx})$ is still in situation (G2). 
There exists an admissible generalized eigenfunction $\Psi^{\dx,\dec}$ for the operator $H(\pot^{\dec}_{\bfz},\Pi_{\dx})$ associated with $\En(\tbB_{\dec},\Pi_{\dx})$ (denoted by $\Lambda_{\dec}$ for shortness):
\begin{equation}\label{D:psidec}
    \Psi^{\natural,\dec}(\bx^\natural) = 
    \Psi^{\dx,\dec}\circ (\udiffeo^{\dec})^{-1}(y,\bz)=
    \re^{\ri\vartheta^{\dec}(\by,\bz)}\Phi^{\dec}(\bz)\quad\mbox{for }\quad 
    (y,\bz)\in\R\times\Upsilon,
\end{equation}
where the rotation\footnote{The rotation $\udiffeo^{\dec}$ may depend on $\dec$ but that does not hamper our analysis.} $\udiffeo^{\dec}$ maps $\Pi_{\dx}$ onto $\R\times \Upsilon$. Here $\Upsilon$ is a cone in $2$ dimensions (namely a sector, a half-space or $\R^2$). We have
\begin{equation}
\label{eq:genEPdec}
\begin{cases}
   (-i\nabla+\pot^{\dec}_{\bfz})^2\,\Psi^{\dx,\dec}=\Lambda_{\dec}\Psi^{\dx,\dec} 
   &\mbox{in } \Pi_{\dx},\\
   (-i\partial_n+\mathbf{n}\cdot\pot^{\dec}_{\bfz})\,\Psi^{\dx,\dec}=0 
   &\mbox{on } \partial\Pi_{\dx}.
\end{cases}
\end{equation}

An important point is that, choosing $\varepsilon>0$ small enough, we may assume that, in virtue of Lemma \ref{L:Elip}, the functions $\Phi^\dec$ are uniformly exponentially decreasing
\begin{equation}
\label{E:agmuniform}
\exists c>0,\ C>0,\quad
\forall \dec\in \cB(\bfz,\varepsilon), \quad 
\|\Phi^{\dec}\re^{c|\bz|}\|_{L^2(\Upsilon)}\leq C \|\Phi^{\dec} \|_{L^2(\Upsilon)}  \, .
\end{equation}
Now we take $\dec= h^{\delta} \dir$ with $h$ small enough. Using the $\dec$-dependent variables $\bx^\natural=(y,\bz)=\udiffeo^\dec(\bx)$, we set $\Psi_{h}^{\natural,\dec}(\bx^{\natural}):=\Psi^{\natural,\dec}(\frac{\bx^{\natural}}{\sqrt h})$ and, 
cf.\ \eqref{eq:psidx}
\begin{equation}
\label{eq:psidxdec}
    \psi_{h}^{\natural,\dec}(\bx^{\natural}) =  
   \chi_h(\bx^{\natural})\, \Psi_{h}^{\natural,\dec}(\bx^{\natural}) = 
   \underline\chi_{R}\left(\frac{|\bx^{\natural}|}{h^{\delta}}\right)  
   \Psi^{\natural,\dec}\left(\frac{\bx^{\natural}}{h^{1/2}}\right),
   \quad\bx^{\natural}\in\R\times\Upsilon \, .  
\end{equation}
We are arrived at point where the situation is similar as in the sitting case, with the new feature that the generalized eigenvectors $\Psi_{h}^{\natural,\dec}$ depend (in some smooth way) on the parameter $\dec$. 
The potential $\bA^{\natural,\dec}$ in natural variables corresponding to $\pot^\dec$ and its linear part at $\bfz$ satisfy
\begin{equation}
   \bA^{\natural,\dec}(\bx^\natural) = \rJ^\top\big(\pot^\dec(\bx)\big) 
   \quad\mbox{and}\quad
   \bA^{\natural,\dec}_\bfz(\bx^\natural) = \rJ^\top(\pot^\dec_\bfz(\bx)) \,.
\end{equation}
Like before we find a function $F^\dec$ satisfying  
\begin{equation}
\label{eq:d2y0estsliding}
   \big|\big(\bA^{\natural,\dec} - \bA^{\natural,\dec}_\bfz - 
   \nabla F^\dec\big)(\bx^\natural)\big| 
   \le C(\cV_{\bx_0})\,\|\pot\|_{W^{3,\infty}(\cV_{\bx_{0}})}
   \big(|y|^3 + |y||\bz| + |\bz|^2\big)\,.
\end{equation}
We define the new functions
\begin{equation}
\label{eq:spmp}
   \spm_{h}^{\natural,\dec}(\bx^\natural):=
   \re^{-iF^\dec(\bx^\natural)/h}\psi_{h}^{\natural,\dec}(\bx^\natural) \quad \mbox{for} \quad 
   \bx^\natural\in\R\times\Upsilon\,,
\end{equation}
and
$$   \spm_{h}^{\dx,\dec}(\bx)=\spm_{h}^{\natural,\dec}(\bx^\natural)\quad \mbox{for} \quad 
   \bx\in\Pi_{\dx} \:.$$
Still with $\dec=h^{\delta}\dir$, we define the quasimode $\varphi_{h}^{\dx}$ in $\Pi_{\bx_0}$ by (cf. \eqref{eq.defQMb})
\begin{equation}
\label{D:psidirdec}
   \varphi_{h}^{\dx}(\bx) = 
   \re^{-i\langle\pot(\dec) ,\, {\bx}\rangle/h} \,
   \spm_{h}^{\dx,\dec}(\bx-\dec),
   \quad\bx\in\Pi_{\bx_{0}} \, .  
\end{equation}
Using formulas \eqref{E:chgG} and \eqref{eq:diffAA'}, we have:
\begin{equation*}
q_{h}[\pot,\Pi_{\bx_{0}}](\varphi_{h}^{\dx}) 
=  q_{h} [\pot^\dec,\Pi_{\dx}](\spm_{h}^{\dx,\dec})=q_{h} [\bA^{\natural,\dec},\R\times\!\Upsilon](\spm_{h}^{\natural,\dec}) \,.
\end{equation*}
Relations~\eqref{eq:diffAAdec} are still valid if we replace $\psi^\natural$ by $\psi^{\natural,\dec}$ and $\bA^\natural$ by $\bA^{\natural,\dec}$. 
Recall that we have denoted $\Lambda_{\dec}$ for $\En(\tbB_{\dec},\Pi_{\dx})$. Using definition \eqref{eq:psidxdec} and  Lemma \ref{lem:tronc} we get like for \eqref{eq:tronc1}
\begin{equation}
\label{eq:tronc1p}
   \frac{q_{h}[\bA^{\natural,\dec}_{\bfz},\R\times\Upsilon](\psi_{h}^{\natural,\dec})}{\|\psi_{h}^{\natural,\dec}\|^2}
   = h\Lambda_{\dec} + h^2\rho_{h}
\quad\mbox{ with }\quad 
  \rho_{h}=\frac{\| \,|\nabla\chi_{h}|\, \Psi_{h}^{\natural,\dec}\|^2}
  {\|\chi_{h}\Psi_{h}^{\natural,\dec}\|^2}.
\end{equation}
Here $\rho_h$ satisfies the estimates as given in Lemma \ref{lem:rhohb} because of the uniformly exponential decay \eqref{E:agmuniform}. Now $\hat a_h$ takes the form
$$
\hat a_{h}=
   \frac{\|(\bA^{\natural,\dec}-\bA^{\natural,\dec}_\bfz-\nabla F^\dec)\psi_{h}^{\natural,\dec}\|}
   {\|\psi_{h}^{\natural,\dec}\|}\, ,
$$ 
and we obtain
\begin{equation}\label{eq:troncA22p}
  \frac{q_{h}[\pot^\dec,\Pi_{\dx}](\spm_{h}^{\dx,\dec})}{\|\spm_{h}^{\dx,\dec}\|^2} 
  \leq h \Lambda_\dec + h^2\rho_{h} +
  2 \sqrt h \sqrt{\Lambda_\dec+h\rho_{h}}\ \hat a_{h} + \hat a_{h}^2
\end{equation}
Now, when comparing with \eqref{eq:troncA22}, we have $\Lambda_\dec$ instead of $\Lambda_\bfz$.
Using the uniform exponential decay \eqref{E:agmuniform}, we find that Lemma \ref{lem:ahsuper} holds uniformly with respect to $\dec$ and we find
\[
  \frac{q_{h}[\pot^\dec,\Pi_{\dx}](\spm_{h}^{\dx,\dec})}{\|\spm_{h}^{\dx,\dec}\|^2} 
   \leq h\Lambda_\dec
   +C(\Omega)(1+\| \bA \|_{W^{3,\infty}(\Omega)}^2)
   (h^{3/2}+h^{2-2\delta}+ h^{1+\delta}+ h^{3\delta+\frac12}+h^{6\delta}) \, . 
\]

Now we use Lemma~\ref{L:Elip} to obtain the estimate $|\Lambda_\dec-\Lambda_{\bfz}|\le C|\dec|\le Ch^\delta$  (as previously $\Lambda_{\bfz}$ denotes $\En(\bB_{\bx_{0}},\Pi_{\bx_{0}})$). Hence
 \begin{multline}
 \label{E:QRspm}
   \frac{q_{h}[\pot,\Pi_{\bx_{0}}](\varphi_{h}^{\dx}) }{\|\varphi_{h}^{\dx}\|^2} =
   \frac{q_{h}[\pot^\dec,\Pi_{\dx}](\spm_{h}^{\dx,\dec})}{\|\spm_{h}^{\dx,\dec}\|^2} 
   \leq \\ h\Lambda_{\bfz}
   +C(\Omega)(1+\| \bA \|_{W^{3,\infty}(\Omega)}^2)
   (h^{3/2}+h^{2-2\delta}+ h^{1+\delta}+ h^{3\delta+\frac12}+h^{6\delta}) \, . 
\end{multline}

We end the proof as in the sitting case and Theorem \ref{T:sUB} is proved in the case (G2).

\paragraph{Assume we are in situation (G1)} 
In situation (G1) the generalized eigenfunction has exponential decay in one variable $z$. The upper bound \eqref{eq:troncA2} obtained by a Cauchy-Schwarz inequality is too rough and we will deal with the previous identity \eqref{eq:troncA}. A Feynman-Hellmann formula will simplify the cross term in \eqref{eq:troncA} and the exponential decay in one variable will provide the desired result.

In situation (G1) $\Pi_{\dx}$ is a half-plane and $\bB_{\bx_{0}}$ is tangent to its boundary. 
Denote by $(\by,z)=(y_1,y_2,z)\in \R^2\times \R_{+}$ a system of coordinates of $\Pi_{\dx}$ such that $\bB_{\bx_{0}}$ is tangent to the $y_{2}$-axis. In these coordinates, the magnetic field $\bB_{\bx_{0}}$ writes $(0,b,0)$.

In the rest of this proof, we will assume without restriction that $b=1$. Indeed, once quasimodes are constructed for $b=1$, Lemmas \ref{lem.dilatation} and \ref{lem:sense} allow to convert them into quasimodes for any $b$.

Let us define the canonical reference potential
\begin{equation}\label{eq.uA}
\uA(\by,z)=(z,0,0).
\end{equation}
such that $\curl \uA=(0,1,0)$. 
We know (see Section \ref{s:age}) that the function
\begin{equation}\label{eq.uPsi}
 \uPsi_{h}(\by,z):=
 \re^{-i\sqrt{\Theta_{0}}\,y_{1}/\sqrt{h}} \,  \Phi\Big(\frac{z}{\sqrt{h}}\Big)
 \end{equation}
is a generalized eigenvector of $H_{h}(\uA,\R^2\times\R_{+})$ where $\Phi$ is a normalized eigenvector associated with the first eigenvalue of the de Gennes operator $-\partial_{z}^2+(z-\sqrt{\Theta_{0}})^2$ on $\R_+$. 
 
 We define $\upsi_{h}$ in the same spirit as \eqref{eq:psidx} but for convenience we take a cut-off function in tensor product form (here for simplicity we denote $\underline\chi_{R}$ by $\chi$) 
\begin{equation}\label{eq.defQM2nat}
  \upsi_{h}(\by,z)
 :=\chi\Big(\frac{|\by|}{h^\delta}\Big)\,\chi\Big(\frac{z}{h^\delta}\Big)\,\uPsi_{h}(\by,z),\qquad \forall(\by,z)\in\R^2\times\R_{+}.
\end{equation}

Let $\udiffeo:\Pi_{\dx}\mapsto \R^2\times \R_{+}$ be the rotation associated with the coordinates $(\by,\bz) = \bx^{\natural}$ and $\rJ$ be its associated matrix. The magnetic potential $\bA^{\natural}$ and its linear part still satisfy \eqref{E:lienbapot}. 
Since $\bA^\natural_{\bfz}$ and  the canonical reference potential $\uA$ introduced in \eqref{eq.uA} are both linear with $\curl\bA^\natural_{\bfz}=\curl\uA$,  
there exists a homogenous polynomial function of degree two $F^\natural$ such that 
\begin{equation}\label{eq.uAnat}
\bA^\natural_{\bfz}-\nabla_{\natural} F^\natural= \uA.
\end{equation} 
Therefore, $\re^{-iF^\natural/h}\uPsi_{h}$ is an admissible generalized eigenvector for $\OP_{h}(\bA^\natural_{\bfz},\R^3_{+})$. So we define 
\begin{equation}
\label{D:spmnatG1}
 \psi_{h}^\natural(\by,z):=\re^{-iF^\natural(\bx^\natural)/h} \upsi_{h}(\by,z),
\qquad\forall(\by,z)\in\R^2\times\R_{+},
\end{equation}
and
\begin{equation}
\label{D:spmG1}
\psi_{h}^\dx(\bx):=\psi_{h}^\natural(\by,z),\qquad \forall \bx\in\Pi_{\dx}.
\end{equation}
According as $\Pi_{\bx_{0}}$ equals $\Pi_{\dx}$ or not we now construct adjusted quasimodes using the generalized eigenvector $\psi_{h}^\dx$.

-- {\em Sitting quasimodes.} This is the case when $\dx=(\bx_0)$. 
The quasi-mode is defined by 
$$\varphi_{h}^\dx(\bx):=\psi_{h}^\dx(\bx)\, ,$$ 
where $\psi_{h}^{\dx}(\bx)$ is set in \eqref{D:spmnatG1}-\eqref{D:spmG1}. This gives a quasimode for $\OP_{h}(\pot,\Pi_{\bx_{0}})$ and we have 
\begin{eqnarray}
\label{eq:G1qm}
  q_{h}[\pot,\Pi_{\bx_{0}}](\psi_{h}^{\dx}) 
  &=&   q_{h}[\bA^\natural,\R^3_{+}](\psi_{h}^{\natural}) \\
  &=&   q_{h}[\bA^\natural-\nabla F^\natural,\R^3_{+}](\upsi_{h}). \nonumber 
\end{eqnarray}
Now we  apply \eqref{eq:diffAA'} with $\bA=\bA^\natural-\nabla F^\natural$ and $\bA'=\uA$.  Using \eqref{eq.uAnat} we find $\bA-\bA'=\bA^\natural-\bA^\natural_{\bfz}$, and thus
\begin{align} 
\label{eq:G1refest}
  q_{h}[\bA^\natural-\nabla F^\natural,\R^3_{+}](\upsi_{h}) 
  &=   q_{h}[\uA,\R^3_{+}](\upsi_{h})\\
  & \quad +2  \Re\int_{\R^3_{+}} (-ih\nabla+\uA)\upsi_{h}(\bx^\natural)\cdot 
  (\bA^\natural-\bA^\natural_{\bfz})(\bx^\natural)\,\overline{\upsi_{h}(\bx^\natural})\,
  \rd\bx^\natural \label{E:termcrois}\\
  & \quad +\|(\bA^\natural-\bA^\natural_{\bfz})\upsi_{h}\|^2.
  \label{eq.RayleighG1}
\end{align}
We bound from above the term \eqref{eq.RayleighG1} like in \eqref{eq.ah2delta}:
\begin{equation}
\label{eq:G1carr}
   \|(\bA^\natural-\bA_{\bfz}^\natural)\upsi_{h}\|^2 \le
   C(\Omega)\|\bA^\natural\|^2_{W^{2,\infty}( \supp(\upsi_{h}))} \,
   h^{4\delta}\, \|\upsi_{h}\|^2 .
\end{equation} 

Let us now deal with the term \eqref{E:termcrois}. We calculate $(-ih\nabla+\uA)\upsi_{h}$ 
by using \eqref{eq.uPsi}--\eqref{eq.defQM2nat}:
\begin{multline*}
   (-ih\nabla+\uA)\upsi_{h}(\bx^\natural)
   = \re^{-i\sqrt{\Theta_{0}}\,y_{1}/\sqrt{h}} \,\times \\
   \left\{\chi\big(\tfrac{|\by|}{h^{\delta}}\big)\ 
   \chi\big(\tfrac{z}{h^{\delta}}\big)
   \begin{bmatrix}
   (z-\sqrt{h\Theta_{0}}\ee)\,\Phi\big(\tfrac{z}{\sqrt{h}}\big) \\[5pt]
   0\\[5pt]
   -i\sqrt h \,\Phi'\big(\tfrac{z}{\sqrt{h}}\big) 
   \end{bmatrix}
   -ih^{1-\delta}  
\begin{bmatrix}
   \frac{y_{1}}{|\by|}\chi'(\tfrac{|\by|}{h^{\delta}})\ 
\chi(\tfrac{z}{h^{\delta}})\\[5pt]
   \frac{y_{2}}{|\by|} \chi'(\tfrac{|\by|}{h^{\delta}})\ 
\chi(\tfrac{z}{h^{\delta}})\\[5pt]
   \chi(\tfrac{|\by|}{h^{\delta}})\ \chi'(\tfrac{z}{h^{\delta}})
\end{bmatrix}
   \Phi\big(\tfrac{z}{\sqrt{h}}\big) \right\}.
\end{multline*}
Since $\Phi$ and $\chi$ are real valued functions, the term \eqref{E:termcrois} reduces to a single term: 
\begin{align}\label{eq.termecrois}
\Re\int_{\R^3_{+}} (-ih\nabla&+\uA)\upsi_{h}(\bx^\natural)\cdot (\bA^\natural-\bA^\natural_{\bfz})(\bx^\natural)\overline{\upsi_{h}(\bx^\natural})\,\rd\bx^\natural \\
   &=\int_{\R^3_{+}} (z-\sqrt{h\Theta_{0}}\ee)\ 
   |\upsi_{h}(\bx^\natural)|^2 A^{(\mathrm{rem},2)}_{2}(\bx^\natural) \, \rd\bx^\natural
   \nonumber\\
   &=\int_{\R^3_{+}}(z-\sqrt{h\Theta_{0}}\ee)\,
   \left|\Phi\big(\tfrac{z}{\sqrt{h}}\big)\right|^2
   \ \left|\chi\big(\tfrac{|\by|}{h^{\delta}}\big)\right|^2\ 
   \left|\chi\big(\tfrac{z}{h^{\delta}}\big)\right|^2
   A^{(\mathrm{rem},2)}_{2}(\bx^\natural)\,\rd\bx^\natural,\nonumber
\end{align}
where $A^{(\mathrm{rem},2)}_{2}$ denotes the second component of $\bA^\natural-\bA_{\bfz}^\natural$.
We write 
$$
   A^{(\mathrm{rem},2)}_{2}(\bx^\natural)=
   P_{2}(\by)+R_{2}(\bx^\natural)+A^{(\mathrm{rem},3)}_{2}(\bx^\natural),
$$
where $A^{(\mathrm{rem},3)}_{2}$ is the Taylor remainder of degree $3$ of the second component of $\bA^\natural$ at $\bfz$, whereas $P_{2}(\by)+R_{2}(\bx^\natural)$ is a representation of its quadratic part in the form
$$
   P_{2}(\by)=a_{1}y_{1}^2+a_{2}y_{2}^2+a_{3}y_{1}y_{2}\quad\mbox{and}\quad
   R_{2}(\bx^\natural)=b_{1} z^2+b_{2}z y_{1}+b_{3} zy_{2}.
$$
As in \eqref{eq:rem3} there holds
$$
   \|A^{(\mathrm{rem},3)}_{2}\|_{L^\infty(\supp(\upsi_{h}))}\leq 
   C\|\pot\|_{W^{3,\infty}( \supp(\upsi_{h}))} \,h^{3\delta},
$$
leading to, with the help of the variable change $Z=z/\sqrt h$ and the exponential decay of $\Phi$:
\begin{equation}
\label{eq:G1A3}
   \left|\int_{\R^3_{+}} (z-\sqrt{h\Theta_{0}}\ee)\,|\upsi_{h}(\bx^\natural)|^2\, 
    A^{(\mathrm{rem},3)}_{2}(\bx^\natural)\,\rd\bx^\natural\right|
   \leq Ch^{\frac12+3\delta} \|\upsi_{h}\|^2 \ .
\end{equation}
Likewise, combining the exponential decay of $\Phi$, the change of variable $Z=z/\sqrt h$ and the localization of the support in balls of size $C h^\delta$, we deduce
\begin{equation}
\label{eq.G1majo1}
   \left| \int_{\R^3_{+}} (z-\sqrt{h\Theta_{0}}\ee)\,|\upsi_{h}(\bx^\natural)|^2\, 
    R_{2}(\bx^\natural)\,\rd\bx^\natural\right|
   \leq Ch^{\min(\frac32,1+\delta)} \|\upsi_{h}\|^2 \ .
\end{equation}
Let us now deal with the term involving $\by\mapsto P_{2}(\by)$. 
Due to a Feynman-Hellmann formula applied to the de Gennes operator $\DG\tau$ at $\tau= -\sqrt{\Theta_0}$ (cf.\ \cite[Lemma A.1]{HeMo01}) we find by the scaling $z\mapsto z/\sqrt{h}$ the identity
$$
   \int_{\R_{+}}(z-\sqrt{h\Theta_{0}}\ee)\,\left|\Phi\big(\tfrac{z}{\sqrt{h}}\big)\right|^2
   \, \rd z = 0 \,. 
$$
Thus we can write 
\begin{align*}
   \int_{\R^3_{+}} (z-\sqrt{h\Theta_{0}}\ee)\, &|\upsi_{h}(\bx^\natural)|^2\, 
   P_{2}(\by) \,\rd\bx^\natural
\\
   &= \int_{\R^2} P_{2}(\by)\, 
   \left|\chi\big(\tfrac{|\by|}{h^{\delta}}\big)\right|^2 \!\rd\by \ \, 
   \int_{z\in\R_{+}} (z-\sqrt{h\Theta_{0}}\ee)\, 
   \left|\Phi\big(\tfrac{z}{\sqrt{h}}\big)\right|^2 \chi\big(\tfrac{z}{h^{\delta}}\big)^2
   \,\rd z
\\
   &= \int_{\R^2} P_{2}(\by)\, 
   \left|\chi\big(\tfrac{|\by|}{h^{\delta}}\big)\right|^2 \!\rd\by \ \, 
   \int_{z\in\R_{+}} (z-\sqrt{h\Theta_{0}}\ee)\, 
   \left|\Phi\big(\tfrac{z}{\sqrt{h}}\big)\right|^2 
   \left(\chi\big(\tfrac{z}{h^{\delta}}\big)^2 - 1\right)
   \,\rd z
\end{align*}
The support of the integral in $z$ is contained in $z\ge Rh^\delta$ with $\delta<\frac12$.
Therefore, using once more the changes of variables $\mathbf Y=\by/h^\delta$ and $Z=z/\sqrt{h}$, we find:
$$
   \left| \int_{\R^3_{+}} (z-\sqrt{h\Theta_{0}}\ee)\, \ |\upsi_{h}(\bx^\natural)|^2 
   P_{2}(\by) \,\rd\bx^\natural \right|
  \leq C \|\bA^\natural\|_{W^{2,\infty}( \supp(\upsi_{h}))} 
  h^{4\delta+\frac12}\re^{-ch^{\delta-1/2}}.
$$
Since $\|\upsi_{h}\|^2\geq C h^{2\delta+\frac 12}$ (see \eqref{eq.minpsih}), 
this leads to: 
\begin{equation}\label{eq.G1majo2}
   \left| \int_{\R^3_{+}} (z-\sqrt{h\Theta_{0}}\ee)\, \ |\upsi_{h}(\bx^\natural)|^2 
   P_{2}(\by) \,\rd\bx^\natural \right|
  \leq C \|\bA^\natural\|_{W^{2,\infty}( \supp(\upsi_{h}))} 
  \re^{-ch^{\delta-1/2}}\,\|\upsi_{h}\|^2\ . 
\end{equation}
Collecting \eqref{eq:G1A3}, \eqref{eq.G1majo1}, and \eqref{eq.G1majo2} in \eqref{E:termcrois}, we find the upper bound 
\begin{multline}
\label{eq:G1crois}
   \left|\Re\int_{\R^3_{+}} (-ih\nabla +\bA_{\bfz}^\natural)\upsi_{h}(\bx^\natural) 
   \cdot (\bA^\natural-\bA_{\bfz}^\natural)\overline{\upsi_{h}(\bx^\natural)}
   \,\rd\bx^\natural\right|\\
   \leq C(1+ \|\bA^\natural\|_{W^{3,\infty}( \supp(\upsi_{h}))}^2 )\,
   h^{\min(\frac 12+3\delta,1+\delta)} 
   \|\upsi_{h}\|^2 \ . 
\end{multline}
Returning to \eqref{eq:G1qm} via \eqref{eq:G1refest} and combining \eqref{eq:G1crois} with \eqref{eq:G1carr}, we deduce
$$
  \frac{q_{h}[\pot,\Pi_{\bx_{0}}](\psi_{h}^{\dx})}{\|\psi_{h}^{\dx}\|^2} 
  \leq \frac{q_{h}[\uA,\R^3_{+}](\upsi_{h})}{\|\upsi_{h}\|^2} + 
  C(1+ \|\bA^\natural\|_{W^{3,\infty}( \supp(\upsi_{h}))}^2 )\,
  (h^{\min(1+\delta,\frac 12+3\delta)}+h^{4\delta}).
$$
Inserting the cut-off error for $q_{h}[\uA,\R^3_{+}](\upsi_{h})$ we obtain
$$
  \frac{q_{h}[\pot,\Pi_{\bx_{0}}](\psi_{h}^{\dx})}{\|\psi_{h}^{\dx}\|^2} 
  \leq h\En(\bB_{\bx_{0}},\Pi_{\bx_{0}}) + 
  C(1+ \|\pot\|_{W^{3,\infty}( \supp(\psi_{h}^{\dx}))}^2)\,
  (h^{2-2\delta}+h^{\min(1+\delta,\frac 12+3\delta)}+h^{4\delta}).
$$
As already seen,  we have $\|\pot\|_{W^{3,\infty}(\supp(\psi_{h}^{\dx}))} \leq (1+C(\Omega)h^{\delta})\|\bA\|_{W^{3,\infty}(\Omega)}$. The quasimode $f_{h}$ on $\Omega$ is defined as in \eqref{D:qmsuromega} and the estimation of Section \ref{SS:qmfinal} provides
\begin{equation}\label{eq:Ray45}
  \frac{q_{h}[\bA,\Omega](f_{h})}{\|f_{h}\|^2}
   \leq h\En(\bB_{\bx_{0}},\Pi_{\bx_{0}})+C(\Omega)(1+\| \bA \|_{W^{3,\infty}(\Omega)}^2)
  (h^{2-2\delta}+h^{3\delta+\frac12}+h^{4\delta}+h^{1+\delta}) \, .
\end{equation}
Choosing $\delta=\frac{1}{3}$ and using the min-max principle we get Theorem \ref{T:sUB} in situation (G1).

-- {\em Sliding quasimodes.} This is the case when $\dx=(\bx_0,\bx_{1})$. 
Let $\dir$ be the vector introduced in \eqref{def:tau} and $\dir^\natural=\udiffeo\dir$. 
We note that $\dir^\natural$ has no component in the $z$ direction, because $\dir$  
is tangent to the boundary of the half-space $\Pi_{\dx}$, see Remark \ref{R:decritdir}. We define $\dec^\natural = h^\delta \dir^\natural$, which we write $\dec^\natural=(\dec_{\by},0)=(p_{1},p_{2},0)$
in coordinates $\bx^{\natural}$.

Our quasimode for $H_{h}(\pot,\Pi_{\bx_{0}})$ is defined by a $\dec$-translation of the  quasimode $\psi_{h}^\dx$, cf \eqref{eq.defQMb}:
\begin{equation}
\label{D:psidirdecG1}
   \varphi_{h}^{\dx}(\bx) = 
   \re^{-\ri\langle\pot_{\bfz}(\dec),\,{\bx}\rangle/h} \,\psi_{h}^\dx(\bx-\dec)
   \qquad\forall\bx\in\Pi_{\bx_{0}} \, .
\end{equation}
There holds the following sequence of identities, cf \eqref{eq:G1qm} for the last one,
\[
\begin{aligned}
   q_{h}[\pot,\Pi_{\bx_{0}}](\varphi_{h}^{\dx}) &=
   q_{h}[\pot(\cdot+\dec)-\pot_{\bfz}(\dec),\Pi_{\dx}](\psi_{h}^\dx) 
\\ &=
   q_{h}[\bA^{\natural}(\cdot+\dec^{\natural})-\bA_{\bfz}^{\natural}(\dec^{\natural}),
   \R^3_{+}](\psi_{h}^{\natural})
\\ &=
   q_{h}[\bA^{\natural}(\cdot+\dec^{\natural})-\bA_{\bfz}^{\natural}(\dec^{\natural}) 
   - \nabla F^\natural,
   \R^3_{+}](\upsi_{h})
\end{aligned}
\]
and we check that
\[
\begin{aligned}
   \bA^{\natural}(\cdot+\dec^{\natural})-\bA_{\bfz}^{\natural}(\dec^{\natural}) 
   - \nabla F^\natural &= 
   \bA^{\natural}(\cdot+\dec^{\natural}) - \bA_{\bfz}^{\natural}(\cdot+\dec^{\natural})
   + \bA_{\bfz}^{\natural}(\cdot+\dec^{\natural})
   -\bA_{\bfz}^{\natural}(\dec^{\natural}) 
   - \nabla F^\natural \\ &=
   \bA^{\natural}(\cdot+\dec^{\natural}) - \bA_{\bfz}^{\natural}(\cdot+\dec^{\natural})
   + \bA_{\bfz}^{\natural} - \nabla F^\natural \\ &=
   \bA^{\natural}(\cdot+\dec^{\natural}) - \bA_{\bfz}^{\natural}(\cdot+\dec^{\natural})
   + \uA \,.
\end{aligned}
\]
Then, instead of \eqref{eq:G1refest}-\eqref{eq.RayleighG1} we obtain now that $q_{h}[\bA^{\natural}(\cdot+\dec^{\natural})-\bA_{\bfz}^{\natural}(\dec^{\natural})
  -\nabla F^\natural,\R^3_{+}](\upsi_{h})$ is the sum of the three following terms:
\begin{align*}
  & q_{h}[\uA,\R^3_{+}](\upsi_{h})\\
  & \quad +2  \Re\int_{\R^3_{+}} (-ih\nabla+\uA)\upsi_{h}(\bx^\natural)\cdot 
  \big(\bA^{\natural}(\bx^\natural+\dec^{\natural}) - 
  \bA_{\bfz}^{\natural}(\bx^\natural+\dec^{\natural})\big)\,\overline{\upsi_{h}(\bx^\natural})\,
  \rd\bx^\natural \\
  & \quad +\|\big(\bA^{\natural}(\cdot+\dec^{\natural}) - 
  \bA_{\bfz}^{\natural}(\cdot+\dec^{\natural})\big)\upsi_{h}\|^2.
\end{align*}
Since $|\dec|=h^\delta$, the estimates \eqref{eq:G1carr}-\eqref{eq:Ray45} of the sitting case are still valid now, replacing the norm in $W^{\ell,\infty}(\supp(\upsi_h))$ by the norm in $W^{\ell,\infty}(\dec^\natural+\supp(\upsi_h))$ (for $\ell=2,3$).

The proof of Theorem~\ref{T:sUB} is now complete since we have explored all possible geometric situations for $(\bB_{\bx_{0}},\Pi_{\dx})$.

%%%%%%%%%%%%%%%%%%%%%%%%%%%%%%%
\subsection{Improvement for a straight polyhedron with constant magnetic field}
%%%%%%%%%%%%%%%%%%%%%%%%%%%%%%%
In this paragraph we improve Theorem \ref{T:generalUB} for a straight polyhedral domain with constant magnetic field. Since there is no curvature, we expect smaller remainders in the asymptotics of $\lambda_{h}(\bB,\Omega)$. Moreover, in that case, we will see that the function $\bx\mapsto E(\bB,\Pi_{\bx})$ attains its minimum at a vertex of $\Omega$.

\begin{theorem}
Let $\Omega$ be a straight polyhedron and $\bB$ be a constant magnetic field with $|\bB|=1$.
Then
$$ \sE(\bB \ee,\Omega)=\min_{\bv\in\gV}\En(\bB,\Pi_{\bv})$$
where $\gV$ denotes the set of the vertices of $\Omega$. We have
$$\lambda_h(\bB,\Omega) \leq h\sE(\bB \ee,\Omega) + C h^2 \ . $$
If there exists $\bv\in\gV$ such that $\En(\bB,\Pi_{\bv})=\sE(\bB \ee,\Omega)<\seE(\bB \ee,\Pi_{\bv})$, then there exist positive  constants $C,c$ such that
$$\lambda_h(\bB,\Omega) \leq h\sE(\bB \ee,\Omega) + C \re^{-c h^{-1/2}}.$$
\end{theorem}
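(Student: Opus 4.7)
The plan is to combine lower semi-continuity and the special geometry of a straight polyhedron with constant magnetic field to reduce all three remainder sources in the quasimode analysis of Theorem~\ref{T:generalUB} down to the single cut-off term $\rho_h$ of Lemma~\ref{lem:rhoh}. With the straight geometry that cut-off can be taken at a fixed scale ($\delta=0$), and Lemma~\ref{lem:rhoh} then directly delivers the two regimes in the statement: a bounded $\rho_h$ yielding the $h^2$ remainder, and an exponentially small $\rho_h$ yielding the exponential refinement.

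For the characterization $\sE(\bB,\Omega)=\min_{\bv\in\gV}\En(\bB,\Pi_\bv)$, note that because $\Omega$ is straight and $\bB$ is constant, the tangent cone $\Pi_\bx$ is invariant (up to translation) along each stratum $\bt\in\gT$, so $\bx\mapsto\En(\bB,\Pi_\bx)$ is constant on each $\bt$. Every non-vertex stratum has a vertex in its closure since $\overline{\Omega}$ is a bounded polyhedron; lower semi-continuity (Theorem~\ref{T:sci}) then forces $\En(\bB,\Pi_\bv)\le \En(\bB,\Pi_\bt)$ for every such vertex $\bv\in\overline\bt$. Hence the infimum of $\En(\bB,\Pi_\bx)$ over $\overline\Omega$ is attained at a vertex $\bv_0$; set $\Lambda_\bfz:=\En(\bB,\Pi_{\bv_0})=\sE(\bB,\Omega)$.

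Now apply Theorem~\ref{th:dicho} to $(\bB,\Pi_{\bv_0})$ to obtain a chain $\dx\in\gC_\bfz(\Pi_{\bv_0})$ and an admissible generalized eigenfunction $\Psi^\dx$ of $\OP(\bA_\bfz,\Pi_\dx)$ with value $\Lambda_\bfz$. Near $\bv_0$ the straight polyhedron coincides with $\bv_0+\Pi_{\bv_0}$ on a ball $\cB(\bv_0,r_0)$ of positive radius, so $\diffeo^{\bv_0}$ may be taken as the pure translation $\bx\mapsto\bx-\bv_0$: the metric $\rG$ of Lemma~\ref{L:changvar} is identically $\Id_3$ and the corresponding remainder vanishes. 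Choose moreover a globally linear magnetic potential $\bA$ (possible because $\bB$ is constant); then the transformed potential equals its linear part, $\pot=\pot_\bfz$, so every term involving $\pot-\pot_\bfz$ in \eqref{eq:troncA} and in the definition \eqref{eq.defah} of $a_h$ is identically zero. Define $\varphi_h^\dx$ by formula \eqref{eq.defQM1} but with $\delta=0$, taking the cut-off radius $R<r_0/4$ and the shift $\dec$ to be either $\bfz$ (sitting case, $\dx=(\bv_0)$) or a fixed small multiple of $\dir$ with $|\dec|<r_0/2$ (sliding case). The local identity $\Pi_{\bv_0}\cap\cB(\dec,2R)=(\dec+\Pi_\dx)\cap\cB(\dec,2R)$ ensures \eqref{eq.qhpotPidx}, and the transported function $f_h\in H^1(\Omega)$ obtained via $(\diffeo^{\bv_0})^{-1}$ and the phase shift \eqref{E:phase} satisfies
\[
   \frac{q_h[\bA,\Omega](f_h)}{\|f_h\|^2}\;=\;h\Lambda_\bfz\;+\;h^2\rho_h.
\]

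Lemma~\ref{lem:rhoh} applied with $\delta=0$ gives $\rho_h\le C/R^2$ when $\Psi^\dx$ has $k<3$ exponentially decaying directions and $\rho_h\le (C/R^2)\,\re^{-c R h^{-1/2}}$ when $k=3$. The min-max principle therefore yields the generic bound $\lambda_h(\bB,\Omega)\le h\sE(\bB,\Omega)+Ch^2$. For the refined bound, assume the existence of a vertex $\bv_0$ with $\En(\bB,\Pi_{\bv_0})=\sE(\bB,\Omega)<\seE(\bB,\Pi_{\bv_0})$: Theorem~\ref{th:dicho} then puts us in case~(i) with the trivial chain $\dx=(\bv_0)$, and Proposition~\ref{prop:cone-ess} together with standard Agmon estimates guarantees that $\Psi^\dx$ is a true eigenfunction with exponential decay in all three variables, i.e.\ $k=3$, whence the exponential bound on $\rho_h$ gives the $C\re^{-ch^{-1/2}}$ remainder. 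The only delicate point is checking that with $\dec$ fixed (rather than shrinking like $h^\delta\dir$) the equality \eqref{eq.qhpotPidx} still holds; this is precisely where the straightness of $\Omega$ is essential, providing the uniform size $r_0$ of the region in which the polyhedron and its tangent cone agree, and allowing both $\dec$ and $R$ to be chosen independently of $h$.
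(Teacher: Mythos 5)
Your proposal is correct and follows essentially the same route as the paper: reduce the minimization to the vertices via the inequality $\En\le\seE$ (you invoke the lower semi-continuity of Theorem~\ref{T:sci}, the paper chains \eqref{eq:comp} and \eqref{eq:s*3simple} directly, which amounts to the same thing), then build a fixed-scale ($\delta=0$) sitting quasimode at a point where an admissible generalized eigenfunction lives, with no linearization or metric error because $\bA$ is linear and $\diffeo^{\bv_0}$ is a translation, so only the cut-off term $\rho_h$ of Lemma~\ref{lem:rhoh} survives. Your ``sliding by a fixed $\dec$'' in case (ii) is just the paper's choice of a base point on the stratum $\bt\subset\overline\Omega$ in disguise, and the two remainder regimes ($k<3$ versus $k=3$) are handled exactly as in the paper.
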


\begin{proof}
Since the polyhedral domain is assumed to have straight faces and edges and the magnetic field is constant, the function $\bx\mapsto \En(\bB,\Pi_{\bx})$ is constant on each stratum of $\overline\Omega$. 
Let $\bv\in\gV$. We apply Theorem~\ref{th:dicho} and relations \eqref{eq:comp} and \eqref{eq:s*3simple} with $\Pi=\Pi_{\bv}$:
$$\En(\bB,\Pi_{\bv})\leq\seE(\bB,\Pi_{\bv})=\min_{\be\in\gE_{\bv}}\En(\bB,\Pi_{\be}),$$
with $\gE_{\bv}$ the subset of $\gE$ such that for any $\be\in\gE_{\bv}$, $\bv\in\partial\be$ and $\Pi_{\be}$ the wedge associated with the edge $\be$.
In the same way we prove for each edge $\be\in \gE$:
$$E(\bB,\Pi_{\be}) \leq \min_{{\bf f}\in \gF_{\be}}E(\bB,\Pi_{\bf f}) \leq 1 $$
where $\gF_{\be}$ denotes the set of the faces adjacent to an edge $\be$. Therefore  
$$\min_{\bv\in\gV}\En(\bB,\Pi_{\bv})=\sE(\bB,\Omega).$$
Let $\bv_{0}$ be a vertex minimizing $\bx\mapsto E(\bB,\Pi_{\bx})$. Let $\Pi_{\dx}$ be the tangent cone given by Theorem \ref{th:dicho}. If $E(\bB,\Pi_{\bv_{0}})<\sE^{*}(\bB,\Pi_{\bv_{0}})$ then $\Pi_{\dx}=\Pi_{\bv_{0}}$ and we have an (admissible generalized) eigenfunction on $\Pi_{\bv_{0}}$ associated with $E(\bB,\Pi_{\bv_{0}})$. If $E(\bB,\Pi_{\bv})=\sE^{*}(\bB,\Pi_{\bv})$, then there exists a stratum $\bt$ of $\Omega$ associated with $\Pi_{\dx}$ such that $\Pi_{\dx}$ is the tangent cone to any point of $\bt$. Moreover for any point $\bx\in \bt$ we have $E(\bB,\Pi_{\bx})<\sE^{*}(\bB,\Pi_{\bx})$ therefore there exists a generalized eigenfunction on $\Pi_{\bx}$ associated to $E(\bB,\Pi_{\bx})$. In both cases we have found a point $\bx\in \overline{\Omega}$ such that there exists a generalized eigenfunction on $\Pi_{\bx}$ associated to $\sE(\bB,\Omega)$.
There exists $R_{\bx}>0$ such that 
\begin{equation}\label{eq.BR}
\Omega\cap \cB(0,2R_{\bx}) = \Pi_{\bx}\cap \cB(0,2R_{\bx}).
\end{equation} 
We define the quasimode $\psi_{h}$ as in \eqref{eq.defQM} with $\delta=0$, $\dir=0$ and $R=R_{\bx}$. We have $\psi_{h}\in H^1(\Omega)$ and $q_{h}[\bA,\Pi_{\bx}](\psi_{h})=q_{h}[\bA,\Omega](\psi_{h})$. Using \eqref{eq:troncA} and the fact that $\bA$ equals its affine part, we have:
\begin{equation}\label{eq.troncdroit}
 \frac{q_{h}[\bA,\Omega](\psi_{h})}{\|\psi_{h}\|^2} = h \En(\bB,\Pi_{\bx}) + h^2\rho_{h}.
\end{equation}
Applying Lemma~\ref{lem:rhoh} with $\chi_{h}$ as defined in \eqref{eq.chih}, $\delta=0$ and $R=R_{\bx}$, we have
\begin{equation}\label{eq.rhohdroit}
\rho_{h}=\begin{cases} O(1) & \mbox{ if }k<3,\\
O(\re^{-ch^{-1/2}}) & \mbox{ if }k=3.
\end{cases}
\end{equation}
Then, by the min-max principle and \eqref{eq.troncdroit}, we deduce when $k<3$:
$$\lambda_h(\bB,\Omega) \leq h \inf_{\bx\in\overline\Omega}\En(\bB,\Pi_{\bx}) + O(h^2)=h\sE(\bB \ee,\Omega) + C h^2.$$
If there exists $\bv\in\gV$ such that $\En(\bB,\Pi_{\bv})=\sE(\bB,\Omega)<\seE(\bB \ee,\Pi_{\bv})$, we use Theorem~\ref{th:dicho}, Proposition~\ref{prop:cone-ess} and there exists an (admissible generalized) eigenfunction with $k=3$ of $\OP(\bA,\Pi_{\bv})$ for $\En(\bB,\Pi_{\bv})$. According to \eqref{eq.troncdroit} and \eqref{eq.rhohdroit}, we have:
\begin{equation}\label{eq.sommetdroit}
\lambda_h(\bB,\Omega)\leq \frac{q_{h}[\bA,\Omega](\psi_{h})}{\|\psi_{h}\|^2} \leq h\En(\bB,\Pi_{\bv})+C \re^{-ch^{-1/2}}.
\end{equation}
\end{proof}

%%%%%%%%%%%%%%%%%%%%%%%%%%%%%%%
\section{Lower bound for first eigenvalues}
\label{sec:low}
%%%%%%%%%%%%%%%%%%%%%%%%%%%%%%%
In this section we give a general lower bound on the first eigenvalue, namely:
\begin{theorem}
\label{T:generalLB}
Let $\Omega\in \ogD(\R^3)$ be a polyhedral domain, $\bA\in W^{2,\infty}(\overline{\Omega})$ be a twice differentiable magnetic potential such that the associated magnetic field $\bB$ does not vanish on $\overline\Omega$. Then there exist $C(\Omega)>0$ and $h_{0}>0$ such that
\begin{equation}
\label{eq:below1}
   \forall h\in (0,h_{0}), \quad \lambda_{h}(\bB,\Omega) \geq
   h\sE(\bB,\Omega)-C(\Omega)(1+\|\bA\|_{W^{2,\infty}(\Omega)}^2)h^{5/4} \ . 
\end{equation}
We recall that the quantity $\sE(\bB,\Omega)$ is the lowest local energy defined in \eqref{eq:s}.
\end{theorem}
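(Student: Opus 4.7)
The plan is to obtain the lower bound by a classical IMS localization argument, where the error size is controlled by the radius of the balls supporting the partition of unity. The heuristic is the same one that produced the $h^{5/4}$ upper bound in Section~\ref{sec:up}: three competing error scales (cut-off, linearization of $\bA$, change of metric) are balanced at $r=h^{3/8}$.

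First I would construct a finite covering $\{\cB(\bx_j,r)\}_j$ of $\overline\Omega$ with $r=h^{\delta}$ (for some $\delta\in(1/4,1/2)$ to be optimized), whose centers $\bx_j\in\overline\Omega$ are chosen compatibly with the stratification \eqref{eq:stratif}, in the sense that each ball $\cB(\bx_j,2r)$ is contained in the coordinate neighborhood $\cU_{\bx_j}$ of the local diffeomorphism $\diffeo^{\bx_j}$ of Section~\ref{SS:tangent}. Such a covering can be obtained by first covering a fixed neighborhood of each vertex $\bv\in\gV$ by one ball centered at $\bv$, then covering the remaining part of each edge $\be\in\gE$ by balls centered on $\be$, then the remaining part of each face $\bff\in\gF$ by balls centered on $\bff$, and finally the interior by balls of radius $r$. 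Subordinate to this cover I take a quadratic partition of unity $\{\chi_j\}$ with $\sum_j\chi_j^2=1$ and $\sum_j|\nabla\chi_j|^2\le Cr^{-2}$.

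Second, for any $\psi\in H^1(\Omega)$ the IMS formula (Lemma~\ref{lem:IMS}) gives
\begin{equation*}
   q_h[\bA,\Omega](\psi) \;=\; \sum_j q_h[\bA,\Omega](\chi_j\psi) - h^2\sum_j\big\||\nabla\chi_j|\psi\big\|^2
   \;\ge\;\sum_j q_h[\bA,\Omega](\chi_j\psi) - Ch^{2-2\delta}\|\psi\|^2.
\end{equation*}
For each $j$, I would then transport $\chi_j\psi$ via the phase shift $\zeta^{\bx_j}_h$ of \eqref{E:phase} and the diffeomorphism $\diffeo^{\bx_j}$ onto the tangent cone $\Pi_{\bx_j}$, obtaining, as in Lemma~\ref{L:changvar},
\begin{equation*}
   q_h[\bA,\Omega](\chi_j\psi) \;\ge\; (1-Cr)\,q_h[\pot^{(j)},\Pi_{\bx_j}](\widetilde{\chi_j\psi}),
\end{equation*}
where $\pot^{(j)}$ is the transported potential given by \eqref{E:ABtilde}, whose linear part at $\bfz$ is $\pot^{(j)}_\bfz$ with $\curl\pot^{(j)}_\bfz=\bB_{\bx_j}$. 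Using Cauchy–Schwarz exactly as in \eqref{eq:diffA} together with the Taylor estimate $|\pot^{(j)}-\pot^{(j)}_\bfz|\le C\|\bA\|_{W^{2,\infty}}r^2$ of Lemma~\ref{lem.TaylorA}, I deduce for any $\epsilon>0$
\begin{equation*}
   q_h[\pot^{(j)},\Pi_{\bx_j}](\varphi_j) \;\ge\; (1-\epsilon)\,q_h[\pot^{(j)}_\bfz,\Pi_{\bx_j}](\varphi_j) - C(1+\epsilon^{-1})\|\bA\|_{W^{2,\infty}(\Omega)}^2\,r^4\,\|\varphi_j\|^2,
\end{equation*}
with $\varphi_j=\widetilde{\chi_j\psi}$, and then apply the variational definition of the ground energy on the tangent cone: $q_h[\pot^{(j)}_\bfz,\Pi_{\bx_j}](\varphi_j)\ge h\,\En(\bB_{\bx_j},\Pi_{\bx_j})\|\varphi_j\|^2\ge h\,\sE(\bB,\Omega)\|\varphi_j\|^2$.

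Third, summing over $j$ and using $\sum_j\|\varphi_j\|^2\sim\|\psi\|^2$ (up to the same $(1+Cr)$ metric factor), I obtain
\begin{equation*}
   q_h[\bA,\Omega](\psi) \;\ge\; \big(h\sE(\bB,\Omega) - \epsilon\, h\sE(\bB,\Omega) - C_{\bA}(1+\epsilon^{-1})h^{4\delta} - Ch^{1+\delta} - Ch^{2-2\delta}\big)\|\psi\|^2,
\end{equation*}
with $C_{\bA}=C(\Omega)(1+\|\bA\|_{W^{2,\infty}(\Omega)}^2)$. Choosing $\epsilon=h^{2\delta-1/2}$ turns the linearization pair into $Ch^{1/2+2\delta}$, and the choice $\delta=3/8$ balances $h^{2-2\delta}=h^{1/2+2\delta}=h^{5/4}$, dominating the remaining $h^{1+\delta}=h^{11/8}$. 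Applying the min-max principle then yields \eqref{eq:below1}.

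The main technical obstacle is the first step: producing a covering whose balls each lie in a single coordinate neighborhood of the stratification, so that each localized piece genuinely reduces to a single tangent-cone problem. Near a vertex this forces the use of one large ball of radius comparable to a geometric constant of $\Omega$ rather than $h^\delta$, which is harmless because there are only finitely many vertices; the remaining strata are then covered by balls of radius $h^\delta$ at distance $\gtrsim h^\delta$ from the lower-dimensional strata. Once this combinatorial bookkeeping is done, the estimates for $q_h[\pot^{(j)}_\bfz,\Pi_{\bx_j}]$ are uniform in $j$ because they reduce to the definition of $\En(\bB_{\bx_j},\Pi_{\bx_j})$, which is in turn bounded below by $\sE(\bB,\Omega)$ by \eqref{eq:s}.
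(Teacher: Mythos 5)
Your overall strategy is exactly the paper's: IMS localization at scale $h^{\delta}$, transport of each piece to the tangent cone via $\diffeo^{\bx_j}$ and the phase shift, linearization of the potential with a parametric Cauchy--Schwarz inequality ($2ab\le\eta a^2+\eta^{-1}b^2$ with $\eta=h^{2\delta-1/2}$), the variational bound $q_h[\pot^{(j)}_\bfz,\Pi_{\bx_j}]\ge h\En(\bB_{\bx_j},\Pi_{\bx_j})\ge h\sE(\bB,\Omega)$, and the optimization $\delta=3/8$. All of that is correct and matches Section~\ref{sec:low}.

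There is, however, a genuine error in your treatment of the covering, which you yourself identify as the main technical obstacle. You claim that near a vertex one is \emph{forced} to use a single ball of radius comparable to a geometric constant of $\Omega$, and that this is \emph{harmless} because there are finitely many vertices. Neither claim is right. It is not harmless: on a piece supported in a ball of fixed radius $R_0$, the metric-change factor is $(1-CR_0)$, contributing an error of order $h$ to the lower bound, and worse, the linearization error $\|(\pot-\pot_\bfz)\varphi\|^2\le C\|\bA\|^2_{W^{2,\infty}}R_0^4\|\varphi\|^2$ is $O(1)$ rather than $o(h)$, so the estimate on that piece collapses entirely --- the number of such pieces is irrelevant, since the lower bound must hold piece by piece. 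It is also not forced: a ball of radius $h^{\delta}$ \emph{centered at the vertex} is a perfectly valid map-neighborhood for the vertex's tangent cone, and it covers the $h^{\delta}$-neighborhood of the vertex where edge- or face-centered balls of radius $\sim h^{\delta}$ cannot be placed. This is precisely what the paper's Lemma~\ref{lem:IMScov} (Appendix B) provides: a covering in which \emph{every} radius lies in $[\kappa h^{\delta},h^{\delta}]$, with balls centered on each stratum (including one at each vertex) and dilated balls $\cB(\bx,Kr)$ that are still map-neighborhoods; the nontrivial part is the uniform constant $\kappa$ and the bounded overlap $L$, handled there by a recursive dyadic construction over the strata. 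With that covering in place of your vertex balls, the rest of your argument goes through and gives \eqref{eq:below1}.
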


\paragraph{Idea of the proof}
We first make a partition of the unity of $\overline{\Omega}$ such that on each element we 
are able to use the change of variable given in \eqref{eq:diffeo}. The local energy of the associated tangent model problem with frozen magnetic field is then bounded from below by $h\sE(\bB,\Omega)$. As above we then estimate the remainders due to the cut-off effects, the change of variables and the linearization of the potential.

\paragraph{IMS localization}
Let $\delta\in(0,\frac12)$. For $h$ small enough, we denote by $\{(\bx_{j},r_{j}),j\in\gJ_{h}\}$ a $h$-dependent finite set of pairs (center, radius) provided by Lemma \ref{lem:IMScov} for $\rho=h^\delta$. Relying on Lemma \ref{lem:IMSpart}, we choose a finite associate partition of the unity $(\tronc)_{j\in\gJ_{h}}$ with 
$\tronc\in \sC^{\infty}_0(\cB(\bx_{j},2r_{j}))$ satisfying
$$ \sum_{j\in\gJ_{h}}\tronc^2=1\quad\mbox{on}\quad \overline\Omega  $$
and the uniform estimate of gradients 
\begin{equation}
\label{E:controlegradtronc}
\exists C>0,\quad \forall h\in (0,h_{0}), \ \forall j\in\gJ_{h},\quad 
\|\nabla\tronc\|_{L^{\infty}(\Omega)}^2 \leq C h^{-2\delta} \, . 
\end{equation}
The IMS formula (see Lemma \ref{lem:IMS}) provides for all $f\in H^1(\Omega)$
\[
   q_{h}[\bA,\Omega](f) = \sum_{j\in\gJ_{h}} q_{h}[\bA,\Omega](\tronc f)
   - h^2 \sum_{j\in\gJ_{h}} \|\nabla\tronc f\|_{L^2(\Omega)}^2
\]
and using \eqref{E:controlegradtronc} we get $C(\Omega)>0$ such that
\begin{equation}
\label{E:minorationpartition}
   q_{h}[\bA,\Omega](f) \geq \sum_{j\in\gJ_{h}} q_{h}[\bA,\Omega](f_{j})
   -C(\Omega) h^{2-2\delta}\|f\|_{L^2(\Omega)}^2 \, .
\end{equation}
where $f_{j}$ denotes the localized function $\tronc f$.

\paragraph{Local control of the energy}
For each $j\in\gJ_{h}$, we estimate the term $q_{h}[\bA,\Omega](f_{j})$ appearing in \eqref{E:minorationpartition}. By construction $\supp(f_{j})\subset \cU_{\bx_{j}}$. Let $\pot{}^{j}$ defined as in \eqref{E:ABtilde} with $\bx_{j}$ playing the same role as $\bx_0$. Lemma \ref{L:changvar} applied with $r=2r_{j}\le Ch^{\delta}$ provides $C(\Omega)>0$ such that
\begin{equation}
\label{E:f-psi}
   \frac{q_{h}[\bA,\Omega](f_{j})}{\|f_{j}\|^2 } \geq 
   (1-C(\Omega)h^{\delta})\frac{q_{h}[\pot{}^{j},\Pi_{\bx_{j}}](\psi_{j})}{\|\psi_{j}\|^2}
\end{equation}
where we have denoted $\psi_{j}=f_{j}\circ (\diffeo^{\bx_{j}})^{-1}$. Let $\pot{}^{j}_{\bfz}$ be the linear part of $\pot{}^{j}$ at the origin $\bfz$ of $\Pi_{\bx_{j}}$. We use \eqref{eq:diffAA'} with $\psi=\psi_{j}$ and $\cO=\Pi_{\bx_{j}}$:
\begin{multline}
\label{eq:diff}
  q_{h}[\pot{}^{j},{\Pi_{\bx_{j}}}](\psi_{j}) =   
  q_{h}[\pot{}^{j}_{\bfz},\Pi_{\bx_{j}}](\psi_{j}) \\
  +2  \Re\big\langle (-ih\nabla+\pot{}^{j}_{\bfz})\psi_{j},(\pot{}^{j}-\pot{}^{j}_{\bfz})\psi_{j}\big\rangle + \|(\pot{}^{j}-\pot{}^{j}_{\bfz})\psi\|^2.
\end{multline}
Therefore using the Cauchy-Schwarz inequality:
\begin{equation*}
  q_{h}[\pot{}^{j},{\Pi_{\bx_{j}}}](\psi_{j}) \geq   
  q_{h}[\pot{}^{j}_{\bfz},\Pi_{\bx_{j}}](\psi_{j})
  -2  \left(q_{h}[\pot{}^{j}_{\bfz},\Pi_{\bx_{j}}](\psi_{j})\right)^{1/2}
  \|(\pot{}^{j}-\pot{}^{j}_{\bfz})\psi_{j}\|\, .
\end{equation*}
We cannot conclude like in \eqref{eq:troncA2} because we do not have any {\it a priori} upper bound on $q_{h}[\pot{}^{j}_{\bfz},\Pi_{\bx_{j}}](\psi_{j})$. That is why we use the parametric estimate
$$
   \forall \eta>0, \quad 
   q_{h}[\pot{}^{j},{\Pi_{\bx_{j}}}](\psi_{j}) \geq  
   (1-\eta) q_{h}[\pot{}^{j}_{\bfz},\Pi_{\bx_{j}}](\psi_{j})
  -\eta^{-1}\|(\pot{}^{j}-\pot{}^{j}_{\bfz})\psi_{j}\|^2
$$
based on the simple inequality $2ab\leq \eta a^2+\eta^{-1}b^2$. Since $\curl \pot{}^{j}_{\bfz}=\bB_{\bx_{j}}$ we have 
$$q_{h}[\pot{}^{j}_{\bfz},\Pi_{\bx_{j}}](\psi_{j}) \geq h\En(\bB_{\bx_{j}},\Pi_{\bx_{j}})\|\psi_{j}\|^2\, .$$
 Moreover using \eqref{E:taylorA} and the same arguments as in Section \ref{SS:linear} we get 
$$
   \|(\pot{}^{j}-\pot{}^{j}_{\bfz})\psi_{j}\|^2 \leq 
   C(\Omega)(1+\| \bA \|_{W^{2,\infty}({\Omega})}^2 )h^{4\delta} \|\psi_{j}\|^2\ . 
$$
We deduce for all $\eta>0$:
$$
   q_{h}[\pot{}^{j},{\Pi_{\bx_{j}}}](\psi_{j}) \geq   
   (1-\eta)h\En(\bB_{\bx_{j}},\Pi_{\bx_{j}})\|\psi_{j}\|^2
  -\eta^{-1}C(\Omega)(1+\| \bA\|_{W^{2,\infty}({\Omega})}^2)h^{4\delta}\|\psi_{j}\|^2 .
$$
Choosing $\eta=h^{2\delta-\frac12}$ we get 
\begin{align}
\label{eq:psijh}
   \frac{q_{h}[\pot{}^{j},{\Pi_{\bx_{j}}}](\psi_{j}) }{\|\psi_{j}\|^2 }
   & \geq  h\En(\bB_{\bx_{j}},\Pi_{\bx_{j}})-C(\Omega)
   (1+\| \bA\|_{W^{2,\infty}({\Omega})}^2)h^{2\delta+\frac12}
\\ 
   & \geq h\sE(\bB,\Omega)-C(\Omega)(1+\| \bA\|_{W^{2,\infty}({\Omega})}^2)
   h^{2\delta+\frac12}.\nonumber
\end{align}

\paragraph{Conclusion}
Combining the previous localized estimate \eqref{eq:psijh} with \eqref{E:f-psi} we deduce:
\begin{equation*}
   \frac{q_{h}[\bA,\Omega](f_{j})}{\|f_{j}\|^2}  \geq
   h\sE(\bB,\Omega)-C(\Omega)(1+\| \bA\|_{W^{2,\infty}({\Omega})}^2)
   (h^{2\delta+\frac12}+h^{1+\delta}).
\end{equation*}
Summing up in $j\in\gJ_{h}$ and using that $\sum_{j\in\gJ_{h}}\|f_{j}\|^2=\|f\|^2$ we obtain
\begin{equation}
\label{E:minoration99}
   \frac{\sum_{j\in\gJ_{h}} q_{h}[\bA,\Omega](f_{j})}{\|f\|^2}  \geq
   h\sE(\bB,\Omega)-C(\Omega)(1+\| \bA\|_{W^{2,\infty}({\Omega})}^2)
   (h^{2\delta+\frac12}+h^{1+\delta}),
\end{equation}
and combining \eqref{E:minoration99} with \eqref{E:minorationpartition} we get $C(\Omega)>0$ such that
\begin{multline}
   \forall f\in H^1(\Omega), \quad \\ \frac{q_{h}[\bA,\Omega](f)}{\|f\|^2}  \geq 
   h\sE(\bB,\Omega)-C(\Omega)(1+\| \bA\|_{W^{2,\infty}({\Omega})}^2)
   \left(h^{2\delta+\frac12}+h^{1+\delta}+h^{2-2\delta}\right).
\end{multline}
We optimize this by taking $\delta=\frac{3}{8}$ and we deduce Theorem \ref{T:generalUB} from the min-max principle.

Like in the last section, we have a result using only a H\"older norm of the magnetic field:

\begin{corollary}
\label{co:T:generalUB}
Let $\Omega\in \ogD(\R^3)$ be a polyhedral domain, $\bB\in W^{1+\alpha,\infty}(\overline{\Omega})$ be a non-vanishing  H\"older continuous magnetic field of order $1+\alpha$ with some $\alpha\in(0,1)$. Then there exist $C(\Omega)>0$ and $h_{0}>0$ such that
\begin{equation}
\label{eq:below1B}
   \forall h\in (0,h_{0}), \quad \lambda_{h}(\bB,\Omega) \geq
   h\sE(\bB,\Omega)-C(\Omega)(1+\|\bB\|_{W^{1+\alpha,\infty}(\Omega)}^2)h^{5/4} \ . 
\end{equation}
\end{corollary}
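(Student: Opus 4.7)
The plan is to reduce Corollary \ref{co:T:generalUB} to Theorem \ref{T:generalLB} by choosing a sufficiently regular magnetic potential $\bA$ associated with $\bB$. Since $\Omega$ is simply connected (as specified in the notations), the eigenvalue $\lambda_h(\bB,\Omega)$ depends only on $\bB$ and not on any particular choice of potential, so we are free to pick $\bA$ optimally.

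First, I would invoke the lifting operator $\sA$ from Costabel-McIntosh \cite{CostabelMcIntosh10} already mentioned in the paper before Corollary \ref{co:T:generalUBB}. This is a pseudodifferential operator of order $-1$ satisfying $\curl \circ\, \sA = \Id$ and the continuity estimate
\begin{equation*}
   \forall\alpha\in(0,1),\quad \exists C_\alpha>0,\quad
   \|\sA\bB\|_{W^{2+\alpha,\infty}(\Omega)} \le C_\alpha\,\|\bB\|_{W^{1+\alpha,\infty}(\Omega)}.
\end{equation*}
Setting $\bA := \sA\bB$, we obtain a magnetic potential with $\curl \bA = \bB$ and $\bA\in W^{2+\alpha,\infty}(\overline\Omega)$. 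The continuous embedding $W^{2+\alpha,\infty}(\Omega)\hookrightarrow W^{2,\infty}(\Omega)$ then yields
\begin{equation*}
   \|\bA\|_{W^{2,\infty}(\Omega)} \le C\,\|\bA\|_{W^{2+\alpha,\infty}(\Omega)}
   \le C\,C_\alpha\,\|\bB\|_{W^{1+\alpha,\infty}(\Omega)},
\end{equation*}
so in particular $1+\|\bA\|_{W^{2,\infty}(\Omega)}^2 \le C(\Omega,\alpha)\bigl(1+\|\bB\|_{W^{1+\alpha,\infty}(\Omega)}^2\bigr)$.

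Second, I would apply Theorem \ref{T:generalLB} to this specific $\bA$: since $\bA\in W^{2,\infty}(\overline\Omega)$ and its associated magnetic field is exactly the given non-vanishing $\bB$, all hypotheses are met and we get
\begin{equation*}
   \lambda_h(\bB,\Omega) \ge h\,\sE(\bB,\Omega)
   - C(\Omega)\bigl(1+\|\bA\|_{W^{2,\infty}(\Omega)}^2\bigr) h^{5/4}
\end{equation*}
for $h\in(0,h_0)$. Combining this with the previous inequality immediately yields the estimate \eqref{eq:below1B} with a constant depending on $\Omega$ and $\alpha$, which can be absorbed into $C(\Omega)$ (implicitly depending on $\alpha$, as in Corollary \ref{co:T:generalUBB}).

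There is no real obstacle here; the proof is essentially a transcription of the argument used for Corollary \ref{co:T:generalUBB} (which provides the upper bound counterpart), with Theorem \ref{T:generalLB} replacing Theorem \ref{T:generalUB}. The only subtle point to verify is that gauge invariance applies on the polyhedral domain $\Omega$, which is guaranteed by the simply-connectedness built into the definition of domains in the notations section, ensuring the lifted potential $\sA\bB$ gives exactly the same first eigenvalue as any other potential associated with $\bB$.
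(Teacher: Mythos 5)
Your proof is correct and follows exactly the paper's intended argument: the paper derives this corollary from Theorem \ref{T:generalLB} by choosing $\bA=\sA\bB$ with the Costabel--McIntosh lifting operator and its continuity estimate between H\"older classes, precisely as it did for Corollary \ref{co:T:generalUBB}. Nothing is missing.
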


%\Cy 
%%%%%%%%%%%%%%%%%%%%%%%%%%%%%%%%%%%%%
%\section{Conclusion, particular cases, extension}
%\label{sec:conc}
%%%%%%%%%%%%%%%%%%%%%%%%%%%%%%%%%%%%%
%
%On pourrait aussi mentionner le Iwatzuka dans une autre footnote ancr\'ee \`a smooth. 
%
%\Cy Et le cas de la dimension $2$? A-t-on la majoration en $h^{4/3}$? -- ce serait une am\'elioration  de \cite{Bo05}. \Mg V: Ca me para\^{\i}t difficile de parler de dimension 2 ici mais je ne vois pas trop o\`u en parler\Bk
%
%Relation avec pb de Robin.
%
%Force de la dichotomie : on ne sait pas si on a in\'egalit\'e stricte dans les g\'eom\'etries complexes.
%
%\Bk

%%%%%%%%%%%%%%%%%%%%%%%%%%%%%%%%%%%
\appendix
\section{Technical lemmas}
\label{sec:techniq}
%%%%%%%%%%%%%%%%%%%%%%%%%%%%%%%%%%%

%%%%%%%%%%%%%%%%%%%%%%%%%%%%%%%%%%%
\subsection{Gauge transform}
%%%%%%%%%%%%%%%%%%%%%%%%%%%%%%%%%%%
\begin{lemma}\label{lem:gauge}
Let $\cO \subset \R^3$ be a domain and let $\vartheta$ be a regular function on $\overline\cO$. Let $\bA$ be a regular potential. Then
\begin{equation*}
\forall \psi\in \dom(q_{h}[\bA,\cO]),\quad q_{h}[\bA+\nabla\vartheta,\cO](\re^{-i\vartheta/h}\psi) =  q_{h}[\bA,\cO](\psi).
\end{equation*}
Furthermore a function $\psi$ is an eigenfunction for the operator $\OP_h(\bA,\cO)$ if an only if $\re^{-i\vartheta/h}\psi$ is an eigenfunction for $\OP_{h}(\bA+\nabla\vartheta,\cO)$ associated with the same eigenvalue.
\end{lemma}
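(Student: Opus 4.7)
The proof rests on a single pointwise identity for the magnetic gradient $(-ih\nabla+\bA)$ under multiplication by the unimodular phase $\re^{-i\vartheta/h}$.

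The plan is to establish first the fundamental commutation
\[
   (-ih\nabla+\bA+\nabla\vartheta)\bigl(\re^{-i\vartheta/h}\psi\bigr)
   = \re^{-i\vartheta/h}\,(-ih\nabla+\bA)\psi,
\]
which I would prove by direct calculation using the Leibniz rule. Indeed, $\nabla(\re^{-i\vartheta/h}\psi) = -\frac{i}{h}(\nabla\vartheta)\re^{-i\vartheta/h}\psi + \re^{-i\vartheta/h}\nabla\psi$, so that $-ih\nabla(\re^{-i\vartheta/h}\psi) = -(\nabla\vartheta)\re^{-i\vartheta/h}\psi + \re^{-i\vartheta/h}(-ih\nabla\psi)$. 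Adding the potential $(\bA+\nabla\vartheta)\re^{-i\vartheta/h}\psi$ makes the terms containing $\nabla\vartheta$ cancel, leaving the announced identity. This computation only uses regularity of $\vartheta$ and does not require $\psi$ to be more than in $H^1_{\loc}$.

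Next, to obtain the quadratic form identity, I would take the pointwise modulus squared of the previous identity. Since $|\re^{-i\vartheta/h}|=1$, we have
\[
   \bigl|(-ih\nabla+\bA+\nabla\vartheta)(\re^{-i\vartheta/h}\psi)\bigr|^2
   = \bigl|(-ih\nabla+\bA)\psi\bigr|^2,
\]
and integration over $\cO$ yields exactly $q_h[\bA+\nabla\vartheta,\cO](\re^{-i\vartheta/h}\psi) = q_h[\bA,\cO](\psi)$. The map $\psi\mapsto\re^{-i\vartheta/h}\psi$ is an $L^2$-isometry and preserves the form domain thanks to the above identity, so the statement makes sense.

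For the eigenfunction part, the hard (or rather, most structural) step is the iteration of the gauge commutation. Applying the same formula twice gives
\[
   (-ih\nabla+\bA+\nabla\vartheta)^2\bigl(\re^{-i\vartheta/h}\psi\bigr)
   = \re^{-i\vartheta/h}\,(-ih\nabla+\bA)^2\psi,
\]
so that $(-ih\nabla+\bA)^2\psi=\lambda\psi$ translates into $(-ih\nabla+\bA+\nabla\vartheta)^2(\re^{-i\vartheta/h}\psi) = \lambda(\re^{-i\vartheta/h}\psi)$. The Neumann boundary condition $(-ih\partial_n+\mathbf{n}\cdot\bA)\psi=0$ on $\partial\cO$ is transported likewise: taking the scalar product of the commutation identity with the outer normal $\mathbf{n}$ gives $(-ih\partial_n+\mathbf{n}\cdot(\bA+\nabla\vartheta))(\re^{-i\vartheta/h}\psi) = \re^{-i\vartheta/h}(-ih\partial_n+\mathbf{n}\cdot\bA)\psi$, which vanishes on $\partial\cO$. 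The converse direction is obtained by exchanging the roles of $\bA$ and $\bA+\nabla\vartheta$, i.e., applying the same argument with $-\vartheta$ in place of $\vartheta$ to the function $\re^{-i\vartheta/h}\psi$. No genuine obstacle is expected here; the whole proof amounts to the single pointwise identity above.
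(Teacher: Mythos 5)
Your proof is correct and follows exactly the paper's route: the paper's entire proof consists of stating the same commutation formula $(-ih\nabla+\bA+\nabla\vartheta)(\re^{-i\vartheta/h}\psi)=\re^{-i\vartheta/h}(-ih\nabla+\bA)\psi$ and noting that it yields the result. You have simply filled in the routine details (Leibniz rule, modulus squared, iteration, boundary condition, converse via $-\vartheta$) that the paper leaves implicit.
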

\begin{proof}
The commutation formula 
\[(-ih\nabla+\bA+\nabla{\vartheta})\left(\re^{-i\vartheta/h}\psi\right)
= \re^{-i\vartheta/h} (-ih\nabla+\bA)\psi
\]
yields the result.
\end{proof}

For the sake of completeness we provide the following standard lemma describing the effect of a translation when the potential is affine:

\begin{lemma}[Translation]\label{lem.transl}
Let $\cO \subset \R^3$ be a domain and $\bA$ be an affine magnetic potential. Let $\bd\in \R^3$ be a vector and $\cO_{\bd}:=\cO+\bd$ be the translated domain. For $\psi\in \dom(\OP_{h}(\bA,\cO))$, we define the translated function on $\cO_{\bd}$ by
\[
   \psi_{\bd}:\bx\mapsto \re^{-\ri\langle\bA(\bd)-\bA(\bfz),\bx\rangle/h}\psi(\bx-\bd)\,.
\]
Then
$ q_{h}[\bA,\cO_{\bd}](\psi_{\bd}) =  q_{h}[\bA,\cO](\psi)$
 and $\psi$ is an eigenfunction of $\OP_{h}(\bA,\cO)$ if and only if $\psi_{\bd}$ is an eigenfunction of $\OP_{h}(\bA,\cO_{\bd})$.
\end{lemma}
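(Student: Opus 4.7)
The strategy is to decompose the transformation $\psi\mapsto\psi_\bd$ into two elementary operations: a pure translation followed by a gauge transform, each of which is handled by a known identity. Set $\tau_\bd:\bx\mapsto\bx-\bd$ and let $\tilde\psi:=\psi\circ\tau_\bd$, which is defined on $\cO_\bd$. By the change of variable $\by=\bx-\bd$ in \eqref{D:fq}, translating both the domain and the potential leaves the quadratic form invariant:
\[
   q_{h}[\bA,\cO](\psi)=q_{h}\bigl[\bA(\cdot-\bd),\cO_\bd\bigr](\tilde\psi).
\]

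Next I would exploit the affine character of $\bA$: writing $\bA(\bx)=\bA(\bfz)+\bM\bx$ for some $3\times 3$ matrix $\bM$, we obtain
\[
   \bA(\bx-\bd)=\bA(\bx)-\bigl(\bA(\bd)-\bA(\bfz)\bigr),
\]
so the two potentials $\bA$ and $\bA(\cdot-\bd)$ on $\cO_\bd$ differ by the constant vector $\bc:=\bA(\bd)-\bA(\bfz)$. This constant is the gradient of the linear function $\vartheta(\bx):=\langle \bc,\bx\rangle$. Applying Lemma~\ref{lem:gauge} with the potential $\bA(\cdot-\bd)$ and the gauge $\vartheta$ (so that $\bA(\cdot-\bd)+\nabla\vartheta=\bA$) yields
\[
   q_{h}[\bA,\cO_\bd]\bigl(\re^{-\ri\vartheta/h}\tilde\psi\bigr)
   = q_{h}\bigl[\bA(\cdot-\bd),\cO_\bd\bigr](\tilde\psi).
\]
Since $\re^{-\ri\vartheta(\bx)/h}\tilde\psi(\bx)=\re^{-\ri\langle\bA(\bd)-\bA(\bfz),\bx\rangle/h}\psi(\bx-\bd)=\psi_\bd(\bx)$, combining the two identities gives $q_{h}[\bA,\cO_\bd](\psi_\bd)=q_{h}[\bA,\cO](\psi)$, which is the first claim.

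For the eigenfunction correspondence, the same two-step decomposition applies at the level of the operator: translation conjugates $\OP_{h}(\bA,\cO)$ into $\OP_{h}(\bA(\cdot-\bd),\cO_\bd)$, and the gauge transform in Lemma~\ref{lem:gauge} further conjugates this into $\OP_{h}(\bA,\cO_\bd)$; both transformations preserve the Neumann boundary condition (translation trivially, the gauge factor because $\mathbf{n}\cdot\nabla\vartheta=\mathbf{n}\cdot\bc$ is absorbed in the shifted normal-trace identity). Hence $\psi$ solves the eigenvalue problem on $\cO$ with eigenvalue $\lambda$ iff $\psi_\bd$ solves it on $\cO_\bd$ with the same $\lambda$; the converse is obtained by applying the same argument with $-\bd$ in place of $\bd$. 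There is no serious obstacle here: the only point requiring care is bookkeeping the signs in the phase factor so that the gauge exactly matches the constant $\bA(\bd)-\bA(\bfz)$, and verifying that the Neumann condition is preserved under the phase multiplication.
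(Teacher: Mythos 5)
Your argument is correct and is exactly the standard decomposition (pure translation followed by the gauge transform of Lemma~\ref{lem:gauge} with $\vartheta(\bx)=\langle\bA(\bd)-\bA(\bfz),\bx\rangle$, using that the affine $\bA$ satisfies $\bA(\bx-\bd)=\bA(\bx)-(\bA(\bd)-\bA(\bfz))$) that the paper has in mind for this lemma, which it states without proof as a standard fact. The sign bookkeeping in the phase matches the statement, and the Neumann condition is indeed preserved via the commutation identity underlying Lemma~\ref{lem:gauge}, so nothing is missing.
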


\begin{lemma} 
\label{L:d2ell0}
Let $\cO$ be a bounded domain such that $\bfz\in\overline\cO$. 
Let $\bA\in W^{3,\infty}(\cO)$ be a magnetic potential such that $\bA(\bfz)=0$. Let $\bA_{\bfz}$ denote the linear
part of $\bA$ at $\bfz$. Let $\ell$ be an index in $\{1,2,3\}$. 

(a) There exists a change of gauge $\nabla F$ where $F$ is a polynomial function of degree $3$, so that
\begin{enumerate}
\item The linear part of $\bA-\nabla F$ at $\bfz$ is still $\bA_\bfz$,
\item The second derivative of $\bA-\nabla F$ with respect to $u_\ell$ cancels at $\bfz$:
\begin{equation*}
   \partial^2_{u_\ell}(\bA-\nabla F)(\bfz) = 0.
\end{equation*}
\item The coefficients of $F$ are bounded by $\|\bA\|_{W^{2,\infty}(\cO)}$.
\end{enumerate}
(b) Let us choose $\ell=1$ for instance. We have the estimate
\begin{equation}
\label{eq:d2ell0est}
   |\bA(\bu) - \bA_\bfz(\bu) - \nabla F(\bu)| \le C(\cO)\,\|\bA\|_{W^{3,\infty}(\cO)}
   \big(|u_1|^3 + |u_1u_2| + |u_1u_3| + |u_2|^2 + |u_3|^2\big)\,,
\end{equation}
where the constant $C(\cO)$ depends only on the outer diameter of $\cO$.
\end{lemma}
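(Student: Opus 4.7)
The plan is to construct $F$ explicitly as a homogeneous cubic polynomial tuned so that $\partial^2_{u_\ell}\nabla F(\bfz)$ matches the prescribed vector $\partial^2_{u_\ell}\bA(\bfz)$, and then read off estimate (b) from the Taylor expansion of $\bA$ at $\bfz$. Without loss of generality $\bfz=\bfz$ is the origin. For $\ell=1$, write $H_1^k:=\partial^2_{u_1} A^k(\bfz)$ for $k=1,2,3$ and set
\[
   F(\bu) \ :=\ \tfrac{1}{6}H_1^1\,u_1^3 + \tfrac{1}{2}H_1^2\,u_1^2 u_2 + \tfrac{1}{2}H_1^3\,u_1^2 u_3.
\]
The analogous formula with indices permuted handles $\ell\in\{2,3\}$.

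To check part (a), observe that $F$ is purely cubic, so $\nabla F$ is a homogeneous quadratic vector field; in particular $\nabla F(\bfz)=0$ and its differential at $\bfz$ vanishes, which gives condition (1) (the linear part of $\bA-\nabla F$ at $\bfz$ still equals $\bA_\bfz$). A direct differentiation yields $\partial^2_{u_1} F(\bu)= H_1^1 u_1 + H_1^2 u_2 + H_1^3 u_3$, so $\partial^2_{u_1}\nabla F(\bfz) = (H_1^1,H_1^2,H_1^3) = \partial^2_{u_1}\bA(\bfz)$, which is condition (2). Condition (3) follows from $|\partial^2_{u_1}\bA(\bfz)|\le \|\bA\|_{W^{2,\infty}(\cO)}$.

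For part (b), Taylor's formula with integral remainder gives
\[
   \bA(\bu) \ =\ \bA_\bfz(\bu) + \tfrac{1}{2}\sum_{i,j}\partial_i\partial_j\bA(\bfz)\,u_i u_j + R(\bu),\qquad |R(\bu)|\le C\,\|\bA\|_{W^{3,\infty}(\cO)}\,|\bu|^3.
\]
By construction, subtracting $\nabla F$ cancels exactly the $u_1^2$ contribution to the quadratic sum, so the quadratic part of $\bA-\bA_\bfz-\nabla F$ is a linear combination of the monomials $u_1 u_2,\ u_1 u_3,\ u_2^2,\ u_2 u_3,\ u_3^2$ with coefficients bounded by $\|\bA\|_{W^{2,\infty}(\cO)}$; the off-diagonal term $u_2 u_3$ is absorbed via $|u_2 u_3|\le \tfrac12(u_2^2+u_3^2)$. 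Finally, since $|u_j|\le \diam(\cO)$, every cubic monomial occurring in $|\bu|^3$ is dominated by $\diam(\cO)$ times one of the five monomials appearing in \eqref{eq:d2ell0est} (for instance $|u_2|^3\le \diam(\cO)\,u_2^2$, $|u_1|^2|u_2|\le \diam(\cO)\,|u_1 u_2|$, $|u_1 u_2 u_3|\le \diam(\cO)(u_2^2+u_3^2)$), which yields the claimed bound with a constant $C(\cO)$ depending only on the outer diameter of $\cO$. There is no genuine obstacle here; the argument is pure multilinear bookkeeping, and the only point requiring attention is verifying that each of the ten degree-three monomials in $|\bu|^3$ can indeed be absorbed into the five monomials on the right-hand side of \eqref{eq:d2ell0est}.
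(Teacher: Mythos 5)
Your construction is correct and is essentially identical to the paper's: the polynomial $F=\tfrac16 H_1^1u_1^3+\tfrac12 H_1^2u_1^2u_2+\tfrac12 H_1^3u_1^2u_3$ coincides, after expansion, with the $F$ the paper writes in terms of the Taylor coefficients $a_{m,\alpha^*}$, and part (b) is obtained by the same Taylor-expansion bookkeeping (cancelling only the $u_\ell^2$ coefficients, absorbing $u_2u_3$ into $u_2^2+u_3^2$, and dominating the cubic remainder using the diameter of $\cO$). No changes needed.
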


\begin{proof}
The Taylor expansion of $\bA$ at $\bfz$ takes the form
\[
   \bA = \bA_\bfz + \bA^{(2)} + \bA^{(\mathrm{rem},3)},
\]
where $\bA^{(2)}$ is a homogeneous polynomial of degree $2$ with $3$ components and $\bA^{(\mathrm{rem},3)}$ is a remainder:
\begin{equation}
\label{eq:rem3}
   |\bA^{(\mathrm{rem},3)}(\bu)| \le 
   \|\bA\|_{W^{3,\infty}(\cO)} |\bu|^3
   \quad\mbox{for}\quad\bu\in\cO.
\end{equation}
Let us write the $m$-th component $A^{(2)}_{m}$ of $\bA^{(2)}$ as
\[
   A^{(2)}_{m}(\bu) = 
   \sum_{|\alpha|=2} a_{m,\alpha} u_1^{\alpha_1}u_2^{\alpha_2}u_3^{\alpha_3}
   \quad\mbox{for}\quad\bu=(u_1,u_2,u_3)\in\cO .
\]

(a) Now, the polynomial $F$ can be explicitly determined. It suffices to take
\[
   F(\bu) =
   u_\ell^2 \big(a_{1,\alpha^*}u_1 +  a_{2,\alpha^*}u_2 + a_{3,\alpha^*}u_3 
   - \tfrac{2}{3} a_{\ell,\alpha^*}u_\ell \big),
\]
where $\alpha^*$ is such that $\alpha^*_\ell=2$ (and the other components are $0$). Then
\[
   \nabla F(\bu) =  
   u_\ell^2
   \begin{pmatrix}
   a_{1,\alpha^*} \\ a_{2,\alpha^*} \\ a_{3,\alpha^*} 
   \end{pmatrix}
\]
and point (a) of the lemma is proved.

(b) Choosing $\ell=1$, we see that the $m$-th components of $\bA^{(2)}-\nabla F$ is
\begin{multline*}
   A^{(2)}_{m}(\bu) - (\nabla F)_m(\bu) \\= 
   a_{m,(1,1,0)} u_1u_2 + a_{m,(1,0,1)} u_1u_3 + a_{m,(0,1,1)} u_2u_3 + 
   a_{m,(0,2,0)} u^2_2 + a_{m,(0,0,2)} u^2_3 \,.
\end{multline*}
Hence $\bA^{(2)}-\nabla F$ satisfies the estimate
\[
   |(\bA^{(2)}(\bu)-\nabla F(\bu)| \le \|\bA\|_{W^{2,\infty}(\cO)}
   \big(|u_1u_2| + |u_1u_3| + |u_2|^2 + |u_3|^2\big).
\]
But
\[
   \bA - \bA_\bfz - \nabla F = \bA^{(2)} - \nabla F + \bA^{(\mathrm{rem},3)}.
\]
Therefore, with \eqref{eq:rem3}
\[
   |\bA(\bu) - \bA_\bfz(\bu) - \nabla F(\bu)| \le 
   \|\bA\|_{W^{2,\infty}(\cO)}
   \big(|u_1u_2| + |u_1u_3| + |u_2|^2 + |u_3|^2\big) +
   \|\bA\|_{W^{3,\infty}(\cO)} |\bu|^3.
\]
Using finally that $|\bu|^3\le 12(|u_1|^3 + |u_2|^3 + |u_3|^3) \le  C(\cO) (|u_1|^3 + |u_2|^2 + |u_3|^2)$, we conclude the proof of estimate \eqref{eq:d2ell0est}.
\end{proof}

\subsection{Change of variables}\label{SA:CV}

Let $\rG$ be a metric of $\R^3$, that is a $3\times3$ positive symmetric matrix with regular coefficients. For a smooth magnetic potential, the quadratic form of the associated magnetic Laplacian with the metric $\rG$ is denoted by $q_{h}[\bA,\cO,\rG]$ and is defined in \eqref{D:fqG}.
The following lemma describes how this quadratic form is involved when using a change of variables: 

\begin{lemma}\label{L:chgvar}
Let $\diffeo:\cO\to\cO'$, $\bu\mapsto\bv$ be a diffeomorphism with $\cO,\cO'$ domains. 
%\Cy
%Je mets $\cO$ et $\cO'$ pour \^etre homog\^ene avec le reste. Mais dans la future version g\'en\'erale il faut penser qu'il vaudra peut-\^etre mieux mettre les ouverts $\cU$ et $\cV$, et utiliser ces notations chaque fois qu'on fait un CV (y compris dans la section d'une c\^one tangent, cf section 4).
%\Bk
We denote by 
$$\rJ:=\rd (\diffeo^{-1})$$ the jacobian matrix of the inverse of $\diffeo$. 
Let $\bA$ be a smooth magnetic potential and $\bB=\curl\bA$ the associated magnetic field. 
Let $f$ be a function of $\dom(q_{h}[\bA,\cO])$ and $\psi:=f\circ \diffeo^{-1}$ defined in $\cO'$. 
For any $h>0$ we have 
\begin{equation}
\label{E:chgG}
q_{h}[\bA,\cO](f)=q_{h}[\pot,\cO',\rG](\psi) \quad \mbox{and} \quad \| f\|_{L^2(\cO)}=\| \psi\|_{L^2_{\rG}(\cO')}
\end{equation}
where the new magnetic potential and the metric are respectively given by 
\begin{equation}
\label{E:Atilde}
   \pot:=\rJ^{\top} \big(\bA\circ \diffeo^{-1} \big) \quad 
   \mbox{and}\quad \rG:=\rJ^{-1}(\rJ^{-1})^{\top} \ . 
\end{equation}
The magnetic field $\tbB=\curl\pot$ in the new variables is given by 
\begin{equation}
\label{D:tbB}
\tbB:=|\det \rJ|\,\rJ^{-1} \big(\bB\circ \diffeo^{-1} \big).
\end{equation}
\end{lemma}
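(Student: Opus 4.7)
The proof plan proceeds in three independent steps, each of which is a direct change-of-variable calculation; the main obstacle is bookkeeping for the curl identity, which is really a Piola-transform statement in disguise.

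First, I will record the pointwise chain rule. Writing $\diffeo=(\diffeo^1,\diffeo^2,\diffeo^3)$ and differentiating the identity $f=\psi\circ\diffeo$, the matrix of partial derivatives gives $\nabla_{\bu} f(\bu)=(\rd\diffeo)(\bu)^{\top}\,(\nabla_{\bv}\psi)(\bv)$ with $\bv=\diffeo(\bu)$. Since $\rd\diffeo=\rJ^{-1}$ when $\rJ=\rd(\diffeo^{-1})$ is evaluated at the matching point $\bv$, and since $\pot(\bv)=\rJ^{\top}\bA(\diffeo^{-1}(\bv))$ by definition, a short algebraic manipulation yields the key transformation identity
\[
   (-ih\nabla_{\bu}+\bA(\bu))f(\bu)=(\rJ^{-1})^{\top}\,(-ih\nabla_{\bv}+\pot(\bv))\psi(\bv).
\]

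Second, I will establish the two norm/quadratic-form equalities in \eqref{E:chgG}. For the $L^2$ identity, the substitution $\bu=\diffeo^{-1}(\bv)$ produces the Jacobian factor $|\det\rJ|\,\rd\bv$; since $\rG=\rJ^{-1}(\rJ^{-1})^{\top}$ has $|\rG|=(\det\rJ)^{-2}$, we have $|\rG|^{-1/2}=|\det\rJ|$, which matches the weight in $L^2_{\rG}(\cO')$. For the quadratic form, the transformation identity above together with the algebraic observation
\[
   (\rJ^{-1})^{\top}\bX\cdot\overline{(\rJ^{-1})^{\top}\bX}=\bX\cdot\rG\,\overline{\bX}
\]
gives, after the same change of variable in the integral,
\[
   q_{h}[\bA,\cO](f)=\int_{\cO'}(-ih\nabla+\pot)\psi\cdot\rG\,\overline{(-ih\nabla+\pot)\psi}\;|\det\rJ|\,\rd\bv,
\]
which is exactly $q_{h}[\pot,\cO',\rG](\psi)$ by \eqref{D:fqG}.

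Third, I will prove the curl formula \eqref{D:tbB}. The cleanest route is to observe that $\pot=\rJ^{\top}(\bA\circ\diffeo^{-1})$ is precisely the component expression of the pullback $(\diffeo^{-1})^{*}\omega_{\bA}$ of the $1$-form $\omega_{\bA}=\sum A_{i}\,\rd u_{i}$ under the map $\diffeo^{-1}:\cO'\to\cO$. Since exterior differentiation commutes with pullback, $\rd\pot=(\diffeo^{-1})^{*}(\rd\omega_{\bA})$, and identifying the $2$-form $\rd\omega_{\bA}$ with the vector field $\bB=\curl\bA$ via $B_{1}\,\rd u_{2}\wedge\rd u_{3}+B_{2}\,\rd u_{3}\wedge\rd u_{1}+B_{3}\,\rd u_{1}\wedge\rd u_{2}$ reduces the identity to the standard formula for the pullback of a $2$-form in $\R^{3}$, namely $(\diffeo^{-1})^{*}(\bB\cdot\rd\bS)=|\det\rJ|\,\rJ^{-1}(\bB\circ\diffeo^{-1})\cdot\rd\bS'$. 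Unwinding the cofactor expansion (which uses $\mathrm{adj}\,\rJ=(\det\rJ)\rJ^{-1}$) yields $\curl\pot=\tbB$. This last step is the only mildly technical point; alternatively, one can verify it by a direct (but heavier) component-wise computation of $\curl(\rJ^{\top}\bA\circ\diffeo^{-1})$.
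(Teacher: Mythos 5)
Your proof is correct and is the standard argument; the paper states Lemma~\ref{L:chgvar} without proof (referring to \cite[\S5]{HeMo04} for the quadratic form with metric), so there is nothing to compare against, and your three steps --- the pointwise identity $(-ih\nabla+\bA)f=(\rJ^{-1})^{\top}(-ih\nabla+\pot)\psi$, the Jacobian bookkeeping $|\rG|^{-1/2}=|\det\rJ|$, and the pullback-of-a-$2$-form argument for the curl --- are exactly what is needed. One pedantic remark: the pullback computation actually yields $\curl\pot=(\det\rJ)\,\rJ^{-1}(\bB\circ\diffeo^{-1})$ with a signed determinant, which agrees with the stated $|\det\rJ|$ only for orientation-preserving $\diffeo$; this is harmless here since all diffeomorphisms used in the paper satisfy $\rd\diffeo^{\bx}(\bx)=\Id_3$, hence $\det\rJ>0$.
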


Let $\rho>0$, using the previous Lemma with the scaling $\diffeo^{\rho}:=\bx \mapsto \sqrt{\rho}\,\bx$ we get 
\begin{lemma}\label{lem.dilatation}
Let $\cO\subset \R^3$ be a domain and for $r>0$, we denote by $r\cO$ the domain $\{\bx\in\R^3,\ \bx=r\bx'\ \mbox{with}\ \bx'\in\cO\}$. Let $\bB$ be a constant magnetic field. Then
\[
\forall \rho>0,\quad   \En(\bB \ee,\cO) = \rho\,\En\Big(\frac\bB\rho \ee,\sqrt\rho\,\cO\Big)\,.
\]
\end{lemma}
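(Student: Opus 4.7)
The plan is to deduce this scaling identity as a direct consequence of Lemma~\ref{L:chgvar} applied to the linear diffeomorphism $\diffeo^\rho:\bu\mapsto\sqrt\rho\,\bu$, which maps $\cO$ onto $\sqrt\rho\,\cO$. The essential computation is tracking the scale factors through the metric-weighted quadratic form \eqref{D:fqG} and then optimizing over test functions.

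First I would compute the geometric data produced by $\diffeo^\rho$. The inverse diffeomorphism is $(\diffeo^\rho)^{-1}(\bv)=\bv/\sqrt\rho$, so $\rJ=\rd(\diffeo^\rho)^{-1}=\rho^{-1/2}\Id_3$, which yields the metric $\rG=\rJ^{-1}(\rJ^{-1})^\top=\rho\,\Id_3$ and the transformed potential $\pot(\bv)=\rho^{-1/2}\bA(\bv/\sqrt\rho)$. Formula \eqref{D:tbB} gives $\tbB=|\det\rJ|\,\rJ^{-1}\bB=\rho^{-3/2}\cdot\rho^{1/2}\bB=\bB/\rho$; since $\bB$ is constant, $\tbB$ is constant as well and $\curl\pot=\bB/\rho$ by direct verification.

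Next I would convert the metric-weighted quadratic form on $\sqrt\rho\,\cO$ back to the standard (identity-metric) form. Because $\rG=\rho\,\Id_3$ is a constant scalar matrix, formula \eqref{D:fqG} simplifies: $|\rG|^{-1/2}=\rho^{-3/2}$ and $\rG\,\bar v\cdot v=\rho|v|^2$, so for any $\psi=f\circ(\diffeo^\rho)^{-1}$,
\[
   q[\pot,\sqrt\rho\,\cO,\rG](\psi)=\rho^{-1/2}\,q[\pot,\sqrt\rho\,\cO](\psi),
   \qquad
   \|\psi\|^2_{L^2_\rG(\sqrt\rho\,\cO)}=\rho^{-3/2}\|\psi\|^2_{L^2(\sqrt\rho\,\cO)}.
\]
Combining these with the identities \eqref{E:chgG} of Lemma~\ref{L:chgvar}, namely $q[\bA,\cO](f)=q[\pot,\sqrt\rho\,\cO,\rG](\psi)$ and $\|f\|_{L^2(\cO)}=\|\psi\|_{L^2_\rG(\sqrt\rho\,\cO)}$, the Rayleigh quotient transforms as
\[
   \frac{q[\bA,\cO](f)}{\|f\|^2_{L^2(\cO)}}=\rho\,\frac{q[\pot,\sqrt\rho\,\cO](\psi)}{\|\psi\|^2_{L^2(\sqrt\rho\,\cO)}}.
\]

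Finally I would take the infimum on both sides over admissible test functions. Since $f\mapsto\psi$ is a bijection between the form domains (a smooth diffeomorphism preserves $H^1$), and since $\curl\pot=\bB/\rho$ is constant, the right-hand side infimum equals $\rho\,\En(\bB/\rho\ee,\sqrt\rho\,\cO)$ by gauge invariance (the energy depends only on the magnetic field, not the particular potential), while the left-hand side infimum is $\En(\bB\ee,\cO)$. This yields the claimed identity. No step presents a genuine obstacle; the only care required is bookkeeping of the powers of $\rho$ coming from the three sources (gradient, volume element, and the matrix $\rG$), which I expect to collapse cleanly into the single factor $\rho$ in front of the scaled energy.
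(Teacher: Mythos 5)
Your proposal is correct and follows exactly the route the paper intends: Lemma~\ref{lem.dilatation} is stated in the appendix as an immediate consequence of Lemma~\ref{L:chgvar} applied to the dilation $\diffeo^{\rho}:\bx\mapsto\sqrt{\rho}\,\bx$, and your bookkeeping of the powers of $\rho$ in $\rJ$, $\rG$, $|\rG|^{-1/2}$ and the Rayleigh quotient is accurate. You have simply written out the details the paper leaves implicit.
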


%%%%%%%%%%%%%%%%%%%%%%%%%%%%%%%%%%%
\subsection{Miscellaneous}
%%%%%%%%%%%%%%%%%%%%%%%%%%%%%%%%%%%
\paragraph{Orientation of the magnetic field}  
Let $\bB$ be a magnetic field. It is known that changing $\bB$ into $-\bB$ does not affect the spectrum of the associated magnetic Laplacian. More precisely we have:
\begin{lemma}
\label{lem:sense}
Let $\cO\subset \R^3$ be a domain, $\bB$ be a magnetic field and $\bA$ an associated potential. Then $\OP_{h}(-\bA,\cO)$ and $\OP_{h}(\bA,\cO)$ are unitary equivalent. We have
$$\forall \psi \in \dom(q_{h}[\bA,\cO]), \quad q_{h}[-\bA,\cO](\overline{\psi})=q_{h}[\bA,\cO](\psi)$$
and $\psi$ is an eigenfunction of $\OP_{h}(\bA,\cO)$ if and only if $\overline\psi$ is an eigenfunction of $\OP_{h}(-\bA,\cO)$.
\end{lemma}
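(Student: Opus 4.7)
My plan is to verify the lemma by a single anti-unitary transformation, namely complex conjugation, which is a standard trick in magnetic Laplacian theory. Since the potential $\bA$ is real-valued, conjugation swaps the sign of the imaginary unit and hence of the momentum operator $-ih\nabla$, while leaving $\bA$ untouched.

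First I would prove the identity on the quadratic form. Starting from
\[
\overline{(-ih\nabla+\bA)\psi} = ih\nabla\overline{\psi}+\bA\overline{\psi} = -(-ih\nabla-\bA)\overline{\psi},
\]
one deduces the pointwise equality
\[
\big|(-ih\nabla+\bA)\psi\big|^2 = \big|(-ih\nabla-\bA)\overline{\psi}\big|^2.
\]
Integrating over $\cO$ with respect to Lebesgue measure and noting that $\psi\in\dom(q_h[\bA,\cO])$ if and only if $\overline\psi\in\dom(q_h[-\bA,\cO])$ (both domains coincide with $\{f\in L^2(\cO),\,(-ih\nabla\pm\bA)f\in L^2(\cO)\}$, cf.\ \eqref{D:fqd}) yields the stated equality of quadratic forms.

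Next I would turn to the eigenfunction equivalence. A direct expansion gives
\[
(-ih\nabla+\bA)^2\psi = -h^2\Delta\psi - 2ih\,\bA\!\cdot\!\nabla\psi - ih(\div\bA)\psi + |\bA|^2\psi,
\]
and taking complex conjugates on both sides of $(-ih\nabla+\bA)^2\psi=\lambda\psi$ produces exactly $(-ih\nabla-\bA)^2\overline\psi=\lambda\overline\psi$, since $\lambda$ is real (the operators being self-adjoint). For the boundary condition, conjugating $(-ih\partial_n+\mathbf n\cdot\bA)\psi=0$ gives $(ih\partial_n+\mathbf n\cdot\bA)\overline\psi=0$, i.e.\ the Neumann condition associated with the potential $-\bA$. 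The converse implication is obtained by swapping the roles of $\bA$ and $-\bA$ and using that $\overline{\overline\psi}=\psi$.

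Finally, to obtain the unitary equivalence statement, I would observe that the map $U:\psi\mapsto\overline\psi$ is an anti-unitary involution of $L^2(\cO)$ which intertwines $\OP_h(\bA,\cO)$ and $\OP_h(-\bA,\cO)$, as shown by the computation above. Composing with any fixed anti-unitary involution (again complex conjugation on the target side) yields a proper unitary equivalence; in any case, the spectra and the eigenspaces correspond bijectively via $U$. No obstacle is expected: the only point demanding a line of care is that $\lambda\in\R$ (needed to keep $\lambda\overline\psi$ on the right-hand side after conjugation), which is guaranteed by self-adjointness of $\OP_h(\bA,\cO)$ recalled in the introduction.
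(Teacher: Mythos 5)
The paper states Lemma \ref{lem:sense} in the appendix without any proof, so there is no argument of the authors to compare against; your computation is the standard one and its core is correct. The pointwise identity $\overline{(-ih\nabla+\bA)\psi}=-(-ih\nabla-\bA)\overline{\psi}$ immediately gives both the equality of the quadratic forms and the fact that conjugation maps $\dom(q_h[\bA,\cO])$ onto $\dom(q_h[-\bA,\cO])$ (be careful with your parenthetical: the two form domains need not coincide as sets when $\bA$ is unbounded, e.g.\ on a cone with linear $\bA$; the correct statement is only that $\psi\mapsto\overline\psi$ is a bijection between them, which is what you actually use). The conjugation of the eigenvalue equation and of the Neumann condition is also fine. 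The one soft spot is your last step: composing the conjugation map $U:\psi\mapsto\overline\psi$ with ``complex conjugation on the target side'' gives the identity and intertwines nothing, so it does not manufacture a unitary intertwiner. What your computation honestly establishes is that $U$ is an \emph{anti-unitary} involution with $U\,\OP_h(\bA,\cO)\,U=\OP_h(-\bA,\cO)$; this already yields everything the paper ever uses (equal quadratic forms, identical spectra, bijective correspondence of eigenspaces). If you want the literal unitary equivalence asserted in the statement, invoke that two self-adjoint operators conjugate under an anti-unitary have the same spectral multiplicity function and are therefore unitarily equivalent by the spectral multiplicity theorem, rather than trying to build the unitary by composing two conjugations.
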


\paragraph{Model linear potential}
Let us remark that if $\bB$ is a constant magnetic field, an associated magnetic potential is given by 
\begin{equation}
\label{PotS}
\bA^{\rm S}(\bx):=\frac{1}{3}\bB\wedge \bx \ . 
\end{equation}
Indeed we have 
$$\curl \bA^{\rm S}=\frac{1}{3}\nabla \wedge(\bB \wedge \bx)=\frac{1}{3}\left( (\nabla\cdot \bx)\bB- (\nabla\cdot\bB)\bx\right)=\bB \ .$$

\paragraph{Comparison between two potentials}

%%%%%%%%%%%%%%%%%%%%%%%%%%%%%%%%%%%
 Let $\cO\subset \R^3$ be a domain and let $\bA$ and $\bA'$ be two magnetic potentials. Then, for any function $\psi$ of $\dom(q_{h}[\bA,{\cO}])\cap \dom(q_{h}[\bA',{\cO}])$, we have:
\begin{equation}
\label{eq:diffAA'}
  q_{h}[\bA,{\cO}](\psi) =   q_{h}[\bA',{\cO}](\psi)
  + 2  \Re\int_{\cO} (-ih\nabla+\bA')\psi(\bx)\cdot(\bA-\bA')(\bx)\overline{\psi(\bx)}\,\rd\bx
  + \|(\bA-\bA')\psi\|^2\, .
\end{equation}

%%%%%%%%%%%%%%%%%%%%%%%%%%%%%%%%%%%
\subsection{Cut-off effect}
In this section we recall standard IMS\footnote{IMS stands for Ismagilov-Morgan-Sigal (or Simon)} formulas. This kind of formulas appear for Schr\"odinger operators in \cite{CyFrKiSi87}, but they can also be found in older works like \cite{Mel71}. In this section $\bA$ denotes a regular magnetic potential and $\cO$ a generic domain of $\R^3$.

The first formula describes the effect of a partition of the unity on the energy of a function which is in the form domain, see for example \cite[Lemma 3.1]{Si82}:
%%%%%%%%%%%%%%%%%%%%%%%%%%%%%%%%%%%
\begin{lemma}[IMS formula]\label{lem:IMS}
Assume that $\chi_1,\ldots,\chi_L\in\sC^\infty(\overline\cO)$ are such that
\[
   \sum_{\ell=1}^L \chi_\ell^2 \equiv 1 \quad\mbox{on}\quad\cO.
\]
Then, for any $\psi\in \dom(q_{h}[\bA,\cO])$ 
\[
   q_{h}[\bA,\cO](\psi) = \sum_{\ell=1}^L q_{h}[\bA,\cO](\chi_\ell \psi)
   - h^2 \sum_{\ell=1}^L \|\psi\nabla\chi_\ell\|_{L^2(\cO)}^2
\]
\end{lemma}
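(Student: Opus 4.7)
The plan is to derive the formula by a direct pointwise computation with the magnetic momentum $P_h := -ih\nabla + \bA$, then sum over $\ell$ and integrate, exploiting that the partition constraint $\sum_\ell \chi_\ell^2 \equiv 1$ forces $\sum_\ell \chi_\ell \nabla\chi_\ell \equiv 0$.

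First, I would establish a magnetic Leibniz rule: for a real-valued smooth $\chi\in\sC^\infty(\overline\cO)$ and $\psi$ in the form domain,
\[
   P_h(\chi\psi) = \chi\, P_h\psi - ih\,(\nabla\chi)\,\psi ,
\]
which follows at once from $\nabla(\chi\psi)=\chi\nabla\psi+(\nabla\chi)\psi$ and from the fact that $\chi$ is real. Taking the pointwise squared modulus and using $\overline{-ih(\nabla\chi)\psi}=ih(\nabla\chi)\bar\psi$, the cross terms combine into a pure imaginary difference, giving
\[
   |P_h(\chi\psi)|^2 = \chi^2\,|P_h\psi|^2 + h^2|\nabla\chi|^2|\psi|^2
   - 2h\,\chi\,(\nabla\chi)\!\cdot\!\Im\bigl(\bar\psi\,P_h\psi\bigr).
\]

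Next, I would sum this identity over $\ell=1,\dots,L$ with $\chi=\chi_\ell$. The first term collapses to $|P_h\psi|^2$ because $\sum_\ell \chi_\ell^2\equiv 1$. The cross term vanishes identically: differentiating the partition-of-unity relation gives
\[
   \sum_{\ell=1}^{L}\chi_\ell\,\nabla\chi_\ell
   = \tfrac12\,\nabla\!\Bigl(\sum_{\ell=1}^{L}\chi_\ell^2\Bigr) = 0 .
\]
Thus one obtains the pointwise identity
\[
   \sum_{\ell=1}^{L}|P_h(\chi_\ell\psi)|^2
   = |P_h\psi|^2 + h^2\sum_{\ell=1}^{L}|\nabla\chi_\ell|^2\,|\psi|^2
   \qquad\text{a.e.\ on }\cO .
\]

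Finally, I would integrate over $\cO$. Each $\chi_\ell$ is bounded smooth, so $\chi_\ell\psi$ belongs to $\dom(q_h[\bA,\cO])$ whenever $\psi$ does, and the integrals make sense. Recognising $\int_\cO|P_h(\chi_\ell\psi)|^2=q_h[\bA,\cO](\chi_\ell\psi)$ and $\int_\cO|P_h\psi|^2=q_h[\bA,\cO](\psi)$, we arrive at
\[
   \sum_{\ell=1}^{L} q_h[\bA,\cO](\chi_\ell\psi)
   = q_h[\bA,\cO](\psi) + h^2\sum_{\ell=1}^{L}\|\psi\,\nabla\chi_\ell\|_{L^2(\cO)}^2,
\]
which is the announced formula after rearrangement. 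There is no serious obstacle here; the only point requiring a tiny bit of care is the justification that $\chi_\ell\psi\in\dom(q_h[\bA,\cO])$ and that the Leibniz identity holds a.e.\ rather than only for smooth $\psi$, which is handled by density of smooth functions in the form domain together with the boundedness of the $\chi_\ell$ and $\nabla\chi_\ell$.
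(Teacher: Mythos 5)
Your proof is correct and is precisely the standard IMS argument (magnetic Leibniz rule, cancellation of the cross terms via $\sum_\ell\chi_\ell\nabla\chi_\ell=\tfrac12\nabla\sum_\ell\chi_\ell^2=0$, then integration); the paper does not prove the lemma itself but cites the literature for exactly this computation. The only point worth keeping an eye on, which you already flag, is that $\chi_\ell\psi\in\dom(q_h[\bA,\cO])$ requires $\nabla\chi_\ell$ bounded (automatic in all the paper's applications), since $\cO$ may be unbounded.
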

 Recall that $\dom_{\,\loc} (\OP_{h}(\bA,\cO))$ denotes the functions that are locally in the domain of the operator, see \eqref{D:domloc}. In particular they satisfy the Neumann boundary condition. The second formula describes the energy of such a function when applying a cut-off function, see for example \cite[(6.11)]{HeMo01}: 
\begin{lemma}
\label{lem:tronc}
Let  $\chi\in\sC^\infty_0(\overline{\cO})$ a real smooth function. Then for any $\psi\in\dom_{\,\loc} (\OP_{h}(\bA,\cO))$
\begin{equation}
\label{eq:tronc}
  q_{h}[\bA,\cO](\chi \psi) = 
  \Re \int_{\cO}\chi(\bx)^2 \OP_{h}(\bA,\cO) \psi(\bx) \ \overline\psi(\bx)\,\rd\bx
  + h^2\| \nabla\chi\,\psi \|^2_{L^2(\cO)} \, .
\end{equation}
\end{lemma}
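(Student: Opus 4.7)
\textbf{Proof plan for Lemma~\ref{lem:tronc}.}
I would prove the identity by direct computation, following the standard IMS philosophy: apply Leibniz to get a localized form, expand the square, and use Green's formula with the Neumann boundary condition to absorb the cross terms. Writing $\bp := -ih\nabla + \bA$, the Leibniz rule gives
\[
   \bp(\chi\psi) = \chi\,\bp\psi - ih(\nabla\chi)\,\psi,
\]
since $\chi$ is real and commutes with the multiplicative potential $\bA$.

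Squaring pointwise and integrating, I would first get
\begin{equation}
\label{eq:planA}
   q_{h}[\bA,\cO](\chi\psi) =
   \int_{\cO}\chi^{2}|\bp\psi|^{2}\,\rd\bx + h^{2}\int_{\cO}|\nabla\chi|^{2}|\psi|^{2}\,\rd\bx
   + 2\Re\!\int_{\cO} ih\,\chi\,(\nabla\chi)\cdot(\bp\psi)\,\overline{\psi}\,\rd\bx.
\end{equation}
The desired formula \eqref{eq:tronc} would then follow if I can show that
\begin{equation}
\label{eq:planB}
   \int_{\cO}\chi^{2}|\bp\psi|^{2}\,\rd\bx
   = \Re\!\int_{\cO}\chi^{2}\,\OP_{h}(\bA,\cO)\psi\cdot\overline{\psi}\,\rd\bx
   - 2\Re\!\int_{\cO}ih\,\chi\,(\nabla\chi)\cdot(\bp\psi)\,\overline{\psi}\,\rd\bx,
\end{equation}
because the cross terms in \eqref{eq:planA} and \eqref{eq:planB} cancel exactly.

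To establish \eqref{eq:planB}, I would apply the Green formula for the magnetic Laplacian to the test pair $(\psi,\chi^{2}\psi)$: since $\psi\in\dom_{\,\loc}(\OP_{h}(\bA,\cO))$, we have $\bp\psi\in L^{2}_{\loc}$ and the Neumann condition $(-ih\partial_{n}+\mathbf{n}\cdot\bA)\psi=0$ on $\partial\cO$; the cut-off $\chi$ has compact support in $\overline{\cO}$, so all boundary integrals are well-defined and the boundary contribution $\int_{\partial\cO}(\bn\cdot\bp\psi)\,\overline{\chi^{2}\psi}\,\rd\sigma$ vanishes. This yields
\[
   \int_{\cO}(\bp^{2}\psi)\,\overline{\chi^{2}\psi}\,\rd\bx
   = \int_{\cO}\bp\psi\cdot\overline{\bp(\chi^{2}\psi)}\,\rd\bx.
\]
Expanding $\bp(\chi^{2}\psi) = \chi^{2}\bp\psi - 2ih\chi(\nabla\chi)\psi$ and conjugating, the right-hand side becomes $\int_{\cO}\chi^{2}|\bp\psi|^{2} + 2ih\int_{\cO}\chi(\nabla\chi)\cdot(\bp\psi)\overline{\psi}$. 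Since the left-hand side is $\int \chi^{2}(\OP_{h}\psi)\overline{\psi}$ and $\int\chi^{2}|\bp\psi|^{2}$ is real, taking real parts produces \eqref{eq:planB}.

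The only subtlety I expect is handling the Green formula at the level of regularity $\dom_{\,\loc}$: $\psi$ is only locally $H^{1}$ with $\OP_{h}\psi$ locally $L^{2}$, but the compact support of $\chi$ confines all computations to a fixed ball, on which the classical Green identity applies through a standard density argument (approximating $\psi$ in the graph norm on a neighborhood of $\supp\chi$). Once this is verified, the algebraic cancellation between the cross terms of \eqref{eq:planA} and \eqref{eq:planB} is immediate and completes the proof.
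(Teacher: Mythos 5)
Your computation is correct and complete: the Leibniz expansion of $\bp(\chi\psi)$, the magnetic Green identity applied to the pair $(\psi,\chi^2\psi)$ (with the boundary term killed by the Neumann condition and the compact support of $\chi$ ensuring integrability), and the exact cancellation of the cross terms reproduce \eqref{eq:tronc}. The paper does not prove this lemma itself but refers to \cite[(6.11)]{HeMo01}, and your argument is precisely that standard derivation, including the correct observation that the only delicate point is justifying the Green formula at the regularity level of $\dom_{\,\loc}$.
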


%%%%%%%%%%%%%%%%%%%%%%%%%%%%%%%%%%%
\section{Partition of unity suitable for IMS formulas}
%%%%%%%%%%%%%%%%%%%%%%%%%%%%%%%%%%%

We need a preliminary definition.

\begin{definition}
\label{def:map-neigh}
Let $\Omega\in\gD(M)$ with $M=\R^n$ or $M=\dS^n$. Let $\bx\in\overline\Omega$ and $\cU$ be an open neighborhood of $\bx$ in $M$. We say that {\em $\cU$ is a map-neighborhood of $\bx$ for $\Omega$} if there exists  a local smooth diffeomorphism $\diffeo^\bx$ which maps the neighborhood $\cU$  onto a neighborhood $\cV$ of $0$ in $\R^n$ and such that
\begin{equation}
\label{eq:map-neigh}
   \diffeo^\bx(\cU\cap\Omega) = \cV\cap\Pi_\bx \quad\mbox{and}\quad
   \diffeo^\bx(\cU\cap\partial\Omega) = \cV\cap\partial\Pi_\bx \,,
\end{equation}
where $\Pi_\bx$ is the tangent cone to $\Omega$ at $\bx$ (compare with \eqref{eq:diffeo}).
\end{definition}

\begin{lemma}
\label{lem:IMScov}
Let $n\ge1$ be the space dimension. $M$ denotes $\R^n$ or $\dS^n$. 
Let $\Omega\in\gD(M)$ and $K>1$. There exist a positive integer $L$ and two positive constants $\rho_{\max}$ and $\kappa\le1$ (depending on $\Omega$ and $K$) such that for all $\rho\in(0,\rho_{\max}]$, there exists a (finite) set $\sZ\subset \overline\Omega \times [\kappa\rho,\rho]$ satisfying the following three properties
\begin{enumerate}
\item We have the inclusion $\overline\Omega \subset \cup_{(\bx,r)\in\sZ}\,\overline\cB(\bx,r)$
\item For any $(\bx,r)\in\sZ$, the ball $\cB(\bx,Kr)$ is a map-neighborhood of $\bx$ for $\Omega$ 
\item Each point $\bx_0$ of $\overline\Omega$ belongs to at most $L$ different balls $\cB(\bx,Kr)$.
\end{enumerate}
\end{lemma}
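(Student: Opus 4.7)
The plan is to build $\sZ$ by descending induction on the dimension of strata of $\overline\Omega$, proceeding from the most singular (vertices) to the least singular (interior), so that each center $\bx$ chosen in a given stratum lies at distance $\gtrsim \rho$ from all strictly more singular strata. The key preliminary step will be to establish a uniform map-neighborhood size estimate: there exist constants $c_1, c_2 > 0$, depending only on $\Omega$ and $K$, such that for every $\bx\in\overline\Omega$ lying in a stratum of singularity dimension $d=d_0(\bx)$, the ball $\cB(\bx, Kr)$ is a map-neighborhood of $\bx$ for $\Omega$ whenever
$$
   r \le c_1\,\min\!\bigl(c_2,\ \dist(\bx, \cS_{>d})\bigr),
$$
where $\cS_{>d}$ denotes the union of strata with $d_0 > d$ (with $\dist(\bx,\emptyset):=+\infty$). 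This estimate will be proved recursively using the local structure of corner domains from Section \ref{sec:chain}: within a fixed map-neighborhood $\cU_{\bx_0}$ near a reference point $\bx_0$, the geometry is a smooth perturbation of the tangent cone $\Pi_{\bx_0}$, on which the map-neighborhoods of points of each stratum are explicit and of size proportional to the distance to more singular strata; a finite covering of $\overline\Omega$ by such reference neighborhoods (possible by compactness) then yields global constants $c_1, c_2$.

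Given this, I will set $\rho_{\max} := c_1 c_2/K$ and $\kappa := 1/(4K)$, and for $\rho\in(0,\rho_{\max}]$ construct $\sZ$ as follows. First, add $(\bv, \rho)$ to $\sZ$ for every vertex $\bv$ (finitely many). Next, let $E_1$ be the set of edge points outside the open $\kappa\rho$-neighborhood of $\gV$; choose a maximal $\kappa\rho$-separated subset of $E_1$ and add each $(\bx, \rho)$ to $\sZ$. I will then proceed inductively through the face stratum and the interior, at each step removing from the current stratum an open $\kappa\rho$-neighborhood of the already-treated more singular strata, and then covering what remains by a maximal $\kappa\rho$-separated subset of centers, each paired with radius $\rho\in[\kappa\rho,\rho]$. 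By the uniform size estimate, each $\cB(\bx, K\rho)$ in $\sZ$ is a map-neighborhood of $\bx$, which establishes property~(2). The covering property~(1) will hold because any point $\bx_0\in\overline\Omega$ belongs to some stratum $\bt$ and is either within $\kappa\rho$ of a strictly more singular stratum (hence already covered in a previous inductive step) or within $\kappa\rho$ of the selected centers on $\bt$ (by maximality).

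Bounded multiplicity, property~(3), will follow from a standard volume/packing argument applied stratum by stratum: within a single stratum the $\kappa\rho$-separation makes the balls $\cB(\bx, \kappa\rho/2)$ disjoint in that stratum, while each enlarged ball has radius $K\rho$, so the multiplicity on each stratum is bounded by $C(K/\kappa)^n$; summing over the finitely many strata then gives a uniform global bound $L=L(\Omega,K)$. The main obstacle will be the uniform map-neighborhood size estimate of Step~1: it requires carefully tracking how the smooth diffeomorphism $\diffeo^{\bx_0}$ locally transports the stratification of $\Omega$ onto that of the tangent cone $\Pi_{\bx_0}$, and controlling distortions uniformly over the compact $\overline\Omega$. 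Once this is secured, the combinatorial cover construction is routine.
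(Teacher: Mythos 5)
There is a genuine quantitative gap in your construction: property (2) fails for the centers you place near the more singular strata. Take a vertex $\bv$ and an edge $\be$ emanating from it. Your edge centers are chosen in $E_1$, the set of edge points at distance $\ge\kappa\rho=\rho/(4K)$ from $\gV$, and each receives radius $r=\rho$. For such a center $\bx$ with $\dist(\bx,\bv)$ close to $\kappa\rho$, the enlarged ball $\cB(\bx,K\rho)$ contains $\bv$ (since $K\rho>\rho/(4K)$ for $K>1$), and a neighborhood of $\bx$ containing the vertex can never satisfy Definition \ref{def:map-neigh} with $\Pi_\bx$ the tangent wedge of the edge point. This is also inconsistent with your own Step 1 estimate: since $\cB(\bx,Kr)$ must avoid $\cS_{>d}$, one necessarily has $c_1\le 1/K<1$, whereas your centers only satisfy $\dist(\bx,\cS_{>d})\ge\rho/(4K)$, so $r=\rho\le c_1\dist(\bx,\cS_{>d})$ would force $c_1\ge 4K$. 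The defect cannot be repaired merely by shrinking the radii of the offending centers to $c_1\kappa\rho$, because their mutual separation is $\kappa\rho$ and the covering property (1) would then break. What is really needed near a lower-dimensional stratum is a multi-scale arrangement: balls centered at distance $D\gtrsim\rho$ from the stratum must have radius $\lesssim c_1 D/K$, with separation comparable to that radius, the innermost region $\{\dist\le\rho\}$ being absorbed by the single ball of radius $\rho$ centered \emph{on} the singular stratum. This is exactly what the paper's recursive construction does (Lemmas \ref{lem:annulus} and \ref{lem:cone}): each tangent cone is cut into dyadic annuli $2^{-m}\cA$, the covering of the section is rescaled by $2^{-m}$ on each annulus, and the radii nevertheless all stay in $[\kappa\rho_{\max}\rho,\rho]$ because the dyadic scales stop at $2^{-M}\sim\rho$. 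Your single-scale-per-stratum scheme with a buffer of width $\kappa\rho$ misses this mechanism.

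A secondary remark: your Step 1 (the uniform bound $r\le c_1\min(c_2,\dist(\bx,\cS_{>d}))$ guaranteeing the map-neighborhood property) is a plausible Whitney-type regularity statement for corner domains, but it is itself the hard part and you only sketch it; the paper sidesteps stating it globally by working recursively on the dimension, first localizing to finitely many reference map-neighborhoods (Step 2 of the paper's proof) and then transferring the covering of the $(n-1)$-dimensional section to the cone. If you repair the scales as indicated (radii decreasing geometrically with the depth of the stratum, buffers equal to the radius of the previously treated balls, separation a fixed fraction of the own radius), your stratum-by-stratum strategy can be made to work and would give an alternative, non-recursive proof; as written, however, properties (1) and (2) cannot hold simultaneously.
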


Here are preparatory notations and lemmas.

Let $\Omega\in\gD(M)$ and $K>1$. If the assertions of Lemma \ref{lem:IMScov} are true for this $\Omega$ and this $K$, we say that Property $\sP(\Omega,K)$ holds. We may also specify that the assertion by the sentence
\[
   \mbox{Property $\sP(\Omega,K)$ holds with parameters $(L,\rho_{\max},\kappa)$.}
\]
Let $\cU^*\subset\subset\cU$ be two nested open sets. We say that the property $\sP(\Omega,K;\cU^*,\cU)$ holds\footnote{This is the localized version of property $\sP(\Omega,K)$.} if the assertions of Lemma \ref{lem:IMScov} are true for this $\Omega$ and this $K$, with discrete sets $\sZ\subset (\cU^*\cap\overline\Omega) \times [\kappa_\Omega\rho,\rho]$ and with (1)-(3) replaced by
\begin{enumerate}
\item We have the inclusion $\cU^*\cap\overline\Omega \subset \cup_{(\bx,r)\in\sZ}\,\overline\cB(\bx,r)$
\item For any $(\bx,r)\in\sZ$, the ball $\cB(\bx,Kr)$ is included in $\cU$ and is a map-neighborhood of $\bx$ for $\Omega$ 
\item Each point $\bx_0$ of $\cU\cap\overline\Omega$ belongs to at most $L$ different balls $\cB(\bx,Kr)$.
\end{enumerate}
Like above the specification is
\[
   \mbox{Property $\sP(\Omega,K;\cU^*,\cU)$ holds with parameters $(L,\rho_{\max},\kappa)$.}
\]
In the process of proof, we will construct coverings which are not exactly balls, but domains uniformly comparable to balls. Let us introduce the local version of this new assertion. For $0<a\le a'$ we say that
\[
   \mbox{Property $\sP[a,a'](\Omega,K;\cU^*,\cU)$ holds with parameters $(L,\rho_{\max},\kappa)$}
\]
if for all $\rho\in(0,\rho_{\max}]$, there exists a finite set $\sZ\subset (\cU^*\cap\overline\Omega )\times [\kappa_\Omega\rho,\rho]$ and open sets $\cD(\bx,r)$ satisfying the following four properties
\begin{enumerate}
\item We have the inclusion $\cU^*\cap\overline\Omega \subset \cup_{(\bx,r)\in\sZ}\,\overline\cD(\bx,r)$
\item For any $(\bx,r)\in\sZ$, the set\footnote{Here $\cD(\bx,Kr)$ is the set of $\by$ such that $\bx+(\by-\bx)/K\in\cD(\bx,r)$.} $\cD(\bx,Kr)$ is included in $\cU$ and is a map-neighborhood of $\bx$ for $\Omega$ 
\item Each point $\bx_0$ of $\cU\cap\overline\Omega$ belongs to at most $L$ different sets $\cD(\bx,Kr)$
\item For any $(\bx,r)\in\sZ$, we have the inclusions
$\cB(\bx,ar) \subset\cD(\bx,r) \subset\cB(\bx,a'r)$.
\end{enumerate}
Note that $\sP[1,1](\Omega,K;\cU^*,\cU)=\sP(\Omega,K;\cU^*,\cU)$.

\begin{lemma}
\label{lem:aa'}
If Property $\sP[a,a'](\Omega,K;\cU^*,\cU)$ holds with parameters $(L,\rho_{\max},\kappa)$, then
\[
   \mbox{Property $\sP(\Omega,\frac{a}{a'}K;\cU^*,\cU)$ 
   holds with parameters $(L,a'\rho_{\max},\kappa)$.}
\]
\end{lemma}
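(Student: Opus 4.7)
The statement is essentially a bookkeeping lemma: it asserts that a covering by sets $\cD(\bx,r)$ that are merely comparable to balls can be promoted to a genuine covering by balls, at the cost of shrinking the scaling factor from $K$ to $(a/a')K$ and enlarging the maximal radius parameter from $\rho_{\max}$ to $a'\rho_{\max}$. The proof is a direct change of scale, so I will simply specify how to convert the data of $\sP[a,a'](\Omega,K;\cU^*,\cU)$ into the data required by $\sP(\Omega,(a/a')K;\cU^*,\cU)$ and check each of the three conditions.

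Given $\rho'\in(0,a'\rho_{\max}]$, I will set $\rho=\rho'/a'\in(0,\rho_{\max}]$ and apply the hypothesis to obtain a finite set $\sZ\subset(\cU^*\cap\overline\Omega)\times[\kappa\rho,\rho]$ together with open sets $\cD(\bx,r)$ satisfying the four properties of $\sP[a,a']$. I will then define a new finite set $\sZ'$ by replacing each $(\bx,r)\in\sZ$ by $(\bx,r')$ with $r':=a'r$, so that $r'\in[\kappa a'\rho,a'\rho]=[\kappa\rho',\rho']$. This gives the correct ratio $\kappa$ between the smallest and largest radii, and the correct range $(0,a'\rho_{\max}]$ for $\rho'$.

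Property (1): the inclusion $\cD(\bx,r)\subset\cB(\bx,a'r)=\cB(\bx,r')$ from condition (4) yields $\cU^*\cap\overline\Omega\subset\bigcup_{(\bx,r')\in\sZ'}\overline\cB(\bx,r')$. Property (2), the map-neighborhood condition: setting $K':=(a/a')K$, I compute $K'r'=(a/a')K\cdot a'r=aKr$, and condition (4) applied to $Kr$ gives $\cB(\bx,K'r')=\cB(\bx,aKr)\subset\cD(\bx,Kr)\subset\cU$. Since $\cD(\bx,Kr)$ is a map-neighborhood of $\bx$ for $\Omega$ and any open subset of a map-neighborhood containing $\bx$ is itself a map-neighborhood of $\bx$ (just restrict the diffeomorphism $\diffeo^\bx$), the ball $\cB(\bx,K'r')$ is a map-neighborhood too. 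Property (3), the overlap count: if $\bx_0\in\cB(\bx,K'r')$, then $\bx_0\in\cD(\bx,Kr)$ by the inclusion above, so the number of balls $\cB(\bx,K'r')$ containing $\bx_0$ is bounded by the number of sets $\cD(\bx,Kr)$ containing $\bx_0$, namely $L$.

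There is no substantial obstacle here---the only point worth verifying carefully is that restricting $\diffeo^\bx$ to a smaller open neighborhood of $\bx$ still yields a valid map-neighborhood in the sense of Definition~\ref{def:map-neigh}, which is immediate from \eqref{eq:map-neigh}. All three parameters $(L,a'\rho_{\max},\kappa)$ claimed in the conclusion then come out correctly, completing the proof.
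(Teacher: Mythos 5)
Your proof is correct and follows essentially the same rescaling argument as the paper: replace each radius $r$ by $r'=a'r$, note $r'\in[\kappa\rho',\rho']$, and use the inclusion $\cB(\bx,\tfrac{a}{a'}Kr')=\cB(\bx,aKr)\subset\cD(\bx,Kr)$ from condition (4) to transfer properties (2) and (3). The only addition is your explicit remark that an open subset of a map-neighborhood containing $\bx$ is again a map-neighborhood, which the paper leaves implicit.
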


\begin{proof}
Starting from the covering of $\cU^*\cap\overline\Omega$ by the sets $\overline\cD(\bx,r)$ and using condition (4), we can consider the covering of $\cU^*\cap\overline\Omega$ by the balls $\cB(\bx,a'r)$. Then $r':=a'r\in[\kappa a'\rho,a'\rho]=[\kappa\rho',\rho']$ with $\rho'<a'\rho_{\max}$. 

Concerning conditions (2) and (3), it suffices to note the inclusions
\[
   \cB(\bx,\frac{a}{a'}Kr') \subset \cD(\bx,\frac{1}{a'}r'K) = \cD(\bx,rK) \,.
\]
The lemma is proved.
\end{proof}

\begin{proof} {\em \!\!\!of Lemma \ref{lem:IMScov}.\ }
The principle of the proof is a recursion on the dimension $n$.

{\em Step 1.\ } Explicit construction when $n=1$.\\ 
The domain $\Omega$ and the localizing open sets $\cU^*$ and $\cU$ are then open intervals. Let us assume for example that $\cU^*=(-\ell,\ell)$, $\cU=(-\ell-\delta,\ell+\delta)$ and $\Omega=(0,\ell+\delta')$ with $\ell,\delta>0$ and $\delta'>\delta$. Let $K\ge1$. We can take
\[
   \rho_{\max} = \min\Big\{\frac{\ell}{K},\delta\Big\}
\]
and for any $\rho\le\rho_{\max}$ the following set of couples $(\bx_j,r_j)$, $j=0,1,\ldots,J$
\[
   \bx_0 = 0, \ r_0=\rho \quad\mbox{and}\quad
   \bx_j=\rho+\frac{2j-1}{K}\rho, \ r_j=\frac{\rho}{K}\quad\mbox{for}\quad j=1,\ldots,J
\]
with $J$ such that $\bx_J<\ell$ and $\rho+\frac{2J+1}{K}\rho\ge\ell$. If $\bx_J<\ell-\frac{\rho}{K}$, we add the point $\bx_{J+1}= \rho+\frac{2J}{K}\rho$. The covering condition (1) is obvious.

Concerning condition (2), we note that the bound $\rho_{\max}\le \frac{\ell}{K}$ implies that $[0,Kr_0)=[0,K\rho)$ is a map-neighborhood for the boundary of $\Omega$, and the bound $\rho_{\max}\le \delta$ implies that when $j\ge1$, the ``balls'' $(\bx_j-Kr_j,\bx_j+Kr_j)=(\bx_j-\rho,\bx_j+\rho)$ are map-neighborhoods for the interior of $\Omega$.

Concerning condition (3), we can check that $L=K+2$ is suitable.

{\em Step 2.\ } Localization.\\ 
Let $\Omega\in\gD(\R^n)$ or $\Omega\in\gD(\dS^n)$. For any $\bx\in\overline\Omega$, there exists a ball $\cB(\bx,r_x)$ with positive radius $r_\bx$ that is a map-neighborhood for $\Omega$. We extract a finite covering of $\overline\Omega$ by open sets $\cB(\bx^{(\ell)},\frac12 r^{(\ell)})$. We set
\[
   \cU^*_\ell = \cB(\bx^{(\ell)},\frac12 r^{(\ell)})\quad\mbox{and}\quad
   \cU_\ell = \cB(\bx^{(\ell)},r^{(\ell)}).
\]
The map $\diffeo^\ell:=\diffeo^{\bx^{(\ell)}}$ transforms $\cU^*_\ell$ and $\cU_\ell$ into neighborhoods $\cV^*_\ell$ and $\cV_\ell$ of $0$ in the tangent cone $\Pi_\ell:=\Pi_{\bx^{(\ell)}}$. Thus we are reduced to prove the local property $\sP(\Pi_\ell,K;\cV^*_\ell,\cV_\ell)$ for any $\ell$. Indeed
\begin{itemize}
\item The local diffeomorphism $\diffeo^\ell$ allows to deduce Property $\sP(\Omega,K;\cU^*_\ell,\cU_\ell)$ from Property $\sP(\Pi_\ell,K';\cV^*_\ell,\cV_\ell)$ for a ratio $K'/K$ that only depends on $\diffeo^\ell$ (this relies on Lemma \ref{lem:aa'}).
\item Properties $\sP(\Omega,K;\cU^*_\ell,\cU_\ell)$ imply Property $\sP(\Omega,K;\cup_\ell\,\cU^*_\ell,\cup_\ell\,\cU_\ell) = \sP(\Omega,K)$ (it suffices to merge the (finite) union of the sets $\sZ$ corresponding to each $\cU_\ell$).
\end{itemize}

{\em Step 3.\ } Core recursive argument: If $\Omega_0$ is the section of the cone $\Pi$, Property $\sP(\Omega_0,K)$ implies Property $\sP(\Pi,K';\cB(0,1),\cB(0,2))$ for a suitable ratio $K'/K$. We are going to prove this separately in several lemmas (\ref{lem:tens} to \ref{lem:cone}). Then the proof Lemma \ref{lem:IMScov} will be complete.
\end{proof}

\begin{lemma}
\label{lem:tens}
Let $\Gamma$ be a cone in $\gP^{n-1}$. For $\ell=1,2$, let $\cB_\ell$ and $\cI_\ell$ be the ball $\cB(0,\ell)$ of $\R^{n-1}$ and the interval $(-\ell,\ell)$, respectively. We assume that Property $\sP(\Gamma,K;\cB_1,\cB_2)$ holds (with parameters $(L,\rho_{\max},\kappa)$). Then Property $\sP[1,\sqrt2](\Gamma\times\R,K;\cB_1\times\cI_1,\cB_2\times\cI_2)$ holds.
\end{lemma}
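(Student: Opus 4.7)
The strategy is a straightforward product construction: combine the given covering of $\cB_1\cap\overline\Gamma$ by balls in $\R^{n-1}$ with a uniform one-dimensional covering of $\cI_1$ by intervals, the two having matched radii.

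Given $\rho\le\rho_{\max}$, property $\sP(\Gamma,K;\cB_1,\cB_2)$ supplies a finite set $\sZ_\Gamma\subset(\cB_1\cap\overline\Gamma)\times[\kappa\rho,\rho]$ of centers with radii. For each $(\bx,r)\in\sZ_\Gamma$ I would pick centers $t_{\bx,r,k}=-1+(2k-1)r$ for $1\le k\le N(r):=\lceil 1/r\rceil$, so that the intervals $(t_{\bx,r,k}-r,\,t_{\bx,r,k}+r)$ cover $\cI_1$. Then set
\[
   \sZ:=\{((\bx,t_{\bx,r,k}),r):(\bx,r)\in\sZ_\Gamma,\ 1\le k\le N(r)\}
   \subset(\cB_1\times\cI_1\cap\overline{\Gamma\times\R})\times[\kappa\rho,\rho]
\]
and for each such $((\bx,t),r)\in\sZ$ define the candidate cylinder
\[
   \cD((\bx,t),r):=\cB(\bx,r)\times(t-r,\,t+r).
\]
Condition (4) of $\sP[1,\sqrt2]$ is immediate: the strict inequalities $|\by-\bx|<r$ and $|u-t|<r$ characterising $\cD((\bx,t),r)$ imply $\cB((\bx,t),r)\subset\cD((\bx,t),r)\subset\cB((\bx,t),\sqrt2\,r)$ via the identity $|(\by,u)-(\bx,t)|^2=|\by-\bx|^2+(u-t)^2$. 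Condition (1) follows from the product covering: any $(\by,u)\in\cB_1\cap\overline\Gamma\times\cI_1$ is covered in the $\Gamma$-factor by some $\overline{\cB(\bx,r)}$ from $\sZ_\Gamma$, and for that same $r$ the 1D construction covers $u$.

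For condition (2), note that the tangent cone to $\Gamma\times\R$ at $(\bx,t)$ is $\Pi_\bx^\Gamma\times\R$; the map-diffeomorphism of $\cB(\bx,Kr)$ onto a neighbourhood of $0$ in $\Pi_\bx^\Gamma$ provided by $\sP(\Gamma,K;\cB_1,\cB_2)$ tensored with the identity in $t$ provides the required map-diffeomorphism on $\cD((\bx,t),Kr)=\cB(\bx,Kr)\times(t-Kr,t+Kr)$. Inclusion of this set in $\cB_2\times\cI_2$ comes from the $\Gamma$-side inclusion $\cB(\bx,Kr)\subset\cB_2$ (given by $\sP(\Gamma,K;\cB_1,\cB_2)$) together with the trivial bound $|t|+Kr\le1+K\rho_{\max}\le2$, which forces us to redefine the new $\rho_{\max}$ as $\min\{\rho_{\max}^\Gamma,1/K\}$ (and the new $\kappa$ is unchanged, equal to the one of $\Gamma$).

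For condition (3), a point $(\by_0,u_0)\in\cB_2\cap\overline\Gamma\times\cI_2$ lies in $\cD((\bx,t_{\bx,r,k}),Kr)$ iff $\by_0\in\cB(\bx,Kr)$ and $|u_0-t_{\bx,r,k}|<Kr$. By the $\Gamma$-multiplicity there are at most $L$ choices of $\bx$ (each with one attached $r$); for each such $(\bx,r)$, the centres $t_{\bx,r,k}$ are $2r$-spaced, so the number of $k$'s with $|u_0-t_{\bx,r,k}|<Kr$ is at most $K+1$. Hence $(\by_0,u_0)$ lies in at most $L(K+1)$ of the sets $\cD((\bx,t),Kr)$, giving the uniform bound $L':=L(K+1)$. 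Nothing in the argument is genuinely delicate; the only point demanding care is matching the radius in the $t$-direction to the $\Gamma$-radius $r$, which is precisely what keeps the enlargement ratio $a'/a$ equal to $\sqrt2$ and preserves the uniform multiplicity.
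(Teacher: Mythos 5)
Your proof is correct and follows essentially the same route as the paper's: tensor the given covering of $\cB_1\cap\overline\Gamma$ with a one-dimensional covering of $\cI_1$ by intervals of matched radius, take the product cylinders as the sets $\cD$, and read off the ratio $[1,\sqrt2]$ from the inscribed/circumscribed balls. The only cosmetic differences are that the paper places the 1D centers symmetrically inside $[-1,1]$ (avoiding the overshoot of your last center $t_{\bx,r,\lceil 1/r\rceil}$ past $1$ when $1/r\notin\N$, a trivially repairable point) and records the multiplicity as $LK$ rather than $L(K+1)$; neither matters.
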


\begin{proof}
Let us denote by $\by$ and $z$ coordinates in $\Gamma$ and $\R$, respectively. For $\rho\le\rho_{\max}$, let $\sZ_\Gamma$ be an associate set of couples $(\by,r_\by)$. For each $\by$ we consider the unique set of equidistant points $\sZ_\by=\{z_j\in[-1,1],\ j=1,\ldots,J_\by\}$ such that
\[
   z_j-z_{j-1}=2r_\by \quad\mbox{and}\quad z_1+1=1-z_{J_\by}<r_\by\,.
\]
Then we define
\begin{equation}
\label{eq:Zprod}
   \sZ^{(\rho)} = \big\{(\bx,r_\bx),\quad\mbox{for}\ \ 
   \bx=(\by,z) \ \mbox{with}\   
   (\by,r_\by)\in\sZ_\Gamma,\ z\in\sZ_\by \ \mbox{and}\ r_\bx=r_\by\big\}.
\end{equation}
The associate open set $\cD(\bx,r_\bx)$ is the product
\[
   \cD(\bx,r_\bx) = \cB(\by,r_\by)\times (z-r_\by,z+r_\by)\,.
\]
We have the inclusions $\cB(\bx,r_\bx)\subset\cD(\bx,r_\bx)\subset\cB(\bx,\sqrt2\, r_\bx)$ and it is easy to check that Property $\sP[1,\sqrt2](\Gamma\times\R,K;\cB(0,1)\times\cI_1,\cB(0,2)\times\cI_2)$ holds with parameters $(L',\rho_{\max},\kappa)$ with $L'=LK$.
\end{proof}

\begin{lemma}
\label{lem:annulus}
Let $\Omega$ be a section in $\gD(\dS^{n-1})$, let $\Pi$ be the corresponding cone, and let $\cI_\ell$ be the interval $(2^{-\ell},2^\ell)$ for $\ell=1,2$. We define the annuli 
\[
   \cA_\ell = \Big\{\bx\in\Pi, \quad 
   |\bx|\in\cI_\ell \ \ \mbox{and}\ \ \frac{\bx}{|\bx|}\in\Omega\Big\}.
\]
We assume that Property $\sP(\Omega,K)$ holds (with parameters $(L,\rho_{\max},\kappa)$). Then Property $\sP[a,a'](\Pi,K;\cA_1,\cA_2)$ holds for suitable constants $a$ and $a'$ (independent of $\Omega$ and $K$). 
\end{lemma}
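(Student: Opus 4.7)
The plan is to transport Property $\sP(\Omega,K)$ from the section to the annulus via polar coordinates, in direct analogy with the product construction of Lemma~\ref{lem:tens}. Let $\Phi:(0,\infty)\times\dS^{n-1}\to\R^n\setminus\{0\}$, $\Phi(s,\omega)=s\omega$, denote the polar map, so that $\cA_\ell=\Phi\bigl((2^{-\ell},2^\ell)\times\Omega\bigr)$. The key observation is that on the bounded radial range $s\in[\tfrac14,4]$ relevant here, $\Phi$ and $\Phi^{-1}$ are uniformly bi-Lipschitz, with constants depending only on $n$.

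For $\rho\le\rho_{\max}$, I would invoke $\sP(\Omega,K)$ to produce a finite set $\sZ_\Omega\subset\overline\Omega\times[\kappa\rho,\rho]$ covering $\overline\Omega$ with overlap bound $L$, such that each $\cB(\omega,Kr_\omega)\cap\dS^{n-1}$ is a map-neighborhood of $\omega$ for $\Omega$. For each $(\omega,r_\omega)\in\sZ_\Omega$, I would choose equidistant points $s_1,\ldots,s_{J_\omega}$ in $(\tfrac12,2)$ with spacing $2r_\omega$ (as in the proof of Lemma~\ref{lem:tens}), and define
\begin{equation*}
  \bx_j:=s_j\omega,\qquad r_{\bx_j}:=r_\omega,\qquad
  \cD(\bx_j,r_\omega):=\Phi\bigl((s_j-r_\omega,s_j+r_\omega)\times(\cB(\omega,r_\omega)\cap\dS^{n-1})\bigr),
\end{equation*}
together with the dilated neighborhood $\cD(\bx_j,Kr_\omega)$ given by the same formula with $r_\omega$ replaced by $Kr_\omega$. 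The candidate family $\sZ^{(\rho)}$ would be the union of these over all $(\omega,r_\omega)\in\sZ_\Omega$.

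Property (4) (two-sided comparison with Euclidean balls) and property (1) (covering of $\cA_1$) will then follow immediately from the product structure and the uniform bi-Lipschitz bounds on $\Phi$. For property (2), I would note that at a point $\bx_j=s_j\omega$ with $s_j>0$, the tangent cone to $\Pi$ is, in the orthogonal splitting (radial direction)$\oplus T_\omega\dS^{n-1}$, canonically identified with $\R\times\Pi_\omega^\Omega$, where $\Pi_\omega^\Omega\in\gP_{n-1}$ is the tangent cone to $\Omega$ at $\omega$. Composing the polar chart around $\bx_j$ with $\mathrm{id}_\R\times\diffeo_\Omega^\omega$, where $\diffeo_\Omega^\omega$ is the map-neighborhood diffeomorphism furnished by $\sP(\Omega,K)$, would yield a diffeomorphism of $\cD(\bx_j,Kr_\omega)$ onto a neighborhood of $0$ in $\R\times\Pi_\omega^\Omega$ with the required boundary correspondence. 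Fixing $\rho_{\max}$ so that $K\rho_{\max}\le\tfrac14$ ensures $(s_j-Kr_\omega,s_j+Kr_\omega)\subset(\tfrac14,4)$, so that $\cD(\bx_j,Kr_\omega)\subset\cA_2$. The overlap bound (3) will then follow exactly as in Lemma~\ref{lem:tens}: the spherical component of any point of $\cA_2$ meets at most $L$ of the sets $\cB(\omega,Kr_\omega)\cap\dS^{n-1}$, and for each such $\omega$ its radial component lies in at most $O(K)$ of the intervals $(s_j-Kr_\omega,s_j+Kr_\omega)$.

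The main obstacle is the identification in the third paragraph of the tangent cone to $\Pi$ at an off-origin point $\bx_j=s_j\omega$ as $\R\times\Pi_\omega^\Omega$ in coordinates compatible with the polar chart; this requires carefully unrolling Definition~\ref{def:map-neigh} for sections $\Omega\in\gD(\dS^{n-1})$ and checking that radial extension intertwines the tangent-cone diffeomorphism of $\Omega$ at $\omega$ with that of $\Pi$ at $\bx_j$. Once that identification is in hand, the remainder is a routine product construction paralleling Lemma~\ref{lem:tens}.
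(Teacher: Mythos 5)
Your proposal is correct and follows essentially the same route as the paper: both transport the product covering of Lemma~\ref{lem:tens} on $\Omega\times(\mbox{interval})$ to the annulus through a radially bi-Lipschitz chart, the only cosmetic difference being that the paper uses the logarithmic parametrization $(\by,z)\mapsto 2^z\by$ on $\Omega\times(-2,2)$ where you use the linear polar map $(s,\omega)\mapsto s\omega$. The tangent-cone identification at off-origin points that you flag as the main remaining obstacle is likewise left implicit in the paper's ``we can check that''.
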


\begin{proof}
Let us consider the diffeomorphism
\begin{equation}
\label{eq:diffeoT}
   \begin{aligned}
   \diffeoT : \ \ &\Omega\times(-2,2) &\longrightarrow\ \ \ &\cA_2\\
   &\bx = (\by,z) &\longmapsto \ \ \ &\breve\bx = 2^z\by
   \end{aligned}
\end{equation}
in view of proving Property $\sP[a,a'](\Pi,K;\cA_1,\cA_2)$, for a given $\rho\le\rho_{\max}$, we define a suitable set $\breve\sZ^{(\rho)}$ using the set $\sZ^{(\rho)}$ introduced in \eqref{eq:Zprod}
\begin{equation}
\label{eq:Zann}
   \breve\sZ^{(\rho)} = \big\{(\breve\bx,r_\bx),\quad\mbox{for}\ \ 
   \breve\bx=\diffeoT\bx  \ \ \mbox{with}\ \ 
   (\bx,r_\bx)\in\sZ^{(\rho)} \big\},
\end{equation} 
and the associated open sets
\[
   \breve\cD(\breve\bx,r_\bx) = \diffeoT \big(\cD(\bx,r_\bx)\big) .
\] 
We can check that
\[
   \cB(\breve\bx,ar_\bx) \subset \breve\cD(\breve\bx,r_\bx) \subset
   \cB(\breve\bx,a'r_\bx) 
\]
with $a = \frac18\log2$ and $a'=8\sqrt2\log2$ and that Property $\sP[a,a'](\Pi,K;\cA_1,\cA_2)$ holds with parameters $(L',\rho_{\max},\kappa)$ for $L'=NLK$ with an integer $N$ independent of $L$ and $K$.
\end{proof}

\begin{lemma}
\label{lem:cone}
Let $\Omega$ be a section in $\gD(\dS^{n-1})$, let $\Pi$ be the corresponding cone, and let $\cB_\ell$ be the balls $\cB(0,\ell)$ of $\R^{n}$ for $\ell=1,2$. We assume that Property $\sP(\Omega,K)$ holds with parameters $(L,\rho_{\max},\kappa)$ for a $\rho_{\max}\le1$. Then Property $\sP[a,a'](\Pi,K;\cB_1,\cB_2)$ holds for suitable constants $a$ and $a'$ (independent of $\Omega$ and $K$) and with parameters $(L',1,\kappa\rho_{\max})$. 
\end{lemma}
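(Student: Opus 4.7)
The plan is to exploit the dilation invariance of $\Pi$ to ``telescope'' the annular covering furnished by Lemma \ref{lem:annulus} down to the vertex. Given $\rho\in(0,1]$, I would decompose
\[
   \overline\Pi\cap\overline\cB_1 \;=\; \big(\overline\Pi\cap\overline\cB(\bfz,s^*)\big) \,\cup\, \bigcup_{k=0}^{k^*}S_k,
   \qquad S_k := \{\bx\in\overline\Pi:\ 2^{-k-1}<|\bx|\leq 2^{-k}\},
\]
where $k^*\in\mathbb{N}$ and $s^*:=2^{-k^*-1}$ are chosen of order $\rho$ (so that the remaining vertex region can be absorbed into a single element $(\bfz,\rho)\in\sZ$). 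The outer dyadic shells $S_k$ will be covered by rescaled copies of the annular covering, while the vertex region is covered by a single ``vertex ball'' centered at $\bfz$.

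For each shell $S_k$, the dilation $\bx\mapsto 2^{-k}\bx$ maps the reference annulus $\cA_1=\{\tfrac12\leq|\bx|\leq 2,\ \bx/|\bx|\in\Omega\}$ onto $2^{-k}\cA_1\supset S_k$. I would apply Lemma \ref{lem:annulus} with the radius parameter $\rho_k:=\min(2^k\rho,\rho_{\max})\leq\rho_{\max}$ to obtain a finite covering of $\cA_1$ whose centers lie in $\cA_1$ and whose radii belong to $[\kappa\rho_k,\rho_k]$; pushing it forward by the dilation yields a covering of $S_k$ with centers in $2^{-k}\cA_1\subset\overline\cB_1\cap\overline\Pi$ and radii in $[2^{-k}\kappa\rho_k,\,2^{-k}\rho_k]$. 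A direct check, using the hypothesis $\rho_{\max}\leq 1$, then shows that these radii sit inside $[\kappa\rho_{\max}\rho,\rho]$, in agreement with the target parameters $(L',1,\kappa\rho_{\max})$. Because the linear dilation is a global isomorphism of $\Pi$ onto itself, it preserves the map-neighborhood clause and the sandwich $\cB(\bx,ar)\subset\cD(\bx,r)\subset\cB(\bx,a'r)$ with the same universal constants as in Lemma \ref{lem:annulus}.

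For the vertex region I would take the single element $(\bfz,\rho)\in\sZ$ with $\cD(\bfz,\rho):=\cB(\bfz,c\rho)$ for a universal constant $c$ large enough that $c\rho\geq s^*$, so that this set truly covers $\overline\cB(\bfz,s^*)\cap\overline\Pi$. Since $\Pi$ is a cone with apex $\bfz$, every ball centered at $\bfz$ is trivially a map-neighborhood of $\bfz$ for $\Pi$ (with $\diffeo^\bfz=\mathrm{id}$), so the only nontrivial check is $\cD(\bfz,K\rho)\subset\cB_2$, handled by the adjustment of the final constants. The overlap count $L'$ is uniform because a point of $\overline\cB_2\cap\overline\Pi$ belongs to at most two consecutive shells of the dyadic decomposition, while inside each shell the overlap is controlled by the $L$ supplied by Lemma \ref{lem:annulus}. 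The main obstacle is purely one of bookkeeping: one must carefully match the parameter $\rho_k$ to the dilation factor $2^{-k}$ so that all resulting radii fit uniformly in $[\kappa\rho_{\max}\rho,\rho]$ for every $\rho\in(0,1]$, and verify the inclusion $\cD(\bx,Kr)\subset\cB_2$ on the outermost shell. Once these scale-matching verifications are complete, Lemma \ref{lem:cone} combined with Lemma \ref{lem:aa'} and the localization step of the proof of Lemma \ref{lem:IMScov} delivers Property $\sP(\Omega,K)$ in general.
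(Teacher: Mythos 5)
Your proposal follows essentially the same route as the paper: a dyadic telescoping of the annular covering of Lemma \ref{lem:annulus} by the dilations $\bx\mapsto 2^{-k}\bx$, with the radius parameter matched to the scale (your $\rho_k=\min(2^k\rho,\rho_{\max})$ versus the paper's $2^m\rho_{\max}\rho$, both of which land all rescaled radii in $[\kappa\rho_{\max}\rho,\rho]$), together with a single ball centered at the vertex covering the innermost region and a uniform overlap count coming from the bounded number of dyadic scales meeting any given point. The bookkeeping you defer is exactly what the paper carries out, so the argument is correct.
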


\begin{proof}
Let $\rho\le1$ and let $M$ be the natural number such that
\[
   2^{-M-1} <\rho\le 2^{-M}.
\]
On the model of \eqref{eq:diffeoT}-\eqref{eq:Zann}, we set
\[
   \breve\sZ^m = \big\{(2^{-m}\diffeoT\bx,2^{-m}r_\bx),\ \ \mbox{with}\ \
   (\bx,r_\bx)\in\sZ^{(2^m\rho_{\max}\rho)} \big\}, \quad
   m=0,\ldots,M,
\]
and the associated open sets are
\begin{equation}
\label{eq:Zm}
   2^{-m}\diffeoT \big(\cD(\bx,r_\bx)\big)\ \ \mbox{with}\ \
   (\bx,r_\bx)\in\sZ^{(2^m\rho_{\max}\rho)} .
\end{equation}
The set $\breve\sZ$ associated with the cone $\Pi$ in the ball $\cB_1$ is
\[
   \{(0,\rho)\} \cup \bigcup_{m=0}^M \breve\sZ^m
\]
and the associated open sets are the reunion of the sets \eqref{eq:Zm} for $m=0,\ldots,M$ and of the ball $\cB(0,\rho)$. As the radii $r_\bx$ belong to $[\kappa 2^m\rho_{\max}\rho,2^m\rho_{\max}\rho]$, we have $2^{-m}r_\bx\in[\kappa \rho_{\max}\rho,\rho_{\max}\rho]$. Since $\rho$ itself belongs to the full collection of radii $r$, we finally find $r\in[\kappa \rho_{\max}\rho,\rho]$. The finite covering holds with $L'=3NLK+1$ for the same integer $N$ appearing at the end of the proof of Lemma \ref{lem:annulus}.
\end{proof}

\begin{lemma}
\label{lem:IMSpart}
Let $\Omega\in\gD(\R^n)$.  Let $(L,\rho_{\max},\kappa)$ be the parameters provided by Lemma \ref{lem:IMScov}, for the Property $\sP(\Omega,2)$ to hold. For any $\rho\in(0,\rho_{\max}]$ let $\sZ\subset \overline\Omega \times [\kappa\rho,\rho]$ be an associate set of pairs (center, radius). Then there exists a collection of smooth functions $(\troncg)_{(\bx,r)\in\sZ}$ with $\troncg\in \sC^{\infty}_0(\cB(\bx,2r))$ satisfying the identity (partition of unity) 
$$
   \sum_{(\bx,r)\in\sZ}\troncg^2=1  \quad\mbox{on}\quad\overline\Omega
$$
and the uniform estimate of gradients 
\begin{equation*}
\exists C>0,\quad \forall (\bx,r)\in\sZ, \quad 
\|\nabla\troncg\|_{L^{\infty}(\Omega)} \leq C \rho^{-1} \, ,
\end{equation*}
where $C$ only depends on $\Omega$. By construction any ball $\cB(\bx,2r)$ is a map-neighborhood of $\bx$ for $\Omega$.
\end{lemma}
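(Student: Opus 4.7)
The construction is the standard renormalization of a subordinate plateau family. First, I would fix once and for all a model profile $\theta\in\sC^\infty(\R_+,[0,1])$ with $\theta\equiv 1$ on $[0,1]$ and $\theta\equiv 0$ on $[2,+\infty)$. Given $\rho\in(0,\rho_{\max}]$ and the associated set $\sZ$ provided by Lemma \ref{lem:IMScov} applied to $\sP(\Omega,2)$, I would define the ``pre-cutoffs''
\[
   \tilde\chi_{(\bx,r)}(\by) := \theta\!\left(\frac{|\by-\bx|}{r}\right),
   \qquad (\bx,r)\in\sZ.
\]
By construction $\tilde\chi_{(\bx,r)}\in\sC_0^\infty(\cB(\bx,2r))$, it equals $1$ on $\overline\cB(\bx,r)$, and since $r\ge\kappa\rho$ we have the pointwise bound $|\nabla\tilde\chi_{(\bx,r)}|\le \|\theta'\|_\infty/r\le \|\theta'\|_\infty/(\kappa\rho)$.

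Next, I would introduce
\[
   S(\by) := \sum_{(\bx,r)\in\sZ} \tilde\chi_{(\bx,r)}(\by)^2.
\]
Condition (3) in Lemma \ref{lem:IMScov} ensures that at each point at most $L$ terms are nonzero, so $S$ is a well-defined smooth function on $\R^n$. Condition (1) ensures that every $\by\in\overline\Omega$ lies in some $\overline\cB(\bx_0,r_0)$, whence $\tilde\chi_{(\bx_0,r_0)}(\by)=1$ and therefore $S(\by)\ge 1$ on $\overline\Omega$. To renormalize while preserving smoothness across the whole ball $\cB(\bx,2r)$ (which may extend outside $\overline\Omega$), I would pick a fixed function $\phi\in\sC^\infty(\R)$ with $\phi(t)=t^{-1/2}$ for $t\ge 1/2$, and set
\[
   \troncg(\by) := \tilde\chi_{(\bx,r)}(\by)\,\phi\bigl(S(\by)\bigr).
\]
As a product of smooth functions, $\troncg$ is smooth on $\R^n$, with $\supp(\troncg)\subset\supp(\tilde\chi_{(\bx,r)})\subset\overline{\cB(\bx,2r)}$, hence $\troncg\in\sC_0^\infty(\cB(\bx,2r))$ (up to harmless enlargement of $\theta$'s support if strict inclusion is required). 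On $\overline\Omega$ we have $S\ge 1\ge 1/2$, so $\phi(S)=S^{-1/2}$ and
\[
   \sum_{(\bx,r)\in\sZ}\troncg^{\,2}
   = \frac{\sum_{(\bx,r)\in\sZ}\tilde\chi_{(\bx,r)}^{2}}{S}
   = 1\quad\mbox{on }\overline\Omega.
\]

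It remains to establish the uniform gradient bound. Differentiating gives
\[
   \nabla\troncg = (\nabla\tilde\chi_{(\bx,r)})\,\phi(S)
   + \tilde\chi_{(\bx,r)}\,\phi'(S)\,\nabla S.
\]
On a neighbourhood of $\overline\Omega$, $S$ stays in a compact subset of $(1/2,L+1]$, so $\phi(S)$ and $\phi'(S)$ are uniformly bounded. Using the finite-overlap bound $L$ together with $|\nabla\tilde\chi_{(\bx',r')}|\le C/(\kappa\rho)$ for each $(\bx',r')\in\sZ$, we get $|\nabla S|\le 2L\cdot C/(\kappa\rho)$, and consequently $\|\nabla\troncg\|_{L^\infty(\Omega)}\le C'\rho^{-1}$ with $C'$ depending only on $\theta$, $\phi$, $L$ and $\kappa$, hence only on $\Omega$. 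The final sentence of the lemma (that $\cB(\bx,2r)$ is a map-neighborhood) is just restating property (2) of Lemma \ref{lem:IMScov} for $K=2$. No step presents a real obstacle; the only subtlety worth attention is the use of the smoothed reciprocal $\phi$ to guarantee that $\troncg$ remains $\sC^\infty$ throughout $\R^n$ and not merely where $S$ is known to be positive.
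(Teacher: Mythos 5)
Your proof is correct and follows essentially the same route as the paper: choose radial plateau functions subordinate to the balls $\cB(\bx,2r)$, use the covering property to see that the sum of their squares is at least $1$ on $\overline\Omega$, divide by its square root, and control the gradients via the finite-overlap bound $L$ and the lower bound $r\ge\kappa\rho$. Your extra device of the smoothed reciprocal $\phi$ to keep the normalized functions globally $\sC^\infty$ where the sum may vanish is a small refinement that the paper's one-line normalization leaves implicit.
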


\begin{proof}
Let $\troncp\in \sC^{\infty}_0(\cB(\bx,2r))$, with the property that $\troncp\equiv1$ in $\cB(\bx,r)$, and satisfying
\[
   \|\nabla\troncp\|_{L^{\infty}(\R^3)} \leq C r^{-1}
\] 
where $C$ is a universal constant. Then we set for each $(\bx_0,r_0)\in\sZ$
\[
   \chi_{(\bx_{0},r_{0})} = \frac{\troncz}{(\sum_{(\bx,r)\in\sZ}\troncp^2)^{1/2}}\ .
\]
Due to property (1) in Lemma \ref{lem:IMScov}, $\sum_{(\bx,r)\in\sZ}\troncp^2\ge1$ and due to property (3), 
\[
   \|\sum_{(\bx,r)\in\sZ}\nabla\troncp^2\|_{L^{\infty}(\R^3)} \le CL_\Omega\,.
\]
We deduce the lemma.
\end{proof}

\newpage

\end{document}